\newcommand{\subsectionruninhead}{\@startsection{subsection}{2}{0mm}
{-\baselineskip}{-0mm}{\bf\large}}
\newcommand{\subsubsectionruninhead}{\@startsection{subsubsection}{3}{0mm}
{-\baselineskip}{-0mm}{\bf\normalsize}}
\newtheorem*{theorem*}{Theorem}
\newtheorem*{proof*}{Proof}
\newtheorem*{proposition*}{Proposition}
\newtheorem*{corollary*}{Corollary}
\newtheorem*{claim*}{Claim}
\newtheorem*{remark*}{Remark}
\newtheorem{problem}{Problem}
\newtheorem{theorem}{Theorem}[section]
\newtheorem{proposition}{Proposition}[section]
\newtheorem{notation}[proposition]{Notation}
\newtheorem{corollary}[proposition]{Corollary}
\newtheorem{lemma}[proposition]{Lemma}
\newtheorem{claim}[proposition]{Claim}
\theoremstyle{definition}
\newtheorem{definition}[proposition]{Definition}
\theoremstyle{remark}
\newtheorem{remark}[proposition]{Remark}
\numberwithin{equation}{section}
\def\DD{\mathbb{D}} 
\def\NN{\mathbb{N}}  
\def\RR{\mathbb{R}} 
\def\SS{\mathbb{S}} 
\def\ZZ{\mathbb{Z}}
\def\d0{\delta^{(0)}}
\def\H{{\rm Hol}}
\def\e{{\varepsilon}}
\begin{document}
\title{ $C^r$-Closing lemma for partially hyperbolic diffeomorphisms with 1D-center bundle}
\author{Shaobo Gan  ~ and ~ Yi Shi}
\date{\today}
\maketitle
\begin{abstract}
For every $r\in\mathbb{N}_{\geq2}\cup\{\infty\}$, we prove the $C^r$-closing lemma for general and conservative partially hyperbolic diffeomorphisms with one-dimensional center bundle. In particular, it implies periodic points are dense for $C^r$-generic conservative partially hyperbolic diffeomorphisms with one-dimensional center bundle.
\end{abstract}

\tableofcontents

\section{Introduction}\label{sec:introduction}

Let $M$ be a $d$-dimensional $C^{\infty}$ closed Riemannian manifold. For every $r\in\mathbb{N}\cup\{\infty\}$, let ${\rm Diff}^r(M)$ denote the set consisting of all $C^r$-diffeomorphisms on $M$, endowed with the $C^r$-topology. Let $m$ be the Lebesgue measure on $M$, and ${\rm Diff}^r_m(M)$ be the set of all $C^r$-diffeomorphisms on $M$ that preserves $m$, endowed with the $C^r$-topology.

For every $f\in{\rm Diff}^r(M)$,
a point $p\in M$ is a \emph{periodic point} of $f$ if there exists an integer $k>0$, such that $f^k(p)=p$. We denote by ${\rm Per}(f)$ the set of all periodic points of $f$. It is obvious that periodic points have the simplest orbits. 
So finding periodic points is the most fundamental problem in the study of dynamical systems. Unfortunately, there are dynamical systems that do not admit any periodic points. For instance, the irrational rotations on the circle.

On the other hand, a non-wandering point of a diffeomorphism on a closed manifold always exists. A point $x\in M$ is called a \emph{non-wandering point} of $f$, if there exist a sequence of points $y_n$ converging to $x$ and $k_n>0$, such that $f^{k_n}(y_n)$ converge to $x$. Denote $\Omega(f)$ the \emph{non-wandering set} of $f$, which consists of all non-wandering points of $f$. Notice that if $f\in{\rm Diff}^r_m(M)$, then every $x\in M$ is a non-wandering point of $f$ by the Poincar\'e Recurrence Theorem.

For finding periodic points, it is natural to ask whether we can perturb the system to make a non-wandering point periodic. 
This idea was originated from H. Poincar\'e's famous paper ``Les m\'ethodes nouvelles de la m\'ecanique c\'eleste'' \cite{P}. In this milestone work, he wrote

\vskip 1mm

{\it `` Voici un fait que je n’ai pu d\'emontrer rigoureusement, mais qui me parait pourtant tr$\grave{e}$s vraisemblable. \'Etant donn\'ees des \'equations . . . et une solution particuli$\grave{e}$re quelconque de ces \'equations, on peut toujours trouver une solution p\'eriodique (dont la p\'eriode peut, il est vrai, $\hat{e}$tre tr$\grave{e}$s longue), telle que la diff\'erence entre les deux solutions soit aussi petite qu’on le veut, pendant un temps aussi long qu’on le veut. ''
}


\vskip 1mm

We can state this idea in the language of modern dynamical systems.
\begin{definition}
	For every $f\in{\rm Diff}^r(M)$ (or ${\rm Diff}^r_m(M)$) and a point $x\in M$, we say $x$ is $C^r$-\emph{closable} for $f$, if there exists $g\in{\rm Diff}^r(M)$ (or ${\rm Diff}^r_m(M)$), which is arbitrarily $C^r$-close to $f$, such that $x$ is a periodic point of $g$. 
\end{definition}

The following open problem was asked by R. Thom, and considered as one of ``The 18 mathematical problems for the next century'' proposed by S. Smale \cite{S}.

\begin{problem}[{\bf $C^r$-Closing Lemma}]
	Let $r\in\mathbb{N}\cup\{\infty\}$ and $f\in{\rm Diff}^r(M)$. Is every non-wandering point $C^r$-closable for $f$?
\end{problem}

For $r=1$, the $C^1$-closing lemma was proved by C. Pugh \cite{Pu1}. This celebrated work inspired a series of perturbation theorems for dynamics in the $C^1$-topology. C. Pugh and C. Robinson \cite{PR} proved the $C^1$-closing lemma for conservative and Hamiltonian diffeomorphisms. R. Ma\~n\'e \cite{M} proved the $C^1$-ergodic closing lemma, which provided more information about the closing periodic orbits. In the 1990s, an improved version of the $C^1$-closing lemma was proved by S. Hayashi \cite{Ha}, which was called the $C^1$-connecting lemma right now. See also the work of L. Wen and Z. Xia \cite{WX1,WX2}. The ergodic closing lemma and connecting lemma played crucial roles for proving the famous $C^1$-stability conjecture. 
In 2004, C. Bonatti and S. Crovisier \cite{BC} proved the $C^1$-chain connecting lemma, which is the strongest perturbation lemma in the $C^1$-topology.  In particular, their works \cite{BC} and \cite{C} show that every chain recurrent point is $C^1$-closable for generic diffeomorphisms. 
See \cite{C} for a series of applications in $C^1$-generic dynamics.

For $r>1$, the problem turns out to be very delicate and difficult.
The $C^r$-closing lemma for interval endomorphisms was proved by L. S. Young \cite{Y}.
However, C. Gutierrez \cite{G} built an example which  showed that Pugh's local perturbation method for proving $C^1$-closing lemma does not work in the $C^2$ case. M. Herman \cite{H2,H3} constructed an example of Hamiltonian vector field for which $C^r$-closing lemma is false for sufficiently large $r$. Both counter-examples imply that we need some global perturbation techniques for proving $C^r$-closing lemma when $r>1$. Recently, M. Asaoka and K. Irie \cite{AI} proved a $C^{\infty}$-closing lemma for Hamiltonian diffeomorphisms of closed surfaces. We refer surveys \cite{AZ,Pu2} for more history and explanations of various closing lemmas. 

If $f\in {\rm Diff}^r(M)$ exhibits hyperbolicity, i.e. there exists a $Df$-invariant splitting $TM=E^s\oplus E^u$, such that $Df$ uniformly contracts vectors in $E^s$ and expands vectors in $E^u$, then Anosov shadowing lemma \cite{An} shows that every non-wandering point is the limit of a sequence of periodic points for $f$.

We call $f\in {\rm Diff}^r(M)$ \emph{partially hyperbolic}, if there exist a continuous $Df$-invariant splitting $TM=E^s\oplus E^c\oplus E^u$, and an integer $k\in\NN$, such that for every $x\in M$,
$$
\|Df^k|_{E^s_x}\|<\min\left\lbrace 1,m(Df^k|_{E^c_x})\right\rbrace \leq
\max\left\lbrace 1,\|Df^k|_{E^c_x}\|\right\rbrace <m(Df^k|_{E^u_x}).
$$
Here $\|\cdot\|$ is the norm of the linear operator, and $m(\cdot)$ is the conorm: $m(A)=\inf\left\lbrace \|A\cdot u\|:\|u\|=1 \right\rbrace $. The bundle $E^c$ is called the center bundle of $f$.
Let ${\rm PH}^r(M)$ and ${\rm PH}^r_m(M)$ be the sets of all $C^r$ general and conservative partially hyperbolic diffeomorphisms on $M$, respectively.
Both ${\rm PH}^r(M)\subset{\rm Diff}^r(M)$ and ${\rm PH}^r_m(M)\subset {\rm Diff}^r_m(M)$ are an open sets in the $C^r$-topology. It is clear that the set of all $f\in{\rm PH}^r(M)$ (or ${\rm PH}^r_m(M)$) with ${\rm dim}E^c=1$ is an open set in ${\rm Diff}^r(M)$ (or ${\rm Diff}^r_m(M)$).

In the 1990's, C. Pugh and M. Shub proposed the famous Stable Ergodicity Conjecture, which states that ergodic diffeomorphisms are $C^r$-open and dense in ${\rm PH}^r_m(M)$. After that,
the partially hyperbolic system has become one of the main topics of research in dynamical systems.
People have achieved a lot of positive answers for this conjecture, see \cite{FRH,HHU2,BW,ACW}. Moreover, a lot of progresses \cite{Po,HP1,HP2,BFFP} have been made on the classification of 3-dimensional partially hyperbolic diffeomorphisms. Meanwhile, more exciting examples \cite{HHU3,GOH,BPP,BGP,BGHP} have appeared, including non-dynamically coherent partially hyperbolic diffeomorphisms on 3-manifolds \cite{HHU3,BGHP,BFFP1}. All these works make the partially hyperbolic systems an active and fruitful object in the study of dynamical systems. 

Let's mention two important works \cite{HHU2,CPS} on partially hyperbolic diffeomorphisms with one-dimensional center bundle. For every $r\in\mathbb{N}_{\geq2}\cup\{\infty\}$, F. Rodriguez Hertz, J. Rodriguez Hertz and R. Ures \cite{HHU2} proved that accessibility is a $C^r$-dense and open property for conservative partially hyperbolic diffeomorphisms with one-dimensional center bundle. This implies 
Stable Ergodicity Conjecture holds if the center bundle is one-dimensional, see \cite{BW,HHU2}. Recently, S. Crovisier, R. Potrie and M. Sambarino \cite{CPS} proved the set of diffeomorphisms having at most finitely many attractors contains a dense and open subset of the space of $C^1$ partially hyperbolic diffeomorphisms with one-dimensional center bundle.

In this paper, for every $r\in\mathbb{N}_{\geq2}\cup\{\infty\}$, we prove that the $C^r$-closing lemma holds for partially hyperbolic diffeomorphisms with one-dimensional center bundle. In particular, all our results hold for partially hyperbolic diffeomorphisms on 3-manifolds. 

\vskip 3mm 
\noindent{\bf Theorem A.\ } {\em Let $r\in\mathbb{N}_{\geq2}\cup\{\infty\}$ and $f\in{\rm PH}^r(M)$ with one-dimensional center bundle. Every non-wandering point is $C^r$-closable for $f$.}
\vskip 3mm

Since the set of all $f\in{\rm PH}^r(M)$ with one-dimensional center bundle is an open set in ${\rm Diff}^r(M)$, from the classical genericity argument, Theorem A implies that for every $r\in\mathbb{N}_{\geq2}\cup\{\infty\}$, $C^r$-generic $f\in{\rm PH}^r(M)$ satisfies $\Omega(f)=\overline{{\rm Per}(f)}$, if the center bundle of $f$ is one-dimensional.

For the conservative partially hyperbolic systems with one-dimensional center bundle, we also proved the $C^r$-closing lemma. Due to the Poincar\'e Recurrence Theorem, every point in the compact manifold is non-wandering for conservative diffeomorphisms.

\vskip 3mm 
\noindent{\bf Theorem B.\ } {\em Let $r\in\mathbb{N}_{\geq2}\cup\{\infty\}$ and $f\in{\rm PH}^r_m(M)$ with one-dimensional center bundle. Every point is $C^r$-closable for $f$.}
\vskip 3mm

In this paper, we prove the following theorem, which is equivalent to Theorem B by applying the classical genericity argument.

\vskip 3mm 
\noindent{\bf Theorem B'.\ } {\em For every $r\in\mathbb{N}_{\geq2}\cup\{\infty\}$, $C^r$-generic $f\in{\rm PH}^r_m(M)$ with one-dimensional center bundle has dense periodic points.}
\vskip 3mm

We prove Theorem A and Theorem B' in Section \ref{subsec:proof}.
A natural idea to get periodic points for partially hyperbolic diffeomorphisms with one dimensional center foliation is to find some periodic center leaf and push the system along this leaf to obtain periodic point. However, as we mention before, there are robustly non-dynamically coherent partially hyperbolic diffeomorphisms on 3-manifolds \cite{HHU3,BGHP,BFFP1}. Moreover, even though the diffeomorphism is dynamically coherent, the center foliation is not absolutely continuous in general, see \cite{SW}. This forbids us to make $C^r$-perturbations ($r\geq 2$) which preserve the center foliation. Furthermore, if the center bundle is not orientable, or it is orientable but $Df$ reverses the orientation, then constructions of perturbing the diffeomorphism are seriously restricted.

Our main techniques for perturbation is the following theorem.

\begin{theorem}\label{Thm:push}
	For every $r\in\mathbb{N}\cup\{\infty\}$, let $f\in{\rm PH}^r(M)$ with one-dimensional center bundle and $x\in\Omega(f)$. Assume one of the following holds:
	\begin{enumerate}
		\item The center bundle $E^c$ is oriented, and $Df$ preserves the orientation of $E^c$.
		\item There exists a neighborhood $U$ of $x$ such that if $y,f^k(y)\in U$, then $Df^k(y)$ preserves the local orientation of $U$.
	\end{enumerate}
    There exist a $C^\infty$ vector field $X$, a sequence of real numbers $\tau_n\rightarrow 0$, and a sequence of points $p_n\rightarrow x$ as $n\rightarrow\infty$, such that every $p_n$ is a periodic point of $f_n=X_{\tau_n}\circ f$.
    
    In particular, we have
    \begin{enumerate}
    	\item If case 1 holds, then $X$ can be any $C^\infty$ vector field transverse to $E^s\oplus E^u$ everywhere, there exist corresponding $\tau_n\rightarrow0$ and $p_n\rightarrow x$ such that $p_n\in{\rm Per}(X_{\tau_n}\circ f)$.
    	\item If case 2 holds, then $X$ is a $C^\infty$ vector field satisfying
    	   \begin{itemize}
    	   	\item The support of $X$ is contained in $U$, and the angle between $X$ and $E^c$ is sufficiently small everywhere;
    	   	\item $X$ is non-negative with respect to the local orientation of $E^c$ and non-vanishing at $x$.
    	   \end{itemize}
    \end{enumerate}
\end{theorem}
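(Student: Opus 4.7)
The plan is to turn the non-wandering property into a near-periodic orbit segment, close it up in the strong stable and strong unstable directions via the local product structure (which always exists for partially hyperbolic systems), and then use the $1$-dimensional center together with the orientation hypothesis to close it up in the center coordinate by an intermediate value argument in the perturbation parameter $\tau$.

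Since $x\in\Omega(f)$, I would pick sequences $y_n\to x$ and $k_n\geq 1$ with $f^{k_n}(y_n)\to x$. In a small neighborhood $V$ of $x$ I would introduce local coordinates via the strong stable and strong unstable foliations $W^{ss}$, $W^{uu}$ and a $C^1$ arc through $x$ tangent to $E^c$; write $\pi_c$ for the projection onto the $1$-dimensional center coordinate. For each large $n$, I would apply a Hadamard--Perron/graph-transform argument to the pseudo-orbit $\{y_n, f(y_n),\dots, f^{k_n}(y_n)\}$: by adjusting the strong-stable and strong-unstable coordinates of $y_n$ inside $W^{ss}_{\mathrm{loc}}(y_n)$ and $W^{uu}_{\mathrm{loc}}(y_n)$, and using the uniform contraction/expansion on $E^s\oplus E^u$, one solves the strong-stable and strong-unstable components of the fixed-point equation $f_\tau^{k_n}(p)=p$ for every sufficiently small $\tau$, where $f_\tau:=X_\tau\circ f$. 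This produces a $1$-parameter family of candidates $q_n(c)$, parametrized by the center coordinate $c\in I_n$ near the center coordinate $c_n^\ast$ of $y_n$, such that $f_\tau^{k_n}(q_n(c))$ and $q_n(c)$ agree in their strong-stable and strong-unstable coordinates; the only remaining obstruction to periodicity is a mismatch in the $1$-dimensional center coordinate.

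I would then define the center-return function $\phi_n(c,\tau):=\pi_c(f_\tau^{k_n}(q_n(c)))-c$. A first-order expansion in $\tau$ combined with the $Df$-invariance of $E^c$ yields
\[
\partial_\tau\phi_n(c,0)=\sum_{j=1}^{k_n}\bigl(Df^{k_n-j}|_{E^c}\bigr)(f^j(q_n(c)))\cdot\pi_c\bigl(X(f^j(q_n(c)))\bigr),
\]
where each factor $Df^{k_n-j}|_{E^c}$ is a signed scalar on the $1$-dimensional fiber of $E^c$. Under case~(1), the global $Df$-invariant orientation of $E^c$ forces every scalar $Df^{k_n-j}|_{E^c}$ to be positive, and the continuity of $X$ together with its transversality to $E^s\oplus E^u$ forces $\pi_c(X)$ to be a non-vanishing section of the oriented line bundle $E^c$, hence of a single fixed sign globally, so every summand has the same sign. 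Under case~(2), $X$ is supported in $U$, so only terms with $f^j(q_n(c))\in U$ contribute; the orientation hypothesis inside $U$ guarantees that $Df^{k_n-j}|_{E^c}$ is positive for each such term, while $\pi_c(X)\geq 0$ by assumption, so again every non-zero summand has the same sign. In either case $|\partial_\tau\phi_n(c,0)|$ is uniformly bounded below by a positive constant, using continuity of $X$ near $x$ and non-vanishing of $\pi_c(X)$ at $x$.

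Since $f^{k_n}(y_n)\to x$, the initial value $\phi_n(c_n^\ast,0)$ tends to $0$. Combining this with the uniform lower bound on $|\partial_\tau\phi_n|$ and the intermediate value theorem produces $\tau_n\to 0$ with $\phi_n(c_n^\ast,\tau_n)=0$; setting $p_n:=q_n(c_n^\ast)$ gives the required periodic point of $f_n=X_{\tau_n}\circ f$, and $p_n\to x$. I expect the main technical obstacle to be the graph-transform step: the center foliation need not exist in the absence of global dynamical coherence, so one cannot use holonomies along it, but because $\dim E^c=1$ a single $C^1$ transverse arc plays the role of a true center foliation, while the uniform hyperbolicity on $E^s\oplus E^u$ supplies the contraction needed for the graph transform to converge along an orbit segment of length $k_n$ uniformly in $n$. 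The orientation hypothesis then plays its essential role precisely in preventing cancellation in the formula for $\partial_\tau\phi_n$, which would otherwise allow the derivative to vanish and destroy the intermediate value argument.
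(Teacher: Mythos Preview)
Your overall strategy matches the paper's: reduce the closing problem to a one-dimensional center-coordinate equation $\phi_n(c,\tau)=0$, use the orientation hypothesis to force a definite sign on the $\tau$-dependence, and finish with an intermediate value argument. The sign analysis of the sum $\sum_j (Df^{k_n-j}|_{E^c})\cdot\pi_c X$ is also the right idea, and your observation that in case~(2) only the returns to $U$ contribute, with the correct sign, is exactly what is used.

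The gap is in the passage from the derivative bound \emph{at} $\tau=0$ to the intermediate value argument. You establish $|\partial_\tau\phi_n(c_n^\ast,0)|\geq b_0>0$, but to produce a root $\tau_n\to 0$ you need this bound (or at least a definite change of sign of $\phi_n$) on a $\tau$-interval whose length does not shrink faster than $|\phi_n(c_n^\ast,0)|$. For $\tau\neq 0$ the splitting $E^s\oplus E^c\oplus E^u$ is no longer $Df_\tau$-invariant, so your chain-rule formula for $\partial_\tau\phi_n$ loses its clean form and its sign; moreover the implicit $\tau$-dependence of the graph-transform solution $q_n(c,\tau)$ contributes an extra term $Df^{k_n}\cdot\partial_\tau q_n$, and controlling all of this uniformly in $k_n\to\infty$ is exactly the hard part. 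This is where the paper does the real work: instead of linearizing at $\tau=0$, it promotes the center arc to a complete $f^{k_n}$-periodic center curve (Proposition~\ref{prop:center-curve}), lifts $X_\tau\circ f$ to the pullback bundle, and applies Hirsch--Pugh--Shub theory to obtain for every small $\tau$ an invariant section $\sigma_\tau$ together with a Lipschitz shadowing estimate $\|\sigma_\tau\|_{C^0}\leq L|\tau|$ with $L$ independent of $k_n$ (Theorem~\ref{thm:Lipschitz}). This replaces your first-order expansion by a finite, $\tau$-uniform bound, which is then fed into a one-step ``moving forward'' estimate (Proposition~\ref{Prop:Moving-forward}) giving a displacement $\Delta_\tau>0$ per iterate, again uniform in $k_n$; this nonlinear substitute for your lower bound on $\partial_\tau\phi_n$ is what makes the intermediate value argument go through. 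In case~(2) the steps where $X$ vanishes may push the conjugacy point backward, and a separate domination estimate (Section~\ref{subsec:domination}) is needed to show that this backward drift is controlled by the unstable-direction expansion, so that the net displacement over the full period is still positive.
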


\begin{remark} This theorem is a combination of Theorem \ref{Thm:Global-Perturb} and Theorem \ref{Thm:Local-Perturb}.
	\begin{enumerate}
		\item The first case of Theorem \ref{Thm:push} is proved in Theorem \ref{Thm:Global-Perturb}, which is a global perturbation result. Here the vector field $X$ only needs to be transverse to $E^s\oplus E^u$  everywhere, thus non-vanishing everywhere. But we don't need the angle between $X$ and $E^c$ to be small. This allowed us to apply this theorem to conservative case, see Section \ref{subsec:proof}.
		\item The second case of Theorem \ref{Thm:push} is proved in Theorem  \ref{Thm:Local-Perturb}, which is a local perturbation result. We compose a local ``pushing'' in a small neighborhood around $x$. It can be applied to the case $Df$ reverses the global orientation of $E^c$, or $E^c$ is non-orientable. But we need the angle between $X$ and $E^c$ to be sufficiently small.  It clear that if $f$ satisfies case 1, then it also satisfies the condition in case 2, which we can also apply local perturbations.
	\end{enumerate}
\end{remark}

The main idea for proving Theorem \ref{Thm:push} is the following: firstly, we construct a family of periodic center curves approximating the non-wandering point, see Proposition \ref{prop:center-curve}. Secondly, we lift the dynamics of $f$ on $M$ to a tubular neighborhood of these periodic curves, see Proposition \ref{prop:PH-local-diffeo}. Thirdly, we apply perturbations generated by some vector field transverse to $E^s\oplus E^u$ or even very close to $E^c$. We need the Lipschitz shadowing property of center invariant manifold (Theorem \ref{thm:Lipschitz}) to estimate the influence of this perturbation to make sure the point in periodic center leaf is actually moving forward along center direction, see Proposition \ref{Prop:Moving-forward}. Finally, for the local perturbation case, the idea is basic the same. But we need more delicate estimation about how much the perturbation influence the point moving forward, see Section \ref{subsec:domination} and \ref{subsec:local}.

A main step for proving Theorem \ref{Thm:push} is Proposition \ref{Prop:Moving-forward}, which estimates the influence of this perturbation to make sure the point in periodic center leaf is actually moving forward. We believe it is a useful tool for studying partially hyperbolic diffeomorphism with one-dimensional center bundle. 

The following theorem is a simplified version of Proposition \ref{Prop:Moving-forward}. We refer Section \ref{sec:perturb} for detailed definitions.

\begin{theorem}\label{Thm:moving-forward}
	Let $F^s\oplus F^u$ be a fiber bundle over $\sqcup_{i=0}^{k-1}\RR_i$, and the diffeomorphism $f:F^s\oplus F^u\rightarrow F^s\oplus F^u$ is periodic and normally hyperbolic at $\sqcup_{i=0}^{k-1}\RR_i$. Assume the stable and unstable bundles $E^s,E^u$ are close to $F^s,F^u$ respectively. For every vector field $X$ which is uniformly transverse to $E^s\oplus E^u$ and every $\tau>0$ small enough, there exists $\Delta_\tau>0$ satisfying
	\begin{itemize}
		\item The diffeomorphism $F_\tau=X_\tau\circ f:F^s\oplus F^u\rightarrow F^s\oplus F^u$ admits an $F_\tau$-invariant section $\sigma_\tau:\sqcup_{i=0}^{k-1}\RR_i\rightarrow F^s\oplus F^u$, and $F_\tau$ is normally hyperbolic at $\sqcup_{i=0}^{k-1}\sigma_\tau(\RR_i)$.
		\item For every $t\in\RR_i$, we denote $h_\tau(t)$ be the unique intersecting point of $(F^s\oplus F^u)_t(\delta)$ with $\sigma_\tau(\RR_i)$, then $h_{\tau}:\RR_i\rightarrow\sigma_{\tau}(\RR_i)$ is a diffeomorphism, and
		$$
		F_\tau\circ h_{\tau}(t)~>~h_{\tau}\big( f(t)+\Delta_{\tau}\big),
		\qquad \forall x\in\RR_i,~i=0,\cdots,k-1.
		$$
		Here the order $``>''$ on $\sigma_{\tau}(\RR_{i+1})$ inherits from $\RR_{i+1}$ $(\RR_k=\RR_0)$.
	\end{itemize}
\end{theorem}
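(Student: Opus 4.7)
The plan is to divide the proof into three phases: existence of the invariant section $\sigma_\tau$ and its normal hyperbolicity, well-definedness and smoothness of the intersection map $h_\tau$, and the quantitative forward-push estimate on $F_\tau \circ h_\tau$.

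For the first phase, I would invoke the persistence of normally hyperbolic invariant manifolds (Hirsch–Pugh–Shub). Since $f$ is normally hyperbolic at $\sqcup_i \RR_i$ and, for $\tau$ small, $X_\tau$ is $C^1$-close to the identity, the map $F_\tau = X_\tau\circ f$ remains normally hyperbolic in a neighborhood of the zero section and admits a unique invariant section $\sigma_\tau$ that is $C^1$-close to the zero section, reducing to it at $\tau=0$. The stable and unstable bundles $E^s_\tau, E^u_\tau$ of $F_\tau$ along $\sigma_\tau$ stay close to $F^s, F^u$. For the second phase, because $\sigma_\tau$ is $C^1$-close to the zero section, which is uniformly transverse to each fiber $(F^s\oplus F^u)_t(\delta)$, the intersection $\sigma_\tau(\RR_i)\cap (F^s\oplus F^u)_t(\delta)$ consists of a unique point depending smoothly on $t$; this gives the diffeomorphism $h_\tau$.

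The heart of the argument is the third phase. Fix $t\in\RR_i$ and write $q = f(h_\tau(t))$. Since $f$ preserves the zero section and $h_\tau(t)$ lies within the fiber $(F^s\oplus F^u)_t(\delta)$, the Lipschitz shadowing property of the center manifold (Theorem \ref{thm:Lipschitz}) shows that the $F^s\oplus F^u$-projection of $q$ onto $\RR_{i+1}$ equals $f(t)$ plus an error that is controlled by the displacement of $h_\tau(t)$ from $\RR_i$, which is itself $O(\tau)$. Now $F_\tau(h_\tau(t)) = X_\tau(q) = q + \tau X(q) + O(\tau^2)$. Decompose $X(q)$ relative to $(F^s\oplus F^u)\oplus F^c$: the $F^c$-component has modulus at least some $c>0$, because $X$ is uniformly transverse to $E^s\oplus E^u$ which is $C^0$-close to $F^s\oplus F^u$. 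Consequently, after projecting $F_\tau(h_\tau(t))$ along $F^s\oplus F^u$ to $\RR_{i+1}$, its base parameter is at least $f(t) + c\tau - O(\tau^{1+\alpha})$. Since $h_\tau(s)$ by construction sits in the fiber over $s$, translating this back into the ordering on $\sigma_\tau(\RR_{i+1})$ yields the desired inequality with $\Delta_\tau = c\tau/2$ for $\tau$ small.

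The main obstacle I expect is in the third phase: I must carefully separate the \emph{parallel} displacement of $\sigma_\tau$ off the zero section (which the fiber-parameter coordinate is insensitive to, by construction of $h_\tau$) from the \emph{along-center} push that $X_\tau$ imparts at $q$. The risk is that the perturbed invariant section ``absorbs'' the vector field's push and the net shift of the fiber parameter becomes $o(\tau)$. Ruling this out requires using both the Lipschitz shadowing estimate to quantify the center-projection of $f(h_\tau(t))$ and the uniform transversality of $X$ to $E^s\oplus E^u$, together with the closeness of $E^s\oplus E^u$ to $F^s\oplus F^u$. Once these two ingredients are combined, the $\Theta(\tau)$ lower bound on the center-component of $X$ survives the projection and yields the strict forward movement $\Delta_\tau$.
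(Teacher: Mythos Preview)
Your three-phase outline matches the paper's approach (Proposition~\ref{Prop:Moving-forward}), and phases one and two are fine. The gap is in phase three: the error bound $O(\tau^{1+\alpha})$ is unjustified and in fact false.

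The issue is that $f$ does \emph{not} carry the fiber $(F^s\oplus F^u)_t$ into the fiber $(F^s\oplus F^u)_{f(t)}$; what $Df$ preserves is $E^s\oplus E^u$, which is only $\eta$-close to $F^s\oplus F^u$. Consequently, if $h_\tau(t)$ sits at height $\leq L\tau$ in the fiber over $t$ (Theorem~\ref{thm:Lipschitz}), then $q=f(h_\tau(t))$ has center coordinate $f(t)+O(\eta L\tau\cdot\|Df\|)$, not $f(t)+O(\tau^{1+\alpha})$. After applying $X_\tau$ you gain $\geq b_0\tau$ in the center, but you must then compare against the fiber through $f(t)$, which introduces another $O(\eta\tau)$ term from the $su$-component $\|p_\tau^{su}\|\leq (\|Df\|L+{\rm const}\cdot\|X\|)\tau$. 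All the competing errors are \emph{linear} in $\tau$, with constants proportional to $\eta$.

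The paper resolves this by choosing $\eta$ quantitatively small, namely
\[
\eta\;\lesssim\;\frac{b_0}{\sup\|Df\|\cdot L + L + a_0\|X\|},
\]
so that the sum of all $\eta$-weighted linear errors is at most $b_0\tau/2$, leaving a net forward push of $b_0\tau/2$. Your last paragraph correctly names the three ingredients (Lipschitz shadowing for $L$, uniform transversality for $b_0$, closeness of $E^{s/u}$ to $F^{s/u}$ for $\eta$), but the computation must track the $\eta$-dependence of every error term rather than claim the errors are superlinear in $\tau$. Once you replace $O(\tau^{1+\alpha})$ by $O(\eta\tau)$ throughout and impose the smallness condition on $\eta$, your argument goes through and coincides with the paper's.
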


\begin{remark}
	This theorem means if we compose a pushing along center direction in the normally hyperbolic curves, then the conjugating point is actually moving forward comparing the original point. This estimation also works for the leaf conjugacy of dynamically coherent and plaque expansive partially hyperbolic diffeomorphisms with one-dimensional center bundle. But we don't need this conclusion in this paper.
\end{remark}

The study of $C^r$-perturbation lemmas and $C^r$-generic properties for general dynamical systems is extremely difficult for $r>1$. Very few results have been achieved in this field. However, ${\rm PH}^r(M)$ and ${\rm PH}^r_m(M)$ are $C^r$-open sets in ${\rm Diff}^r(M)$ and  ${\rm Diff}^r_m(M)$ respectively. It is natural to study  $C^r$-perturbation lemmas and $C^r$-generic properties in ${\rm PH}^r(M)$ and ${\rm PH}^r_m(M)$, which may inspire the study of general dynamics in $C^r$-topology.
There are a lot of further questions for partially hyperbolic diffeomorphisms in $C^r$-topology where $r\in\mathbb{N}_{\geq2}\cup\{\infty\}$. We list some of them below. We hope our work opening a gate for further studies. 

\begin{problem}
	Let $r\in\mathbb{N}_{\geq2}\cup\{\infty\}$ and $f\in{\rm PH}^r(M)$ with one-dimensional center bundle. Whether the following properties hold for $f$?
	\begin{itemize}
		\item The $C^r$-ergodic closing lemma, $C^r$-connecting lemma, and $C^r$-chain connecting lemma.
		\item The $C^r$-structural stability conjecture: $f$ is $C^r$-structurally stable if and only if it is Axiom-A.
		\item For conservative case, are non-uniformly hyperbolic ones $C^r$-dense?
	\end{itemize}
\end{problem}

\vskip2mm

\noindent {\bf Acknowledgements:} We are grateful to C. Bonatti, S. Crovisier, A. da Luz, H. Hu, R. Potrie, X. Wang, L. Wen, J. Yang, J. Zhang, and Y. Zhu for their valuable comments. S.Gan is supported by NSFC 11771025 and 11831001. Y. Shi is supported by NSFC 12071007, 11831001 and 12090015.

\section{Preliminaries and proof of main theorems}
\label{sec:prel-proof}

In this section, we first give some preliminaries of the partially hyperbolic diffeomorphisms. Then we prove Theorem A and Theorem B' based on Theorem \ref{Thm:Local-Perturb}, \ref{Thm:Topo-degree}, \ref{Thm:Global-Perturb} and \ref{Thm:Div-free}. 

\subsection{Preliminaries}
\label{subsec:prel}

In the whole paper, we assume $r\in\mathbb{N}_{\geq2}\cup\{\infty\}$.
Let  $f\in{\rm PH}^r(M)$, and $TM=E^s\oplus E^c\oplus E^u$ be the partially hyperbolic splitting of $f$. We also assume ${\rm dim}E^c=1$ through the whole paper. Denote $s={\rm dim}E^s$, $u={\rm dim}E^u$, and $d={\rm dim}M=s+u+1$. For every $x\in M$ and $v\in T_xM$, we denote 
$$
v=v^s+v^c+v^u\in T_xM=E^s_x\oplus E^c_x\oplus E^u_x
$$
the decomposition associated to the splitting. For $i=s,c,u$, we denote $\pi^i_x:T_xM\rightarrow E^i_x$ the projection $\pi^i_x(v)=v^i$. The following lemma defines the well-known adapted metric on $M$ with respect to $f$. We refer \cite{HPS} for its proof.

\begin{lemma}\label{lem:metric}
	There exists a $C^\infty$ adapted Riemannian metric on $M$, such that for every $x\in M$,
	\begin{enumerate}
		\item $\|Df|_{E^s_x}\|<\min \left\lbrace 1,\|Df|_{E^c_x}\| \right\rbrace \leq \max \left\lbrace 1,\|Df|_{E^c_x}\| \right\rbrace <m(Df|_{E^u_x})$;
		\item The angles of the splitting $T_xM=E^s_x\oplus E^c_x\oplus E^u_x$ satisfy
		\begin{align*}
		&\min_{v^s\in E^s_x,\|v^s\|=1}d_{T_xM}(v^s,E^c_x\oplus E^u_x)>1-10^{-3},
		\qquad
		\min_{v^c\in E^c_x,\|v^c\|=1}d_{T_xM}(v^c,E^s_x\oplus E^u_x)>1-10^{-3},
		\\
		&\min_{v^u\in E^u_x,\|v^u\|=1}d_{T_xM}(v^u,E^s_x\oplus E^c_x)>1-10^{-3}.
		\end{align*}
		\item For every $i=s,c,u$, the projection $\pi^i_x:T_xM\rightarrow E^i_x$ satisfies $\|\pi^i_x\|\leq 2$, i.e. $\|\pi^i_x(v)\|\leq2\|v\|$ for every $v\in T_xM$.
	\end{enumerate}
    Here $\|\cdot\|$ is the norm of a vector in $T_xM$ and $d_{T_xM}(\cdot,\cdot)$ is the distance induced by the Riemannian metric on $T_xM$.  The invariant bundles are made almost mutually orthogonal in the adapted metric.
\end{lemma}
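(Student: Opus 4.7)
The plan is to produce the adapted metric in two stages, first securing the one-step rate condition (1) and then arranging near-orthogonality of the splitting to obtain (2) and (3), with a smoothing step at the end.

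For the first stage, I would start from an arbitrary smooth background metric $g_0$ and apply the classical iterate-averaging (Mather) trick bundle by bundle. Choose constants $0<\lambda_s<\lambda_c^{-}\le\lambda_c^{+}<\lambda_u$ so that $\lambda_s$ exceeds the uniform contraction rate on $E^s$, $\lambda_u$ is exceeded by the uniform expansion rate on $E^u$, and $\lambda_c^{\pm}$ pinch the central rates against those on $E^s,E^u$ as demanded by the partial hyperbolicity inequality (which is given only after $k$ iterates). For $\sigma\in\{s,u\}$ define on $E^\sigma$ the inner product
\[
\langle v,w\rangle^{\sigma}_{x} \;=\; \sum_{j=0}^{N-1}\lambda_{\sigma}^{-2j}\,g_0\!\bigl(Df^{j}v,Df^{j}w\bigr),
\]
with a two-sided analogue on $E^c$ involving both $\lambda_c^\pm$. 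A direct telescoping computation shows that for $N$ large enough (chosen in terms of $k$ and the original rates) the new norms satisfy $\|Df|_{E^s_x}\|<\lambda_s<1$, $m(Df|_{E^u_x})>\lambda_u>1$, together with the pinching $\|Df|_{E^s_x}\|<\min\{1,\|Df|_{E^c_x}\|\}$ and $\max\{1,\|Df|_{E^c_x}\|\}<m(Df|_{E^u_x})$ required by (1).

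For the second stage, I assemble these pointwise bundle inner products into an inner product on all of $TM$ by declaring $E^s,E^c,E^u$ to be orthogonal:
\[
\langle v,w\rangle'_{x} \;=\; \langle v^s,w^s\rangle^{s}_{x} + \langle v^c,w^c\rangle^{c}_{x} + \langle v^u,w^u\rangle^{u}_{x}.
\]
With respect to $\langle\cdot,\cdot\rangle'$ the splitting is \emph{exactly} orthogonal, so (2) holds with the maximal possible angle distances and $\pi^i_x$ becomes the orthogonal projection, giving $\|\pi^i_x\|=1$. The only defect is that the bundles $E^s,E^c,E^u$ are generally merely H\"older continuous in $x$, so $\langle\cdot,\cdot\rangle'$ is continuous but not smooth. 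I would upgrade it to a $C^\infty$ metric by approximating $\langle\cdot,\cdot\rangle'$ by a smooth Riemannian metric in the $C^0$-topology on the bundle of positive definite symmetric bilinear forms (standard partition-of-unity smoothing on $\mathrm{Sym}^2T^*M$). Both the rate inequality (1) and the angle condition (2) are open in the $C^0$-topology on the metric, so a sufficiently close smooth approximation preserves all strict inequalities in (1) and leaves the distances in (2) above $1-10^{-3}$. Since $\|\pi^i_x\|$ is controlled by the reciprocal of the sine of the angle between $E^i_x$ and its complementary subbundle, any angle within this margin of $\pi/2$ forces $\|\pi^i_x\|\le 2$, yielding (3).

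The only real obstacle is the mismatch between the mere continuity of the invariant bundles and the smoothness required of the metric; once the slack in the constants $10^{-3}$ and $2$ is understood, everything else is bookkeeping. This is essentially the construction presented in \cite{HPS}, and the generous numerical margins mean that any reasonable smoothing scale completes the argument.
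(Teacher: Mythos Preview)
Your proposal is correct and follows the standard construction. Note, however, that the paper does not actually give a proof of this lemma: it simply states ``We refer \cite{HPS} for its proof'' and moves on, treating the existence of an adapted metric as a well-known fact. Your sketch is precisely the classical argument one finds in Hirsch--Pugh--Shub (and in many subsequent expositions): build bundle-wise inner products via the Mather averaging trick to secure the one-step rate inequalities, declare the invariant subbundles orthogonal to obtain a continuous metric with exact orthogonality, and then $C^0$-approximate by a smooth metric, using openness of the strict inequalities to retain (1)--(3) with the stated numerical margins. So there is no ``paper's own proof'' to compare against; you have supplied exactly the kind of argument the citation points to.
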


We fix an adapted metric satisfying all properties in Lemma \ref{lem:metric} through all this paper.
Denote $d(\cdot,\cdot)$ the distance induced by this metric. For every $\delta>0$, every $x\in M$ and every subset $K\subset M$, we denote 
$$
B(x,\delta)=\left\lbrace y\in M: d(x,y)\leq\delta \right\rbrace
\qquad {\rm and} \qquad
B(K,\delta)=\left\lbrace y\in M: d(K,y)\leq\delta \right\rbrace.
$$ 
For every $i=s,c,u$, we denote
$E^i_x(\delta)=\left\lbrace v\in E^i_x:~\|v\|\leq\delta \right\rbrace$  and
$$
T_xM(\delta)=E^s_x(\delta)\oplus E^c_x(\delta)\oplus E^u_x(\delta) =\left\lbrace v\in T_xM:~\|v^s\|\leq \delta,~\|v^s\|\leq\delta, ~\|v^u\|\leq\delta \right\rbrace.
$$
Then for $\delta$ small enough, the exponential map $\exp_x:T_xM(\delta)\rightarrow M$ is an imbedding for every $x\in M$. Moreover, it satisfies $B(x,\delta/2)\subset \exp_x\left( T_xM(\delta)\right)$.

The classical stable manifold theorem \cite{HPS} shows that there are two families of $f$-invariant foliations ${\cal F}^s$ and ${\cal F}^u$ tangent to $E^s$ and $E^u$ respectively. For every $x\in M$, we denote ${\cal F}^{s/u}_x$ the leaf containing $x$ and $d_{{\cal F}^{s/u}}(\cdot,\cdot)$ the distance induced by the inherited Riemannian metric on the leaf of ${\cal F}^{s/u}$.  Each leaf of ${\cal F}^{s/u}_x$ is contained in the stable/unstable manifold of $x$, which is a $C^r$-immersed submanifold. 

\begin{definition}
	For every $\delta$  small enough, let ${\cal F}^s_x(\delta)$ be the connected component of 
	$\exp_x\left(T_xM(\delta)\right)\cap {\cal F}^s_x$ 
	which contains $x$;
	and ${\cal F}^u_x(\delta)$ be the connected component of 
	$\exp_x\left(T_xM(\delta)\right)\cap {\cal F}^u_x$ which contains $x$. They are the local strong stable and strong unstable manifolds of $x$, which are both $C^r$ imbedded submanifolds. 
\end{definition}

We fix a constant $0<\lambda<1$ satisfying
\begin{align}\label{const-hyperbolic}
    \max_{x\in M}\left\lbrace 
    \|Df|_{E^s_x}\|,m(Df|_{E^u_x})^{-1}\right\rbrace <\lambda <1.
\end{align}
Notice that the contracting rate of $f$ in $E^s$ is strictly smaller than $\lambda$, and the expanding rate of $f$ in $E^u$ is strictly larger than $\lambda^{-1}$.
The following lemma is a corollary of the stable manifold theorem, see \cite{HPS}.

\begin{lemma}\label{lem:local-stable}
	There exists $\d0=\d0(M,f,\lambda)>0$, such that for every $x\in M$, the exponential map $\exp_x:T_xM(\d0)\rightarrow M$ is a diffeomorphism, and there exist two $C^r$-smooth functions 
	$$
	\varphi^s_x: E^s_x(\d0)\rightarrow E^c_x(\d0)\oplus E^u_x(\d0)
	\qquad {\it and} \qquad 
	\varphi^u_x: E^u_x(\d0)\rightarrow E^s_x(\d0)\oplus E^c_x(\d0),
	$$ 
	such that
	\begin{align*}
	    &{\cal F}^s_x(\d0)=
	    \exp_x\left({\rm Graph}(\varphi^s_x)\right)=
	    \exp_x\left(\left\lbrace v^s+\varphi^s_x(v^s):~v^s\in E^s_x(\d0) \right\rbrace \right); \\
	    &{\cal F}^u_x(\d0)=
	    \exp_x\left({\rm Graph}(\varphi^u_x)\right)=
	    \exp_x\left(\left\lbrace v^u+\varphi^u_x(v^u):~v^u\in E^u_x(\d0) \right\rbrace \right).
	\end{align*}
	Moreover, they satisfy the following properties:
	\begin{enumerate}
		\item For every $0<\delta\leq\d0$, we have 
		    \begin{itemize}
		    	\item $E^s_x(\delta)$ is diffeomorphic to ${\cal F}^s_x(\delta)$ by the map 
		    	$\exp_x\circ\left({\rm Id}+\varphi^s_x\right):
		    	E^s_x(\delta)\longrightarrow {\cal F}^s_x(\delta)$;
		    	\item $E^u_x(\delta)$ is diffeomorphic to ${\cal F}^u_x(\delta)$ by the map 
		    	$\exp_x\circ\left({\rm Id}+\varphi^u_x\right):
		    	E^u_x(\delta)\longrightarrow {\cal F}^u_x(\delta)$.
		    \end{itemize}
	    
		\item These two functions satisfy $\varphi^s_x(0^s_x)=0^{cu}_x$, $\varphi^u_x(0^u_x)=0^{cs}_x$,
		$$ 
		\frac{\partial\varphi^s_x}{\partial s}(0^s_x)=0, \quad
		\|\partial\varphi^s_x/\partial s\|<10^{-3};
		\qquad {\it and} \qquad
		\frac{\partial\varphi^u_x}{\partial u}(0^u_x)=0, \quad
		\|\partial\varphi^u_x/\partial u\|<10^{-3}.
		$$
		
		\item For every $\e>0$, there exists $0<\delta_\e<\d0$, such that for every $x\in M$, we have
		$$
		\left\|\frac{\partial\varphi^s_x}{\partial s}(v^s) \right\|<\e,
		\quad \forall v^s\in E^s_x(\delta_\e);
		\qquad {\it and} \qquad
		\left\|\frac{\partial\varphi^u_x}{\partial u}(v^u) \right\|<\e,
		\quad \forall v^u\in E^u_x(\delta_\e).
		$$
		
		\item
		For every $0<\delta\leq\d0$ and $n\geq0$, we have 
		\begin{align*}
		    f^n\left({\cal F}^s_x(\delta)\right)
		    &~\subseteq~{\cal F}^s_{f^n(x)}(\lambda^n\delta)
		    ~\subseteq~\left\lbrace y\in{\cal F}^s_{f^n(x)}:~
		     d_{{\cal F}^s}\left(f^n(x),y\right)\leq 
		     2\lambda^n\delta \right\rbrace; \\
		    f^{-n}\left({\cal F}^u_x(\delta)\right)
		    &~\subseteq~{\cal F}^u_{f^{-n}(x)}(\lambda^n\delta)
		    ~\subseteq~\left\lbrace y\in{\cal F}^u_{f^{-n}(x)}:~
		    d_{{\cal F}^u}\left(f^{-n}(x),y\right)\leq
		      2\lambda^n\delta \right\rbrace.
		\end{align*}
		
		\item For every $0<\eta<1$, there exists $\delta_{\eta}>0$, such that for every $0<\delta\leq\delta_{\eta}$, we have
		\begin{align*}
		{\cal F}^s_x\left((1+\eta)^{-1}\delta\right)
		&~\subseteq~\left\lbrace y\in{\cal F}^s_x:~
		d_{{\cal F}^s}\left(x,y\right)\leq\delta \right\rbrace 
		~\subseteq~ {\cal F}^s_x\left((1+\eta)\delta\right); \\
		{\cal F}^u_x\left((1+\eta)^{-1}\delta\right)
		&~\subseteq~\left\lbrace y\in{\cal F}^u_x:~
		d_{{\cal F}^u}\left(x,y\right)\leq\delta \right\rbrace 
		~\subseteq~ {\cal F}^u_x\left((1+\eta)\delta\right).
		\end{align*}
	\end{enumerate}
\end{lemma}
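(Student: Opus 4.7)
The plan is to derive this statement directly from the Hirsch--Pugh--Shub stable manifold theorem combined with the almost-orthogonality of the adapted splitting supplied by Lemma \ref{lem:metric}. The HPS theorem applied to $f$ produces $f$-invariant foliations ${\cal F}^s$ and ${\cal F}^u$ with $C^r$ leaves, tangent to $E^s$ and $E^u$ respectively, and varying continuously in the $C^r$-topology on compact pieces of the leaves. Since $M$ is compact with a uniform lower bound on its injectivity radius, I would first pick a preliminary $\d0$ so that $\exp_x:T_xM(\d0)\to M$ is a diffeomorphism onto its image for every $x\in M$.

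For the graph representation, I would pull each local leaf back by $\exp_x^{-1}$ and write the resulting $C^r$-submanifold of $T_xM$ as a graph over $E^s_x$. Because the leaf is tangent to $E^s_x$ at $0$ and the splitting is almost orthogonal by Lemma \ref{lem:metric}, the restriction of the projection $\pi^s_x$ to the pulled-back leaf has derivative the identity at the origin; the inverse function theorem then yields a $C^r$ map $\varphi^s_x:E^s_x(\d0)\to E^c_x\oplus E^u_x$ whose graph is the leaf, with $\varphi^s_x(0^s_x)=0^{cu}_x$ and $D\varphi^s_x(0^s_x)=0$, confirming item 1 and the vanishing part of item 2. Continuity of $D\varphi^s_x$ in $x$ and in the base variable, together with compactness of $M$, allows me to shrink $\d0$ uniformly in $x$ so that $\|\partial\varphi^s_x/\partial s\|<10^{-3}$ on all of $E^s_x(\d0)$; a further shrinking for a prescribed $\e$ yields item 3.

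Item 4 follows by iterating the $f$-invariance $f({\cal F}^s_x(\delta))\subset{\cal F}^s_{f(x)}$: writing a point of the image as $\exp_{f(x)}(w^s+\varphi^s_{f(x)}(w^s))$ and using the contraction bound $\|Df|_{E^s}\|<\lambda$ from \eqref{const-hyperbolic} together with the graph slope bound $\|D\varphi^s\|<10^{-3}$ from item 2, one estimates $\|w^s\|\leq\lambda\delta$ after one iterate, and then $\|w^s\|\leq\lambda^n\delta$ after $n$ iterates; the factor $2$ in the leaf-distance comparison comes from the chart ${\rm Id}+\varphi^s_x$ having Lipschitz constant at most $1+10^{-3}<2$. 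For item 5, this same chart is a quasi-isometric diffeomorphism between the Euclidean ball $E^s_x(\delta)$ and the intrinsic leaf-metric ball around $x$, with distortion controlled by $\|D\varphi^s_x\|$ on $E^s_x(\delta)$; choosing $\delta_\eta$ small enough to force this distortion within $(1+\eta)^{\pm 1}$, via item 3, yields both inclusions. The unstable versions are identical after applying the same argument to $f^{-1}$. The only technical concern, otherwise entirely routine given HPS, is to carry out all of these estimates uniformly in $x\in M$, which is secured by the continuous variation of $E^s$, $E^u$, ${\cal F}^s$, ${\cal F}^u$ together with the compactness of $M$.
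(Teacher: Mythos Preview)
Your proposal is correct and follows exactly the approach the paper takes: the paper does not give any proof of this lemma, simply stating that it ``is a corollary of the stable manifold theorem, see \cite{HPS}.'' Your outline fills in precisely the standard details one would supply to unpack that citation, and the arguments you sketch for each item are the expected ones.
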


\begin{definition}
	Let $\gamma:[a,b]\rightarrow M$ be a $C^1$ immersion. We say $\gamma$ is a \emph{center curve} if $0\neq\gamma'(t)\in E^c(\gamma(t))$ for every $t\in[a,b]$, i.e. $\gamma$ is tangent to $E^c$ everywhere. Notice here the image of $\gamma$ may have self-intersections.
	
	We call $\gamma$ is a \emph{local center curve} if it is a center curve and its image $\gamma([a,b])$ has no self-intersection. For notational simplicity, we also denote 
	$$
	\gamma=\gamma([a,b])\subset M
	$$
	the image a local center curve of $\gamma:[a,b]\rightarrow M$ on $M$, and $|\gamma|$ the length of $\gamma$.
\end{definition}

The following lemma is Proposition 3.4 of \cite{BBI} for absolutely partially hyperbolic diffeomorphisms with one-dimensional center bundle. However, it works for general partially hyperbolic diffeomorphisms with dim$E^c=1$.

\begin{lemma}[\cite{BBI}, Proposition 3.4]\label{lem:cs-manifold}
	Let $f\in{\rm PH}^r(M)$ with one-dimensional center bundle. There exists $0<\delta'\leq\d0$, such that for every $0<\delta\leq\delta'$, and any local center curve $\gamma^c:[a,b]\rightarrow M$ with $|\gamma^c|\leq2\delta'$, the set
	$$
	{\cal F}^{ci}_{\gamma^c}(\delta)=\bigcup_{x\in\gamma^c}{\cal F}^i_x(\delta), \qquad i=s,u,
	$$
	is a $C^1$ imbedded submanifold tangent to the bundle $E^{ci}=E^c\oplus E^i$ everywhere for $i=s,u$.
\end{lemma}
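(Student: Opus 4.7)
The plan is to prove the statement for $i=s$; the case $i=u$ follows by applying the same argument to $f^{-1}$, whose stable foliation is $\mathcal{F}^u$. Fix a local center curve $\gamma^c:[a,b]\to M$ with $|\gamma^c|\le 2\delta'$ and a base point $x_0\in\gamma^c$, and work in the exponential chart at $x_0$ with $\delta'\le\d0$ small. The natural parametrization is
$$
\Psi(t,v)\;=\;\exp_{\gamma^c(t)}\bigl(v+\varphi^s_{\gamma^c(t)}(v)\bigr),\qquad t\in[a,b],\ v\in E^s_{\gamma^c(t)}(\delta),
$$
whose image is exactly $\mathcal{F}^{cs}_{\gamma^c}(\delta)$. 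Injectivity of $\Psi$ for small $\delta'$ is a direct consequence of Lemma \ref{lem:metric} and Lemma \ref{lem:local-stable}(2--3): if $\Psi(t_1,v_1)=\Psi(t_2,v_2)$ with $t_1\ne t_2$, the points $\gamma^c(t_1),\gamma^c(t_2)$ would share a local strong stable leaf, impossible since $\gamma^c$ is tangent to $E^c$ and local stable leaves are nearly tangent to $E^s$, and these are uniformly transverse.

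The core task is to upgrade $\mathcal{F}^{cs}_{\gamma^c}(\delta)$ to a $C^1$ imbedded submanifold tangent to $E^{cs}$. For each $y\in\mathcal{F}^{cs}_{\gamma^c}(\delta)$, a cone argument produces an $E^{cs}_y$-cone around $y$ that contains the set locally: the stable leaf through $y$ lies in an $E^s$-cone by Lemma \ref{lem:local-stable}(2--3), $\gamma^c$ lies in an $E^c$-cone by tangency, and by continuity of $E^s,E^c$ along $\gamma^c$ all nearby stable leaves stay in a common $E^{cs}$-cone around $y$ that is uniformly transverse to $E^u$. Hence $\mathcal{F}^{cs}_{\gamma^c}(\delta)$ is locally the graph of a continuous function $\phi_y : U_y\subset E^{cs}_y\to E^u_y$. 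Since $E^{cs}$ is a continuous distribution, it suffices to prove that $\phi_y$ is differentiable at every point with derivative determined by $E^{cs}$; such a graph is automatically $C^1$.

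The main obstacle is differentiability of $\phi_y$ at a point $y$ lying in the interior of some stable leaf $\mathcal{F}^s_{\gamma^c(t)}(\delta)$ but off $\gamma^c$ itself. Along stable-leaf directions, differentiability is immediate from the $C^r$ graph structure in Lemma \ref{lem:local-stable}. For the remaining direction within $E^{cs}_y$, the stable holonomy along $\gamma^c$ is only H\"older in general, so one cannot differentiate $\Psi$ directly in $t$. I would instead apply the classical normal-hyperbolicity trick: push $y$ forward by $f^n$. By Lemma \ref{lem:local-stable}(4), $f^n$ contracts stable distances by $\lambda^n$, so $f^n(y)$ lies at distance $\le 2\lambda^n\delta$ from $f^n(\gamma^c(t))$ on the new center curve $f^n(\gamma^c)$, and $f^n(\mathcal{F}^{cs}_{\gamma^c}(\delta))\subset \mathcal{F}^{cs}_{f^n(\gamma^c)}(\lambda^n\delta)$. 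As $n\to\infty$ the transverse direction at $f^n(y)$ is thus approximated to order $\lambda^n$ by the $C^1$ tangent of $f^n(\gamma^c)$. Pulling back by $f^{-n}$ and comparing the contraction $\lambda^n$ on $E^s$ with uniform bounds on $Df^{-n}|_{E^{cs}}$ transforms this into differentiability of $\phi_y$ at $y$ with error $O(\lambda^n)\to 0$, yielding the derivative prescribed by $E^{cs}_y$. Together with continuity of $E^{cs}$, this establishes the $C^1$ structure.
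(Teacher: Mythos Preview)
The final step has a genuine gap. You invoke ``uniform bounds on $Df^{-n}|_{E^{cs}}$'', but no such bounds exist: $\|Df^{-n}|_{E^s}\|\ge\lambda^{-n}\to\infty$. Without this, pulling the $O(\lambda^n)$ approximation at $f^n(y)$ back to $y$ gives nothing. The argument is also close to circular: to transport a ``tangent direction at $f^n(y)$'' back by $Df^{-n}$ you must already know the tangent exists there, which is what you are proving. The estimate that actually does the work is the \emph{domination} $\|Df|_{E^{cs}_x}\|<m(Df|_{E^u_x})$: once your cone argument shows the set is a Lipschitz graph of $E^{cs}$ into $E^u$, forward invariance $f\big(\mathcal{F}^{cs}_{\gamma^c}(\delta)\big)\subset\mathcal{F}^{cs}_{f(\gamma^c)}(\lambda\delta)$ together with one graph-transform step forces the Lipschitz constant at every point to pick up a factor $\sup_x\|Df|_{E^{cs}_x}\|/m(Df|_{E^u_x})<1$; iterating drives it to $0$, hence $D\phi_y\equiv 0$ in these coordinates and the tangent is $E^{cs}_y$. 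Your sketch should be rewritten around this ratio, not around any bound on $Df^{-n}|_{E^{cs}}$.

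The paper avoids the pointwise differentiability issue altogether by a different construction. For $i=u$ it sets $\gamma_n^c=f^{-n}(\gamma^c)$ and sweeps along $\gamma_n^c$ a $C^\infty$ family of smooth $u$-disks $\mathcal{D}^u_x(\delta)$, each $C^r$-close to $\mathcal{F}^u_x(\delta)$; since the family is smooth in the base point and $\gamma_n^c$ is $C^1$, the resulting $\mathcal{D}^{cu}_{\gamma_n^c}$ is an honest $C^1$ submanifold transverse to $E^s$. Pushing forward by $f^n$, the domination $E^s\oplus_{\prec}E^{cu}$ makes the tangent planes of $f^n(\mathcal{D}^{cu}_{\gamma_n^c})$ converge uniformly to $E^{cu}$, while each disk converges to the corresponding unstable leaf; hence the $C^1$ limit is $\mathcal{F}^{cu}_{\gamma^c}(\delta)$, automatically $C^1$ and tangent to $E^{cu}$. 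This approximation-from-outside route never needs to manufacture a derivative at points off $\gamma^c$ and is what the paper sketches.
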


\begin{proof}
	 The proof is the same with absolutely partially hyperbolic case, we give a short sketch. For $i=u$, we take the negative iteration $\gamma_n^c=f^{-n}(\gamma^c)$ which is still a center curve. Now we take a $C^{\infty}$ family of smooth disks 
	 $\big\{{\cal D}^u_x(\delta)\big\}_{x\in M}$ with $\delta$-radius, which is $C^r$-close to ${\cal F}^u_x(\delta)$ for every $x$ (thus transverse to $E^s\oplus E^c$ everywhere), and $C^\infty$ smooth with respect to the base point $x$. From the continuity, the submanifold
	 $$
	 {\cal D}^{cu}_{\gamma_n^c}~=~\bigcup_{x\in\gamma_n^c}{\cal D}^u_x(\delta)
	 $$
	 is $C^1$-smooth and uniformly transverse to $E^s$ everywhere. 
	 
	 From the dominated splitting $E^s\oplus_{\prec}E^{cu}$, the iteration 
	 $f^n\big({\cal D}^{cu}_{\gamma_n^c}\big)$ is a $C^1$-submanifold whose tangent space uniformly converges to $E^{cu}$ as $n\rightarrow+\infty$.
	 Since the $f^n$-iterations of ${\cal D}^u_x(\delta)$ converge to some disk in ${\cal F}^u_{f^n(x)}$ and uniformly expanding as $n\rightarrow+\infty$, the submanifold
	 $$
	 \bigcup_{x\in\gamma^c}f^n\big({\cal D}^u_{f^{-n}(x)}\big)(\delta)
	 ~\subseteq~f^n\big({\cal D}^{cu}_{\gamma_n^c}\big)
	 $$
	 uniformly converges to ${\cal F}^{ci}_{\gamma^c}(\delta)$ as $n\rightarrow+\infty$. Thus ${\cal F}^{ci}_{\gamma^c}(\delta)$ is a $C^1$ imbedded submanifold tangent to the bundle $E^{cu}=E^c\oplus E^u$ everywhere.
\end{proof}

From Lemma \ref{lem:cs-manifold} and the local product structure, we have the following lemma.

\begin{lemma}\label{lem:local-product}
	Let $f\in{\rm PH}^r(M)$ with one-dimensional center bundle. There exist two constants $\delta''\in(0,\delta']$ and $C'>1$, such that for every $x,y\in M$ with $d(x,y)=\delta\leq\delta''/C'$, and every $C^1$ local center curve $\gamma^c$ which centered at $x$ with radius $\delta''$, the following properties hold.
	\begin{enumerate}
		\item There exists a unique point
		$$
		z={\cal F}^u_y(\delta'')\pitchfork{\cal F}^{cs}_{\gamma^c}(\delta'')={\cal F}^u_y(\delta'')\pitchfork\left(\bigcup_{w\in\gamma^c}{\cal F}^s_w(\delta'')\right).
		$$
		\item If we denote $w\in\gamma^c$ the point where $z\in{\cal F}^s_w(\delta'')$, then
		$$
		d_{\gamma^c}(x,w)<C'\cdot\delta, \qquad d_{{\cal F}^s}(z,w)<C'\cdot\delta, \qquad  \it{and} \qquad
		d_{{\cal F}^u}(z,y)<C'\cdot\delta.
		$$
		\item In particular, if $z=w\in\gamma^c\cap{\cal F}^u_y(\delta'')$, then
		$$
		d_{\gamma^c}(x,z)<C'\cdot\delta, \qquad   \it{and} \qquad d_{{\cal F}^{s/u}}(z,y)<C'\cdot\delta.
		$$
	\end{enumerate}
    The same conclusion holds if one exchanges $s$ and $u$.
\end{lemma}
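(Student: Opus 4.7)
The plan is to work in a single exponential chart at $x$ and reduce the statement to a graph-transversality / implicit function argument, using the cs-submanifold from Lemma \ref{lem:cs-manifold} together with the almost-orthogonal splitting of Lemma \ref{lem:metric}. First I would fix $\delta''\in(0,\delta']$ small enough that $\exp_x\colon T_xM(\delta'')\to M$ is a diffeomorphism (Lemma \ref{lem:local-stable}) and identify a neighborhood of $x$ with $E^s_x\oplus E^c_x\oplus E^u_x$ via $\exp_x^{-1}$. In this chart, I would express three objects as $C^1$ graphs with arbitrarily small derivatives once $\delta''$ is small: the center curve $\gamma^c$ as a graph from an interval in $E^c_x$ to $E^s_x\oplus E^u_x$ (using that $\gamma^c$ is tangent to $E^c$ and $E^c$ is almost orthogonal to $E^s\oplus E^u$); the cs-submanifold ${\cal F}^{cs}_{\gamma^c}(\delta'')$ as the graph of a $C^1$ map $\Phi^{cs}\colon E^s_x(\delta'')\oplus E^c_x(\delta'')\to E^u_x$ (using tangency to $E^{cs}$ from Lemma \ref{lem:cs-manifold} and continuity of $E^{cs}$); and, since $d(x,y)\leq\delta''$ and $E^u$ is continuous, the leaf ${\cal F}^u_y(\delta'')$ as a $C^1$ graph over a subset of $E^u_x$ passing through $y$ with derivative bounded by a small constant (Lemma \ref{lem:local-stable}, items 2--3).

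Given these graph representations, the point $z$ is produced by intersecting the graph of $\Phi^{cs}$ with the graph representing ${\cal F}^u_y(\delta'')$. Because $E^{cs}_x$ and $E^u_x$ are complementary in $T_xM$ and both graph derivatives can be made smaller than any prescribed $\e$ by shrinking $\delta''$, the implicit function theorem (equivalently, a one-step contraction on $E^u_x$) yields a unique intersection point $z$, which depends Lipschitz continuously on $y$ with some constant $C_1$ determined only by the angles of the splitting. Since $z=x$ when $y=x$, we obtain $d(x,z)\leq C_1\cdot d(x,y)$, giving item 1. The unique $w\in\gamma^c$ with $z\in{\cal F}^s_w(\delta'')$ is then obtained by following the stable graph through $z$ back to $\gamma^c$; uniqueness of $w$ is immediate from disjointness of the $s$-leaves.

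For item 2, I would decompose the broken path from $x$ to $y$ via $w$ and $z$ in the chart: the arc-lengths $d_{\gamma^c}(x,w)$, $d_{{\cal F}^s}(w,z)$, and $d_{{\cal F}^u}(z,y)$ are comparable, via the bi-Lipschitz property of the three graph representations and the bound $\|\pi^i_x\|\leq 2$ from Lemma \ref{lem:metric}(3), to the norms of the $E^c_x$-, $E^s_x$-, and $E^u_x$-components of the corresponding chart points, each of which is bounded by a universal constant times $d(x,y)=\delta$. Taking $C'$ to be the maximum of all the resulting constants yields item 2, and item 3 is the degenerate special case $z=w$ (the $E^s_x$-component vanishes), which drops the middle distance and leaves the first and third under the same bound. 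The $s\leftrightarrow u$ version is the symmetric argument with $E^{cs}$ replaced by $E^{cu}$ and the roles of $s$- and $u$-leaves exchanged. The main obstacle I anticipate is purely bookkeeping: calibrating $\delta''$, the target bound $\e$ on the three graph derivatives, and the constants $C_1$ so that they compose into a single universal $C'$. There is no conceptually deep step; once the three graph representations are in place, this is the standard local product structure for an almost-orthogonal, uniformly transverse splitting, combined with the $C^1$ cs/cu-submanifolds of Lemma \ref{lem:cs-manifold}.
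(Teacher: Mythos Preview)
Your approach is correct and is precisely what the paper has in mind: the paper does not give an explicit proof of this lemma, stating only that it follows ``From Lemma \ref{lem:cs-manifold} and the local product structure,'' and your graph-transversality argument in the exponential chart, using the $C^1$ cs-submanifold from Lemma \ref{lem:cs-manifold} together with the graph representations of ${\cal F}^u_y$ and $\gamma^c$ from Lemma \ref{lem:local-stable} and the almost-orthogonal splitting of Lemma \ref{lem:metric}, is exactly the standard way to spell out that local product structure. You are simply supplying the details the paper omits.
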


\begin{lemma}\label{lem:tubular}
	Let $\lambda\in(0,1)$ be the constant defined in (\ref{const-hyperbolic}).
	There exist two constant $\rho\in(0,\sqrt{\lambda}-\lambda)$ and $\delta'''\in(0,\d0)$, such that for every $\delta\in(0,\delta''')$, let $D^{cu}\subset M$ be a $(u+1)$-dimensional closed imbedded disk tangent to $E^c\oplus E^u$ everywhere, and let $d_{D^{cu}}(\cdot,\cdot)$ be the distance induced by the inherited Riemannian metric on $D^{cu}$,
    if $x\in D^{cu}$ satisfies 
    $$
    B^{cu}(x,\delta)=\left\lbrace y\in D^{cu}:
      ~d_{D^{cu}}(x,y)\leq\delta \right\rbrace\subseteq {\rm Int}(D^{cu}),
    $$  
    then we have
	$$
	B\left( {\cal F}^s_x(\lambda\delta),\rho\delta \right)
	\subset 
	B^s_{\sqrt{\lambda}\delta}\left( B^{cu}(x,\delta) \right)
	=
	\bigcup_{y\in B^{cu}(x,\delta)}{\cal F}^s_y(\sqrt{\lambda}\delta)
	$$
	The same conclusion holds if we consider $D^{cs}$ tangent to $E^s\oplus E^c$ and ${\cal F}^u_x(\lambda\delta)$.
\end{lemma}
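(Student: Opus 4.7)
My plan is to exploit the local product structure coming from the uniform transversality between the strong stable foliation ${\cal F}^s$ and the disk $D^{cu}$, reducing the claim to two quantitative estimates: the stable-holonomy projection of a nearby point onto $D^{cu}$ lies inside $B^{cu}(x,\delta)$, and the resulting leaf-length is at most $\sqrt{\lambda}\delta$. The slack between $\lambda$ and $\sqrt{\lambda}$ is exactly what creates the room to choose $\rho>0$.

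\emph{Set-up.} Since $D^{cu}$ is a $(u+1)$-dimensional disk tangent to $E^c\oplus E^u$ everywhere and $x\in{\rm Int}(D^{cu})$, we have $T_xD^{cu}=E_x^c\oplus E_x^u$. In exponential coordinates at $x$ the connected component of $D^{cu}$ through $x$ is the graph of a $C^1$-function $\psi:E_x^c\oplus E_x^u\to E_x^s$ with $\psi(0)=0$ and $d\psi(0)=0$; by continuity of the bundle $E^c\oplus E^u$ the slope of $\psi$ stays below $10^{-2}$ on a uniform neighborhood of $x$. Combined with the small-slope property of stable leaves (Lemma \ref{lem:local-stable}(3)) and the uniform angles of Lemma \ref{lem:metric}, this yields a well-defined stable-holonomy map $\pi_s$ from a uniform neighborhood $V$ of $x$ in $M$ onto the component of $D^{cu}\cap V$ through $x$, sending $p\in V$ to the unique intersection ${\cal F}^s_p(\d0)\cap D^{cu}$ near $x$. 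The resulting $\pi_s$ is Lipschitz with a constant $L>0$ that depends only on $f$ (independent of the particular disk $D^{cu}$).

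\emph{Main estimate.} Pick $z\in B({\cal F}^s_x(\lambda\delta),\rho\delta)$ and $w\in{\cal F}^s_x(\lambda\delta)$ with $d(z,w)\leq\rho\delta$. Since $w\in{\cal F}^s_x$ and ${\cal F}^s_x\cap D^{cu}=\{x\}$ locally, $\pi_s(w)=x$; set $y:=\pi_s(z)\in D^{cu}$. The Lipschitz bound gives
$$
d_{D^{cu}}(x,y)=d_{D^{cu}}\bigl(\pi_s(w),\pi_s(z)\bigr)\leq L\cdot d(w,z)\leq L\rho\delta,
$$
so taking $\rho$ with $L\rho\leq 1$ places $y$ inside $B^{cu}(x,\delta)$. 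By the triangle inequality $d(y,z)\leq d(z,w)+d(w,x)+d(x,y)\leq \bigl((L+1)\rho+(1+10^{-3})\lambda\bigr)\delta$, where the factor $1+10^{-3}$ absorbs the small-slope contribution in the definition of ${\cal F}^s_x(\lambda\delta)$ via Lemma \ref{lem:local-stable}(2). Applying Lemma \ref{lem:local-stable}(3) to ${\cal F}^s_y$ gives $d_{{\cal F}^s_y}(y,z)\leq(1+\eta)d(y,z)$ for any preassigned $\eta>0$ once $\delta'''$ is small. Thus
$$
d_{{\cal F}^s_y}(y,z)\leq (1+\eta)\bigl((L+1)\rho+(1+10^{-3})\lambda\bigr)\delta.
$$
Finally, choose $\eta>0$ and $\rho\in(0,\sqrt{\lambda}-\lambda)$ small enough that $(1+\eta)\bigl((L+1)\rho+(1+10^{-3})\lambda\bigr)\leq\sqrt{\lambda}$; this is feasible precisely because $\sqrt{\lambda}>\lambda$. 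Then $z\in{\cal F}^s_y(\sqrt{\lambda}\delta)$ with $y\in B^{cu}(x,\delta)$, establishing the inclusion. The analogous statement for $D^{cs}$ follows by replacing $f$ with $f^{-1}$.

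\emph{Main obstacle.} The only non-routine point is the uniformity of the Lipschitz constant $L$ of $\pi_s$ across all admissible disks $D^{cu}$. This is not automatic from abstract transversality but crucially uses the tangency of $D^{cu}$ to the uniformly continuous bundle $E^c\oplus E^u$, which together with Lemma \ref{lem:metric} forces $D^{cu}$ to be a graph of slope $<10^{-2}$ over $E_x^c\oplus E_x^u$ in exponential coordinates on a uniform neighborhood of $x$. Combined with the uniform small-slope control of stable leaves (Lemma \ref{lem:local-stable}(2)-(3)), this yields an $L$ depending only on $f$, after which the closure of all remaining estimates is straightforward.
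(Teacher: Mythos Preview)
Your approach is the same as the paper's at heart—both set up a stable-holonomy projection onto $D^{cu}$ using the uniform transversality between ${\cal F}^s$ and $E^c\oplus E^u$, and both exploit the gap $\sqrt{\lambda}>\lambda$ to absorb the overshoot. The paper carries this out entirely in exponential coordinates at $x$, bounding the $E^s$-component of $\exp_x^{-1}(z)-\exp_x^{-1}(y)$ directly; you instead route through the triangle inequality $d(y,z)\le d(z,w)+d(w,x)+d(x,y)$, which is a bit lossier but still workable in principle.

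There is, however, a genuine numerical gap in your final step. You need
\[
(1+\eta)\bigl((L+1)\rho+(1+10^{-3})\lambda\bigr)\le\sqrt{\lambda},
\]
and you claim this is feasible ``precisely because $\sqrt{\lambda}>\lambda$.'' But the $10^{-3}$ here is the \emph{fixed} slope bound from Lemma~\ref{lem:local-stable}(2), not a parameter you may shrink. Letting $\rho,\eta\to 0$ the left side tends to $(1+10^{-3})\lambda$, and the inequality $(1+10^{-3})\lambda<\sqrt{\lambda}$ fails once $\lambda>(1+10^{-3})^{-2}\approx 0.998$; nothing in the hypotheses excludes this. The fix is easy: invoke item~(3) of Lemma~\ref{lem:local-stable} (or item~(5)) in place of item~(2), so that the slope of $\varphi^s_x$ becomes an arbitrarily small $\epsilon$ once $\delta'''$ is chosen small; then the limiting inequality reads $\lambda<\sqrt{\lambda}$, which does hold. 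This is exactly what the paper does, working throughout with a single small parameter $\eta$ chosen so that $(1+\eta)^2(\lambda+2\eta)<\sqrt{\lambda}$. A second, smaller imprecision: the inequality $d_{{\cal F}^s_y}(y,z)\le(1+\eta)\,d(y,z)$ is not literally what Lemma~\ref{lem:local-stable}(3) states; you need a line or two in exponential coordinates at $y$ (or an appeal to item~(5)) to convert the slope control into this arc-length comparison.
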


\begin{proof}
	Since for every $x\in M$, the exponential map $\exp_x:T_xM(\d0)\rightarrow M$ is a local diffeomorphism,  there exists a constant $C''>1$, such that for every $x\in M$ and every $v_1,v_2\in T_xM(\d0/2)$, they satisfies
	$$
	\frac{1}{C''}\|v_1-v_2\|<
	d\left(\exp_x(v_1),\exp_x(v_2)\right)
	<C''\|v_1-v_2\|.
	$$ 
	
	Let $\eta\in(0,10^{-3})$ be a constant satisfying $(1+\eta)^2(\lambda+2\eta)<\sqrt{\lambda}$ and the constant
	$$
	\rho=\frac{\eta}{2C''}.
	$$
	There exists a constant $0<\delta_{\eta}<\delta'/100$ satisfying the following properties:
	\begin{enumerate}
		\item For every $y\in M$ and $0<\delta\leq 10\delta_{\eta}$, we have
		$$
		{\cal F}^s_x\left((1+\eta)^{-1}\delta\right)
		~\subseteq~\left\lbrace y\in{\cal F}^s_x:~
		d_{{\cal F}^s}\left(x,y\right)\leq\delta \right\rbrace 
		~\subseteq~ {\cal F}^s_x\left((1+\eta)\delta\right).
		$$
		
		\item For every imbedded disk $D^{cu}$ which satisfying $B^{cu}(x,10\delta_{\eta})\subseteq {\rm Int}(D^{cu})$, there exists a $C^1$-map $\psi^{cu}_x:E^c_x(2\delta_\eta)\oplus E^u_x(2\delta_\eta)\rightarrow E^s_x(2\delta_\eta)$ satisfying
		$$
		\exp_x\left({\rm Graph}(\psi^{cu}_x)\right)=
		B^{cu}(x,10\delta_{\eta})\cap T_xM(2\delta_\eta),\quad
		\psi^{cu}_x(0^{cu}_x)=0^s_x, 
		\quad {\rm and } \quad
		\|\partial \psi^{cu}_x/\partial cu\|<\eta.
		$$
		Moreover, for every $v^{cu}_1,v^{cu}_2\in E^c_x(2\delta_\eta)\oplus E^u_x(2\delta_\eta)$, if we denote $y_i=\exp_x\left(v^{cu}_i+\psi^{cu}_x(v^{cu}_i)\right)$ for $i=1,2$, then 
		$$
		(1+\eta)^{-1}\|v^{cu}_1-v^{cu}_2\|
		< d_{D^{cu}}(y_1,y_2) <
		(1+\eta)\|v^{cu}_1-v^{cu}_2\|.
		$$
		
		\item For every $z\in \exp_x\left(T_xM(2\delta_\eta)\right)$, there exists a $C^r$-map $\varphi^s_{x,y}:E^s_x(2\delta_\eta)\rightarrow E^c_x(4\delta_\eta)\oplus E^u_x(4\delta_\eta)$ satisfying
		$$
		\exp_x\left({\rm Graph}(\varphi^s_{x,y})\right)~=~
		{\cal F}^s_y(10\delta_\eta)~\cap ~
		\left(E^s_x(2\delta_\eta)\oplus E^c_x(4\delta_\eta)\oplus E^u_x(4\delta_\eta)\right).
		$$
		Moreover, it satisfies $\|\partial\varphi^s_{x,y}/\partial s\|<\eta$, and for every $u_1,u_2\in{\rm Graph}(\varphi^s_{x,y})$, if we denote $z_i=\exp_x(u_i)$ and $u^s_i=\pi^s_x(u_i)\in E^s_x(2\delta_\eta)$ for $i=1,2$, then
		$$
		(1+\eta)^{-1}\|u^s_1-u^s_2\|
		< d_{{\cal F}^s}(z_1,z_2) <
		(1+\eta)\|u^s_1-u^s_2\|.
		$$
	\end{enumerate}
	All these properties are deduced from the $C^1$-continuity of $D^{cu}$, and the local stable manifolds vary continuously in $C^r$-topology with respect to the base point.
	
	For every $0<\delta\leq\delta_\eta$, the local stable manifold ${\cal F}^s_x(\lambda\delta)$ satisfies
	$$
	\exp_x^{-1}\left( {\cal F}^s_x(\lambda\delta) \right)
	~\subseteq~
	E^s_x(\lambda\delta)\oplus E^c_x(\eta\delta)\oplus E^u_x(\eta\delta) 
	~\subseteq~
	E^s_x((\lambda+\eta)\delta)\oplus E^c_x(2\eta\delta)\oplus E^u_x(2\eta\delta).
	$$
	This implies $E^s_x((\lambda+\eta)\delta)\oplus E^c_x(2\eta\delta)\oplus E^u_x(2\eta\delta)$ contains a $(\eta\delta/2)$-neighborhood of $\exp_x^{-1}\left( {\cal F}^s_x(\lambda\delta) \right)$ in $T_xM$. 
	
	For every $v\in E^s_x((\lambda+\eta)\delta)\oplus E^c_x(2\eta\delta)\oplus E^u_x(2\eta\delta)$, if we denote $y=\exp_x(v)$, since the map $\varphi^s_{x,y}$ satisfies $\|\partial \varphi^s_{x,y}/\partial s \|<\eta$, its image of   $E^s_x((\lambda+\eta)\delta)$ satisfies
	$$
	\varphi^s_{x,y}\left( E^s_x((\lambda+\eta)\delta) \right)
	~\subseteq~
	E^c_x(4\eta\delta)\oplus E^u_x(4\eta\delta).
	$$
	So there exists a unique point
	$u={\rm Graph}(\varphi^s_{x,y})\cap {\rm Graph}(\psi^{cu}_x)$, i.e. 
	$$
	z=\exp_x(u)={\cal F}^s_y(10\delta_\eta)\cap B^{cu}(x,10\delta_\eta). 
	$$ 
	
	Moreover, for $u^s=\pi^s_x(u)$ and $u^{cu}=\pi^c_x(u)+\pi^u_x(u)$, we have
	$$
	\|u^{cu}\|\leq 8\eta\delta, \qquad {\rm and} \qquad 
	\|u^s\|\leq 8\eta^2\delta.
	$$
	From the second property of $\delta_{\eta}$, we have
	$$
	d_{D^{cu}}(x,z)\leq (1+\eta)\cdot8\eta\delta<\delta,
	\qquad i.e. \quad z\in B^{cu}(x,\delta).
	$$
	From the third property of $\delta_{\eta}$, we have
	$$
	d_{{\cal F}^s}(y,z)\leq
	(1+\eta)\|\pi^s_x(v)-u^s\|
	\leq (1+\eta)\left[ (\lambda+\eta)\delta+8\eta^2\delta \right]
	<(1+\eta)(\lambda+2\eta)\delta.
	$$
	Since we have assumed $(1+\eta)^2(\lambda+2\eta)<\sqrt{\lambda}$, the first property of $\delta_{\eta}$ implies $y\in{\cal F}^s_z(\sqrt{\lambda}\delta)$. This implies
	$\exp_x^{-1}\left( B^s_{\sqrt{\lambda}\delta}\left( B^{cu}(x,\delta) \right) \right)$ contains $(\eta\delta)/2$-neighborhood of $\exp_x^{-1}\left( {\cal F}^s_x(\lambda\delta) \right)$ in $T_xM$. So for $\rho=\eta/2C''$, the set $B^s_{\sqrt{\lambda}\delta}\left( B^{cu}(x,\delta) \right)$ contains a $(\rho\delta)$-neighborhood of ${\cal F}^s_x(\lambda\delta)$.
	So we take $\delta'''=\delta_{\eta}$ which proves the lemma.	
\end{proof}

\begin{notation}\label{notation}
	From now on, we denote 
	$$
	\delta_0=\min\left\lbrace \d0, \delta', \delta'',\delta'''\right\rbrace 
	\qquad {\it and} \qquad
	C_0=\max \left\lbrace C',C'' \right\rbrace .
	$$
	Then Lemma \ref{lem:local-stable}, \ref{lem:cs-manifold}, \ref{lem:local-product} and \ref{lem:tubular} all hold for constants $\delta_0$ and $C_0$. The constant $\lambda$ is defined in (\ref{const-hyperbolic}).
\end{notation}

\subsection{Proof of main theorems}
\label{subsec:proof}

In this subsection, we state some results and deduce Theorem A and Theorem B from them. We want to point out that all our argument also works for $C^1$ partially hyperbolic diffeomorphisms with one-dimensional center.

Let $r\in\mathbb{N}_{\geq2}\cup\{\infty\}$ and $f\in{\rm PH}^r(M)$ with one-dimensional center bundle. Let $\delta_0$ be the constant in Notation \ref{notation}. Since $\exp_x:T_xM(\delta_0)\rightarrow M$ is a local diffeomorphism for every $x\in M$, the $(\delta_0/2)$-neighborhood 
$B(x,\delta_0/2)\subset  \exp_x\left( T_xM(\delta_0)\right)$
is diffeomorphic to a $d$-dimensional ball in $\mathbb{R}^d$. So we can define the \emph{local orientation of $E^c$ around $x$} as a fixed orientation of $E^c|_{B(x,\delta_0/2)}$. It is well defined since $B(x,\delta_0/2)$ is simply connected.

If the point $x\in\Omega(f)$, then there exist $y_n\rightarrow x$ and $f^{k_n}(y_n)\rightarrow x$ with $k_n>0$.
Firstly, we consider the case that $x$ admits a neighborhood such that every orbit return of this neighborhood preserves the local orientation of $E^c$. Let ${\cal X}^r(M)$ be the set of all $C^r$ vector fields on $M$. For every $X\in{\cal X}^r(M)$ and every $\tau\in\RR$, we denote $\|X\|=\max_{z\in M}\|X(z)\|$ and $X_{\tau}$ the time-$\tau$ map of $X$. 

\begin{theorem}\label{Thm:Local-Perturb}
	Let $r\in\mathbb{N}\cup\{\infty\}$ and $f\in{\rm PH}^r(M)$ with one-dimensional center bundle. For every $x\in\Omega(f)$, if there exists a neighborhood $U$ of $x$ with $E^c|_U$ is orientable, such that for every $y\in U$ and $f^k(y)\in U$ with $k>0$, $Df^k(y)$ preserves the orientation of $E^c|_U$, then there exist a vector field $X\in{\cal X}^{\infty}(M)$ whose support is contained in $U$,  a sequence of real numbers $\{\tau_n\}\subset\mathbb{R}$, and a sequence of points $\{p_n\}\subset M$, such that
	\begin{itemize}
		\item the sequence $\tau_n$ converges $0$ and $p_n$ converge to $x$ as $n$ tends to infinity;
		\item every $p_n$ is a periodic point of $f_n=X_{\tau_n}\circ f$.
	\end{itemize}
\end{theorem}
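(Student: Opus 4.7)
My plan is to perturb only inside $U$ along the center direction, use the orientation hypothesis to guarantee that every visit of a pseudo-periodic orbit to $U$ contributes a forward shift in $E^c$, and close the loop by an intermediate-value argument in the perturbation parameter $\tau$.

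First I would extract sequences $y_n\to x$ and $k_n\ge 1$ with $f^{k_n}(y_n)\to x$ from $x\in\Omega(f)$. Iterating the local product structure of Lemma~\ref{lem:local-product} and using the orientation hypothesis on $Df^{k_n}$ to close the loop without flipping $E^c$, I construct (via Proposition~\ref{prop:center-curve}) a sequence of local center curves $\gamma_n$ through points $x_n\to x$ such that $f^{k_n}(\gamma_n)$ returns to a strong stable/unstable box around $\gamma_n$, with a small center holonomy of the order $d(x_n,f^{k_n}(x_n))$. The resulting pseudo-periodic chain $\sqcup_{i=0}^{k_n-1}f^i(\gamma_n)$ is normally hyperbolic under $f^{k_n}$, so by Proposition~\ref{prop:PH-local-diffeo} I can lift $f$ to a fiber-bundle model $F^s\oplus F^u\to\sqcup_{i=0}^{k_n-1}\RR_i$ with $E^s,E^u$ close to $F^s,F^u$; this is exactly the setting of Theorem~\ref{Thm:moving-forward}.

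Next, I would choose a $C^\infty$ vector field $X$ with $\mathrm{supp}(X)\subset U$, non-vanishing at $x$, non-negative with respect to the local orientation of $E^c|_U$, and with $\angle(X,E^c)<\theta$ everywhere for a small threshold $\theta>0$ to be fixed later. For $\tau>0$ small, $X_\tau$ is within $C^0$-distance $O(\theta\tau)$ of the identity and, composed with $f$, slides points forward along $E^c$ inside $U$. Since $\mathrm{supp}(X)\subset U$, only those indices $j$ with $f^{j+1}(\gamma_n)\cap U\ne\emptyset$ feel the push; the orientation hypothesis ensures each such push is forward in the center parametrization inherited from $\gamma_n$ via $Df^{j+1}$, and the closing index (where the orbit returns near $x$) contributes a displacement of size at least $c\tau$ with $c>0$ depending only on $\|X(x)\|\cos\angle(X(x),E^c_x)$. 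The local case of Proposition~\ref{Prop:Moving-forward} then supplies, for $\tau$ small, an $f_\tau$-invariant center section $\sigma_\tau$ (Lipschitz in $\tau$ by the center Lipschitz shadowing of Theorem~\ref{thm:Lipschitz}) on which, via the parametrization $h_\tau$, the return map strictly dominates $f^{k_n}$ shifted forward by some $\Delta_\tau>0$.

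Finally, comparing the $\tau=0$ return map, whose center displacement is of size at most $d(x_n,f^{k_n}(x_n))$ and of either sign, with the perturbed return map at $\tau$ large enough that $\Delta_\tau$ overtakes this initial offset, I obtain a sign change in the center displacement along $\sigma_\tau$. A one-parameter intermediate value argument in $\tau$ then produces some $\tau_n\in(0,\tau)$ with $\tau_n\to 0$ and a fixed point $p_n$ of $f_{\tau_n}^{k_n}$ on $\sigma_{\tau_n}(\gamma_n)$; shrinking $|\gamma_n|$ forces $p_n\to x$, giving the required periodic points of $f_n=X_{\tau_n}\circ f$. The hardest step is the local quantitative moving-forward estimate: because $X_\tau$ acts only at those iterates landing in $U$, the net forward displacement per period is a bounded sum of contributions of size $\tau$, while the transverse $E^s\oplus E^u$-error accumulated along the $k_n$ iterates could a priori dominate. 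Controlling this ratio requires both the smallness of $\angle(X,E^c)$ (ensuring $X_\tau$ perturbs the transverse direction by $o(\tau)$ relative to the center push) and the Lipschitz shadowing of the center invariant manifold (Theorem~\ref{thm:Lipschitz}) to bound the transverse drift per period independently of $k_n$; this is the quantitative content of the local case of Proposition~\ref{Prop:Moving-forward}, and once in hand the one-dimensional intermediate value argument closes the perturbation into a genuine periodic orbit.
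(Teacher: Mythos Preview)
Your overall architecture matches the paper's: construct periodic center curves near $x$ via Proposition~\ref{prop:center-curve}, lift to the bundle model, choose $X$ supported in $U$ with small angle to $E^c$, and close by an intermediate value argument in $\tau$. You also correctly isolate the real difficulty: at the many iterates where the orbit is outside $U$ the vector field vanishes, so $F_\tau=\theta^*f$ there, and the leaf-conjugacy point $h_\tau(t)$ can drift \emph{backward} in the center coordinate relative to $h_\tau(\theta^*f(t))$; a priori these backward drifts over $k_n$ steps could swamp the single forward push of size $\sim b_1\tau$ gained at the return to $U$.

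The gap is in how you resolve this. You invoke ``the local case of Proposition~\ref{Prop:Moving-forward}'', but that proposition is stated and proved only for $X$ transverse to $E^s\oplus E^u$ \emph{everywhere}; there is no local case. Your heuristic that small $\angle(X,E^c)$ makes the transverse perturbation $o(\tau)$, together with the Lipschitz bound of Theorem~\ref{thm:Lipschitz}, does not by itself control the center backward drift: at iterates where $X\equiv 0$ the angle of $X$ is irrelevant, and Theorem~\ref{thm:Lipschitz} only says $\sigma_\tau$ sits in an $L\tau$-tube, not that the center coordinate is monotone along the orbit. What the paper actually uses is a separate domination estimate (Lemma~\ref{Lem:Doimination}): if at some step the center deficit of the conjugacy point is at most $\theta\eta_1$ times its unstable displacement $\|v^u\|$, then after one application of $f$ (and any non-negative push along $E^c$) this \emph{ratio} is preserved, because $m(Df|_{E^u})>\|Df|_{E^c}\|$. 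Since Corollary~\ref{cor:Lipschitz} keeps $\|v^u\|\le L\tau$ uniformly, the center deficit never exceeds $\theta\eta_1 L\tau$ at any iterate, independently of $k_n$. Choosing $\eta$ small relative to $b_1/(L\sup\|Df\|)$ then makes the single forward push at the return near $x$ dominate the accumulated deficit (Claim~\ref{claim:last-step}). This inductive ratio-control via domination is the missing mechanism in your sketch; once you supply it, the rest of your argument goes through essentially as in the paper.
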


\begin{remark}
	Since the support of $X$ is contained in $U$, $f_n=X_{\tau_n}\circ f$ is a local perturbation of $f$. We will see that $X$ is a $C^\infty$ vector field satisfying
	\begin{itemize}
		\item The support of $X$ is contained in $U$, and the angle between $X$ and $E^c$ is sufficiently small everywhere;
		\item $X$ is non-negative with respect to the local orientation of $E^c$ and $X(x)\neq0$.
	\end{itemize}
	In particular, $X$ could not be divergence-free. So this perturbation is not  conservative. 
\end{remark}

If $x\in\Omega(f)$ admits orbit returns that reverse the local orientation of $x$, then the periodic points exist automatically.

\begin{theorem}\label{Thm:Topo-degree}
	Let $r\in\mathbb{N}\cup\{\infty\}$ and $f\in{\rm PH}^r(M)$ with one-dimensional center bundle. Given $x\in\Omega(f)$, if there exist a sequence of points $y_n\in M$ converging to $x$ and $f^{k_n}(y_n)\in M$ converging to $x$ with $k_n>0$, such that $Df^{k_n}(y_n)$ reverses the local orientation of the center bundle $E^c$ around $x$, then there exists a sequence of periodic points $p_n\in{\rm Per}(f)$ converging to $x$.
\end{theorem}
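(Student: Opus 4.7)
The strategy is to combine the normal hyperbolicity of $f$ near the pseudo-return $y_n \to f^{k_n}(y_n)$ with an intermediate-value argument on a locally invariant center arc. Since $E^c|_U$ is oriented and the local stable and unstable holonomies between nearby center leaves preserve that orientation, the only way $Df^{k_n}(y_n)|_{E^c_{y_n}}$ can act as a negative scalar in the local frame is for the center dynamics of $f^{k_n}$ itself to be orientation-reversing, which topologically forces a fixed point.

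First, I would reduce to producing, for each sufficiently small $\delta>0$, a periodic point of $f$ in $B(x,\delta)$. Fix $n$ large enough so that both $y_n$ and $z_n := f^{k_n}(y_n)$ lie in $B(x, \delta/(2C_0)) \subset U$, and fix a short local center curve $I$ through $y_n$ of length comparable to $\delta$, parametrized consistently with the local orientation of $E^c$ around $x$.

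Second, I would construct a $C^1$ locally $f^{k_n}$-invariant center arc along the orbit segment $y_n, f(y_n), \ldots, f^{k_n}(y_n)$ using the normally hyperbolic pseudo-orbit / graph-transform machinery developed later in the paper (in the spirit of Proposition~\ref{prop:center-curve} and the Lipschitz shadowing Theorem~\ref{thm:Lipschitz}). Concretely, I would thicken $I$ by the local ${\cal F}^s$- and ${\cal F}^u$-foliations to form a tube around the pseudo-orbit, iterate $I$ by $f^{k_n}$, and then project the image back onto a short center transversal $T \subset U$ through $x$ via the local product structure provided by Lemma~\ref{lem:local-product}. This yields a continuous return map $g_n \colon J \to T$ on a subarc $J \subset I$ of length comparable to $\delta$, whose fixed points are in bijective correspondence with honest $k_n$-periodic points of $f$ near $y_n$.

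Third, I would exploit the orientation reversal. Because $E^c|_U$ is oriented and the ${\cal F}^s$- and ${\cal F}^u$-holonomies between nearby center leaves preserve this orientation, the derivative of $g_n$ at $y_n$ has, in the local orientation, the same sign as $Df^{k_n}(y_n)|_{E^c}$, namely negative. Combined with $g_n(y_n)$ very close to $y_n$ and $g_n$ continuous on $J$, the orientation-reversing character forces the graph of $g_n$ to cross the diagonal inside $J$. The resulting fixed point $p_n$ of $g_n$ unwinds, via the tubular construction, to a genuine periodic point of $f$ within distance $O(\delta)$ of $x$; letting $\delta \to 0$ gives the desired sequence.

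\textbf{The main obstacle} is Step~2: rigorously setting up the locally invariant center arc and the induced $1$-dimensional return map along a finite pseudo-orbit whose intermediate iterates may leave every small neighborhood of $x$, and carefully tracking orientations along the holonomies. This is exactly the role played by the normally hyperbolic pseudo-orbit / graph-transform tools developed in Section~\ref{sec:perturb}, in analogy with the role of the Anosov shadowing lemma in the uniformly hyperbolic setting. Once those tools are in hand, the rest of the argument is a one-dimensional intermediate value theorem.
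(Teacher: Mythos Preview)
Your one-dimensional reduction has a genuine gap at the point where you claim that fixed points of $g_n\colon J\to T$ are in bijection with $k_n$-periodic points of $f$.  Projecting $f^{k_n}(J)$ back to a center transversal via the full local product structure (both $\mathcal F^s$- and $\mathcal F^u$-holonomy) only yields a point $p$ with $f^{k_n}(p)$ lying in the local $su$-plaque $D^{su}_p$ through $p$.  That is strictly weaker than $f^{k_n}(p)\in\mathcal F^u_{p,\mathrm{loc}}$ or $f^{k_n}(p)\in\mathcal F^s_{p,\mathrm{loc}}$, and it does not by itself produce a periodic point: $f^{k_n}$ does not preserve $su$-plaques, so there is no contracting map on $D^{su}_p$ to which one could apply Brouwer.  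The stable direction cannot simply be projected away; it must be carried along in the fixed-point argument.

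There is also a tension in your Step~2.  The machinery of Proposition~\ref{prop:center-curve} and Section~\ref{sec:perturb} is built under the hypothesis that $Df$ (or at least $Df^{k_n}$ on returns) \emph{preserves} the orientation of $E^c$; the concatenation $\bigcup_l f^{lk_n}(\sigma_n)$ only forms a consistent center curve in that case.  Here the return reverses the center orientation, so those tools do not construct an $f^{k_n}$-invariant center arc.  If you pass to $f^{2k_n}$ to restore orientation, you lose precisely the reversal you need for the intermediate-value step.

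The paper avoids both issues by working one dimension higher.  It parametrizes the local center-stable disk $\mathcal F^{cs}_{\rho_n}(\delta_0)\cong[-1,1]\times\mathbb D^s$, iterates by $f^{k_n}$ (which collapses the $\mathbb D^s$-factor), projects back by $\mathcal F^u$-holonomy only, and obtains a map $H_n\colon[a_n,b_n]\times\mathbb D^s\to[-1,1]\times\mathbb D^s$.  The orientation reversal in the center coordinate gives $h_1(a_n,v)>a_n$ and $h_1(b_n,v)<b_n$, while the strong stable contraction gives $\|h_2\|<1$; a short degree argument (Lemma~\ref{lem:fixed-point}) then yields a fixed point $r_n$ with $f^{k_n}(r_n)\in\mathcal F^u_{r_n,\mathrm{loc}}$, and the usual contraction of $f^{-k_n}$ on $\mathcal F^u$ produces the periodic point.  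The correct ``main obstacle'' is thus the $(s+1)$-dimensional fixed-point lemma, not the graph-transform machinery you cite.
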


\begin{remark}
	Notice here we do not make any perturbations. The proof of this theorem relies on calculating the topological degrees of local return maps. Moreover, this theorem works for $C^1$ diffeomorphisms.
\end{remark}

Now we can prove Theorem A.

\begin{proof}[{\bf Proof of Theorem A}]
	Let $r\in\mathbb{N}_{\geq2}\cup\{\infty\}$ and $f\in{\rm PH}^r(M)$ with one-dimensional center bundle. For every $x\in\Omega(f)\setminus{\rm Per}(f)$, there are two possibilities:
	\begin{itemize}
		\item Either there exist $y_n\rightarrow x$, $k_n\rightarrow+\infty$, and $f^{k_n}(y_n)\rightarrow x$ as $n\rightarrow\infty$, such that $Df^{k_n}(y_n)$ reverses the local orientation of the center bundle $E^c$ around $x$;
		\item Or there exists a small neighborhood $U$ of $x$ with $E^c|_U$ is orientable, such that for every $y\in U$, if $f^k(y)\in U$ with $k>0$, then $Df^k(y)$ preserves the orientation of $E^c|_U$. 
	\end{itemize}	
	
	In the first case, Theorem \ref{Thm:Topo-degree} implies there exist $p_n\in{\rm Per}(f)$ such that $p_n\rightarrow x$. In the second case, we can apply Theorem \ref{Thm:Local-Perturb}, which shows there exist $p_n\rightarrow x$ and $f_n\rightarrow f$ in $C^r$-topology, such that $p_n\in{\rm Per}(f_n)$. Composing with a local conjugation which maps $x$ to $p_n$, it proves that $x$ is $C^r$-closable.	
\end{proof}

\vskip2mm

Now we begin to prove the $C^r$-closing lemma for conservative diffeomorphisms. We restrict ourselves in the case $r\in\mathbb{N}_{\geq2}\cup\{\infty\}$ because we need some results of stable ergodicity of conservative partially hyperbolic diffeomorphisms with one-dimensional center bundle.

Let $f\in{\rm PH}^r_m(M)$ with one-dimensional center bundle. It satisfies one of the following conditions:
\begin{enumerate}
	\item either the center bundle $E^c$ is orientable, and $Df$ preserves the orientation of $E^c$;
	\item or the center bundle $E^c$ is orientable, and $Df$ reverses the orientation of $E^c$;
	\item or the center bundle $E^c$ non-orientable.
\end{enumerate}
Each case in this classification is a $C^r$-open set in ${\rm Diff}^r_m(M)$, and they are mutually disjoint. We will prove the $C^r$-closing lemma case by case following this classification.

Recall that for $f\in{\rm PH}^r(M)$ and $x\in M$, the \emph{accessible class} $AC(x)$ of $x$ consists of all points that can be reached from $x$ along a piecewise smooth curve such that each smooth piece is tangent to either $E^s$ or $E^u$ at every point.
We say $f$ \emph{is accessible}, if it has only one accessible class which is equal to $M$. 

It was proved by Dider \cite{D} that accessibility is a $C^1$-open property for partially hyperbolic diffeomorphism with one-dimensional center bundle. Then Hertz-Hertz-Ures \cite{HHU2} proved that accessibility is a $C^r$-dense property for $C^r$ conservative partially hyperbolic diffeomorphisms with one-dimensional center bundle.

 The following lemma was essentially proved in \cite{HHU2}.

\begin{lemma}\label{lem:accessibility}
	Let $f\in{\rm PH}^r(M)$ with one-dimensional center bundle. Then
	\begin{itemize}
		\item either $f$ has an open accessible class;
		\item or the stable and unstable bundles of $f$ are jointly integrable and $f$ is not accessible.
	\end{itemize}
\end{lemma}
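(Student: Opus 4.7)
The plan is to derive the dichotomy directly from the Hertz--Hertz--Ures structure theorem for accessibility classes in the one-dimensional center case, together with the elementary observation that accessibility classes partition $M$ and are saturated by both the strong stable and strong unstable foliations.

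First, I would recall two structural facts: (i) the accessibility classes $\{AC(x)\}_{x\in M}$ form a partition of $M$, and each $AC(x)$ is $\mathcal{F}^s$- and $\mathcal{F}^u$-saturated by definition; and (ii) the key technical ingredient from \cite{HHU2}, which exploits $\dim E^c=1$: every accessibility class of $f\in\mathrm{PH}^r(M)$ with one-dimensional center is either open in $M$ or is a $C^1$ injectively immersed $(s+u)$-dimensional submanifold tangent to $E^s\oplus E^u$ at every point. This is the genuine heart of the argument and the place I would have to lean on earlier work; the proof involves a Hopf-type bracket/holonomy analysis using the fact that the complementary bundle $E^c$ is one-dimensional so that transverse intersections of stable and unstable saturates cannot escape a single center curve.

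Given this structural input, the dichotomy in the lemma is immediate. If some accessibility class $AC(x_0)$ is open, we are in the first alternative and the proof is finished. Otherwise every accessibility class is a $C^1$ submanifold tangent to $E^s\oplus E^u$ everywhere. Since these submanifolds partition $M$ and all have the same dimension $s+u$, their collection is a topological foliation with $C^1$ leaves tangent to $E^s\oplus E^u$; moreover each individual leaf contains, through each of its points, the corresponding strong stable and strong unstable leaves. This is precisely the definition of joint integrability of $E^s$ and $E^u$. Finally, every accessibility class is a proper submanifold of codimension $1=\dim E^c\geq 1$ in $M$, so no single class can equal $M$, which means $f$ fails to be accessible.

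The only step that is not routine bookkeeping is the invocation of the Hertz--Hertz--Ures structure theorem; once that is granted, the dichotomy and the joint integrability conclusion follow by the elementary partition/saturation argument sketched above. I do not expect any subtlety to arise from the regularity: $C^1$ smoothness of the non-open classes suffices to extract a foliation tangent to $E^s\oplus E^u$, which is all that the notion of joint integrability requires here.
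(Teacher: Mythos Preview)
Your proposal is correct and follows essentially the same strategy as the paper: both reduce the dichotomy to the structural results of \cite{HHU2} on accessibility classes in the one-dimensional center setting. The only cosmetic difference is that the paper cites the specific characterization (Proposition~A.4 of \cite{HHU2}) that $AC(x)$ is open iff $AC(x)\cap\gamma^c(x)$ has nonempty interior in a local center curve, and then invokes the 4-legs argument to conclude local integrability of $E^s\oplus E^u$ at each $x$ with non-open class; you instead invoke the stronger packaged statement from \cite{HHU2} that non-open classes are $C^1$ codimension-one submanifolds tangent to $E^s\oplus E^u$, and pass directly to a foliation. Both routes are valid and rest on the same body of work.
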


\begin{proof}
	Since dim$E^c=1$, Proposition A.4 of \cite{HHU2} shows that for every $x\in M$, its accessible class $AC(x)$ is open is equivalent to $AC(x)\cap\gamma^c(x)$ has non-empty interior, where $\gamma^c(x)$ is any local center curve centered at $x$. From the classical 4-legs argument, if $AC(x)$ is not open, then locally $AC(x)$ only intersects $\gamma^c(x)$ at $x$, which implies
	$E^s\oplus E^u$ is locally integrable at $x$. So $f$ has no open accessible class implies $E^s\oplus E^u$ is integrable everywhere and $f$ is not accessible.
\end{proof}

\begin{lemma}\label{lem:mixing}
	Let $r\in\mathbb{N}_{\geq2}\cup\{\infty\}$ and $f\in{\rm PH}^r_m(M)$ with one-dimensional center bundle, which satisfies one of the following:
	\begin{itemize}
		\item the center bundle $E^c$ is orientable, and $Df$ reverses the orientation of $E^c$;
		\item or the center bundle $E^c$ is non-orientable.
	\end{itemize}
	If $f$ is accessible, then for every $x\in M$, there exists a sequence of points $y_n\in M$ and integers $k_n>0$, such that
	\begin{itemize}
		\item $y_n\rightarrow x$, and $f^{k_n}(y_n)\rightarrow x$ as $n\rightarrow\infty$;
		\item for every $n$, $Df^{k_n}(y_n)$ reverses the local orientation of $E^c$ around $x$.
	\end{itemize}
\end{lemma}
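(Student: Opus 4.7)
The plan is to reduce each case to a topological mixing statement and then extract returns with the desired orientation-reversing behavior by a parity argument (Case 1) or a sheet-switching argument on the orientation double cover (Case 2). The key external input will be the stable ergodicity theorem of Rodriguez Hertz--Rodriguez Hertz--Ures \cite{HHU2}, which, combined with the automatic center bunching for one-dimensional center bundles in the Burns--Wilkinson framework, gives that any accessible conservative $C^r$ ($r\geq2$) partially hyperbolic diffeomorphism with one-dimensional center bundle is a $K$-automorphism with respect to Lebesgue, and hence topologically mixing.

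For Case 1, where $E^c$ is orientable and $Df$ globally reverses its orientation, $Df^k$ reverses the (global, and hence the local) orientation of $E^c$ precisely when $k$ is odd. Since $f$ itself is accessible and conservative, it is topologically mixing by the above, so for every neighborhood $U$ of $x$ and every sufficiently large $k$ one has $f^k(U)\cap U\neq\emptyset$. I would pick odd integers $k_n\to\infty$ and neighborhoods $U_n=B(x,1/n)$, obtain $y_n\in U_n$ with $f^{k_n}(y_n)\in U_n$, and these will satisfy the conclusion of the lemma.

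For Case 2, where $E^c$ is non-orientable, I would lift the problem to the orientation double cover $\pi\colon\tilde M\to M$ of $E^c$: since $E^c$ is non-orientable $\tilde M$ is connected, $\pi^*E^c$ is orientable with a fixed canonical orientation, $\sigma\colon\tilde M\to\tilde M$ is the deck involution, and there exists a lift $\tilde f$ of $f$ (since $Df$ preserves $E^c$). Then $\tilde f$ is a conservative $C^r$ partially hyperbolic diffeomorphism with one-dimensional center on $(\tilde M,\pi^*m)$. I would first show $\tilde f$ is accessible. By Lemma~\ref{lem:accessibility}, accessibility of $f$ yields non-integrability of $E^s\oplus E^u$ on $M$, which pulls back to non-integrability of the corresponding bundle on $\tilde M$; applying Lemma~\ref{lem:accessibility} to $\tilde f$ then provides an open accessibility class $A\subset\tilde M$. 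Since accessibility classes map to accessibility classes and $f$ is accessible, $\pi(A)=M$; the set $\sigma(A)$ is also an open accessibility class, so either $\sigma(A)=A$ (giving $A=\pi^{-1}(M)=\tilde M$ and accessibility of $\tilde f$) or $\sigma(A)\cap A=\emptyset$ (giving $\tilde M=A\sqcup\sigma(A)$ as an open disconnection, contradicting connectedness of $\tilde M$). Hence $\tilde f$ is accessible, and \cite{HHU2} makes it topologically mixing.

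Fixing a lift $\tilde x\in\pi^{-1}(x)$, topological mixing of $\tilde f$ then provides $\tilde y_n\to\tilde x$ and $k_n\to\infty$ with $\tilde f^{k_n}(\tilde y_n)\to\sigma(\tilde x)$, and the parity of $k_n$ can be chosen freely. A path from $\tilde x$ to $\sigma(\tilde x)$ in $\tilde M$ projects to a loop in $M$ with orientation-reversing monodromy on $E^c$, so the fixed local orientation at $x$ pulls back to the canonical orientation of $\pi^*E^c$ on the component of $\pi^{-1}(B(x,\delta_0/2))$ containing $\tilde x$, but to the opposite of canonical on the component containing $\sigma(\tilde x)$. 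Letting $\epsilon\in\{\pm1\}$ record whether $D\tilde f$ preserves the canonical orientation of $\pi^*E^c$, a direct bookkeeping would show that $Df^{k_n}(y_n)$ reverses the local orientation of $E^c$ around $x$ if and only if $\epsilon^{k_n}=+1$. I would choose $k_n$ accordingly (any $k_n$ when $\epsilon=+1$, even $k_n$ when $\epsilon=-1$) and then project $y_n=\pi(\tilde y_n)$, $f^{k_n}(y_n)=\pi(\tilde f^{k_n}(\tilde y_n))\to x$ to obtain the required sequence. The main obstacle is the accessibility of $\tilde f$, which crucially exploits the non-orientability of $E^c$ (ensuring connectedness of $\tilde M$) together with Lemma~\ref{lem:accessibility}; beyond that, the argument is a careful translation of the orientation condition through the double cover, combined with invoking the $K$-property from \cite{HHU2}.
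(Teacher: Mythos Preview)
Your approach is correct and essentially the same as the paper's: both cases hinge on invoking \cite{HHU2}/\cite{BW} to get topological mixing from accessibility, and in the non-orientable case both pass to the orientation double cover and use Lemma~\ref{lem:accessibility} together with connectedness of $\tilde M$ to show the lift is accessible. The only cosmetic difference is that in Case~2 the paper singles out the orientation-reversing lift $f_-$ and finds returns $\tilde y_n\to\tilde x$, $f_-^{k_n}(\tilde y_n)\to\tilde x$ with $k_n$ odd (literally reducing to Case~1 upstairs), whereas you work with an arbitrary lift and instead send $\tilde y_n$ near $\tilde x$ to near the other sheet $\sigma(\tilde x)$, letting the sheet switch carry the orientation reversal; these are equivalent bookkeeping choices.
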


\begin{proof}
	{\bf Case \uppercase\expandafter{\romannumeral1}.} $E^c$ is orientable, and $Df$ reverses the orientation of $E^c$.
	
	In this case, we can assume the local orientation of $E^c$ around $x$ coincides with the orientation of $E^c$.
	Since $f$ is accessible, it is Kolmogorov with respect to Lebesgue measure(\cite{BW,HHU2}), thus is topologically mixing. Then for every $x\in M$ and any $n\in\NN$, there exists $l_n>0$, such that
	$$
	f^{-k}(B(x,1/n))\cap B(x,1/n)\neq \emptyset, \qquad \forall k>l_n.
	$$
	We take an odd number $k_n>l_n$, and $y_n\in f^{-k_n}(B(x,1/n))\cap B(x,1/n)$. Then $f^{k_n}(y_n)\in B(x,1/n)$, and $Df^{k_n}$ reverses the orientation of $E^c$. Let $n$ tend to infinity, this proves the first case.
	
	\vspace{1mm}
	
	\noindent{\bf Case \uppercase\expandafter{\romannumeral2}.} $E^c$ is non-orientable.
	
	There exists a connected double covering $\widetilde{M}$ of $M$, such that the lifting bundle $\widetilde{E}^c$ is orientable. Denote $\pi:\widetilde{M}\rightarrow M$ the covering map. If we equip $\widetilde{M}$ with the lifting Riemannian metric, then $\pi$ is a local isometry.
	
	Let $T:\widetilde{M}\rightarrow\widetilde{M}$ be the nontrivial Deck transformation associated to this double covering. We have $T^2={\rm id}$, $\pi\circ T=\pi$, and $T$ is an isometry on $\widetilde{M}$. 
	In particular, since $\widetilde{M}$ is the double covering which makes lifting bundle $\widetilde{E}^c$ is orientable, and $T$ is the corresponding Deck transformation, $DT$ preserves $\widetilde{E}^c$ and reverses its orientation.
	
	The diffeomorphism $f$ has two lifting diffeomorphisms $f_+,f_-:\widetilde{M}\rightarrow\widetilde{M}$ satisfying 
	$$
	f_+=f_-\circ T=T\circ f_-.
	$$
	Since the covering map $\pi:\widetilde{M}\rightarrow M$ is a local isometry, $f_+$ and $f_-$ are also partially hyperbolic diffeomorphisms on $\widetilde{M}$ and preserves the Lebesgue measure of $\widetilde{M}$.
	Moreover, since they are commuting with $T$ which is a 2-periodic isometry, $f_+$ and $f_-$ have the same invariant bundles, which are lifting of the invariant bundle of $f$. So they also have the same stable and unstable foliations.
	We denote $\widetilde{{\cal F}}^s$ and $\widetilde{{\cal F}}^u$ the stable and unstable foliations of them respectively.
	
	Since $\widetilde{E}^c$ is the invariant center bundle of $f_+$ and $f_-$, and $f_+=f_-\circ T=T\circ f_-$, where $T$ reverses the orientation of $\widetilde{E}^c$. We can assume that $f_+$ preserves the orientation of $\widetilde{E}^c$ and 
	$f_-$ reverses the orientation of $\widetilde{E}^c$.
	
	\begin{claim}
		$f_-:\widetilde{M}\rightarrow\widetilde{M}$ is accessible, thus topologically mixing.
	\end{claim}
	
	\begin{proof}[Proof of the claim]
		Lemma \ref{lem:accessibility} implies that either $f_-$ has an open accessible class, or the union of stable and unstable bundles of $f_-$ is integrable to a foliation. The second case is impossible, since $\pi$ maps the stable and unstable foliations of $f_-$ into the stable and unstable foliations of $f$, and this contradicts to $f$ is accessible.    	
		
		Now assume $f_-$ is not accessible. Lemma \ref{lem:accessibility} shows that there exists $\widetilde{x}\in\widetilde{M}$, such that its accessible class $AC(\widetilde{x},f_-)$ is open and $AC(\widetilde{x},f_-)\neq\widetilde{M}$. Since the Deck transformation $T$ preserves the stable and unstable foliations of $f_-$, we have
		$$
		AC(T(\widetilde{x}),f_-)=T(AC(\widetilde{x},f_-))
		\neq\widetilde{M},
		$$ 
		which is also open. 
		
		Since $\pi$ maps the stable and unstable foliations of $f_-$ into the stable and unstable foliations of $f$, for $x=\pi(\widetilde{x})$, we have    	
		$$
		M=AC(x,f)=\pi(AC(\widetilde{x},f_-))=
		\pi(AC(T(\widetilde{x}),f_-)).
		$$
		We must have $AC(\widetilde{x},f_-)\cap AC(T(\widetilde{x}),f_-)\neq\emptyset$. Otherwise, we have
		$$
		\widetilde{M}=\pi^{-1}(M)=AC(\widetilde{x},f_-)\cup AC(T(\widetilde{x}),f_-)
		$$
		is the disjoint union of two open sets. This contradicts that $\widetilde{M}$ is connected. 
		
		However, $AC(\widetilde{x},f_-)\cap AC(T(\widetilde{x}),f_-)\neq\emptyset$ implies
		$$
		AC(\widetilde{x},f_-)=AC(T(\widetilde{x}),f_-).
		$$
		If there exists some $\widetilde{y}\in\widetilde{M}\setminus AC(\widetilde{x},f_-)$, then $T(\widetilde{y})\notin AC(\widetilde{x},f_-)$. This implies 
		$$
		y=\pi(\widetilde{y})\notin\pi(AC(\widetilde{x},f_-))=AC(x,f)=M.
		$$
		This is absurd. So we have $AC(\widetilde{x},f_-)=\widetilde{M}$ and  $f_-$ is accessible. Thus $f_-$ is topologically mixing.
	\end{proof}
	
	Now for any $x\in M$, let $\widetilde{x}\in\pi^{-1}(x)\subset\widetilde{M}$.
	From the proof of Case \uppercase\expandafter{\romannumeral1}, there exists a sequence of points $\widetilde{y}_n\in \widetilde{M}$ and integers $k_n>0$, such that
	\begin{itemize}
		\item $\widetilde{y}_n\rightarrow \widetilde{x}$, and $f_-^{k_n}(\widetilde{y}_n)\rightarrow \widetilde{x}$ as $n\rightarrow\infty$;
		\item for every $n$, $Df_-^{k_n}(\widetilde{y}_n)$ reverses the local orientation of $\widetilde{E}^c$ around $\widetilde{x}$.
	\end{itemize}
	
	Since $\pi$ projects the local orientation of $\widetilde{E}^c$ around $\widetilde{x}$ to the local orientation of $E^c$ around $x$. So the sequence of points $y_n=\pi(\widetilde{y}_n)$ and corresponding integers $k_n>0$ satisfy
	\begin{itemize}
		\item $y_n\rightarrow x$, and $f^{k_n}(y_n)\rightarrow x$ as $n\rightarrow\infty$;
		\item for every $n$, $Df^{k_n}(y_n)$ reverses the local orientation of center bundle $E^c$ around $x$.
	\end{itemize}
	This finishes the proof of second case.
\end{proof}

Combined with this lemma, Theorem \ref{Thm:Topo-degree} has the following corollary.

\begin{corollary}\label{Cor:Topo-degree}
	Let $r\in\mathbb{N}_{\geq2}\cup\{\infty\}$ and ${\rm PH}^{r,-}_m(M)$ be the set of all $f\in{\rm PH}^r_m(M)$ with one-dimensional center bundle which satisfies:
	\begin{itemize}
		\item either the center bundle $E^c$ is orientable, and $Df$ reverses the orientation of $E^c$;
		\item or the center bundle $E^c$ is non-orientable.
	\end{itemize}
	There exists a $C^r$-open dense subset ${\cal V}\subseteq {\rm PH}^{r,-}_m$, such that every $f\in{\cal V}$ satisfies ${\rm Per}(f)$ is dense in $M$.
\end{corollary}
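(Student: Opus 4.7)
The plan is to define ${\cal V}$ to be the set of accessible diffeomorphisms inside ${\rm PH}^{r,-}_m(M)$, and then combine the accessibility machinery with Lemma \ref{lem:mixing} and Theorem \ref{Thm:Topo-degree}. First I would observe that, as noted right before Corollary \ref{Cor:Topo-degree}, the three orientation cases for the center bundle of a conservative partially hyperbolic diffeomorphism with one-dimensional center are mutually disjoint $C^r$-open subsets of ${\rm Diff}^r_m(M)$; hence ${\rm PH}^{r,-}_m(M)$ is itself $C^r$-open.

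Next I would invoke the two accessibility results recalled in the excerpt. By Dider's theorem, accessibility is a $C^1$-open (hence $C^r$-open) property for partially hyperbolic diffeomorphisms with one-dimensional center bundle. By Rodriguez Hertz--Rodriguez Hertz--Ures, accessibility is $C^r$-dense in ${\rm PH}^r_m(M)$ among those with one-dimensional center bundle. Intersecting with the $C^r$-open set ${\rm PH}^{r,-}_m(M)$, I would define
\[
    {\cal V}~=~\left\{\, f\in{\rm PH}^{r,-}_m(M)\; :\; f\text{ is accessible}\,\right\},
\]
which is therefore $C^r$-open and $C^r$-dense in ${\rm PH}^{r,-}_m(M)$.

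It then remains to show that every $f\in{\cal V}$ has dense periodic points. Fix $f\in{\cal V}$ and an arbitrary $x\in M$. Since $f$ is accessible and lies in ${\rm PH}^{r,-}_m(M)$, Lemma \ref{lem:mixing} produces a sequence $y_n\to x$ with $f^{k_n}(y_n)\to x$ for some $k_n>0$, such that each $Df^{k_n}(y_n)$ reverses the local orientation of $E^c$ around $x$. This is exactly the hypothesis of Theorem \ref{Thm:Topo-degree}, which therefore yields a sequence of periodic points $p_n\in{\rm Per}(f)$ with $p_n\to x$. Since $x\in M$ was arbitrary, ${\rm Per}(f)$ is dense in $M$.

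There is no real obstacle here: the work has already been done in Lemma \ref{lem:mixing} (to manufacture orientation-reversing recurrences from accessibility plus mixing) and in Theorem \ref{Thm:Topo-degree} (to convert such recurrences into genuine periodic points via a topological-degree argument, with no perturbation). The only point that requires a little care is to confirm that the $C^r$-density of accessibility proved in \cite{HHU2} for the full space ${\rm PH}^r_m(M)$ with one-dimensional center restricts to $C^r$-density inside the $C^r$-open subset ${\rm PH}^{r,-}_m(M)$; this is immediate since relative density of a dense set in an open subset is automatic.
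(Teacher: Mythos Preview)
Your proposal is correct and matches the paper's intended argument: the paper presents this corollary as an immediate consequence of combining Lemma \ref{lem:mixing} with Theorem \ref{Thm:Topo-degree}, using the $C^r$-open density of accessibility from \cite{D,HHU2} to define the set ${\cal V}$. You have simply spelled out the details the paper leaves implicit.
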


When the center bundle $E^c$ is orientable and $Df$ preserves the orientation of $E^c$, we need to do global perturbations.

\begin{definition}
	Let $X\in{\cal X}^r(M)$ be a $C^r$ vector field. We say $X$ \emph{is transverse to} $E^s\oplus E^u$, if the vector $X(x)\in T_xM$ is transverse to $E^s_x\oplus E^u_x$ in $T_xM$ for every $x\in M$. It is clear that $E^c$ is orientable if and only if there exists a continuous vector field $X$ transverse to $E^s\oplus E^u$. 
	
	Assume the center bundle $E^c$ is oriented.
    We denote $\pi^c_x:T_xM\rightarrow E^c_x$ the projection for every $x\in M$. For every $v\in T_xM$, we say \emph{$v$ is positively transverse to $E^s\oplus E^u$ at $x$}, if $\pi^c_x(v)\in E^c_x$ is a non-zero positive vector corresponding to the orientation of $E^c$.
	We say $X\in{\cal X}^r(M)$ is \emph{positively transverse to} $E^s\oplus E^u$, if $X(x)$ is positively transverse to $E^s\oplus E^u$ for every $x\in M$.
\end{definition}

\begin{remark}
	It is clear that if a vector field is (positively) transverse to $E^s\oplus E^u$, then it is non-zero everywhere.
\end{remark}

\begin{theorem}\label{Thm:Global-Perturb}
	Let $r\in\mathbb{N}_{\geq2}\cup\{\infty\}$ and $f\in{\rm PH}^r(M)$ with one-dimensional center bundle. Assume the center bundle $E^c$ is orientable and $Df$ preserves the orientation of $E^c$. Let $X\in{\cal X}^r(M)$ which is transverse to $E^s\oplus E^u$. For every $x\in\Omega(f)$, there exist a sequence of real numbers $\{\tau_n\}$ and $\{p_n\}\subset M$, such that
	\begin{itemize}
		\item the sequence $\tau_n$ converges to $0$ and $p_n$ converge to $x$ as $n$ tends to infinity;
		\item every $p_n$ is a periodic point of $f_n=X_{\tau_n}\circ f$.
	\end{itemize}
\end{theorem}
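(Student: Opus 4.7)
The approach is the one outlined at the end of the introduction: approximate the recurrence of $x$ by a normally hyperbolic periodic chain of center curves, lift the dynamics to an abstract tube, and use the transverse flow $X_\tau$ to produce a genuine periodic point via an intermediate-value argument in the center direction. Concretely, since $X$ is transverse to the codimension-one plane field $E^s \oplus E^u$, the continuous function $x \mapsto \pi^c_x(X(x))$ is nowhere zero; by replacing $\tau$ with $-\tau$ if needed, I may assume $X$ is positively transverse to $E^s \oplus E^u$. Because $x \in \Omega(f)$, for every small $\delta > 0$ I can pick $y_\delta \in B(x,\delta)$ and $k_\delta > 0$ with $f^{k_\delta}(y_\delta) \in B(x,\delta)$, with $k_\delta \to \infty$ as $\delta \to 0$ (otherwise $x$ is already periodic).

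\emph{Step 1: approximate periodic center curve and lifted tube.} The recurrence data together with Lemma \ref{lem:local-product} feed into Proposition \ref{prop:center-curve} to produce a local center curve $\gamma_0^c$ near $x$ of length $\asymp \delta_0$ such that its forward images $\gamma_i^c = f^i(\gamma_0^c)$, $0 \leq i < k_\delta$, are disjoint local center curves and $f^{k_\delta}(\gamma_0^c)$ lies in a thin $cs$/$cu$ product neighborhood of $\gamma_0^c$ (Lemma \ref{lem:cs-manifold}). Proposition \ref{prop:PH-local-diffeo} then constructs, over the abstract disjoint union $\sqcup_{i=0}^{k_\delta - 1} \RR_i$ of the $\gamma_i^c$, a fiber bundle with fiber $F^s \oplus F^u$ (chosen close to $E^s \oplus E^u$, and using Lemma \ref{lem:tubular} to keep fibers of radius $\sqrt{\lambda}\,\delta_0$ pairwise disjoint) on which $f$ lifts to a periodic, normally hyperbolic diffeomorphism, and $X$ pulls back to a vector field uniformly transverse to $E^s \oplus E^u$.

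\emph{Step 2: perturbation and closing.} Theorem \ref{Thm:moving-forward} (or rather Proposition \ref{Prop:Moving-forward}) now applies on the lifted bundle: for all sufficiently small $\tau$, the perturbed map $f_\tau = X_\tau \circ f$ has an invariant normally hyperbolic section $\sigma_\tau$ over each $\RR_i$, and each step of the induced one-dimensional map along the chain pushes $t$ forward by at least $\Delta_\tau > 0$ compared with $f$. Iterating for $k_\delta$ steps and using the orientation preservation of $Df$ on $E^c$ (which makes the return map on $\RR_0$ orientation preserving), the perturbed return map $\psi_\tau : \RR_0 \to \RR_0$ satisfies $\psi_\tau(t) > \psi_0(t) + k_\delta \Delta_\tau$. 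By Lemma \ref{lem:local-product}, the unperturbed return displacement $\psi_0(t_\delta) - t_\delta$ at the approximate fixed point $t_\delta \in \RR_0$ close to $x$ is at most of order $\delta$. Allowing $\tau$ to range over a small symmetric interval around $0$ (negative $\tau$ corresponding to flowing along $-X$), the signed displacement $\tau \mapsto \psi_\tau(t_\delta) - t_\delta$ is continuous in $\tau$ and sweeps past zero for some $|\tau_n|$ of order $\delta/k_\delta$; the intermediate value theorem then produces $\tau_n$ for which $\psi_{\tau_n}$ has a fixed point $p_n \in \gamma_0^c$. This $p_n$ is a $k_\delta$-periodic point of $f_n = X_{\tau_n} \circ f$ within $O(\delta)$ of $x$, and $(p_n, \tau_n) \to (x, 0)$ as $\delta \to 0$.

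The hard part is the quantitative moving-forward estimate encoded in Theorem \ref{Thm:moving-forward}: one must show that a perturbation along an $X$ which is only transverse to $E^s \oplus E^u$ (not necessarily nearly tangent to $E^c$) still induces a strict monotone forward drift on the one-dimensional center quotient, and that this drift accumulates additively over the long period $k_\delta$. The Lipschitz shadowing of center invariant manifolds (Theorem \ref{thm:Lipschitz}) is what controls the deviation of the invariant section $\sigma_\tau$ from $\gamma_0^c$ uniformly in $k_\delta$; without such control, the large number of iterations could in principle swallow the positivity of the total displacement and the closing argument would never drive the signed return displacement through zero.
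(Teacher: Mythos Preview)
Your overall strategy matches the paper's: build the periodic center curves from Proposition~\ref{prop:center-curve}, lift to a tube, invoke Proposition~\ref{Prop:Moving-forward}, and close by an intermediate-value argument in $\tau$. Two points, however, need correction.

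\textbf{The drift does not accumulate additively.} Your inequality $\psi_\tau(t)>\psi_0(t)+k_\delta\Delta_\tau$ is not what Proposition~\ref{Prop:Moving-forward} gives and is not in general true. The proposition says $F_\tau\circ h_\tau(t)>h_\tau(\theta^*f(t)+\Delta_\tau)$; iterating once more yields
\[
F_\tau^2\circ h_\tau(t)>h_\tau\bigl(\theta^*f(\theta^*f(t)+\Delta_\tau)+\Delta_\tau\bigr),
\]
and since $(\theta^*f)'=\|Df|_{E^c}\|$ may be less than $1$, the inner $\Delta_\tau$ can be contracted away over many steps. The paper sidesteps this entirely by reversing your order of quantifiers: it \emph{first} fixes $\tau=1/n$ and reads off $\Delta_{1/n}>0$ from Proposition~\ref{Prop:Moving-forward} (this constant is independent of the curve $\theta$ and of the period $k$), and \emph{then} invokes Proposition~\ref{prop:center-curve} to produce a periodic center curve with $|t_0|<\Delta_{1/n}$. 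The iteration then uses only monotonicity for the first $k-1$ steps and a single $+\Delta_{1/n}$ at the last step:
\[
F_{1/n}^k\circ h_{1/n}(0)>F_{1/n}^{k-1}\circ h_{1/n}(\theta^*f(0))>\cdots>h_{1/n}(\theta^*f^k(0)+\Delta_{1/n})=h_{1/n}(t_0+\Delta_{1/n})>h_{1/n}(0).
\]
Since $F_0^k\circ h_0(0)=h_0(t_0)<h_0(0)$, continuity in $\tau$ gives $\tau_n\in(0,1/n)$ with a fixed point on $\sigma_{\tau_n}(\RR_0)$, and $p_n=\Phi(h_{\tau_n}(0))$ is the periodic point. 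No estimate of the form ``$|\tau_n|\asymp\delta/k_\delta$'' is needed.

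\textbf{The curves are complete, not local.} Proposition~\ref{prop:center-curve} produces \emph{complete} immersions $\theta_i:\RR_i\to M$ (possibly with self-intersections), not disjoint local arcs. This is essential: the HPS permanence theorem (Theorem~\ref{thm:permanence}) and the Lipschitz shadowing (Theorem~\ref{thm:Lipschitz}) are applied to the normally hyperbolic invariant set $\sqcup_i\RR_i$, and the $F_\tau$-invariant section $\sigma_\tau$ lives over all of $\RR_i$. With only bounded arcs you would have boundary leakage and no invariant section to compare against. Your description of Proposition~\ref{prop:center-curve} as yielding local disjoint curves is inaccurate; the completeness is precisely what Lemma~\ref{lem:complete-center-mfd} is there to guarantee.
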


\begin{remark}
	The previous Theorem provides another proof of Theorem A for the case where $E^c$ is orientable and $Df$ preserves the orientation of $E^c$.
\end{remark}

Since we want to make perturbations for conservative systems, the perturbating vector field $X$ need to be divergence-free.

\begin{theorem}\label{Thm:Div-free}
	Let $r\in\mathbb{N}_{\geq2}\cup\{\infty\}$ and $f\in{\rm PH}^r(M)$ with one-dimensional orientable center bundle $E^c$. If $f$ is topologically mixing, then there exists $X\in{\cal X}^{\infty}(M)$ which is divergence-free and transverse to $E^s\oplus E^u$.
\end{theorem}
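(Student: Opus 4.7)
The plan is to reformulate the problem in terms of closed differential forms, apply a Hahn--Banach / Sullivan--Ruelle duality to convert it to the non-existence of an ``obstruction current,'' and use topological mixing of $f$ to rule out such a current.

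Fix a smooth volume form $\omega_0$ on $M$. The assignment $X\mapsto \iota_X\omega_0$ is a linear bijection between smooth vector fields and smooth $(d-1)$-forms, sending $\{X:\operatorname{div}_{\omega_0}X=0\}$ onto the space $Z^{d-1}(M)$ of closed $(d-1)$-forms; moreover, $X$ is transverse to $E^s\oplus E^u$ if and only if $\eta:=\iota_X\omega_0$ is non-vanishing on the rank-one subbundle $L:=\bigwedge^{d-1}(E^s\oplus E^u)\subset \bigwedge^{d-1}TM$. Since $E^c$ is oriented, $L$ inherits a canonical orientation, so we may strengthen the condition to pointwise positivity $\eta|_L>0$. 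Thus the theorem reduces to exhibiting a smooth closed $(d-1)$-form $\eta$ with $\eta|_L>0$. The set $\mathcal{C}\subset Z^{d-1}(M)$ of such $\eta$'s is convex and $C^0$-open, so by a Hahn--Banach/Ruelle--Sullivan duality, $\mathcal{C}\neq\emptyset$ if and only if there is no \emph{obstruction current}: a nonzero positive $(d-1)$-current $T=\mu\otimes V$ tangent to $E^s\oplus E^u$ (with $\mu$ a finite positive Borel measure on $M$ and $V$ a measurable positively oriented section of $L$) which is exact, $T=\partial S$ for some $d$-current $S$.

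To rule out such an obstruction, I would argue by contradiction: suppose $T=\partial S$ is a positive tangent exact current. Replacing $f$ by $f^2$ if necessary, $Df$ preserves the orientation of $L$, so each pushforward $f^n_*T=\partial(f^n_*S)$ remains a positive tangent exact current (tangency from $Df(E^s\oplus E^u)=E^s\oplus E^u$, positivity from orientation-preservation, exactness from functoriality). Choose normalizing constants $c_n>0$ so that the Cesaro averages $T_n:=\tfrac{1}{c_n}\sum_{k=0}^{n-1}f^k_*T$ have unit total mass, and extract a weak-$*$ subsequential limit $T_\infty$. Then $T_\infty$ is nonzero, positive, tangent to $E^s\oplus E^u$, closed, exact, and $f$-invariant, and its support is a closed $f$-invariant subset of $M$, hence equal to $M$ by topological mixing of $f$. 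Such a full-support $f$-invariant closed exact positive tangent current is then incompatible with the uniform hyperbolicity of $Df$ along $E^s\oplus E^u$: $f$-invariance forces a nontrivial transverse mass distribution along the hyperbolic leaves, while exactness forces it to pair trivially with every closed tangent test form, producing the desired contradiction.

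\textbf{Main obstacle.} The crux lies in the final step. The delicate points are: (i) choosing the normalizers $c_n$ so that the limit $T_\infty$ is well-defined, nonzero, and of bounded mass---the norms $\|f^n_*T\|$ can a priori blow up or vanish under non-conservative $f$; (ii) verifying that the weak-$*$ limit $T_\infty$ inherits exactness, since exactness of currents is not in general preserved under weak-$*$ limits on infinite-dimensional spaces; and (iii) turning the conflict between $f$-invariance of a positive tangent cycle of full support and the hyperbolic expansion/contraction along $E^s\oplus E^u$ into a rigorous, quantitative contradiction.
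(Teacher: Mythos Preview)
Your reformulation via $\eta=\iota_X\omega_0$ and the Sullivan duality is correct: the problem does reduce to showing that no nonzero positive $(d-1)$-current tangent to $E^s\oplus E^u$ is exact. But the argument you sketch for ruling out such a current has real gaps, and you have correctly flagged them. Gap (ii) is decisive, not technical: exactness is not weak-$*$ closed, and there is no reason the primitives $f^k_*S$ remain bounded once you renormalize the $f^k_*T$'s, so the Ces\`aro limit $T_\infty$ is at best \emph{closed}, not exact. A nonzero closed positive tangent current of full support is not a contradiction --- when $E^s\oplus E^u$ happens to be integrable (which the hypotheses do not exclude), these are precisely Sullivan's foliation cycles, and they can certainly exist. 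Gap (iii) is equally real: $Df$ acts on the line $\bigwedge^{d-1}(E^s\oplus E^u)$ by $\det(Df|_{E^s\oplus E^u})$, which need be neither uniformly expanding nor contracting, so normal hyperbolicity alone gives no obvious leverage against an $f$-invariant tangent cycle. As written, the scheme does not close.

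The paper's proof is entirely different and constructive; it never invokes currents or duality. Using the complete periodic center curves of Proposition~\ref{prop:center-curve} and topological mixing, it shows (Proposition~\ref{prop:transverse-cycle}) that through every $x\in M$ there passes a $C^\infty$ \emph{embedded circle} $\gamma_x$ positively transverse to $E^s\oplus E^u$: follow a long arc of a periodic center curve, use mixing to carry its far end back near the start, and close up with a short transverse arc. In a tubular neighborhood of $\gamma_x$ one then writes down by hand (Lemma~\ref{lem:vector-field}) a divergence-free field $Y_x$, supported there and positively transverse wherever it is nonzero: if the volume pulls back to $V\,da\,db$ on $\DD^{d-1}\times\SS^1$ (or its twisted version), take $V\cdot\varphi(|a|^2)\,\partial/\partial b$ for a radial bump $\varphi$. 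Finitely many of the open sets $\{Y_{x_j}\neq 0\}$ cover $M$, and $X=\sum_j Y_{x_j}$ is divergence-free (the Lie-derivative computation is linear) and everywhere positively transverse. The argument exploits the concrete one-dimensionality of $E^c$ and the ability to build closed transverse loops, rather than any abstract obstruction theory.
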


Now we can prove Theorem B'.

\begin{proof}[Proof of Theorem B']
	Let $f\in{\rm PH}^r_m(M)$ with one-dimensional center bundle and $x\in M$. It has been showed \cite{HHU2} that there exists a $C^r$-open dense subset ${\cal U}$ of these diffeomorphisms, such that every $f\in{\cal U}$ is accessible. Thus $f$ is Kolmogorov with respect to Lebesgue measure \cite{BW,HHU2} and topologically mixing. 
	
	We denote ${\cal U}_+$ consisting of $f\in{\cal U}$ with $E^c$ is orientable and $Df$ preserves the orientation of $E^c$; ${\cal U}_-$ consisting of $f\in{\cal U}$ with $E^c$ is orientable and $Df$ reverses the orientation of $E^c$ or $E^c$ is non-orientable. Then ${\cal U}={\cal U}_+\cup{\cal U}_-$ are disjoint union of two open subsets.
	
	If $f\in{\cal U}_+$, then Theorem \ref{Thm:Div-free} implies there exists a divergence-free vector field $X\in{\cal X}^{\infty}(M)$ which is transverse to $E^s\oplus E^u$. So we can apply Theorem \ref{Thm:Global-Perturb}. This proves that for every $x\in M$, there exists $f_n\rightarrow f$ in $C^r$-topology and $p_n\rightarrow x$, such that $p_n\in{\rm Per}(f_n)$. The classical generic argument shows that $C^r$-generic $f\in{\cal U}_+$ has dense periodic points on $M$.
	
	If $f\in{\cal U}_-$, then we can apply Corollary \ref{Cor:Topo-degree} directly. This finishes the proof of Theorem B.
\end{proof}

\vskip 2mm

\noindent{\bf Organization of the paper:} In Section \ref{sec:center-curve}, we construct a family of complete periodic center curves approximating the non-wandering point, which is the first step for 
Theorem \ref{Thm:Local-Perturb}, \ref{Thm:Global-Perturb} and \ref{Thm:Div-free}. Section \ref{sec:perturb} is the main part of this paper. We prove Theorem \ref{Thm:Global-Perturb} in Section \ref{subsec:global}, and Theorem \ref{Thm:Local-Perturb} in Section \ref{subsec:local}. Finally, we prove Theorem \ref{Thm:Topo-degree} and Theorem \ref{Thm:Div-free} in Section \ref{sec:non-orientable} and Section \ref{sec:vector-field} respectively.

\section{Complete periodic center curves}\label{sec:center-curve}

In this section, we construct a family of complete periodic center curves approximating the non-wandering point, see Proposition \ref{prop:center-curve}.
This is the first step for proving Theorem \ref{Thm:Local-Perturb},  \ref{Thm:Global-Perturb}, and \ref{Thm:Div-free}.

\begin{proposition}\label{prop:center-curve}
	Let $f\in{\rm PH}^r(M)$ with one-dimensional oriented center bundle $E^c$, and $Df$ preserves the orientation of $E^c$. For every $x\in\Omega(f)\setminus {\rm Per}(f)$, there exists a sequence of complete $C^1$-curves $\theta_n:\RR\rightarrow M$, such that
	\begin{enumerate}
		\item For every $t\in\RR$, $\theta_n'(t)$ is the positive  (w.r.t. the orientation of $E^c$) unit vector in $E^c_{\theta_n(t)}$.
		\item There exist $k_n\rightarrow+\infty$ and $t_n\rightarrow0$, such that $\theta_n(0)=y_n\rightarrow x$, and $\theta_n(t_n)=f^{k_n}(y_n)\rightarrow x$ as $n\rightarrow\infty$.
		\item Let ~$\theta_n^*f^{k_n}:\RR\rightarrow\RR$ be the $C^1$-diffeomorphism defined as 
		$$
		\theta_n^*f^{k_n}(t)=t_n+
		\int_{0}^{t}\|Df^{k_n}|_{E^c_{\theta_n(s)}}\|{\rm d}s.
		$$
		Then we have
		$$
		\theta_n\circ \theta_n^*f^{k_n}=f^{k_n}\circ\theta_n.
		$$
	\end{enumerate}
    Moreover, if $f\in{\rm PH}^r_m(M)$ with one-dimensional oriented center bundle $E^c$ and $Df$ preserves the orientation of $E^c$, then the same conclusion holds for every $x\in M$.
\end{proposition}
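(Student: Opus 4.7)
The plan is to construct each $\theta_n$ as the complete integral curve of the positive unit vector field tangent to $E^c$ through an appropriately chosen point $y_n$, where the real work is producing $y_n$ close to $x$ such that $f^{k_n}(y_n)$ lies on the same center curve as $y_n$ itself. Orientability of $E^c$ together with $Df$ preserving its orientation allow me to define a continuous unit vector field $X^c$ on $M$ with $X^c(z)$ the positive unit vector of $E^c_z$. By Lemma \ref{lem:cs-manifold}, through each $z\in M$ there is a unique local center curve (obtained as the intersection of the local center-stable and center-unstable manifolds); gluing these and using compactness of $M$ yields a unique complete integral curve $\theta_z:\RR\to M$ of $X^c$ parametrized by arc length, so condition $(1)$ is immediate. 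Condition $(3)$ will also be automatic once $y_n$ is chosen: $\theta_n$ is unit speed and $f^{k_n}$ scales tangent vectors along it by $\|Df^{k_n}|_{E^c}\|$ while preserving orientation, so the arc-length reparametrization of $f^{k_n}\circ\theta_n$ starting from $f^{k_n}(y_n)=\theta_n(t_n)$ is precisely the formula for $\theta_n^* f^{k_n}$ in the statement.

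The core task is thus to find $y_n\to x$ with $f^{k_n}(y_n)$ on the complete center curve through $y_n$. I would start from sequences $y_n^0\to x$ and $k_n>0$ with $f^{k_n}(y_n^0)\to x$ provided by non-wandering; since $x\notin{\rm Per}(f)$, the sequence $k_n$ must tend to $+\infty$ (otherwise a constant subsequence $k_n\equiv k$ would force $f^k(x)=x$). Exploiting the local product structure (Lemma \ref{lem:local-product}) and the strong hyperbolicity of $f^{k_n}$ along $E^s\oplus E^u$, I fix a small transversal $T$ to $E^c$ at $x$, say the exponential image of $E^s_x(\delta)\oplus E^u_x(\delta)$, and assign to every nearby point $z$ a transverse coordinate $\pi^c(z)\in T$ defined as the intersection of the local center curve through $z$ with $T$. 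Since $Df$ preserves $E^c$ and its orientation, $f^{k_n}$ maps center curves to center curves, inducing on some $T_{\rm loc}\subset T$ a well-defined map $G_n:T_{\rm loc}\to T$ with $G_n(\pi^c(z))=\pi^c(f^{k_n}(z))$. By \eqref{const-hyperbolic}, $G_n$ is uniformly hyperbolic for $k_n$ large: it contracts the $E^s$-slice of $T$ by $\lambda^{k_n}$ and expands the $E^u$-slice by $\lambda^{-k_n}$. The pseudo-fixed point $t_n^0:=\pi^c(y_n^0)$ and its image $G_n(t_n^0)=\pi^c(f^{k_n}(y_n^0))$ both lie near $\pi^c(x)$, hence close to each other; a standard fixed-point argument for hyperbolic maps---solve $G_n(t)=t$ componentwise in the contracting and expanding directions via Banach fixed point, i.e.\ Anosov closing applied to a single return---then produces a unique fixed point $t_n\in T_{\rm loc}$ of $G_n$ near $t_n^0$, with $t_n\to\pi^c(x)$. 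Choosing $y_n$ on the center curve above $t_n$ closest to $x$, the complete center curve $\theta_n:=\theta_{y_n}$ is $f^{k_n}$-invariant because $G_n(t_n)=t_n$, so $f^{k_n}(y_n)=\theta_n(t_n^{\rm arc})$ for some $t_n^{\rm arc}\in\RR$; transverse closeness of $y_n$ to $y_n^0$ combined with $y_n^0,\,f^{k_n}(y_n^0)\to x$ yields $f^{k_n}(y_n)\to x$, and hence $t_n^{\rm arc}\to 0$ since arc length and ambient distance are comparable along the center curve for nearby points.

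The main obstacle will be this uniform hyperbolic fixed-point step for $G_n$: one must verify that $G_n$ extends to a uniformly hyperbolic map on a fixed box around $\pi^c(x)$ independent of $n$, and that the fixed point stays inside the box as $n\to\infty$. This combines the uniform partial hyperbolicity on $M$ with the smallness of $G_n(t_n^0)-t_n^0$, and one can alternatively appeal to the Lipschitz shadowing property of center invariant manifolds (Theorem \ref{thm:Lipschitz}) to directly produce the desired $y_n$. The conservative case requires no new ideas: Poincar\'e recurrence forces $\Omega(f)=M$, and if $x\in{\rm Per}(f)$ has period $\pi$ one simply sets $y_n\equiv x$, $k_n=n\pi$, $t_n\equiv 0$ and $\theta_n=\theta_x$.
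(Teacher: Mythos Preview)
Your argument assumes a fact that is false in this generality: that through each point $z\in M$ there passes a \emph{unique} complete center curve, giving a global center foliation and a well-defined transverse projection $\pi^c$. The center bundle $E^c$ is only H\"older, not Lipschitz, so the positive unit field $X^c$ need not have unique integral curves; indeed the paper explicitly mentions in the introduction the robustly non-dynamically-coherent examples of \cite{HHU3,BGHP,BFFP1}. Lemma~\ref{lem:cs-manifold} does not yield a unique local center curve through $z$---it only says that along \emph{any chosen} local center curve the saturated center-stable and center-unstable sets are $C^1$ submanifolds. Consequently your map $G_n$ on the transversal $T$ is not well-defined, and your ``center curve through $y_n$'' need not be $f^{k_n}$-invariant even if it happens to hit the transverse fixed point.

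The paper circumvents this in two steps. First (Lemma~\ref{lem:center-connection}) it fixes a particular local center curve $\rho_n$ through $z_n\approx x$, builds the center-stable manifold ${\cal F}^{cs}_{\rho_n}$ along it, and uses a Brouwer fixed-point argument on the stable disk (approximating $E^c$ by smooth line fields inside ${\cal F}^{cs}_{\rho_n}$) to find a point $w_n$ whose $f^{k_n}$-image is joined back to $w_n$ by a short center arc $\tau_n$; a second Brouwer step inside ${\cal F}^{cu}_{\tau_n}$ upgrades this to a point $y_n$ with $f^{k_n}(y_n)$ genuinely on a center arc through $y_n$. Second (Lemma~\ref{lem:complete-center-mfd}, via the Hertz--Hertz--Ures invariant center-manifold theorem for periodic points of large period), the finite arc $\sigma_n$ is extended to a complete $f^{k_n}$-invariant curve $\theta_n:\RR\to M$ by iterating $\sigma_n$ under $f^{k_n}$ and, if the total length is finite, capping off at the limiting periodic point. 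Neither step presupposes unique integrability of $E^c$. Your proposal would be correct in the dynamically coherent, plaque-expansive setting, but for the general statement you need the paper's two lemmas or an equivalent workaround.
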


\begin{remark}
	The curve $\theta_n:\RR\rightarrow M$ is $k_n$-periodic, and the following diagram commutes:
	\begin{displaymath}
	\xymatrix{
		\RR \ar[r]^{\theta_n^*f^{k_n}} \ar[d]_{\theta_n} & \RR \ar[d]^{\theta_n} \\
		M \ar[r]^{f^{k_n}} & M }
	\end{displaymath}
	Its image $\theta_n(\RR)\subset M$ may have self-intersections. 
\end{remark}

We will prove this proposition at the end of this section. 
The following lemma is the key step for proving Proposition \ref{prop:center-curve}.

\begin{lemma}\label{lem:center-connection}
	Let $f\in{\rm PH}^r(M)$ with one-dimensional center bundle and $x\in\Omega(f)\setminus{\rm Per}(f)$. There exist a sequence of points $y_n\rightarrow x$ and integers $k_n\rightarrow+\infty$ as $n\rightarrow\infty$, such that for every $n$,
	\begin{itemize}
		\item either $f^{k_n}(y_n)=y_n$;
		\item or there exists a $C^1$ curve $\sigma_n:[0,t_n]\rightarrow M$, such that
		$$
		\sigma_n(0)=y_n,  \quad
		\sigma_n(t_n)=f^{k_n}(y_n),  \quad
		\|\sigma_n'(t)\|\equiv1, \quad {\it and} \quad
		\sigma_n'(t)\in E^c_{\sigma_n(t)}, \quad 
		\forall t\in[0,t_n].
		$$
		Moreover, $t_n$ converge to zero as $n\rightarrow\infty$.
	\end{itemize}
\end{lemma}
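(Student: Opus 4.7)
The plan is: by non-wandering, pick $z_n\to x$, $k_n>0$, $f^{k_n}(z_n)\to x$; necessarily $k_n\to+\infty$, since otherwise a subsequence with constant $k_n\equiv k$ would yield $f^k(x)=x$ by continuity, contradicting $x\notin{\rm Per}(f)$. Set $g_n:=f^{k_n}$ and $\epsilon_n:=d(z_n,g_n(z_n))\to0$. If $g_n(z_n)=z_n$ we take $y_n:=z_n$ (the periodic case). Otherwise the plan is to nudge $z_n$ to a nearby point $y_n$ whose forward image $g_n(y_n)$ lies on the local center curve through $y_n$; the curve $\sigma_n$ is then the arclength parametrization of the resulting short center segment.

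The nudge is carried out by a contraction-mapping / implicit function argument that kills the stable-unstable mismatch of $g_n$ at $z_n$. In a local chart at $z_n$ adapted to $E^s\oplus E^c\oplus E^u$, the linearized problem for ``$y$ and $g_n(y)$ lie on the same local center curve'' reads schematically $(A_u-I)\,\xi^u=-(g_n(z_n)-z_n)^u$ and $(A_s-I)\,\xi^s=-(g_n(z_n)-z_n)^s$ with $A_s=Dg_n|_{E^s_{z_n}}$, $A_u=Dg_n|_{E^u_{z_n}}$, and $\xi=y-z_n$. Both blocks are uniformly invertible by partial hyperbolicity~(\ref{const-hyperbolic}) and the adapted metric from Lemma~\ref{lem:metric}: $\|(A_s-I)^{-1}\|\le(1-\lambda^{k_n})^{-1}$ and $\|(A_u-I)^{-1}\|\le(\lambda^{-k_n}-1)^{-1}$. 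Concretely I would implement the nonlinear closing as a two-stage contraction based on Lemma~\ref{lem:cs-manifold} and Lemma~\ref{lem:local-product}. Stage~(i): using the uniform unstable expansion of $g_n$ at rate $\lambda^{-k_n}$, find $y_n^{(1)}$ in the local strong unstable disk ${\cal F}^u_{z_n}(\delta_0)$ such that $g_n(y_n^{(1)})$ lies on the local center-stable plaque ${\cal F}^{cs}_{\gamma^c(y_n^{(1)})}(\delta_0)$ through $y_n^{(1)}$; this is set up as a fixed point of a strict contraction, the contraction factor being $O(\lambda^{k_n})$. Stage~(ii): using the stable contraction of $g_n$, find $y_n$ in the local strong stable disk ${\cal F}^s_{y_n^{(1)}}(\delta_0)$ such that $g_n(y_n)$ in addition lies on the local center-unstable plaque ${\cal F}^{cu}_{\gamma^c(y_n)}(\delta_0)$. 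The saturation of ${\cal F}^{cs}$-plaques by ${\cal F}^s$-leaves shows that Stage~(ii) preserves Stage~(i)'s constraint: $g_n(y_n)\in g_n({\cal F}^s_{y_n^{(1)}})={\cal F}^s_{g_n(y_n^{(1)})}\subseteq{\cal F}^{cs}_{\gamma^c(y_n^{(1)})}$, and the local coincidence of ${\cal F}^{cs}_{\gamma^c(y_n)}$ with ${\cal F}^{cs}_{\gamma^c(y_n^{(1)})}$ near $y_n$ follows from Lemma~\ref{lem:cs-manifold} and the $E^{cs}$-integrality of the plaque. Combining, $g_n(y_n)\in{\cal F}^{cs}_{\gamma^c(y_n)}\cap{\cal F}^{cu}_{\gamma^c(y_n)}=\gamma^c(y_n)$. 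Tracking sizes gives $d(y_n,z_n)=O(\epsilon_n)\to 0$, so $y_n\to x$, and $t_n:=d_{\gamma^c(y_n)}(y_n,g_n(y_n))$ equals the residual center component of $g_n(z_n)-z_n$ up to lower-order corrections, so $t_n=O(\epsilon_n)\to 0$.

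The main obstacle is the non-smooth dependence in the base point of the local center curves and plaques: $E^c$ is generally only H\"older, so the family $y\mapsto\gamma^c(y)$ is not $C^1$ and the implicit function theorem cannot be applied in a single stroke to the condition ``$g_n(y)\in\gamma^c(y)$''. The two-stage contraction circumvents this by never differentiating $\gamma^c$ in $y$; each stage uses only the $C^1$ regularity of a single fixed plaque (Lemma~\ref{lem:cs-manifold}) together with the Lipschitz constants provided by partial hyperbolicity~(\ref{const-hyperbolic}). Uniformity of the construction as $n\to\infty$ is ensured by the global constants $\delta_0,C_0$ of Notation~\ref{notation}, the uniform partial hyperbolicity rate $\lambda<1$, and the fact that $\lambda^{k_n}\to 0$ uniformly in the base point, so the contraction factors stay strictly below~$1$ and the radii of applicability of Lemmas~\ref{lem:local-product} and~\ref{lem:cs-manifold} remain valid for all large~$n$.
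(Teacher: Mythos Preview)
Your two-stage plan is parallel to the paper's, but the contraction-mapping implementation has a genuine gap. In Stage~(i) you seek $y\in{\cal F}^u_{z_n}$ with $g_n(y)\in{\cal F}^{cs}_{\gamma^c(y)}$, where the target plaque moves with $y$ through the choice of center curve $\gamma^c(y)$. Two problems arise. First, since $E^c$ is only H\"older and need not be uniquely integrable, the assignment $y\mapsto\gamma^c(y)$ is not a priori a single-valued continuous map, so your fixed-point operator is not even well-defined without further work. Second, even granting a continuous selection, the Lipschitz estimate you need for a strict contraction is not ``the $C^1$ regularity of a single fixed plaque'': in your setup there is no single plaque --- it moves with $y$ --- and what you would actually need is a uniform Lipschitz comparison of the family $y\mapsto{\cal F}^{cs}_{\gamma^c(y)}$, which you do not establish and which does not follow from Lemma~\ref{lem:cs-manifold}. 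The same two issues recur verbatim in Stage~(ii) with the moving plaque ${\cal F}^{cu}_{\gamma^c(y_n)}$ as $y_n$ ranges over ${\cal F}^s_{y_n^{(1)}}$.

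The paper's proof avoids both difficulties by working inside a \emph{fixed} plaque at each stage and replacing contraction with Brouwer. In the first stage it fixes ${\cal F}^{cs}_{\rho_n}(\delta_0)$, the $cs$-manifold through one chosen center curve $\rho_n$ at $z_n$; for $z\in{\cal F}^s_{z_n}(\delta_0)$ it projects $f^{k_n}(z)$ into this fixed plaque via ${\cal F}^u$-holonomy, then flows the image back to ${\cal F}^s_{z_n}$ along an integral curve of a \emph{smooth} approximating line field $E^c_{n,k}\to E^c$. Smoothness guarantees unique integral curves and hence a single-valued continuous return map $h_{n,k}$, to which Brouwer applies; letting $k\to\infty$ produces $w_n$ joined to its holonomy image $w_n'$ by a genuine center arc $\tau_n$. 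The second stage repeats the device on the fixed $cu$-manifold ${\cal F}^{cu}_{\tau_n}(\delta_0)$ to produce $y_n$. The smooth-approximation-plus-Brouwer mechanism is exactly the missing ingredient in your argument; without it, neither well-definedness nor the contraction property of your fixed-point map is secured.
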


\begin{proof}
	Since $x\in \Omega(f)\setminus {\rm Per}(f)$, there exist a sequence of points  $z_n\rightarrow x$ and integers $k_n\rightarrow+\infty$ as $n\rightarrow\infty$, such that $f^{k_n}(z_n)\rightarrow x$. 
	Let $\delta_0>0$ be the constant in Notation \ref{notation}.
	For every $n$, we take a $C^1$ curve $\rho_n:[-\delta_0,\delta_0]\rightarrow M$ satisfying
	\begin{itemize}
		\item $\rho_n(0)=z_n$;
		\item $\rho_n'(t)\in E^c_{\rho_n(t)}$ is a unit vector for every $t\in[-\delta_0,\delta_0]$.
	\end{itemize}
	For $\rho_n=\rho_n([-\delta_0,\delta_0])\subset M$,
    Lemma \ref{lem:cs-manifold} shows that 
	$$
	{\cal F}^{cs}_{\rho_n}(\delta_0)=\bigcup_{z\in\rho_n}{\cal F}^s_{z}(\delta_0)
	$$
	is a $C^1$ submanifold tangent to $E^s\oplus E^c$ everywhere.
	
	From the uniformly contracting of ${\cal F}^s$ and $\lim_{n\rightarrow\infty}k_n=+\infty$, the diameter of $f^{k_n}({\cal F}^s_{z_n}(\delta_0))$ satisfies
	$$
	\lim_{n\rightarrow\infty}{\rm diam}
	\left(f^{k_n}({\cal F}^s_{z_n}(\delta_0))\right)=0.
	$$
	Since $\lim_{n\rightarrow\infty}z_n=\lim_{n\rightarrow\infty}f^{k_n}(z_n)=x$, for $n$ large enough and every point $z\in {\cal F}^s_{z_n}(\delta_0)$, there exists a unique point
	$$
	z'\in{\cal F}^u_{f^{k_n}(z)}(\delta_0)\cap {\cal F}^{cs}_{\rho_n}(\delta_0).
	$$
	This defines a continuous map $z\mapsto z'$ from ${\cal F}^s_{z_n}(\delta_0)$ to ${\cal F}^{cs}_{\rho_n}(\delta_0)$. Moreover, there exist $\epsilon_n\rightarrow0$ as $n\rightarrow\infty$, such that
	$$
	z'\in B(z_n,\epsilon_n)\cap{\cal F}^{cs}_{\rho_n}(\delta_0), \qquad \forall z\in{\cal F}^s_{z_n}(\delta_0).
	$$
	
	\begin{claim}\label{claim:c-connection}
		For every $n$ large enough, there exist $w_n\in {\cal F}^s_{z_n}(\delta_0)$ and $w_n'\in{\cal F}^u_{f^{k_n}(w_n)}(\delta_0)\cap {\cal F}^{cs}_{\rho_n}(\delta_0)$, such that
		\begin{itemize}
			\item either $w_n=w_n'\in{\cal F}^s_{f^{k_n}(w_n)}(\delta_0)$.
			\item or there exists a $C^1$ curve $\tau_n$ contained in ${\cal F}^{cs}_{\rho_n}(\delta_0)$ with endpoints $w_n$ and $w_n'$, such that $\tau_n$ is tangent to $E^c$ and its length $|\tau_n|\rightarrow0$ as $n\rightarrow\infty$.
		\end{itemize}
	\end{claim}
	
	\begin{proof}[Proof of the claim]
		
		For every $n$ large enough, we can define a sequence of $C^1$ one-dimensional line field $E^c_{n,k}$ on the $C^1$ submanifold ${\cal F}^{cs}_{\rho_n}(\delta_0)$, such that
		\begin{itemize}
			\item $E^c_{n,k}(y)\subset E^s_y\oplus E^c_y=T_y{\cal F}^{cs}_{\rho_n}(\delta_0)$, for every $y\in{\cal F}^{cs}_{\rho_n}(\delta_0)$;
			\item $E^c_{n,k}(y)$ uniformly converges to $E^c_y$ on ${\cal F}^{cs}_{\rho_n}(\delta_0)$ as $k\rightarrow\infty$.
		\end{itemize}
		
		For $n,k$ large enough and every $z\in{\cal F}^s_{z_n}(\delta_0)$, there exists a $C^1$ curve which is tangent to $E^c_{n,k}$ everywhere, such that it has two endpoints
		$$
		z'={\cal F}^u_{f^{k_n}(z)}(\delta_0)\cap {\cal F}^{cs}_{\rho_n}(\delta_0) \qquad {\rm and} \qquad
		h_{n,k}(z)\in{\cal F}^{s}_{z_n}(\delta_0)
		$$
		This curve may be degenerated, i.e. $h_{n,k}(z)=z'$.
		This defines a continuous map $z\mapsto h_{n,k}(z)$ from ${\cal F}^s_{z_n}(\delta_0)$ to itself. 
		
		Since ${\cal F}^s_{z_n}(\delta_0)$ is diffeomorphic to a unit ball of dimension $s={\rm dim}E^s$, there exists at least one point $w_{n,k}\in{\cal F}^s_{z_n}(\delta_0)$, such that
		$$
		w_{n,k}=h_{n,k}(w_{n,k}).
		$$
		
		Let $k$ tend to infinity and take the subsequence, we can assume $w_{n,k}\rightarrow w_n$. The segments connecting $w_{n,k}'$ and $h_{n,k}(w_{n,k})$ also converge to a $C^1$-curve which is tangent to $E^c$ everywhere, and has two endpoints $w_n'$ and $w_n$. 
		
		If the converging segment is degenerated, then we are in the first item of the claim. Otherwise, we denote the converging segment by $\tau_n$. Since $z'\in B(z_n,\epsilon_n)$ for every $z\in{\cal F}^s_{z_n}(\delta_0)$ and $\epsilon_n\rightarrow0$, the local product structure in Lemma \ref{lem:local-product} implies the length of $\tau_n$ will tend to zero.
		This finishes the proof of our claim.	    
	\end{proof}

    Since $\lim_{n\rightarrow\infty}z_n=\lim_{n\rightarrow\infty}f^{k_n}(z_n)=x$, and
    $\lim_{n\rightarrow\infty}{\rm diam}
    \left(f^{k_n}({\cal F}^s_{z_n}(\delta_0))\right)=0$,
	we have 
	$$
	\lim_{n\rightarrow\infty}w_n=\lim_{n\rightarrow\infty}f^{k_n}(w_n)=\lim_{n\rightarrow\infty}w_n'=x,
	$$
	and the distance in the unstable manifold $d_{{\cal F}^u}\left(f^{k_n}(w_n),w_n'\right)\rightarrow0$ as $n\rightarrow\infty$.
	
	\vskip1mm
	
	Assuming the first case of Claim \ref{claim:c-connection} holds, i.e. $w_n=w_n'$. Since $k_n\rightarrow+\infty$ and ${\cal F}^u$ is uniformly contracting by $f^{-1}$, the distance in the unstable manifold $d_{{\cal F}^u}(w_n,f^{k_n}(w_n))\rightarrow0$ as $n\rightarrow\infty$. This implies 
	$$
	f^{-k_n}\left({\cal F}^u_{f^{k_n}(w_n)}(\delta_0)\right)
	~\subseteq~{\cal F}^u_{w_n}(\lambda^{k_n}\delta_0)
	~\subseteq~{\cal F}^u_{f^{k_n}(w_n)}
	\left(\lambda^{k_n}\delta_0+d_{{\cal F}^u}\left(w_n,f^{k_n}(w_n)\right)\right).
	$$
	So there exists a unique periodic point
	$$
	y_n\in
	{\cal F}^u_{f^{k_n}(w_n)}
	\left(\lambda^{k_n}\delta_0+d_{{\cal F}^u}\left(w_n,f^{k_n}(w_n)\right)\right)
	$$
	satisfying $f^{k_n}(y_n)=y_n$. Since $\lim_{n\rightarrow\infty}\lambda^{k_n}\delta_0+d_{{\cal F}^u}\left(w_n,f^{k_n}(w_n)\right)=0$, we have $y_n\rightarrow x$ as $n\rightarrow0$. 
	
	\vskip1.5mm
	
	Otherwise, the second case of Claim \ref{claim:c-connection} holds. We consider the set
	$$
	{\cal F}^{cu}_{\tau_n}(\delta_0)=\bigcup_{y\in\tau_n}{\cal F}^u_{y}(\delta_0).
	$$
	From Lemma \ref{lem:cs-manifold}, it is a $C^1$ submanifold tangent to $E^c\oplus E^u$ everywhere. We replay the argument in the proof of Claim \ref{claim:c-connection}.
	
	Since $d_{{\cal F}^u}\left(f^{k_n}(w_n),w_n'\right)\rightarrow0$ as $n\rightarrow\infty$, we have
	$$
	f^{-k_n}\left({\cal F}^u_{w_n'}(\delta_0)\right)
	\subseteq{\cal F}^u_{w_n}(\lambda^{k_n}\delta_0)
	\subseteq{\cal F}^u_{w_n}(\delta_0).
	$$
	Repeating the analysis in Claim \ref{claim:c-connection}, we consider a sequence $C^1$ line fields $E^c_{n,l}\subset T{\cal F}^{cu}_{\tau_n}(\delta_0)$ on ${\cal F}^{cu}_{\tau_n}(\delta_0)$, which converges to $E^c|_{{\cal F}^{cu}_{\tau_n}(\delta_0)}$ as $l\rightarrow\infty$.
	So for $n,l$ large enough and every $y\in{\cal F}^u_{w_n'}(\delta_0)$, there exists a $C^1$ curve tangent to $E^c_{n,l}$ everywhere and connecting 
	$f^{-k_n}(y)\in f^{-k_n}\left({\cal F}^u_{w_n'}(\delta_0)\right)
	\subseteq{\cal F}^u_{w_n}(\lambda^{k_n}\delta_0)$ 
	to a point $h_{n,l}(y)\in{\cal F}^u_{w_n'}(\delta_0)$. The continuous map $y\mapsto h_{n,l}(y)$ from ${\cal F}^u_{w_n'}(\delta_0)$ to itself has a fixed point $y_{n,l}$.
	
	Let $l\rightarrow\infty$ and assume $y_{n,l}\rightarrow y_n$. 
	The curve tangent to $E^c_{n,l}$ converges to a curve $\sigma_n$ tangent to $E^c$ everywhere and connecting $y_n$ to $f^{k_n}(y_n)$. Moreover, since $d(w_n,w_n')\rightarrow0$ as $n\rightarrow\infty$, the length of the curve $\sigma_n$ also tends to zero as $n\rightarrow\infty$. Since $k_n\rightarrow+\infty$, we also have $d(y_n,w_n')\rightarrow0$, thus
	$$
	\lim_{n\rightarrow\infty}y_n=\lim_{n\rightarrow\infty}f^{k_n}(y_n)=x.
	$$
	This finishes the proof of this lemma.
\end{proof}

The follwoing lemma has been proved by F. Rodriguez Hertz, J. Rodriguez Hertz and R. Ures in \cite{HHU1}.

\begin{theorem}[\cite{HHU1}, Theorem 2]\label{lem:center-mfd}
	Let $f\in{\rm PH}^r(M)$ with one-dimensional center bundle. There exists $K>0$, such that for any periodic point $p$ of $f$ with period $\pi(p)>K$, there exists, through $p$, an $f^{\pi(p)}$-invariant curve tangent to $E^c$ at every point. 
\end{theorem}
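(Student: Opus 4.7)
The plan is to realize the invariant curve through $p$ as the transverse intersection of local $f^{\pi(p)}$-invariant center-stable and center-unstable manifolds at $p$. Set $N=\pi(p)$ and $g=f^N$, so $g$ fixes $p$; the partially hyperbolic splitting $T_pM=E^s_p\oplus E^c_p\oplus E^u_p$ is $Dg(p)$-invariant, with $\|Dg(p)|_{E^s_p}\|\le\lambda^N$ and $m(Dg(p)|_{E^u_p})\ge\lambda^{-N}$, while $Dg(p)|_{E^c_p}$ is dominated by $E^u_p$ and dominates $E^s_p$. I would choose $K$ large enough that for $N>K$ the compound hyperbolic rates $\lambda^N,\lambda^{-N}$ beat the uniform nonlinearity of $f$ along any orbit, so the classical invariant-manifold constructions yield disks of a definite minimum size independently of $p$.

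The first step is to construct two locally $g$-invariant $C^1$ disks through $p$ via the Hirsch--Pugh--Shub graph transform, carried out orbit-wise along $p,f(p),\ldots,f^{N-1}(p)$: a center-unstable disk $W^{cu}_{\rm loc}(p)$ tangent at $p$ to $E^{cu}_p=E^c_p\oplus E^u_p$, and a center-stable disk $W^{cs}_{\rm loc}(p)$ tangent at $p$ to $E^{cs}_p=E^s_p\oplus E^c_p$. For $W^{cu}_{\rm loc}(p)$ one iterates on Lipschitz graphs over $E^{cu}$ into $E^s$ in a tubular neighborhood of the orbit; the induced transform is a contraction on the Lipschitz constant because the dominated splitting $E^s\oplus_\prec E^{cu}$ makes $g^{-1}$ contract the $E^s$-fibre much more strongly than it distorts the $E^{cu}$-base. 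The fixed graph yields a $g$-invariant disk $W^{cu}_{\rm loc}(p)$, and the symmetric scheme using $E^{cs}\oplus_\prec E^u$ produces $W^{cs}_{\rm loc}(p)$.

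The second step is to verify that the tangent bundle of $W^{cu}_{\rm loc}(p)$ at every point $q$, not only at $p$, equals $E^{cu}_q$, and analogously $T_qW^{cs}_{\rm loc}(p)=E^{cs}_q$. This is a consequence of the cone condition built into the graph transform: the fixed graph has arbitrarily small Lipschitz constant with respect to the $E^{cu}/E^s$ (resp. $E^{cs}/E^u$) splittings along the orbit, so its tangent plane at any $q$ lies inside the narrow $Dg$-invariant cone around $E^{cu}_q$ (resp. $E^{cs}_q$); since $E^{cu}$ is the unique $(c+u)$-dimensional $Df$-invariant subbundle dominated from below by $E^s$, the domination $E^s\oplus_\prec E^{cu}$ forces equality of the tangent plane with $E^{cu}_q$ (and symmetrically for $E^{cs}_q$).

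Finally, since $E^{cu}_p+E^{cs}_p=T_pM$ and $E^{cu}_p\cap E^{cs}_p=E^c_p$, the disks $W^{cu}_{\rm loc}(p)$ and $W^{cs}_{\rm loc}(p)$ meet transversally at $p$; by the implicit function theorem their intersection in a small neighborhood is a $C^1$ one-dimensional submanifold $\gamma$ through $p$, tangent at each $q\in\gamma$ to $E^{cu}_q\cap E^{cs}_q=E^c_q$ by the preceding step, and $g=f^N$-invariant as the intersection of two $g$-invariant objects. The main technical obstacle is the uniform-in-period construction of the two center manifolds of step one with sizes bounded below; the role of $K$ is precisely to guarantee this uniformity, since once $N>K$ the compound rates $\lambda^N,\lambda^{-N}$ dominate the uniform bound $\sup_M\|D^2 f\|$ along the orbit by a fixed margin, placing us in the hypothesis of the classical invariant-manifold theorems applied to $g$ at $p$.
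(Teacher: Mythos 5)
Your construction misses the actual content of the statement. The graph transform along the orbit produces center-stable and center-unstable \emph{plaques} that are only \emph{locally} invariant for $g=f^{\pi(p)}$ (the transform cuts off or extends the dynamics outside a tube, so one only gets $g\bigl(W^{cs}_{\rm loc}\bigr)\cap V\subset W^{cs}_{\rm loc}$ and the analogous one-sided statement for $W^{cu}_{\rm loc}$), and they are tangent to $E^{cs}$, $E^{cu}$ only at points with the appropriate recurrence, not everywhere. Your step-3 argument is a non sequitur: knowing that $T_qW^{cu}_{\rm loc}$ lies in a thin cone around $E^{cu}_q$ does not force equality with $E^{cu}_q$; the standard bootstrap writes $T_qW^{cu}_{\rm loc}=Dg^{n}\bigl(T_{g^{-n}(q)}W^{cu}_{\rm loc}\bigr)$ and lets domination push the plane onto $E^{cu}_q$, which requires the entire backward $g$-orbit of $q$ to stay on the plaque --- false for points that escape, and typically most points do escape (for instance when $Dg|_{E^c_p}$ contracts). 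Uniqueness of the dominated subbundle concerns invariant bundles over invariant sets and cannot be applied to a single tangent plane of a non-invariant disk; indeed, everywhere-tangency of such plaques would amount to local integrability of $E^{cs}$ and $E^{cu}$, which fails robustly (see the non-dynamically coherent examples \cite{HHU3,BGHP,BFFP1} cited in the introduction). So the curve $\gamma=W^{cs}_{\rm loc}\cap W^{cu}_{\rm loc}$ is only known to be a locally invariant $C^1$ curve tangent to $E^c$ at the single point $p$: neither the genuine $f^{\pi(p)}$-invariance (exactly what Lemma \ref{lem:complete-center-mfd} uses, where an endpoint of an invariant branch must again be periodic) nor the tangency to $E^c$ at every point of $\gamma$ has been established, and passing to the maximal invariant subset of $\gamma$ may leave you with just $\{p\}$.

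Relatedly, your explanation of $K$ is off target. Locally invariant plaque families of uniform size, tangent to $E^{cs}_x$ and $E^{cu}_x$ at the base point, exist through \emph{every} point by Hirsch--Pugh--Shub with no assumption on periods, so largeness of the period buys nothing in your step one; and applying the invariant manifold theorem directly to $g=f^{N}$ does not improve with $N$, since the nonlinearity of $g$ compounds at the same exponential rate as its hyperbolicity. Note also that this paper does not reprove the statement: it quotes \cite{HHU1}. There, and in the analogous constructions this paper does carry out (cf.\ Lemma \ref{lem:center-connection} and Claim \ref{claim:c-connection}), the hypothesis $\pi(p)>K$ is what makes a genuinely invariant center curve appear: the contraction/expansion $\lambda^{\pi(p)}$ accumulated over one period must beat fixed constants coming from the local product structure in a limiting fixed-point argument with curves tangent to approximating line fields. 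That is precisely the part of the argument your proposal does not supply.
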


The following lemma is a corollary of this theorem.

\begin{lemma}\label{lem:complete-center-mfd}
	Let $f\in{\rm PH}^r(M)$ with one-dimensional oriented center bundle $E^c$, and $Df$ preserves the orientation of $E^c$. There exists $K>0$, such that for any periodic point $p$ of $f$ with period $\pi(p)>K$, there exists an $f^{\pi(p)}$-invariant $C^1$ curve $\xi_+(p):[0,+\infty)\rightarrow M$, such that
	\begin{itemize}
		\item $\xi_+(0)=p$;
		\item $\xi_+'(t)\in E^c_{\xi_+(t)}$ is the positive unit vector for every $t\in[0,+\infty)$.
	\end{itemize}
	Here we say $\xi_+(p)$ is complete in the positive center direction.
	
	Similarly, we have an $f^{\pi(p)}$-invariant $C^1$ curve $\xi_-(p):(-\infty,0]\rightarrow M$, such that
	\begin{itemize}
		\item $\xi_-(0)=p$;
		\item $\xi_-'(t)\in E^c_{\xi_-(t)}$ is the positive unit vector for every $t\in(-\infty,0]$.
	\end{itemize}
	We say $\xi_-(p)$ is complete in the negative center direction. 
	
	If we define $\xi:\RR\rightarrow M$ as
	$$
	\xi(t)=\left\{
	\begin{array}{ll}
	\xi_-(t), \qquad & t\in(-\infty,0], \\
	\xi_+(t), \qquad & t\in[0,+\infty).
	\end{array}
	\right.
	$$ 
	Then $\xi$ is a complete $f^{\pi(p)}$-invariant $C^1$ curve positively tangent to $E^c$ at every point.
\end{lemma}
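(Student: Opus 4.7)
The plan is to extend the local $f^{\pi(p)}$-invariant center curve given by Theorem \ref{lem:center-mfd} to a complete positive half-line by iterating $f^{\pi(p)}$, using that $Df$ preserves the orientation of $E^c$ to ensure the iterates glue into a single $C^1$ curve.

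First, I would take $K$ equal to the constant of Theorem \ref{lem:center-mfd}. For a periodic point $p$ with $\pi(p)>K$, that theorem provides a local $C^1$ curve $\gamma_0:(-a_0,a_0)\to M$ with $\gamma_0(0)=p$, tangent to $E^c$ at every point, and $f^{\pi(p)}$-invariant as a germ at $p$. Since $E^c$ is globally oriented, one can reparametrize $\gamma_0$ by arclength so that $\gamma_0'(t)$ is the positive unit vector of $E^c_{\gamma_0(t)}$. Read through this parametrization, the restriction of $f^{\pi(p)}$ to $\gamma_0$ is a local $C^1$ diffeomorphism $\phi$ of $(-a_0,a_0)$ fixing $0$, and the orientation-preservation of $Df$ on $E^c$ forces $\phi$ to be orientation-preserving with $\phi'(0)=\|Df^{\pi(p)}|_{E^c_p}\|>0$.

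Next, I would build $\xi_+$ by tiling with iterates. Pick a small $a\in(0,a_0)$ and consider the fundamental domain $I=[\phi^{-1}(a),a]$ (expanding case) or $I=[a,\phi^{-1}(a)]$ (contracting case). Applying $f^{n\pi(p)}$ to $\gamma_0|_I$ yields $C^1$ sub-arcs of an integral curve of $E^c$, all consistently positively oriented because $Df$ preserves $E^c$ and its orientation, and consecutive tiles share endpoints by construction. In the hyperbolic sub-case $\phi'(0)\neq 1$, the iterates $\phi^{\pm n}(a)$ escape any compact subinterval of $(-a_0,a_0)$, so concatenating the arcs $f^{n\pi(p)}(\gamma_0|_I)$ in the appropriate direction gives a $C^1$ curve of arbitrarily large positive arclength; reparametrizing by arclength yields $\xi_+:[0,\infty)\to M$. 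Building $\xi_-:(-\infty,0]\to M$ symmetrically and gluing at $p$ (where both derivatives equal the positive unit vector of $E^c_p$) produces the complete $C^1$ invariant curve $\xi:\RR\to M$.

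The main obstacle will be the neutral sub-case $\phi'(0)=1$, where iteration alone does not move a fundamental domain outward. If $\phi(t)\neq t$ for small $t>0$, the parabolic dynamics of the germ of $\phi$ still push iterates of $a$ to the boundary of $(-a_0,a_0)$ and the tile-and-glue argument adapts. If instead $\phi$ has $\phi$-fixed parameters accumulating at $0$ (including the degenerate case where $\phi$ is locally the identity on $\gamma_0$), then the corresponding points $\gamma_0(t_i)$ are $f^{\pi(p)}$-fixed, hence periodic for $f$; after enlarging $K$ if necessary or replacing $f$ by a suitable iterate so that Theorem \ref{lem:center-mfd} applies at $\gamma_0(t_i)$, one obtains new local invariant curves that glue onto $\gamma_0$ by uniqueness of the tangent-to-$E^c$ $C^1$ integral curve at a periodic point, and iterating this bootstrap pushes positive arclength to $\infty$.
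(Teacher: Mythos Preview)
Your case split on $\phi'(0)$ is the wrong organizing principle and leaves a genuine gap. In the ``hyperbolic'' sub-case you assert that the iterates $\phi^{\pm n}(a)$ escape every compact subinterval of $(-a_0,a_0)$, but this need not hold: even when $\phi'(0)>1$, the forward orbit $\phi^n(a)$ can converge to an interior fixed point $t_*\in(0,a_0)$ of $\phi$, and then your tiles never leave $\gamma_0((0,t_*))$ and produce no extension at all. That fixed point gives a periodic point $\gamma_0(t_*)$ of $f$ on the curve, which is precisely the situation you treat only in the neutral sub-case; so the hyperbolic/neutral dichotomy does not actually separate the relevant cases, and the tiling argument as written is incomplete in both.

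The paper's proof bypasses all of this with one clean observation. Since $Df$ preserves the orientation of $E^c$, removing $p$ splits the invariant curve of Theorem~\ref{lem:center-mfd} into two branches that are each $f^{\pi(p)}$-invariant. If the positive branch has finite length it is a compact arc, and $f^{\pi(p)}$ restricted to it is an orientation-preserving homeomorphism of an interval fixing the endpoint $p$, hence also fixing the other endpoint $q$. Thus $q$ is fixed by $f^{\pi(p)}$, Theorem~\ref{lem:center-mfd} applies again at $q$, one takes the union with the new positive branch at $q$, and repeats. There is no need to analyze the germ of $\phi$, build fundamental domains, or treat parabolic dynamics separately; the mechanism you reserved for the degenerate neutral case is in fact the whole argument.
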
	
	
\begin{proof}
	Here we take the constant $K$ be the same constant in Theorem \ref{lem:center-mfd}. Then for the periodic point $p$ with period $\pi(p)>K$, it has an $f^{\pi(p)}$-invariant center manifold tangent to $E^c$ at every point. Moreover, since $Df$ preserves the orientation of $E^c$, if one removes $p$,  
	then this center manifold has two branches which are both $f^{\pi(p)}$-invariant.
	
	For the branch tangent to the positive direction of $E^c_p$, if it has infinite length, then it is complete in the positive center direction, and we parameterize it by its length, which defines $\xi_+$. 
	
	If the length of positive branch is finite, then it has another endpoint $q$. From $f^{\pi(p)}$-invariant of this branch, $q$ is a periodic point with period $\pi(q)=\pi(p)>K$. We can apply Theorem \ref{lem:center-mfd} to $q$, and consider its positive branch of $f^{\pi(p)}$-invariant center manifold. Taking the union of positive branchs of $f^{\pi(p)}$-invariant center manifold of $p$ and $q$, and repeating this procedure, we will get an $f^{\pi(p)}$-invariant curve starting from $p$, tangent to $E^c$ at every point, and complete in the positive center direction. We parameterize it by its length, which defines $\xi_+$. This completes the proof of $\xi_+$.
	
	The proof for $\xi_-$ is the same, and $\xi$ is the union of $\xi_+$ and $\xi_-$.	
\end{proof}

Now we can prove Proposition \ref{prop:center-curve}.

\begin{proof}[Proof of Proposition \ref{prop:center-curve}]
	Given $x\in\Omega(f)\setminus{\rm Per}(f)$,  Lemma \ref{lem:center-connection} shows that there exist a sequence of points $y_n\rightarrow x$ and integers $k_n\rightarrow\infty$, such that for every $n$,
	\begin{itemize}
		\item either $f^{k_n}(y_n)=y_n$;
		\item or there exists a $C^1$ curve $\sigma_n:[0,t_n]\rightarrow M$, such that
		$$
		\sigma_n(0)=y_n,  \quad
		\sigma_n(t_n)=f^{k_n}(y_n),  \quad
		\|\sigma_n'(t)\|\equiv1, \quad {\it and} \quad
		\sigma_n'(t)\in E^c_{\sigma_n(t)}, \quad 
		\forall t\in[0,t_n].
		$$
		Moreover, $t_n$ converge to zero as $n\rightarrow\infty$.
	\end{itemize}
	
	In the first case, since $k_n\rightarrow\infty$, we apply Lemma \ref{lem:complete-center-mfd} directly to $y_n$, which gives a complete $f^{k_n}$-invariant $C^1$ curve that positively tangent to $E^c$ everywhere. This proves the proposition in this case.
	
	In the second case, we assume $\sigma_n'(t)\in E^c_{\sigma_n(t)}$ is the positive unit vector. The proof for it is negative is the same.  The curve $f^{k_n}\left(\sigma_n([0,t_n])\right)$ is a curve positively tangent to $E^c$ with endpoints $f^{k_n}(y_n)$ and  $f^{2k_n}(y_n)$. Since $Df$ preserves the orientation of $E^c$, the curve
	$$
	\sigma_n\left([0,t_n]\right)\cup 
	f^{k_n}\left(\sigma_n([0,t_n])\right)
	$$
	is a curve positively tangent to $E^c$, and has endpoints $y_n$ and  $f^{2k_n}(y_n)$.
	
	Repeat the iteration and take the union. We consider the curve
	$$
	\bigcup_{l=-\infty}^{+\infty}f^{l\cdot k_n}
	\left(\sigma_n([0,t_n])\right).
	$$
	If the sum of the length
	$$
	\sum_{l=0}^{+\infty}{\rm length}
	\left(f^{l\cdot k_n}(\sigma_n([0,t_n]))\right)=+\infty,
	$$
	then this curve is complete in the positive direction.
	
	Otherwise, we have
	$$
	\sum_{l=0}^{+\infty}{\rm length}
	\left(f^{l\cdot k_n}(\sigma_n([0,t_n]))\right)<+\infty.
	$$
	This implies $f^{l\cdot k_n}\left(\sigma_n([0,t_n])\right)$ will converge to a point $p_n$ as $l\rightarrow\infty$. It is clear that $p_n$ is a periodic point with period $k_n$.
	
	Since $k_n\rightarrow\infty$ as $n\rightarrow\infty$, Lemma \ref{lem:complete-center-mfd} shows that the periodic point $p_n$ has an $f^{k_n}$-invariant $C^1$ curve tangent to $E^c$ everywhere, which is complete in the positive center direction. This gives us a curve tangent to $E^c$ and starting from $y_n$, which is complete in the positive direction. 
	
	\vskip1mm
	
	Similarly, we consider the sum
	$$
	\sum_{l=-\infty}^{-1}{\rm length}
	\left(f^{l\cdot k_n}(\sigma_n([0,t_n]))\right)
	$$
	is finite or not, and repeat this analysis. We have a curve tangent to $E^c$ and starting from $y_n$, which is complete in the negative direction. 
	
	So the union of these two curves is a complete $f^{k_n}$-invariant $C^1$ curve tangent to $E^c$ everywhere. Parameterizing it by its length, and setting the zero point to be $y_n$, we have the $C^1$ curve $\theta_n:\RR\rightarrow M$ satisfies the first and second items of the proposition.  Moreover, since $\theta_n$ is parameterized by its length and $k_n$-periodic, we have
	$$
	f^{k_n}\circ\theta_n(t)=
	\theta_n\left(t_n+
	\int_{0}^{t}\|Df^{k_n}|_{E^c_{\theta_n(s)}}\|{\rm d}s\right),
	\qquad \forall t\in\RR.
	$$
	This finishes the proof for $f\in{\rm PH}^r(M)$ and $x\in\Omega(f)\setminus{\rm Per}(f)$.
	
	\vskip1mm
	
	Finally, let $f\in{\rm PH}^r_m(M)$ with one-dimensional oriented  center bundle, and $Df$ preserves the orientation of $E^c$.
	Since $f$ is volume preserving, every $x\in M$ is a non-wandering point of $f$. If $x$ is not a periodic point, then we can apply the same proof directly. If $x$ is a periodic point, then we take sequence of points $x_k\in {\cal F}^s_x$, such that $x_k\rightarrow x$ as $k\rightarrow\infty$. We apply the previous analysis to $x_k$ getting a sequence of center curves satisfying desired properties. Let $k$ tend to infinity and take the subsequence, we get the desired sequence of periodic center curves approximating to $x$.
\end{proof}

\section{Lifting bundle dynamics and perturbations}\label{sec:perturb}

In this section, we prove Theorem \ref{Thm:Local-Perturb} and Theorem \ref{Thm:Global-Perturb}. We give a sketch of the proof and main ideas of this section:
\begin{itemize}
	\item Firstly, we consider the periodic center curves which approximates the non-wandering point. We lift the dynamics of $f$ on $M$ to the bundle dynamics on the these periodic curves (Lemma \ref{lem:inducing} and Proposition \ref{prop:exp-map}), and show that the lifting bundle dynamics are normally hyperbolic in these curves (Proposition \ref{prop:PH-local-diffeo}). 
	
	\item Secondly, we apply the normally hyperbolic invariant manifold theory of \cite{HPS} to the bundle dynamics (Theorem \ref{thm:HPS} and Theorem \ref{thm:permanence}). Moreover, we show that these periodic center curves have Lipschitz shadowing property under perturbations (Theorem \ref{thm:Lipschitz}). This Lipschitz shadowing property is important for us estimating the influence of perturbations.
	
	\item Thirdly, we use the partial leaf conjugacy to show that the global perturbation actually push the point in periodic center leaf moving forward along center direction. This is Proposition \ref{Prop:Moving-forward}, which is the key observation in this paper, and proves Theorem \ref{Thm:Global-Perturb}.
	
	\item Finally, for the local perturbation case, the idea is basic the same to Proposition \ref{Prop:Moving-forward}. But we need more delicate estimation about how much the perturbation influence the conjugacy point moving, see Section \ref{subsec:domination} and \ref{subsec:local}.   
\end{itemize}

Let $f\in{\rm PH}^r(M)$ with one-dimensional center bundle, and $TM=E^s\oplus E^c\oplus E^u$ be the partially hyperbolic splitting associated to $f$. We will lift $f:M\rightarrow M$ on the tangent bundles of periodic center curves in Section \ref{sec:center-curve}, and make the perturbation and estimation on these bundle dynamics. The main idea originates from the proof of structural stability of Anosov flows and the classical work of Hirsch-Pugh-Shub \cite{HPS}.

Let the Riemannian metric on $M$ be the adapted metric satisfying Lemma \ref{lem:metric}.
Let $F^s$ and $F^u$ be two $C^\infty$ subbundles of $TM$ with ${\rm dim}F^s=s$ and ${\rm dim}F^u=u$. Assume they are $C^0$-close to $E^s$ and $E^u$ respectively. Be precise, we assume
$$
\measuredangle(F^s,E^s)=
\max_{x\in M}
\max_{v\in F^s_x,\|v\|=1}d_{T_xM}(v,E^s_x)<10^{-3},
\quad {\rm and} \quad
\measuredangle(F^u,E^u)=
\max_{x\in M}
\max_{v\in F^u_x,\|v\|=1}d_{T_xM}(v,E^u_x)<10^{-3}.
$$
Since the distance between each pair of partially hyperbolic bundles are larger than $1-10^{-3}$ (Lemma \ref{lem:metric}), the decomposition
$$
TM=F^s\oplus E^c\oplus F^u
$$
is a direct sum decomposition. From now on, we fix these two bundles $F^s$ and $F^u$.

Assume the center bundle $E^c$ is orientable and oriented. Let $\theta_0:\RR\rightarrow M$ be a $C^1$ immersion and $k>0$, such that
\begin{itemize}
	\item $\theta_0'(t)\in E^c(\theta_0(t))$ is the positive unit vector for every $t\in\RR$;
	\item there exists $t_0\in\RR$ satisfying
	  \begin{align}\label{equ:center-per}
	  f^k\circ\theta_0(t)=
	  \theta_0\left(t_0+
	  \int_{0}^{t}\|Df^k|_{E^c_{\theta_0(s)}}\|{\rm d}s\right),
	  \qquad \forall t\in\RR.
	  \end{align}
\end{itemize}
These two properties imply $\theta_0$ is $k$-periodic, and $Df^k$ preserves the orientation of $E^c$.
However, we don't require $Df$ preserves the orientation of $E^c$.

For $i=1,\cdots,k-1$, the $C^1$-curve
$f^i\circ\theta_0:\RR\rightarrow M$
is non-degenerated and tangent to $E^c$ everywhere. We can reparameterize $f^i\circ\theta_0:\RR\rightarrow M$ by its length to satisfy the following lemma.

\begin{lemma}\label{lem:para}
	For every $i=1,\cdots,k-1$, there exist a $C^1$-increasing diffeomorphism $\alpha_i:\RR\rightarrow\RR$ satisfying $\alpha_i(0)=0$, and a $C^1$-curve $\theta_i:\RR\rightarrow M$, such that
	$$
	\theta_i(0)=f^i\circ\theta_0(0), \qquad
	\theta_i(t)=f^i\circ\theta_0\circ\alpha_i(t), 
	\qquad{\it and} \qquad
		 \|\theta_i'(t)\|\equiv1, \qquad
	\forall t\in\RR.
	$$
\end{lemma}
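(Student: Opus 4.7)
The plan is to recognize this as a standard arc-length reparameterization and carry it out explicitly. First, I would define the arc-length function of the curve $f^i\circ\theta_0$ by
\[
\beta_i(s)~=~\int_{0}^{s}\bigl\|(f^i\circ\theta_0)'(u)\bigr\|\,{\rm d}u.
\]
Since $\theta_0'(u)$ is a unit vector in the one-dimensional bundle $E^c_{\theta_0(u)}$ and $Df^i$ preserves $E^c$, the chain rule gives $(f^i\circ\theta_0)'(u)=Df^i(\theta_0(u))\cdot\theta_0'(u)$, whose norm equals $\|Df^i|_{E^c_{\theta_0(u)}}\|$. This integrand is continuous in $u$ and strictly positive, so $\beta_i$ is $C^1$ with $\beta_i(0)=0$ and $\beta_i'>0$; in particular $\beta_i$ is a $C^1$ strictly increasing map.

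Next I would verify that $\beta_i$ is actually a diffeomorphism from $\RR$ onto $\RR$, not just onto a bounded interval. By compactness of $M$, the constant $c_i:=\min_{x\in M}\|Df^i|_{E^c_x}\|$ is strictly positive, so $|\beta_i(s)|\geq c_i|s|$ for all $s\in\RR$ and $\beta_i$ is surjective. Hence $\alpha_i:=\beta_i^{-1}:\RR\rightarrow\RR$ is a well-defined $C^1$ increasing diffeomorphism satisfying $\alpha_i(0)=0$.

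Finally, setting $\theta_i(t):=f^i\circ\theta_0\circ\alpha_i(t)$, the two trivial facts $\theta_i(0)=f^i\circ\theta_0(0)$ and $\theta_i(t)=f^i\circ\theta_0\circ\alpha_i(t)$ are immediate. For the unit speed condition, the chain rule together with $\alpha_i'(t)=1/\beta_i'(\alpha_i(t))$ gives
\[
\|\theta_i'(t)\|~=~\bigl\|(f^i\circ\theta_0)'(\alpha_i(t))\bigr\|\cdot\alpha_i'(t)~=~\frac{\bigl\|(f^i\circ\theta_0)'(\alpha_i(t))\bigr\|}{\beta_i'(\alpha_i(t))}~=~1.
\]
There is no real obstacle here: the only nontrivial point is ensuring surjectivity of the arc-length function onto all of $\RR$, which follows from the uniform lower bound $c_i>0$ obtained from compactness of $M$.
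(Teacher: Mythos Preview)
Your proof is correct and essentially identical to the paper's: your arc-length function $\beta_i$ is exactly the paper's $\bar{\alpha_i}$, and both take $\alpha_i$ to be its inverse and verify unit speed via the chain rule. You actually supply a detail the paper omits, namely the surjectivity of $\beta_i$ onto $\RR$ using the uniform lower bound $c_i>0$ from compactness of $M$.
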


\begin{proof}
	For every $i=1,\cdots,k-1$, let $\bar{\alpha_i}:\RR\rightarrow\RR$ be defined as
	$$
	\bar{\alpha_i}(t)=\int_{0}^{t}\|Df^i|_{E^c_{\theta_0(s)}}\|{\rm d}s,
	\qquad \forall t\in\RR.
	$$
	Then $\bar{\alpha_i}$ is a $C^1$-smooth strictly increasing diffeomorphism of $\RR$ and  $\bar{\alpha_i}(0)=0$. 
	
	We define $\alpha_i:\RR\rightarrow\RR$ be the inverse function of $\bar{\alpha_i}$. It is a $C^1$-increasing diffeomorphism of $\RR$ and satisfies $\alpha_i(0)=0$. Let $\theta_i:\RR\rightarrow M$ be defined as
	$$
	\theta_i(t)=f^i\circ\theta_0\circ\alpha_i(t), \qquad \forall t\in\RR.
	$$
    It is a $C^1$-curve on $M$ satisfying $\theta_i(0)=f^i\circ\theta_0(0)$. Moreover, for every $t\in\RR$, let $t'=\alpha_i(t)$, then
    $$
    \|\theta_i'(t)\|~=~
    \|Df^i|_{E^c_{\theta_0(t')}}\|\cdot\|\theta_0'(t')\|\cdot\alpha_i'(t)~=~
    \|Df^i|_{E^c_{\theta_0(t')}}\|\cdot
    \frac{1}{\bar{\alpha_i}'(t')}
    ~=~\|Df^i|_{E^c_{\theta_0(t')}}\|\cdot
    \frac{1}{\|Df^i|_{E^c_{\theta_0(t')}}\|}~\equiv~ 1.
    $$
    This proves the lemma.
\end{proof}

For every $i=0,1,\cdots,k-1$, we use the symbol $\RR_i=\RR$ to emphasize $\theta_i:\RR_i=\RR\rightarrow M$.

\begin{lemma}\label{lem:inducing}
	For every $i=0,\cdots,k-1$, the diffeomorphism $f$ induces a diffeomorphism $\theta_i^*f: \RR_i\rightarrow\RR_{i+1}$   ($\RR_k=\RR_0$) defined as
	$$
	\theta_i^*f(t)=\int_{0}^{t}\|Df|_{E^c_{\theta_i(s)}}\|{\rm d}s,
	\qquad i=0\cdots,k-2, \qquad {\it and} \qquad
	\theta_{k-1}^*f(t)=
	t_0+\int_{0}^{t}\|Df|_{E^c_{\theta_{k-1}(s)}}\|{\rm d}s.
	$$ 
	They satisfy
	${\rm d}\theta_i^*f(t)/{\rm d}t=\|Df|_{E^c_{\theta_i(t)}}\|$ for every
	$t\in\RR_i$,
	and make the following diagram commuting:
\begin{displaymath}
\xymatrix{
\RR_0 \ar[r]^{\theta_0^*f} \ar[d]_{\theta_0} & \RR_1 \ar[r]^{\theta_1^*f} \ar[d]_{\theta_1} & \cdots \cdots \ar[r]^{\theta_{k-2}^*f} & \RR_{k-1} \ar[r]^{\theta_{k-1}^*f} \ar[d]^{\theta_{k-1}} & \RR_0 \ar[d]^{\theta_0} \\
M \ar[r]_{f} & M \ar[r]_{f} & \cdots \cdots \ar[r]_{f} & M \ar[r]_{f} & M }
\end{displaymath}
That is
$$
\theta_{i+1}\circ\theta_i^*f=f\circ\theta_i,   	
\qquad \forall i=0,1,\cdots,k-2, \qquad {\rm and} \qquad
\theta_{0}\circ\theta_{k-1}^*f=f\circ\theta_{k-1}.
$$
\end{lemma}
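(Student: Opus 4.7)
The plan is to prove the lemma by direct computation: define $\theta_i^{*}f$ by the displayed integrals, verify it is a diffeomorphism, then check the commuting square using the arc-length reparametrizations $\alpha_i$ from Lemma \ref{lem:para}.

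First, observe that the integrand $\|Df|_{E^c_{\theta_i(s)}}\|$ is continuous on $\RR_i$ and, by compactness of $M$ together with continuity of $Df$ and $E^c$, is bounded above and below by positive constants. Consequently each $\theta_i^{*}f$ is $C^1$, strictly increasing, and surjective onto $\RR_{i+1}$, hence a diffeomorphism; the derivative formula $\frac{d}{dt}\theta_i^{*}f(t)=\|Df|_{E^c_{\theta_i(t)}}\|$ is just the fundamental theorem of calculus applied to the definition.

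The main content is the commuting diagram. For $i\in\{0,\dots,k-2\}$, unfolding the definitions gives $\theta_{i+1}\circ\theta_i^{*}f(t)=f^{i+1}\circ\theta_0\circ\alpha_{i+1}\circ\theta_i^{*}f(t)$ and $f\circ\theta_i(t)=f^{i+1}\circ\theta_0\circ\alpha_i(t)$, so since $f^{i+1}\circ\theta_0$ is an immersion, it suffices to show $\alpha_{i+1}\circ\theta_i^{*}f=\alpha_i$, equivalently $\theta_i^{*}f=\bar{\alpha}_{i+1}\circ\alpha_i$ where $\bar\alpha_j=\alpha_j^{-1}$ is the arc-length function from the proof of Lemma \ref{lem:para}. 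I would verify this by writing
$$
\bar{\alpha}_{i+1}(\alpha_i(t))=\int_0^{\alpha_i(t)}\|Df^{i+1}|_{E^c_{\theta_0(s)}}\|\,ds
$$
and performing the substitution $s=\alpha_i(u)$ with $ds=\alpha_i'(u)\,du=\|Df^i|_{E^c_{\theta_0(\alpha_i(u))}}\|^{-1}\,du$. Combined with the chain rule $\|Df^{i+1}|_{E^c_{\theta_0(\alpha_i(u))}}\|=\|Df|_{E^c_{\theta_i(u)}}\|\cdot\|Df^i|_{E^c_{\theta_0(\alpha_i(u))}}\|$ (valid because $E^c$ is one-dimensional, so operator norms multiply along the orbit), the two factors involving $Df^i$ cancel and the integrand collapses to $\|Df|_{E^c_{\theta_i(u)}}\|$, giving exactly the defining integral of $\theta_i^{*}f(t)$.

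For $i=k-1$ the argument is the same, except one now applies the $k$-periodicity identity \eqref{equ:center-per} at the step $f^k\circ\theta_0\circ\alpha_{k-1}(t)=\theta_0\big(t_0+\int_0^{\alpha_{k-1}(t)}\|Df^k|_{E^c_{\theta_0(s)}}\|\,ds\big)$; the same change of variables and chain-rule cancellation turn the inner integral into $\int_0^t\|Df|_{E^c_{\theta_{k-1}(u)}}\|\,du$, producing the shifted formula $\theta_{k-1}^{*}f(t)=t_0+\int_0^t\|Df|_{E^c_{\theta_{k-1}(s)}}\|\,ds$. I do not expect a genuine obstacle here; the only thing one must take care of is bookkeeping: the shift $t_0$ appears solely because $\theta_0$ is parametrized from the base point $\theta_0(0)$, whereas in the periodic identity $f^k\circ\theta_0(0)=\theta_0(t_0)$, so the $t_0$ is forced on us by the normalization chosen in Lemma \ref{lem:para} (namely $\alpha_i(0)=0$ for all $i$). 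Everything else is a one-step exercise in change of variables.
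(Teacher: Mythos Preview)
Your proof is correct and follows essentially the same route as the paper: both reduce the commuting-diagram identity to showing $\theta_i^{*}f=\bar{\alpha}_{i+1}\circ\alpha_i$ (and the analogous shifted version for $i=k-1$), and then verify this by a one-variable calculus computation. The only cosmetic difference is that you carry out the computation via the change of variables $s=\alpha_i(u)$ in the integral, whereas the paper differentiates $\alpha_{i+1}^{-1}\circ\alpha_i$ and matches initial values; these are two sides of the same coin.
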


\begin{proof}
First we fix $i\in\{0,1,\cdots,k-2\}$. The induced diffeomorphism $\theta_i^*f:\RR_i\rightarrow \RR_{i+1}$ is defined as
\begin{align*}
\theta_i^*f(t)=\int_{0}^{t}\|Df|_{E^c_{\theta_i(s)}}\|{\rm d}s, 
\qquad {\rm thus} \quad
\frac{{\rm d}}{{\rm d}t}\theta_i^*f(t)=\|Df|_{E^c_{\theta_i(t)}}\|, \quad \forall t\in\RR_i.
\end{align*}
The definition of $\theta_i:\RR_i\rightarrow M$ in Lemma \ref{lem:para} implies
\begin{align*}
    f\circ\theta_i(t)~
       =~f\circ f^i\circ\theta_0\circ\alpha_i(t)
       ~=~f^{i+1}\circ\theta_0\circ\alpha_{i+1}\left( \alpha_{i+1}^{-1}\circ\alpha_i(t) \right) 
       ~=~\theta_{i+1}\left( \alpha_{i+1}^{-1}\circ\alpha_i(t) \right) .
\end{align*}
Since $\alpha_i^{-1}(t)=\int_{0}^{t}\|Df^i|_{E^c_{\theta_0(s)}}\|{\rm d}s$, we have
$\alpha_{i+1}^{-1}\circ\alpha_i(0)=0$. For every $t\in\RR$, if we denote $t'=\alpha_i(t)$, then
\begin{align*}
    \left[\alpha_{i+1}^{-1}\circ\alpha_i\right]'(t)~&=~
    \|Df^{i+1}|_{E^c_{\theta_0(t')}}\|\cdot
    \frac{1}{\|Df^i|_{E^c_{\theta_0(t')}}\|}
    ~=~\|Df|_{E^c_{f^i\circ\theta_0(t')}}\| \\
    &=~\|Df|_{E^c_{f^i\circ\theta_0\circ\alpha_i(t)}}\|
    ~=~\|Df|_{E^c_{\theta_i(t)}}\|.
\end{align*}
This implies
$$
\alpha_{i+1}^{-1}\circ\alpha_i(t)
~=~\int_{0}^{t}\|Df|_{E^c_{\theta_i(s)}}\|{\rm d}s
~=~\theta_i^*f(t).
$$
Thus $\theta_{i+1}\circ\theta_i^*f\equiv f\circ\theta_i$ for every $i=0,\cdots,k-2$.

\vskip3mm

When $i=k-1$, the induced diffeomorphism $\theta_{k-1}^*f:\RR_{k-1}\rightarrow \RR_0$ is defined as
\begin{align*}
   \theta_{k-1}^*f(t)=
   t_0+\int_{0}^{t}\|Df|_{E^c_{\theta_{k-1}(s)}}\|{\rm d}s, 
   \qquad   {\rm thus} \quad
   \frac{{\rm d}}{{\rm d}t}\theta_{k-1}^*f(t)=\|Df|_{E^c_{\theta_{k-1}(t)}}\|, \quad \forall t\in\RR_{k-1}.
\end{align*}
Recall (\ref{equ:center-per}) we have
  $$
  f^k\circ\theta_0(t)=
  \theta_0\left(t_0+
  \int_{0}^{t}\|Df^k|_{E^c_{\theta_0(s)}}\|{\rm d}s\right),
  \qquad \forall t\in\RR.
  $$
the definition of $\theta_{k-1}:\RR_i\rightarrow M$ in Lemma \ref{lem:para} implies
\begin{align}\label{equ:k-1}
  f\circ\theta_{k-1}(t)~
  &=~f\circ f^{k-1}\circ\theta_0\circ\alpha_{k-1}(t)
  ~=~f^k\circ\theta_0\circ\alpha_{k-1}(t) \notag\\
  &=~\theta_0\left(t_0+
  \int_{0}^{\alpha_{k-1}(t)}\|Df^k|_{E^c_{\theta_0(s)}}\|{\rm d}s\right)
\end{align}
Moreover, for every $t\in\RR$, if we denote $t'=\alpha_{k-1}(t)$, then
\begin{align*}
   \frac{{\rm d}}{{\rm d}t}
   \int_{0}^{\alpha_{k-1}(t)}\|Df^k|_{E^c_{\theta_0(s)}}\|{\rm d}s
   ~&=~
   \frac{{\rm d}\alpha_{k-1}(t)}{{\rm d}t}\cdot \|Df^k|_{E^c_{\theta_0(t')}}\| 
   ~=~\frac{1}{\|Df^{k-1}|_{E^c_{\theta_0(t')}}\|}
      \cdot \|Df^k|_{E^c_{\theta_0(t')}}\| \\
    &=~\|Df|_{E^c_{f^{k-1}\circ\theta_0(t')}}\|
      ~=~\|Df|_{E^c_{f^{k-1}\circ\theta_0\circ\alpha_{k-1}(t)}}\| \\
    &=~\|Df|_{E^c_{\theta_{k-1}(t)}}\|.
\end{align*}
Combining with (\ref{equ:k-1}), for every $t\in\RR_{k-1}$, we have
\begin{align*}
    f\circ\theta_{k-1}(t)~=~
    \theta_0\left(t_0+\int_{0}^{t}\|Df|_{E^c_{\theta_{k-1}(s)}}\|{\rm d}s\right)
    ~=~\theta_{0}\circ\theta_{k-1}^*f(t).
\end{align*}
\end{proof}

\begin{definition}\label{def:center-immer}
	Let $\bigsqcup_{i=0}^{k-1}\RR_i$ be the disjoint union of  $\RR_i$.
	The map $\theta:\bigsqcup_{i=0}^{k-1}\RR_i\rightarrow M$ and the induced diffeomorphism $\theta^*f:\bigsqcup_{i=0}^{k-1}\RR_i\rightarrow \bigsqcup_{i=0}^{k-1}\RR_i$ are defined as following, for every $i=0,\cdots,k-1$,
	$$
	\theta|_{\RR_i}=\theta_i:~\RR_i\longrightarrow M,
	\qquad {\rm and} \qquad
	\theta^*f|_{\RR_i}=\theta_i^*f:~\RR_i\longrightarrow \RR_{i+1}~(\RR_k=\RR_0).
	$$
\end{definition}

It is clear that the map $\theta:\bigsqcup_{i=0}^{k-1}\RR_i\rightarrow M$ is a $C^1$ leaf immersion, and it is a normally hyperbolic leaf immersion following the definition of \cite[Page 69]{HPS}. To be precise, we have the following proposition.
Its proof deduces from the fact that $f:M\rightarrow M$ is partially hyperbolic and Lemma \ref{lem:inducing} directly.

\begin{proposition}\label{prop:leaf-immersion}
	 The partially hyperbolic diffeomorphism $f:M\rightarrow M$ is {\it normally hyperbolic to the $C^1$ leaf immersion} $\theta:\bigsqcup_{i=0}^{k-1}\RR_i\rightarrow M$, i.e. they satisfy
	\begin{enumerate}
		\item Let $\Lambda=\theta(\bigsqcup_{i=0}^{k-1}\RR_i)$ and $\overline{\Lambda}$ be the closure of $\Lambda$. We have $f(\Lambda) =\Lambda$.
		\item The pull back diffeomorphism $\theta^*f$ satisfies $f\circ\theta=\theta\circ(\theta^*f)$ on $\bigsqcup_{i=0}^{k-1}\RR_i$.
		\item The $Df$-invariant splitting 
		$$
		T_{\overline{\Lambda}}M=(E^s|_{\overline{\Lambda}})\oplus T\overline{\Lambda}\oplus (E^u|_{\overline{\Lambda}})
		=(E^s|_{\overline{\Lambda}})\oplus (E^c|_{\overline{\Lambda}})\oplus (E^u|_{\overline{\Lambda}})
		$$
		is partially hyperbolic.
	\end{enumerate}
\end{proposition}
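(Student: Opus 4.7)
The proposition is essentially a bookkeeping statement: all three claims follow almost immediately from the construction of $\theta$ and $\theta^*f$, combined with Lemma \ref{lem:inducing} and the partial hyperbolicity of $f$. I would organize the plan around verifying the three items in order.

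For item (1), the plan is to write $\Lambda=\bigcup_{i=0}^{k-1}\theta_i(\RR_i)$ and use Lemma \ref{lem:inducing} to compute
\[
f\bigl(\theta_i(\RR_i)\bigr)=\theta_{i+1}\bigl(\theta_i^*f(\RR_i)\bigr)=\theta_{i+1}(\RR_{i+1}),\qquad i=0,\ldots,k-1,
\]
with the convention $\RR_k=\RR_0$ and $\theta_k=\theta_0$, since each $\theta_i^*f:\RR_i\to\RR_{i+1}$ is a $C^1$-diffeomorphism (its derivative $\|Df|_{E^c_{\theta_i(t)}}\|$ is strictly positive). Taking the union over $i$ cyclically gives $f(\Lambda)=\Lambda$.

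Item (2) is literally what Lemma \ref{lem:inducing} says once we package the maps together as in Definition \ref{def:center-immer}: on each $\RR_i$ we already have $\theta_{i+1}\circ\theta_i^*f=f\circ\theta_i$, and these identities assemble into $f\circ\theta=\theta\circ(\theta^*f)$ on the disjoint union $\bigsqcup_{i=0}^{k-1}\RR_i$.

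For item (3), the key observation is that by construction $\theta_i'(t)\in E^c_{\theta_i(t)}$ is a unit vector for every $t$ and every $i$, so the leaf tangent bundle satisfies $T\theta_i(\RR_i)=E^c|_{\theta_i(\RR_i)}$; passing to the closure and using continuity of $E^c$ yields $T\overline{\Lambda}=E^c|_{\overline{\Lambda}}$ as a continuous line subbundle. Therefore the two displayed splittings of $T_{\overline{\Lambda}}M$ coincide, and they are simply the restriction of the ambient partially hyperbolic splitting $TM=E^s\oplus E^c\oplus E^u$ to the $f$-invariant set $\overline{\Lambda}$ (which is $f$-invariant because $\Lambda$ is and $f$ is a homeomorphism). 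The $Df$-invariance and the domination/contraction/expansion inequalities required for partial hyperbolicity are inherited directly from the partial hyperbolicity of $f$ on $M$ (Lemma \ref{lem:metric}), so there is nothing further to verify.

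There is no real obstacle here; the only mild point to be careful about is that $\overline{\Lambda}$ need not be a smooth submanifold, which is why the statement is phrased using the leaf immersion $\theta$ rather than a submanifold structure on $\overline{\Lambda}$. This matches the framework of normally hyperbolic leaf immersions in \cite{HPS}, to which the proposition appeals.
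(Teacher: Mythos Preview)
Your proposal is correct and matches the paper's own treatment: the paper does not give a detailed proof but simply states that the proposition ``deduces from the fact that $f:M\rightarrow M$ is partially hyperbolic and Lemma~\ref{lem:inducing} directly,'' which is exactly the route you outline item by item.
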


Since $TM=F^s\oplus E^c\oplus F^u$ is a direct sum decomposition, we can pull back the bundle $F^s\oplus F^u$ by $C^1$ leaf immersion $\theta:\bigsqcup_{i=0}^{k-1}\RR_i\rightarrow M$. There exists a fiber bundle $\theta^*(F^s\oplus F^u)$ over $\bigsqcup_{i=0}^{k-1}\RR_i$, such that the following diagram commutes:
\begin{displaymath}
\xymatrix{
	\theta^*(F^s\oplus F^u) \ar[r]^{\theta_*} \ar[d]_{\pi} & F^s\oplus F^u \ar[d]^{\pi} \\
	\bigsqcup_{i=0}^{k-1}\RR_i \ar[r]_{\theta} & M }
\end{displaymath}

For every $t\in\RR_i$, the fiber
$$
\theta^*(F^s\oplus F^u)_t=\theta^*(F^s_{\theta_i(t)}\oplus F^u_{\theta_i(t)})
$$
is a linear space equipped with an inner product structure by pulling back the Riemannian metric on $F^s_{\theta_i(t)}\oplus F^u_{\theta_i(t)}\subset T_{\theta_i(t)}M$.
This defines a metric $\|\cdot\|_t$ on each fiber $\theta^*(F^s\oplus F^u)_t$, which vary continuously with respect to $t$ in $\RR_i$. For every $\delta>0$, we denote 
$$
\theta^*(F^s\oplus F^u)_t(\delta)=\left\lbrace v\in\theta^*(F^s\oplus F^u)_t:~\|v\|_t<\delta \right\rbrace.
$$ 
For every $(a,b)\subseteq\RR_i$, we denote
$$
\theta^*(F^s\oplus F^u)_{(a,b)}(\delta)=
\bigcup_{t\in(a,b)}\theta^*(F^s\oplus F^u)_t(\delta),
\qquad {\rm and} \qquad
\theta^*(F^s\oplus F^u)(\delta)= \bigcup_{t\in\bigsqcup_{i=0}^{k-1}\RR_i}\theta^*(F^s\oplus F^u)_t(\delta).
$$
Now we show that the exponential map from $\theta^*(F^s\oplus F^u)$ to $M$ is a uniform local diffeomorphism.

\begin{proposition}\label{prop:exp-map}
	Let the exponential map $\Phi:\theta^*(F^s\oplus F^u)\rightarrow M$ be defined as:
	$$
	\Phi(v)=\exp_{\theta_i(t)}\left(\theta_{i*}(v)\right), 
	\qquad  \forall t\in\RR_i, 
	\quad\forall v\in\theta^*(F^s\oplus F^u)_t,
	$$
	then $\Phi$ is a uniform local diffeomorphism. 
	
	To be precise, there exists a constant $\delta_1=\delta_1(M,f,F^s,F^u)>0$, such that for every $0<\delta\leq\delta_1$ and every $t\in\RR_i$, the set $\Phi\left( \theta^*(F^s\oplus F^u)_{(t-\delta,t+\delta)}(\delta) \right)$ is an open subset in $M$, and 
	$$
	\Phi:~\theta^*(F^s\oplus F^u)_{(t-\delta,t+\delta)}(\delta)
	 \longrightarrow \Phi\left( \theta^*(F^s\oplus F^u)_{(t-\delta,t+\delta)}(\delta) \right)
	$$
	is a diffeomorohism.
\end{proposition}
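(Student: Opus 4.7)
The plan is to compute the differential of $\Phi$ on the zero section, deduce a uniform lower bound on the norm of its inverse from the adapted metric together with the $C^0$-closeness of $F^s,F^u$ to $E^s,E^u$, and then upgrade this pointwise invertibility to a uniform box via a quantitative inverse function theorem.

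First, I would fix $t_0\in\RR_i$ and trivialize $\theta^*(F^s\oplus F^u)$ on a neighborhood of $t_0$ by identifying nearby fibers with $\theta^*(F^s\oplus F^u)_{t_0}$; in this chart, $\Phi$ reads as $(s,v)\mapsto\exp_{\theta_i(t_0+s)}(\theta_{i*}(v))$. Differentiating at $(0,0)$ yields $\partial_s\Phi(0,0)=\theta_i'(t_0)\in E^c_{\theta_i(t_0)}$, which is a unit vector by Lemma~\ref{lem:para}, while $\partial_v\Phi(0,0)$ is the canonical inclusion of $F^s_{\theta_i(t_0)}\oplus F^u_{\theta_i(t_0)}$ into $T_{\theta_i(t_0)}M$. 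Since $TM=F^s\oplus E^c\oplus F^u$ is a direct sum decomposition, $d\Phi_{(t_0,0)}$ is a linear isomorphism onto $T_{\theta_i(t_0)}M$.

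Next, I would extract uniformity. By Lemma~\ref{lem:metric} the partially hyperbolic splitting has angles uniformly bounded below by $1-10^{-3}$, and the standing assumption $\measuredangle(F^s,E^s),\measuredangle(F^u,E^u)<10^{-3}$ forces the angle between $E^c$ and $F^s\oplus F^u$ to be uniformly bounded below by a positive constant depending only on $M,f,F^s,F^u$. Combined with $\|\theta_i'(t)\|\equiv 1$, this yields a uniform bound $\|d\Phi_{(t,0)}^{-1}\|\le K$ for all $t\in\bigsqcup_{i=0}^{k-1}\RR_i$.

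To produce a uniform box, I would work in the exponential chart at $\theta_i(t_0)$ provided by Lemma~\ref{lem:local-stable}. In this chart, $\Phi$ becomes a map from a neighborhood of the origin in $\RR\times(F^s_{\theta_i(t_0)}\oplus F^u_{\theta_i(t_0)})$ into $T_{\theta_i(t_0)}M$, and at the origin it agrees to first order with the linear isomorphism $(s,v)\mapsto s\cdot\theta_i'(t_0)+\iota_{t_0}(v)$, where $\iota_{t_0}$ denotes the fiber inclusion. As $s$ and $v$ vary, the tangent $\theta_i'(t_0+s)\in E^c_{\theta_i(t_0+s)}$ and the subspaces $F^s_{\theta_i(t_0+s)}\oplus F^u_{\theta_i(t_0+s)}$, viewed in these coordinates, vary with moduli of continuity that are uniform in $t_0$ by compactness of $M$ and continuity of the bundles $E^c,F^s,F^u$. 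The hard part is injectivity on the entire box: after shrinking $\delta$ so that the $C^1$-perturbation of the linear model on this box has operator norm below $(2K)^{-1}$, a quantitative inverse function theorem (equivalently a Banach contraction argument for the inverse map) yields a uniform $\delta_1>0$ such that, for every $0<\delta\le\delta_1$ and every $t\in\bigsqcup_{i=0}^{k-1}\RR_i$, $\Phi$ restricts to a $C^1$ diffeomorphism on the box $\theta^*(F^s\oplus F^u)_{(t-\delta,t+\delta)}(\delta)$, with open image in $M$.
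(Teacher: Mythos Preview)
Your proposal is correct and follows essentially the same route as the paper: compute $d\Phi$ at points of the zero section, observe that it is the linear isomorphism $(s,v)\mapsto s\cdot\theta_i'(t_0)+v$ onto $T_{\theta_i(t_0)}M=E^c_{\theta_i(t_0)}\oplus F^s_{\theta_i(t_0)}\oplus F^u_{\theta_i(t_0)}$, and then invoke the inverse function theorem together with compactness of $M$ for a uniform $\delta_1$. Your treatment of uniformity (explicit angle bounds and a quantitative inverse function theorem) is somewhat more detailed than the paper's, which simply appeals to compactness.
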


\begin{proof}
	Let $\gamma:(-1,1)\rightarrow M$ be a $C^1$ curve which satisfies $\gamma'(t)\in E^c_{\gamma(t)}\subset T_{\gamma(t)}M$ is an unit vector for every $t\in(-1,1)$, and $\gamma^*(F^s\oplus F^u)$ be the lifting bundle. We denote $v^c=
	\gamma'(0)\in E^c_{\gamma(0)}$, which satisfies $\|v^c\|=1$. For every $\tau\in\RR$ and $v^{su}\in F^s_{\gamma(0)}\oplus F^u_{\gamma(0)}$, we denote 
	$\left(\tau,\gamma^*(v^{su})\right)\in 
	 T_{(0,0)}\gamma^*(F^s\oplus F^u)\cong T_0\RR\oplus\gamma^*(F^s_{\gamma(0)}\oplus F^u_{\gamma(0)})$ 
	a tangent vector. Then the tangent map 
	$D\Phi_{(0,0)}:T_{(0,0)}\gamma^*(F^s\oplus F^u)\rightarrow T_{\gamma(0)}M$ satisfies
	$$
	D\Phi_{(0,0)}\left(\tau,\gamma^*(v^{su})\right)=\tau\cdot v^c+v^{su}\in T_{\gamma(0)}M
	\cong E^c_{\gamma(0)}\oplus F^s_{\gamma(0)}\oplus F^u_{\gamma(0)}.
	$$
	
	This implies if we consider the following bundle isomorphism at the point $(0,0)\in\gamma^*(F^s\oplus F^u)$:
	$$
	T_{(0,0)}\gamma^*(F^s\oplus F^u)\cong T_0\RR\oplus\gamma^*(F^s_{\gamma(0)}\oplus F^u_{\gamma(0)})\cong E^c_{\gamma(0)}\oplus F^s_{\gamma(0)}\oplus F^u_{\gamma(0)}\cong T_{\gamma(0)}M,
	$$
	the tangent map
	$$
	D\Phi_{(0,0)}=D\exp|_{(\gamma(0),0)}:T_{(0,0)}\gamma^*(F^s\oplus F^u)\longrightarrow T_{\gamma(0)}M
	$$
	is the identity map.

	From the inverse function theorem, there exists a constant $\delta_1>0$, such that for every $0<\delta\leq\delta_1$, the map
	$$
	\Phi:~\gamma^*(F^s\oplus F^u)_{(-\delta,\delta)}(\delta)
	 \longrightarrow \Phi\left( \gamma^*(F^s\oplus F^u)_{(-\delta,\delta)}(\delta) \right)
	$$
	is a diffeomorphism. Moreover, the compactness of $M$ implies $\delta_1$ only depends on $M,f$ and the bundle $F^s,F^u$, which is independent of $\gamma$.
	Finally, we apply this analysis to $\theta_i:\RR_i\rightarrow M$ at $t\in\RR_i$, which proves the proposition.
\end{proof}

Let $C_1=100\cdot\sup_{z\in M}\|Df(z)\|$. Since $\Phi$ is a local diffeomorphism,
for every $t\in\bigsqcup_{i=0}^{k-1}\RR_i$ and every $v\in\theta^*(F^s\oplus F^u)_t(\delta_1/C_1)$, there exists a unique point 
$$
\theta^*f(v):=\Phi^{-1}\circ f\circ\Phi(v)\in\theta^*(F^s\oplus F^u)(\delta_1/2).
$$
Here $\Phi^{-1}$ is defined from $\Phi\left( \theta^*(F^s\oplus F^u)_{(\theta^*f(t)-\delta,\theta^*f(t)+\delta)}(\delta_1) \right)$ to the region $\theta^*(F^s\oplus F^u)_{(\theta^*f(t)-\delta,\theta^*f(t)+\delta)}(\delta)$. Then we have the following diagram commutes:
\begin{displaymath}
\xymatrix{
	\theta^*(F^s\oplus F^u)(\delta_1/C_1) \ar[r]^{\theta^*f} \ar[d]_{\Phi} & \theta^*(F^s\oplus F^u)(\delta_1/2) \ar[d]^{\Phi} \\
	M \ar[r]_{f} & M }
\end{displaymath}
Moreover, the lifting map $\theta^*f$ is a diffeomorphism from $\theta^*(F^s\oplus F^u)(\delta_1/C_1)$ to its image 
$$
\theta^*f\left(\theta^*(F^s\oplus F^u)(\delta_1/C_1)\right)~\subset~\theta^*(F^s\oplus F^u)(\delta_1/2).
$$

\begin{proposition}\label{prop:PH-local-diffeo}
	The local diffeomorphism $\theta^*f:\theta^*(F^s\oplus F^u)(\delta_1/C_1)\rightarrow\theta^*(F^s\oplus F^u)(\delta_1/2)$ is partially hyperbolic at the invariant set $\bigsqcup_{i=0}^{k-1}\RR_i$. The partially hyperbolic splitting at $\bigsqcup_{i=0}^{k-1}\RR_i$ is
	\begin{align*}
	   T_{\bigsqcup_{i=0}^{k-1}\RR_i}\theta^*(F^s\oplus F^u) 
	   &=\theta^*(E^s)|_{\bigsqcup_{i=0}^{k-1}\RR_i} \oplus T(\bigsqcup_{i=0}^{k-1}\RR_i)\oplus \theta^*(E^u)|_{\bigsqcup_{i=0}^{k-1}\RR_i} \\
	   &=\left(\theta^*(E^s)\oplus \theta^*(E^c)\oplus \theta^*(E^u)\right)|_{\bigsqcup_{i=0}^{k-1}\RR_i}.
	\end{align*}	
\end{proposition}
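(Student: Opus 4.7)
The plan is to transfer the partial hyperbolicity of $f$ to $\theta^*f$ along the zero section by exploiting the fact that $\Phi$ is a local diffeomorphism and $\theta^*f = \Phi^{-1}\circ f\circ\Phi$, so the differential of $\theta^*f$ at the zero section is conjugate to $Df$ at $\theta(\bigsqcup_{i=0}^{k-1}\RR_i)$ through a natural isomorphism of tangent spaces.

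First I would pin down this isomorphism. At each point $(t,0)$ of the zero section in $\theta^*(F^s\oplus F^u)$, the tangent space splits canonically into horizontal and vertical parts,
$$
T_{(t,0)}\theta^*(F^s\oplus F^u)\;\cong\; T_t\RR_i \;\oplus\; \theta^*(F^s_{\theta_i(t)}\oplus F^u_{\theta_i(t)}).
$$
By Proposition \ref{prop:exp-map}, $\Phi$ is a local diffeomorphism near the zero section, so $D\Phi_{(t,0)}$ is a linear isomorphism onto $T_{\theta_i(t)}M$. Since $\theta_i$ is parameterized by arc length with $\theta_i'(t)\in E^c_{\theta_i(t)}$ a unit vector (Lemma \ref{lem:para}), $D\Phi_{(t,0)}$ sends $T_t\RR_i$ isometrically onto $E^c_{\theta_i(t)}$; and by the very construction of the exponential map along $F^s\oplus F^u$, it sends the vertical fiber isometrically onto $F^s_{\theta_i(t)}\oplus F^u_{\theta_i(t)}$. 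Hence $D\Phi_{(t,0)}$ is an isometric isomorphism from $T_{(t,0)}\theta^*(F^s\oplus F^u)$ to $T_{\theta_i(t)}M = E^c_{\theta_i(t)}\oplus F^s_{\theta_i(t)}\oplus F^u_{\theta_i(t)}$.

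Next I would use Lemma \ref{lem:inducing}, which gives $f\circ\theta = \theta\circ\theta^*f$, to conclude that the zero section is $\theta^*f$-invariant, and then apply the chain rule to the identity $\theta^*f = \Phi^{-1}\circ f\circ\Phi$:
$$
D(\theta^*f)_{(t,0)} \;=\; \bigl(D\Phi_{(\theta^*f(t),0)}\bigr)^{-1}\circ Df_{\theta_i(t)}\circ D\Phi_{(t,0)}.
$$
Under the identifications of Step 1, this conjugates $D(\theta^*f)|_{\text{zero section}}$ to $Df|_{\theta(\bigsqcup_{i=0}^{k-1}\RR_i)}$. I then define the invariant splitting on the zero section by pulling back the partially hyperbolic splitting of $f$:
$$
\theta^*(E^j)_t \;:=\; \bigl(D\Phi_{(t,0)}\bigr)^{-1}\!\bigl(E^j_{\theta_i(t)}\bigr),\qquad j=s,c,u.
$$
The $Df$-invariance of $E^s\oplus E^c\oplus E^u$ implies $D(\theta^*f)$-invariance of the pulled-back splitting, and because $D\Phi_{(t,0)}$ is an isometry, the partial hyperbolicity constants from Lemma \ref{lem:metric} transfer with equality rather than with distortion, so $\theta^*f$ is partially hyperbolic along $\bigsqcup_{i=0}^{k-1}\RR_i$ with the claimed rates. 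Finally, because $D\Phi_{(t,0)}$ maps $T_t\RR_i$ onto $E^c_{\theta_i(t)}$, one has $\theta^*(E^c)|_{\bigsqcup_{i=0}^{k-1}\RR_i}=T(\bigsqcup_{i=0}^{k-1}\RR_i)$, matching the form of the splitting asserted in the statement.

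There is no deep obstacle here; the work is in setting up the identification cleanly. The only point that requires a bit of care is the metric: one must verify that $D\Phi$ along the zero section is an isometry (not merely an isomorphism), so that the hyperbolicity constants of $Df$ pass directly to $D(\theta^*f)$ without having to bound how $D\Phi$ distorts norms. This isometry follows because the fibers of $\theta^*(F^s\oplus F^u)$ carry the pulled-back Riemannian metric, and $\theta_i'(t)$ is a unit vector in $E^c$ by Lemma \ref{lem:para}.
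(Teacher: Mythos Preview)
Your proposal is correct and follows essentially the same approach as the paper: both argue that $D(\theta^*f)$ along the zero section is conjugate to $Df$ along $\theta(\bigsqcup_{i=0}^{k-1}\RR_i)$ via $D\Phi$, and then pull back the partially hyperbolic splitting. The paper's version is terser---it simply notes that $D\Phi$ restricted to the tangent space of the zero section coincides with $\theta^*$ (this was effectively computed in the proof of Proposition~\ref{prop:exp-map}, where $D\Phi_{(0,0)}$ is shown to be the identity under the natural identification)---while you spell out the isometry explicitly; but the substance is identical.
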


\begin{proof}
	The diffeomorphism $f$ is partially hyperbolic on $M$, and $\theta_i(\RR_i)$ is tangent to $E^c$ everywhere for every $i=0,\cdots,k-1$. Moreover, we have
	$
	\Phi\circ\theta^*f=f\circ\Phi
	$
	on $\theta^*(F^s\oplus F^u)(\delta_1/C_1)$, and the tangent map $D\Phi$ satisfies
	$$
	D\Phi|_{T_{\bigsqcup_{i=0}^{k-1}\RR_i}\theta^*(F^s\oplus F^u)} \equiv
	\theta^*|_{\bigsqcup_{i=0}^{k-1}\RR_i}.
	$$
	This shows that $\theta^*f$ is partially hyperbolic on  $\bigsqcup_{i=0}^{k-1}\RR_i$ with partially hyperbolic splitting
	\begin{align*}
	    T_{\bigsqcup_{i=0}^{k-1}\RR_i}\theta^*(F^s\oplus F^u)
	    &=D\Phi^{-1}|_{\bigsqcup_{i=0}^{k-1}\RR_i}(E^s)\oplus D\Phi^{-1}(\theta(\bigsqcup_{i=0}^{k-1}\RR_i))\oplus D\Phi^{-1}|_{\bigsqcup_{i=0}^{k-1}\RR_i}(E^u) \\
	    &=\left(\theta^*(E^s)\oplus \theta^*(E^c)\oplus \theta^*(E^u)\right)|_{\bigsqcup_{i=0}^{k-1}\RR_i}.
	\end{align*}	
\end{proof}
    
The following theorem was established in \cite[Theorem 6.1]{HPS}, which shows the existence and properties of local invariant manifolds of $\theta^*f$.

\begin{theorem}[Theorem 6.1 of \cite{HPS}]\label{thm:HPS}
	The diffeomorphism $\theta^*f:\theta^*(F^s\oplus F^u)(\delta_1/C_1)\rightarrow \theta^*(F^s\oplus F^u)(\delta_1/2)$ satisfies:
	\begin{enumerate}
		\item Through $\bigsqcup_{i=0}^{k-1}\RR_i$, there exists $C^1$ manifolds $W^{cu}, W^{cs}\subset\theta^*(F^s\oplus F^u)(\delta_1/C_1)$ satisfying
		      $$
		      \theta^*f(W^{cs})\subset W^{cs} \qquad {\it and} \qquad 
		      W^{cu}\subset\theta^*f(W^{cu}).
		      $$
		      The boundary of manifolds $\partial W^{cs}\cup\partial W^{cu}\subset\partial \theta^*(F^s\oplus F^u)(\delta_1/C_1)$, and
		      $$
		      T_{\bigsqcup_{i=0}^{k-1}\RR_i}W^{cs}
		      =\theta^*(E^s)\oplus T(\bigsqcup_{i=0}^{k-1}\RR_i)
		       \qquad {\it and} \qquad
		      T_{\bigsqcup_{i=0}^{k-1}\RR_i}W^{cu}
		      =T(\bigsqcup_{i=0}^{k-1}\RR_i)\oplus \theta^*(E^u).
		      $$
		\item The manifold $W^{cs}$ consists of points whose positive orbit staying in $\theta^*(F^s\oplus F^u)(\delta_1/C_1)$, and $W^{cu}$ consists of points whose negative orbit staying in $\theta^*(F^s\oplus F^u)(\delta_1/C_1)$:
		      $$
		      W^{cs}=\bigcap_{n=0}^{+\infty}\theta^*f^{-n}\left(\theta^*(F^s\oplus F^u)(\delta_1/C_1)\right) \qquad {\it and} \qquad
		      W^{cu}=\bigcap_{n=0}^{+\infty}\theta^*f^n\left(\theta^*(F^s\oplus F^u)(\delta_1/C_1)\right).
		      $$
		\item The manifolds $W^{cs}$ and $W^{cu}$ are foliated by $C^1$ strong stable and unstable discs:
		$$
		W^{cs}=\bigcup_{t\in\bigsqcup_{i=0}^{k-1}\RR_i}W^s_t
		\qquad {\it and} \qquad
		W^{cu}=\bigcup_{t\in\bigsqcup_{i=0}^{k-1}\RR_i}W^u_t,
		$$
		which satisfy $T_tW^s_t=\theta^*(E^s)_t$ and $T_tW^u_t=\theta^*(E^u)_t$.
		For every $t\in\bigsqcup_{i=0}^{k-1}\RR_i$, they satisfy
		\begin{align*}
		     \theta^*f(W^s_t)\subset W^s_{\theta^*f(t)}; 
		     \qquad {\it and} \qquad
		     W^u_{\theta^*f(t)}\subset \theta^*f(W^u_t).
		\end{align*}
	    Actually, for every $t\in\bigsqcup_{i=0}^{k-1}\RR_i\subset\theta^*(F^s\oplus F^u)(\delta_1/C_1)$, we have
		\begin{align*}
		    W^s_t=
		    \Phi^{-1}\left({\cal F}^s_{\theta(t)}(\delta_1)\right)
		      \cap \theta^*(F^s\oplus F^u)(\delta_1/C_1), 
		    \quad
		    W^u_t=
		    \Phi^{-1}\left({\cal F}^u_{\theta(t)}(\delta_1)\right)
		      \cap \theta^*(F^s\oplus F^u)(\delta_1/C_1).
		\end{align*}
		Here $\Phi^{-1}$ is defined as the inverse of the local diffeomorphim $\Phi$ around $t$.     
	\end{enumerate}
\end{theorem}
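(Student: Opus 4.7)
The plan is to apply the classical graph transform machinery of Hirsch--Pugh--Shub directly to the bundle dynamics established in Proposition \ref{prop:PH-local-diffeo}. Since $\theta^*f$ is a local diffeomorphism that is partially hyperbolic at the invariant $1$-dimensional submanifold $\bigsqcup_{i=0}^{k-1}\RR_i$, with splitting $\theta^*E^s \oplus T(\bigsqcup \RR_i) \oplus \theta^*E^u$, all hypotheses of the HPS theorems on existence of local center-stable/unstable manifolds and strong stable/unstable subfoliations are satisfied. Although $\bigsqcup \RR_i$ is non-compact, the fact that $\theta^*f$ is conjugate via $\theta$ to the restriction of $f$ to the $f$-periodic set $\theta(\bigsqcup \RR_i)\subset M$ (which has compact closure) supplies the uniform hyperbolicity estimates needed for the graph transform to contract.

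First I would set up the graph transform in a Banach space of sections of $\theta^*(F^s\oplus F^u)$ over $\bigsqcup \RR_i$. The manifold $W^{cs}$ is obtained as the unique fixed point of the pullback graph transform acting on Lipschitz sections of the $\theta^*E^u$ bundle over a neighborhood of the zero section of $\theta^*E^s \oplus T(\bigsqcup \RR_i)$; $W^{cu}$ is obtained dually with the pushforward graph transform. Partial hyperbolicity provides strict contraction in the $C^0$-norm, and the $C^r$-section theorem of HPS upgrades regularity to $C^1$, yielding item $(1)$ together with the tangent space identifications at $\bigsqcup \RR_i$. For item $(2)$, forward invariance of $W^{cs}$ is built in; conversely, if a point $v \notin W^{cs}$ had all forward iterates in $\theta^*(F^s\oplus F^u)(\delta_1/C_1)$, then writing $v$ in graph-over-$W^{cs}$ coordinates and using the uniform normal expansion in $\theta^*E^u$ would force its $\theta^*E^u$-component to grow exponentially and eventually leave the chart --- a contradiction. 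The argument for $W^{cu}$ is symmetric.

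For item $(3)$, the strong stable foliation inside $W^{cs}$ and the strong unstable foliation inside $W^{cu}$ are produced by applying the HPS strong foliation theorem to $\theta^*f|_{W^{cs}}$ and $(\theta^*f)^{-1}|_{W^{cu}}$, where the $\theta^*E^s$/$\theta^*E^u$ bundle is hyperbolic relative to $T(\bigsqcup \RR_i)$. The identification $W^s_t = \Phi^{-1}\bigl({\cal F}^s_{\theta(t)}(\delta_1)\bigr)\cap \theta^*(F^s\oplus F^u)(\delta_1/C_1)$ follows from the intertwining $\Phi\circ\theta^*f = f\circ\Phi$: the pullback of the strong stable foliation ${\cal F}^s$ of $f$ by the local diffeomorphism $\Phi$ is $\theta^*f$-invariant, tangent to $\theta^*E^s$ along $\bigsqcup \RR_i$, and inherits the same exponential contraction under $\theta^*f$, so by uniqueness of the strong stable manifold of $\theta^*f$ through each point of $\bigsqcup \RR_i$ it must agree with the foliation produced by HPS. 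The main subtlety I expect is that $\theta^*f$ is only a local (not global) diffeomorphism, so the graph transform must be run on sections whose iterates remain in the domain of $\theta^*f$; this is precisely the role of the constant $\delta_1/C_1$ with $C_1 = 100\sup_{z\in M}\|Df(z)\|$, which guarantees that one application of $\theta^*f$ lands back in $\theta^*(F^s\oplus F^u)(\delta_1/2)$ and stays within reach of the inverse branch of $\Phi$.
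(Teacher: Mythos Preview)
Your proposal correctly sketches the standard HPS graph-transform argument, but note that the paper does not actually prove this theorem: it is stated as a direct citation of \cite[Theorem 6.1]{HPS}, with the observation that the hypotheses are furnished by Proposition \ref{prop:PH-local-diffeo}. So your approach is not merely ``essentially the same'' as the paper's --- you have supplied a proof outline where the paper supplies none, and your outline is faithful to how Hirsch--Pugh--Shub themselves prove it. The one point worth flagging is your handling of non-compactness: you are right that the uniform estimates come from the compact closure $\overline{\theta(\bigsqcup\RR_i)}\subset M$, but be explicit that the contraction constants in the graph transform (and the size $\delta_1$) depend only on $f$, $F^s$, $F^u$ and not on the particular periodic center curve $\theta$ or its period $k$ --- this uniformity is used crucially later in the paper (e.g.\ Remark \ref{rk:permanence} and the proof of Theorem \ref{Thm:Global-Perturb}).
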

\begin{remark}\label{rk:tangent}
	$\Phi(W^{cu})$ and $\Phi(W^{cs})$ are tangent to $E^c\oplus E^u$ and $E^s\oplus E^c$ everywhere, respectively. 
\end{remark}

We denote 
$d_{W^{cu}}(\cdot,\cdot)$ the distance induced by pulling back the Riemannian metric restricted on $\Phi(W^{cu})$. It is locally well defined on $W^{cu}$. The same for $d_{W^{cs}}(\cdot,\cdot)$ on $W^{cs}$.
Since  the expanding rate of $\theta^*f$ on $W^u_t$ and  contracting rate on $W^s_t$ are uniform for every $t\in\bigsqcup_{i=0}^{k-1}\RR_i$, we have the following corollary of Theorem \ref{thm:HPS}.

\begin{corollary}\label{cor:boundary}
	There exists a constant $c_1=c_1(f,X,F^s,F^u)>0$, such that
	\begin{itemize}
		\item for every $v\in \theta^*f^{-1}(W^{cu})$, we have
		$B_{W^{cu}}(v,c_1\delta_1)=
		\left\lbrace v'\in W^{cu}:~ d_{W^{cu}}(v,v')\leq c_1\delta_1 \right\rbrace 
		\subseteq W^{cu}$;
		\item for every $v\in\theta^*f(W^{cs})$, we have
		$B_{W^{cs}}(v,c_1\delta_1)=
		\left\lbrace v'\in W^{cs}:~ d_{W^{cs}}(v,v')\leq c_1\delta_1 \right\rbrace 
		\subseteq W^{cs}$. 
	\end{itemize}
	Moreover, $\Phi(B_{W^{cu}}(v,c_1\delta_1))$ and $\Phi(B_{W^{cs}}(v,c_1\delta_1))$ are imbedded disks in $M$ tangent to $E^c\oplus E^u$ and $E^s\oplus E^c$ respectively. 
\end{corollary}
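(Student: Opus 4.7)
The plan is to use two structural facts from Theorem \ref{thm:HPS}: first, $\partial W^{cu}$ lies entirely on the sidewall $\partial\theta^*(F^s\oplus F^u)(\delta_1/C_1)$ of the tube; second, $\theta^*f$ uniformly expands along the $W^u$-leaves by a factor at least $\lambda^{-1}$. Combining these, a point $v$ whose forward image $\theta^*f(v)$ already lies inside the tube cannot sit too close to $\partial W^{cu}$ in the unstable direction. The symmetric argument for $W^{cs}$ replaces $W^u$-expansion by $W^s$-contraction on backward iterates.

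Concretely, for the $W^{cu}$-statement I would take $v\in\theta^*f^{-1}(W^{cu})$; then $v\in W^{cu}$ by item (2) of Theorem \ref{thm:HPS}. Let $t\in\bigsqcup_i\RR_i$ be the basepoint of the unstable leaf through $v$, so $v\in W^u_t$. Via $\Phi$ the leaf $W^u_t$ corresponds to a piece of ${\cal F}^u_{\theta(t)}(\delta_1)$ and $\theta^*f|_{W^u_t}$ corresponds to $f|_{{\cal F}^u_{\theta(t)}}$, so the hyperbolicity constant \eqref{const-hyperbolic} gives
\[d_{W^u_t}(v,t)\le \lambda\cdot d_{W^u_{\theta^*f(t)}}(\theta^*f(v),\theta^*f(t)).\]
The right-hand side is controlled by the fibre norm $\|\theta^*f(v)\|\le\delta_1/C_1$ up to a uniform distortion constant $K$ coming from the $C^1$-closeness of the $W^u$-leaves to the fibres $\theta^*(F^u)$ (itself a consequence of $\measuredangle(F^u,E^u)<10^{-3}$, the tangency $T_tW^u_t=\theta^*(E^u)_t$, and the small tube radius, so $K$ can be taken close to $1$). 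Hence the fibre norm of $v$ is at most $\lambda K\cdot\delta_1/C_1$, leaving a uniform fibre-buffer of at least $(1-\lambda K)\delta_1/C_1>0$ to the sidewall $\partial W^{cu}$.

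Next I would translate this fibre buffer into a $W^{cu}$-metric buffer. Since $W^{cu}$ is foliated by the $W^u$-leaves with base $\bigsqcup_i\RR_i$, its intrinsic metric $d_{W^{cu}}$ is bi-Lipschitz comparable, with a uniform constant depending only on $f,F^s,F^u$, to the product of arc-length on $\RR_i$ and the fibre norm. Taking $c_1>0$ small enough that any $W^{cu}$-path of length at most $c_1\delta_1$ starting at $v$ stays inside this fibre buffer yields the containment $B_{W^{cu}}(v,c_1\delta_1)\subseteq W^{cu}$. Shrinking $c_1$ further if needed so that the ball fits inside one chart of the local diffeomorphism $\Phi$ of Proposition \ref{prop:exp-map}, the image $\Phi(B_{W^{cu}}(v,c_1\delta_1))$ becomes an embedded disk, tangent to $E^c\oplus E^u$ everywhere by Remark \ref{rk:tangent}. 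The $W^{cs}$-statement is identical after swapping $s\leftrightarrow u$ and using $W^s$-contraction on points $v\in\theta^*f(W^{cs})$.

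The main obstacle I anticipate is keeping all distortion constants uniform in $t\in\bigsqcup_i\RR_i$, which is non-compact. This however follows from the fact that $\theta:\bigsqcup_i\RR_i\to M$ factors through the compact set $\bigcup_{i=0}^{k-1}\theta_i(\RR_i)\subset M$ by $k$-periodicity of $\theta_0$, so the $C^1$-geometry of $W^{cu}$ and $W^{cs}$ relative to $\theta^*(F^u)$ and $\theta^*(F^s)$ is pulled back from bounded data on the compact manifold $M$, hence uniform in $t$.
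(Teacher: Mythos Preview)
Your proposal is correct and follows the same approach the paper sketches in one line (``Since the expanding rate of $\theta^*f$ on $W^u_t$ and contracting rate on $W^s_t$ are uniform for every $t\in\bigsqcup_{i=0}^{k-1}\RR_i$\ldots''); you have simply spelled out the details the paper leaves implicit. One minor point: your uniformity argument via ``factoring through the compact set $\bigcup_i\theta_i(\RR_i)$'' is slightly misstated, since that image need not be compact, but your real reason---that all the relevant $C^1$-geometry is pulled back by $\Phi$ from bounded data on the compact manifold $M$---is the correct one and already appears in your last sentence.
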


\subsection{Lipschitz shadowing of invariant manifolds}
\label{subsec:Lipschitz}

Let $X\in{\cal X}^r(M)$ be a $C^r$ vector field on $M$, and denote $X_{\tau}$ the time-$\tau$ map generated by $X$.  Let $\widetilde{X}$ be the lifting vector field on $\theta^*(F^s\oplus F^u)(\delta_1)$, which is defined as 
$$
\widetilde{X}(v)=D\Phi^{-1}\left( X(\Phi(v))\right), \qquad \forall v\in \theta^*(F^s\oplus F^u)(\delta_1).
$$
Since the leaf immersion $\theta$ is $C^1$-smooth, the lifting vector field $\widetilde{X}$ is a $C^0$ vector field on $\theta^*(F^s\oplus F^u)(\delta_1)$. However, for $\tau$ small enough, $\widetilde{X}$ generates a family of $C^1$ diffeomorphisms 
$$
\widetilde{X}_{\tau}:\theta^*(F^s\oplus F^u)(\delta_1/2)\rightarrow \theta^*(F^s\oplus F^u)(\delta_1),
$$
which satisfies
$\widetilde{X}_{\tau}=\Phi^{-1}\circ X_{\tau}\circ\Phi$. 

Since the exponential map $\Phi:\theta^*(F^s\oplus F^u)_{(t-\delta_1,t+\delta_1)}(\delta_1)\rightarrow M$ is a local diffeomorphism for every $t\in\RR_i$, we can define a local metric on $\theta^*(F^s\oplus F^u)(\delta_1)$ via pulling back the Riemannian metric on $M$ by $\Phi$. For every $v_1,v_2\in \theta^*(F^s\oplus F^u)(\delta_1/2)$ which are close enough, we denote $\tilde{d}(v_1,v_2)$ the distance between $v_1$ and $v_2$ defined by this induced metric, i.e.
$\tilde{d}(v_1,v_2)=d(\Phi(v_1),\Phi(v_2))$. For every set $K\subset\theta^*(F^s\oplus F^u)(\delta_1)$ and $\delta>0$, we denote
$$
B(K,r)=\left\lbrace v\in\theta^*(F^s\oplus F^u)(\delta_1):~
\tilde{d}(v,K)\leq \delta  \right\rbrace 
$$

\begin{lemma}\label{lem:C1-perturb}
	There exists $\tau'=\tau'(f,X,F^s,F^u)>0$, such that for every for every $\tau$ satisfying $|\tau|<\tau'$, the family of $C^1$ diffeomorphisms
	$$
	F_{\tau}=\widetilde{X}_{\tau}\circ\theta^*f:
	~\theta^*(F^s\oplus F^u)(\delta_1/C_1)\longrightarrow \theta^*(F^s\oplus F^u)(\delta_1)
	$$
	is well defined. The diffeomorphism $F_{\tau}$ converge to $\theta^*f$ in the $C^1$-topology as $\tau$ tends to $0$. Moreover, there exists a constant $L_0>1$ which depends on $X$ only, such that $\tilde{d}(v,\widetilde{X}_{\tau}(v))\leq L_0|\tau|$ for every $v\in\theta^*(F^s\oplus F^u)(\delta_1/2)$. This implies	
	$$
	\tilde{d}\left(F_{\tau}(v),\theta^*f(v)\right)
	=\tilde{d}\left(F_{\tau}(v),F_{0}(v)\right)
	\leq L_0\cdot|\tau|, 
	\qquad \forall v\in \theta^*(F^s\oplus F^u)(\delta_1/C_1).
	$$
\end{lemma}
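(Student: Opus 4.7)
The plan is to exploit the fact that, by construction, $\Phi$ is a local isometry between the pull-back metric $\tilde d$ on $\theta^*(F^s\oplus F^u)$ and the Riemannian metric on $M$, so the lifted flow $\widetilde X_\tau$ is literally the pull-back $\Phi^{-1}\circ X_\tau\circ\Phi$ wherever $\Phi$ is injective. With this identification, every assertion in the lemma reduces to a corresponding property of $X_\tau$ on $M$, transported through the uniform local diffeomorphism provided by Proposition \ref{prop:exp-map}.

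First I would fix $\tau'$. By Proposition \ref{prop:exp-map}, every $v\in\theta^*(F^s\oplus F^u)(\delta_1/2)$ admits a uniform neighbourhood of size $\delta_1$ on which $\Phi$ is a diffeomorphism onto its image. Since $\theta^*f$ maps $\theta^*(F^s\oplus F^u)(\delta_1/C_1)$ into $\theta^*(F^s\oplus F^u)(\delta_1/2)$, I pick $\tau'>0$ small enough so that for $|\tau|<\tau'$ and every $z\in M$, the flow segment $\{X_s(z):|s|\le\tau'\}$ has diameter less than $\delta_1/4$. Then $\widetilde X_\tau(v):=\Phi^{-1}(X_\tau(\Phi(v)))$ is unambiguously defined and remains in $\theta^*(F^s\oplus F^u)(\delta_1)$, so $F_\tau=\widetilde X_\tau\circ\theta^*f$ is a well-defined diffeomorphism from $\theta^*(F^s\oplus F^u)(\delta_1/C_1)$ into $\theta^*(F^s\oplus F^u)(\delta_1)$. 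For the $C^1$-convergence, since $X\in\mathcal X^r(M)$ with $r\ge 2$ and $X_\tau\to\mathrm{id}$ in $C^1$-topology as $\tau\to 0$, and since $\Phi$ is a $C^1$ local diffeomorphism with uniformly bounded $C^1$ geometry, the pull-back $\widetilde X_\tau$ converges in $C^1$-topology to the identity on any relatively compact piece of $\theta^*(F^s\oplus F^u)(\delta_1/2)$; composing with $\theta^*f$ gives $F_\tau\to\theta^*f$ in $C^1$-topology.

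Finally, for the Lipschitz estimate, set $L_0:=\max_{z\in M}\|X(z)\|+1>1$, which depends only on $X$. For any $v\in\theta^*(F^s\oplus F^u)(\delta_1/2)$, the identification $\Phi\circ\widetilde X_s(v)=X_s(\Phi(v))$ holds for all $|s|\le\tau'$ by the choice of $\tau'$, so
\begin{equation*}
\tilde d(v,\widetilde X_\tau(v))\;=\;d\bigl(\Phi(v),X_\tau(\Phi(v))\bigr)\;\le\;\int_0^{|\tau|}\bigl\|X(X_s(\Phi(v)))\bigr\|\,ds\;\le\;\|X\|\cdot|\tau|\;\le\;L_0|\tau|.
\end{equation*}
Applying this to $\theta^*f(v)$ in place of $v$ (which lies in $\theta^*(F^s\oplus F^u)(\delta_1/2)$ whenever $v\in\theta^*(F^s\oplus F^u)(\delta_1/C_1)$) yields the asserted bound $\tilde d(F_\tau(v),\theta^*f(v))\le L_0|\tau|$. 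The only real care needed is the routine bookkeeping that ensures each trajectory we compare under $\tilde d$ lies inside a region where $\Phi$ is a diffeomorphism; this is guaranteed by the uniformity of $\delta_1$ in Proposition \ref{prop:exp-map}, so no genuine obstacle arises beyond choosing $\tau'$ small enough.
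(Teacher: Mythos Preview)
The paper states this lemma without proof, treating it as an immediate consequence of the definitions (the pull-back metric $\tilde d$ satisfies $\tilde d(v_1,v_2)=d(\Phi(v_1),\Phi(v_2))$ by construction, and $\widetilde X_\tau=\Phi^{-1}\circ X_\tau\circ\Phi$). Your argument correctly supplies the elementary details---choosing $\tau'$ so that flow segments of $X$ have diameter $<\delta_1/4$, reading $C^1$-convergence off from $X_\tau\to\mathrm{id}$ through the uniform $C^1$ local diffeomorphism $\Phi$, and bounding $\tilde d(v,\widetilde X_\tau(v))=d(\Phi(v),X_\tau(\Phi(v)))$ by the arclength $\int_0^{|\tau|}\|X\|\,ds$---so there is nothing to correct.
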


The following theorem was established in \cite[Theorem 6.8]{HPS}, which states the permanence of invariant manifolds after perturbations.

\begin{theorem}[Theorem 6.8 of \cite{HPS}]\label{thm:permanence}
	There exists a contant $\tau''=\tau''(f,X,F^s,F^u)\in(0,\tau']$, such that for every $\tau$ satisfying $|\tau|<\tau''$, there exists an $F_{\tau}$-invariant section $\sigma_{\tau}:\bigsqcup_{i=0}^{k-1}\RR_i\rightarrow\theta^*(F^s\oplus F^u)(\delta_1)$, such that 
	$$
	\sigma_{\tau}\left(\bigsqcup_{i=0}^{k-1}\RR_i\right)=
	\bigcap_{n\in\mathbb{Z}}F_{\tau}^n\left(\theta^*(F^s\oplus F^u)(\delta_1/C_1)\right).
	$$
	The section $\sigma_{\tau}$ is $C^1$-smooth and vary continuously with respect to $\tau$. The sets
	$$
	W^{cs}_{\tau}=\bigcap_{n=0}^{+\infty}F_{\tau}^{-n}\left(\theta^*(F^s\oplus F^u)(\delta_1/C_1)\right) \qquad {\it and} \qquad
	W^{cu}_{\tau}=\bigcap_{n=0}^{+\infty}F_{\tau}^n\left(\theta^*(F^s\oplus F^u)(\delta_1/C_1)\right)
	$$
	are $C^1$-manifolds intersecting transversely at $\sigma_{\tau}\left(\bigsqcup_{i=0}^{k-1}\RR_i\right) 
	=W^{cs}_{\tau}\cap W^{cu}_{\tau}$.
	
	Moreover, the section $\sigma_{\tau}$ converges to $0$-section in $C^1$-topology as $\tau\rightarrow0$. This means for every $\e>0$, there exists $\tau_\e=\tau_\e(f,X,F^s,F^u,\e)>0$, such that  if $|\tau|<\tau_\e$, then the $F_{\tau}$-invariant section $\sigma_{\tau}$ satisfies $\|\sigma_{\tau}-0\|_{C^1}<\e$. In particular, this implies if $|\tau|<\tau_\e$, then
	$$
	\sigma_{\tau}\left(\bigsqcup_{i=0}^{k-1}\RR_i\right)
	~\subset~\theta^*(F^s\oplus F^u)(\e).
	$$
\end{theorem}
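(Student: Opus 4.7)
The plan is to derive Theorem \ref{thm:permanence} from the classical permanence theorem for normally hyperbolic invariant manifolds of Hirsch--Pugh--Shub (Theorem 6.8 of \cite{HPS}), by verifying that the hypotheses are satisfied in our setting. Proposition \ref{prop:PH-local-diffeo} provides the normally hyperbolic structure of $\theta^*f$ at the invariant set $\bigsqcup_{i=0}^{k-1}\RR_i$, with invariant splitting $\theta^*(E^s)\oplus T(\bigsqcup_{i=0}^{k-1}\RR_i)\oplus\theta^*(E^u)$ and contraction/expansion rates inherited uniformly from the partial hyperbolicity of $f$ via the local diffeomorphism $\Phi$. Lemma \ref{lem:C1-perturb} then ensures that $F_\tau$ converges to $\theta^*f=F_0$ in the $C^1$ topology as $\tau\to 0$, so for $|\tau|$ sufficiently small, $F_\tau$ is an arbitrarily small $C^1$ perturbation of a normally hyperbolic map.

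The existence of $\sigma_\tau$ will be obtained by a graph transform argument. Consider the space $\Sigma$ of continuous bounded sections $\sigma:\bigsqcup_{i=0}^{k-1}\RR_i\to\theta^*(F^s\oplus F^u)(\delta_1/C_1)$ equipped with the supremum norm induced by the fiberwise metric $\|\cdot\|_t$. The partial hyperbolicity yields invariant cone fields around $\theta^*(E^s)$ and $\theta^*(E^u)$ which persist under small $C^1$ perturbations of $\theta^*f$, so for $|\tau|<\tau''$ the map $F_\tau$ still admits the appropriate unstable and stable cones. Using these cones, I would define a graph transform $\Gamma_\tau:\Sigma\to\Sigma$ by specifying that the graph of $\Gamma_\tau\sigma$ is the preimage under the unstable-direction part of $F_\tau$ of the graph of $\sigma$, after projecting back to the base via the bundle projection. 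The dominated splitting forces $\Gamma_\tau$ to be a contraction on $\Sigma$; the unique fixed point is the desired invariant section $\sigma_\tau$.

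For the characterization $\sigma_\tau\bigl(\bigsqcup_{i=0}^{k-1}\RR_i\bigr)=\bigcap_{n\in\ZZ}F_\tau^n\bigl(\theta^*(F^s\oplus F^u)(\delta_1/C_1)\bigr)$, the inclusion $\subseteq$ is immediate from invariance, while the reverse inclusion follows by a standard cone argument: any point off the section would have nontrivial component in the unstable cone or in the stable cone, and this component grows exponentially under forward or backward iteration, forcing the orbit eventually to leave the tube. The $C^{0}$ and then $C^1$ smoothness of $\sigma_\tau$ is obtained by upgrading the graph transform to the closed subspace of Lipschitz sections with a prescribed Lipschitz bound, where the derivative equation $D\sigma_\tau=\pi^{su}DF_\tau|_{\mathrm{graph}\,\sigma_\tau}(\,\mathrm{id}+D\sigma_\tau\,)(\,\mathrm{id}+D(\theta^*f+\text{correction})\,)^{-1}$ sets up a second fiberwise contraction. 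Finally, the $C^1$ convergence $\sigma_\tau\to 0$ as $\tau\to 0$ is a consequence of continuous dependence of the fixed point on the parameter $\tau$: at $\tau=0$ the graph transform fixes the zero section, and continuity of $\tau\mapsto F_\tau$ in $C^1$ passes through to the fixed point.

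The main obstacle is the non-compactness of the base $\bigsqcup_{i=0}^{k-1}\RR_i$, since the classical HPS theorem is usually stated for compact normally hyperbolic invariant manifolds. However, this is resolved by observing that all hyperbolic constants (contraction rates, cone aperture, angle bounds, size of local invariant discs) are inherited uniformly from the ambient partially hyperbolic structure of $f$ on the compact manifold $M$ through the uniform local diffeomorphism $\Phi$ of Proposition \ref{prop:exp-map}. Consequently, the contraction estimates for $\Gamma_\tau$ and its derivative version hold uniformly over the whole non-compact base, which is all that is needed for the contraction mapping argument in the Banach space of bounded (Lipschitz, respectively $C^1$) sections.
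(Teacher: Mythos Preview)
Your proposal is correct and essentially reconstructs the Hirsch--Pugh--Shub argument that the paper is invoking: the paper does not give its own proof of this theorem at all, but simply cites \cite[Theorem~6.8]{HPS} and adds a remark (Remark~\ref{rk:permanence}) that the $C^1$ convergence of $\sigma_\tau$ to the zero section follows from the $C^1$ convergence of $W^{cs}_\tau$ and $W^{cu}_\tau$ established in HPS. Your observation about handling the non-compact base via the uniform constants inherited from the compact ambient manifold $M$ is exactly the point the paper relies on implicitly (and makes explicit in the remark, noting that $\tau_\varepsilon$ is independent of the choice of $\theta$).
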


\begin{remark}\label{rk:permanence}
Notice here for every $\e>0$, the constant $\tau_\e$ is independent of the choice of periodic center curves $\theta$.
In the proof of \cite[Theorem 6.8]{HPS}, the authors proved that $W^{cs}_{\tau}$ and $W^{cu}_{\tau}$ converge to $W^{cs}$ and $W^{cu}$ in $C^1$-topology when $\tau$ tends to zero. Thus $\sigma_{\tau}$ converge to $0$-section in $C^1$-topology.
\end{remark}

The following theorem shows that the section $\sigma_{\tau}$ is Lipschitz with respect to the parameter $\tau$. This means there exists some constant $L>1$, such that when $\tau$ is small enough in Theorem \ref{thm:permanence}, the new section $\sigma_{\tau}$ is contained in $(L\cdot|\tau|)$-neighborhood of zero section. In particular, if we want $\|\sigma_\tau\|_{C^0}<\e$, then we need to take $\tau_\e<\e/L$.

\begin{theorem}\label{thm:Lipschitz}
	There exist two constants $0<\tau_0=\tau_0(f,X,F^s,F^u)<\tau''$ and $L=L(f,X,F^s,F^u)>1$, such that for every $\tau\in(-\tau_0,\tau_0)$, every $t\in\RR_i$ and any $C^1$ map $\psi:E^s_{\theta_i(t)}(\delta_1)\oplus E^u_{\theta_i(t)}(\delta_1)\rightarrow E^c_{\theta_i(t)}(\delta_1)$ which satisfies $\psi(0)=0$, $\|\partial\psi/\partial s\|<10^{-3}$, and $\|\partial\psi/\partial u\|<10^{-3}$, then the $C^1$-submanifold
	$$
	D^{su}_{\theta_i(t)}
	=\exp_{\theta_i(t)}\left({\rm Graph}(\psi)\right)
	=\exp_{\theta_i(t)}\left(\left\lbrace v^{su}+\psi(v^{su}):v^{su}\in E^s_{\theta_i(t)}(\delta_1)\oplus E^u_{\theta_i(t)}(\delta_1)\right\rbrace \right)
	$$
	intersects $\Phi\circ\sigma_{\tau}((t-\delta_1,t+\delta_1))$ with a unique point $q=q(t,\tau,\psi)$. Moreover, if we denote
	$$
	\exp_{\theta_i(t)}^{-1}(q)=q^{su}+\psi(q^{su})\in 
	\left(E^s_{\theta_i(t)}(\delta_1)\oplus E^u_{\theta_i(t)}(\delta_1)\right)
	\oplus E^c_{\theta_i(t)}(\delta_1),
	$$
	then 
	$$
	\|q^{su}\|+\|\psi(q^{su})\|\leq L\cdot|\tau|.
	$$
	In particular, this implies
	$\sigma_{\tau}\left(\bigsqcup_{i=0}^{k-1}\RR_i\right)\subset\theta^*_i(F^s\oplus F^u)(L\cdot|\tau|)$.
\end{theorem}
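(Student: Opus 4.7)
The strategy is to derive the Lipschitz bound from a quantitative version of the graph-transform argument that underlies Theorem~\ref{thm:permanence}, and then extract the point $q$ via a transverse intersection in the exponential chart. Let $\mathcal{S}$ denote the Banach space of bounded continuous sections $\sigma:\bigsqcup_{i=0}^{k-1}\RR_i\to\theta^*(F^s\oplus F^u)(\delta_1/C_1)$ equipped with the fiberwise sup-norm (pulled back by $\Phi$). The proof of Theorem~\ref{thm:permanence} in \cite{HPS} realises $\sigma_\tau$ as the unique fixed point, near the zero section, of a graph transform $\Gamma_\tau:\mathcal{S}\to\mathcal{S}$ associated to $F_\tau$, and shows that $\Gamma_\tau$ is a uniform contraction with some rate $\mu<1$. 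The rate $\mu$ depends only on the partial hyperbolicity constants of $f$ and on $\measuredangle(F^s,E^s),\measuredangle(F^u,E^u)$, not on the periodic center curve $\theta$, because $\theta^*f$ is obtained by conjugating the local restriction of $f$ by $\Phi$, whose $C^1$-bounds were made uniform in Proposition~\ref{prop:exp-map}.

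The zero section is the fixed point of $\Gamma_0$, so to bound $\|\sigma_\tau\|_{C^0}$ it suffices to bound the displacement $\|\Gamma_\tau(0)-0\|_{C^0}$. Since $\theta^*f$ preserves the zero section fiberwise, Lemma~\ref{lem:C1-perturb} gives
\begin{equation*}
\|\Gamma_\tau(0)\|_{C^0}\leq C_2\,\tilde d(\widetilde{X}_\tau(0),0)\leq C_2L_0|\tau|,
\end{equation*}
where $C_2$ is a uniform constant from the fibered projection onto $F^s\oplus F^u$. The Banach fixed-point estimate then yields
\begin{equation*}
\|\sigma_\tau\|_{C^0}\leq\frac{1}{1-\mu}\|\Gamma_\tau(0)-0\|_{C^0}\leq\frac{C_2L_0}{1-\mu}\,|\tau|=:L_1|\tau|,
\end{equation*}
so in particular $\sigma_\tau\bigl(\bigsqcup_{i=0}^{k-1}\RR_i\bigr)\subset\theta^*(F^s\oplus F^u)(L_1|\tau|)$.

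For the intersection statement, Theorem~\ref{thm:permanence} also gives that $\sigma_\tau$ is $C^1$-close to the zero section, hence $\Phi\circ\sigma_\tau$ is a $C^1$-curve $C^1$-close to $\theta_i$, and in particular nearly tangent to $E^c$ throughout. The disk $D^{su}_{\theta_i(t)}$ passes through $\theta_i(t)$ and, by the hypotheses on $\psi$, is uniformly transverse to $E^c$. The implicit function theorem, applied in the parameter $\tau$ starting from the obvious transverse intersection at $\tau=0$, produces a unique intersection point $q=q(t,\tau,\psi)$ with $q=\Phi(\sigma_\tau(t'))$ for some $t'$ near $t$ satisfying $|t'-t|\leq L_2|\tau|$. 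In the chart $\exp_{\theta_i(t)}$ one has
\begin{equation*}
\exp_{\theta_i(t)}^{-1}(q)\;=\;\exp_{\theta_i(t)}^{-1}(\theta_i(t'))+\theta_{i*}(\sigma_\tau(t'))+O(|\tau|^2),
\end{equation*}
where the first summand lies within $O(|t'-t|^2)$ of the $E^c$-axis (since $\theta_i$ is tangent to $E^c$) and the second has norm at most $L_1|\tau|$ with direction in $F^s\oplus F^u$, hence close to $E^s\oplus E^u$. Projecting onto $E^s\oplus E^u$ and using $\measuredangle(F^{s/u},E^{s/u})<10^{-3}$ gives $\|q^{su}\|\leq L_3|\tau|$. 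Since $\psi(0)=0$ and $\|\partial\psi/\partial s\|,\|\partial\psi/\partial u\|<10^{-3}$, we obtain $\|\psi(q^{su})\|\leq 10^{-3}\|q^{su}\|$, so the desired bound $\|q^{su}\|+\|\psi(q^{su})\|\leq L|\tau|$ follows with $L:=2L_3$.

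The principal technical point, and the main obstacle, is to guarantee that every constant appearing above ($\mu$, $L_0$, $C_2$, $L_2$, $L_3$) is uniform over the infinitely many periodic center curves produced by Proposition~\ref{prop:center-curve}, whose lengths and positions on $M$ are uncontrolled. This uniformity is exactly what the bundle lifting of Section~\ref{sec:perturb} was engineered to provide: since $\Phi$ is a \emph{uniform} local diffeomorphism (Proposition~\ref{prop:exp-map}), since $\theta^*f$ is partially hyperbolic at $\bigsqcup_{i=0}^{k-1}\RR_i$ with hyperbolicity rates inherited directly from those of $f$ on the compact manifold $M$ (Proposition~\ref{prop:PH-local-diffeo}), and since $\widetilde{X}$ is obtained by pulling $X$ back through $\Phi$, all of the ingredients entering the HPS contraction estimate and the implicit function theorem depend only on $f$, $X$, $F^s$, $F^u$.
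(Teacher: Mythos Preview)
Your contraction-mapping strategy is viable and is a genuinely different route from the paper's proof, but one step needs care. You assert that the HPS proof realises $\sigma_\tau$ as the fixed point of a single graph transform $\Gamma_\tau$ that is a $C^0$-contraction on bounded sections. The forward graph transform, however, \emph{expands} the $F^u$-component of a section, so it is not a contraction in the sup norm on $\mathcal{S}$. What HPS actually does (and what your argument needs) is to treat $W^{cs}_\tau$ and $W^{cu}_\tau$ separately: the forward graph transform contracts sections into $\theta^*F^u$ (giving $W^{cs}_\tau$), the backward one contracts sections into $\theta^*F^s$ (giving $W^{cu}_\tau$), and $\sigma_\tau$ is their transverse intersection. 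Applying your Banach fixed-point displacement estimate to each of these two contractions yields $\|\sigma_\tau^s\|_{C^0},\|\sigma_\tau^u\|_{C^0}\le C|\tau|$, and the rest of your argument goes through. Alternatively you could recast the fixed-point problem in Lyapunov--Perron form (solving for the unique bounded orbit through each fiber), which \emph{is} a single contraction; but you should say which formulation you mean.

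The paper's proof is more geometric and does not invoke the internal contraction estimate of HPS at all. It argues directly that $\sigma_\tau(\bigsqcup\RR_i)$ is trapped in the intersection $B^s_{r}(\theta^*f^{-2}(W^{cu}))\cap B^u_{r}(\theta^*f^2(W^{cs}))$ of shrinking stable and unstable tubular neighbourhoods, and drives $r$ down geometrically by iterating $F_\tau$ and invoking Lemma~\ref{lem:tubular} at each step: one application of $\theta^*f$ shrinks the $s$-thickness by $\lambda$, and one application of $\widetilde X_\tau$ moves points by at most $L_0|\tau|$, which Lemma~\ref{lem:tubular} absorbs as long as $r\ge L_0|\tau|/\rho$. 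The iteration stops exactly at $r=L_0|\tau|/\rho=:L_1|\tau|$, yielding the Lipschitz bound. The final passage to an arbitrary transverse disk $D^{su}_{\theta_i(t)}$ is then a local-product-structure computation, as in your last paragraph.

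What each approach buys: yours is shorter and conceptually clean, but leans on the quantitative form of the HPS contraction (including uniformity of $\mu$ over all periodic center curves, which you correctly flag as the main point). The paper's approach is self-contained once Theorems~\ref{thm:HPS} and~\ref{thm:permanence} are accepted as black boxes, makes the Lipschitz constant explicit in terms of the geometric data $(\lambda,\rho,L_0)$, and its iteration scheme is reused later in the moving-forward estimates.
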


\begin{proof}
	From Theorem \ref{thm:HPS}, we know that
	$$
	W^{cu}
	=\bigcup_{t\in\bigsqcup_{i=0}^{k-1}\RR_i}W^u_t
	=\bigcup_{t\in\bigsqcup_{i=0}^{k-1}\RR_i}
	  \Phi^{-1}\left({\cal F}^u_{\theta(t)}(\delta_1)\right)
	  \cap\theta^*(F^s\oplus F^u)(\delta_1/C_1).
	$$ 
	Thus $\Phi(W^{cu})$ is tangent to $E^c\oplus E^u$ everywhere.
	
	For every $v\in\theta^*(F^s\oplus F^u)(\delta_1/C_1)$, we denote $t_v\in\bigsqcup_{i=0}^{k-1}\RR_i$, such that $v\in\theta^*(F^s\oplus F^u)_{t_v}(\delta_1/C_1)$.
	For every $0<\delta<\delta_1/C_1$, we denote
	$$
	W^s_{v}(\delta)=\Phi^{-1}\left({\cal F}^s_{\Phi(v)}(\delta)\right) 
	\qquad {\rm and} \qquad
	W^u_{v}(\delta)=\Phi^{-1}\left({\cal F}^u_{\Phi(v)}(\delta)\right).
	$$
	We can see that $W^s_{v}(\delta)$ and $W^u_{v}(\delta)$ are the strong stable and unstable manifolds of $v$ with respect to $\theta^*f$.
	Let $K^{cu}\subseteq W^{cu}$ and $K^{cs}\subseteq W^{cs}$, we denote 
	$$
	B^s_{\delta}(K^{cu})=\bigcup_{v\in K^{cu}}W^s_v(\delta)
	\qquad {\rm and} \qquad
	B^u_{\delta}(K^{cs})=\bigcup_{v\in K^{cs}}W^u_v(\delta).
	$$
	
	\begin{claim}
		Let $c_1$ be the constant in Corollary \ref{cor:boundary}.
		There exists $\tau'''=\tau'''(f,X,F^s,F^u)>0$, such that for every $\tau\in(-\tau''',\tau''')$, we have
		$$
		\sigma_{\tau}\left(\bigsqcup_{i=0}^{k-1}\RR^i\right)~\subset~
		B^s_{c_1\delta_1}\left(\theta^*f^{-2}(W^{cu})\right)
		~\cap~
		B^u_{c_1\delta_1}\left(\theta^*f^2(W^{cs})\right).
		$$
	\end{claim}
	\begin{proof}[Proof of the claim]	
	From the local product structure in Lemma \ref{lem:local-product}, both sets
	$B^s_{c_1\delta_1}\left(\theta^*f^{-2}(W^{cu})\right)$ and $B^u_{c_1\delta_1}\left(\theta^*f^2(W^{cs})\right)$ contain a neighborhood of $\bigsqcup_{i=0}^{k-1}\RR^i$. Theorem \ref{thm:permanence} shows that there exists $\tau'''>0$, such that for every $\tau\in(-\tau''',\tau''')$, we have
	$\sigma_{\tau}\left(\bigsqcup_{i=0}^{k-1}\RR^i\right)\subset
	B^s_{c_1\delta_1}\left(\theta^*f^{-2}(W^{cu})\right)
	\cap
	B^u_{c_1\delta_1}\left(\theta^*f^2(W^{cs})\right)$.
	\end{proof}
	
	Let $\rho>0$ be the constant in Lemma \ref{lem:tubular} and $L_0$ be the constant in Lemma \ref{lem:C1-perturb}, which both depend only on $f,X,F^s,F^u$. We denote 
	$$
	\tau_0=\min\left\lbrace \tau''', \frac{\rho c_1\delta_1}{L_0} \right\rbrace 
	\qquad {\rm and} \qquad
	L_1=\frac{L_0}{\rho}.
	$$

	\begin{claim}\label{claim:Lip-Tubular-nbhd}
	   For every $\tau$ satisfying $|\tau|<\tau_0$, we have
	   $$
	   \sigma_{\tau}\left(\bigsqcup_{i=0}^{k-1}\RR^i\right)~\subset~
	   B^s_{L_1|\tau|}\left(\theta^*f^{-2}(W^{cu})\right)\bigcap
	   B^u_{L_1|\tau|}\left(\theta^*f^2(W^{cs})\right).
	   $$
	\end{claim}
	
	\begin{proof}[Proof of the claim]
	   We first show that $\sigma_{\tau}\left(\bigsqcup_{i=0}^{k-1}\RR^i\right)\subset B^s_{L_1|\tau|}\left(\theta^*f^{-2}(W^{cu})\right)$. 
	   Since $\tau_0\leq\frac{\rho c_1\delta_1}{L_0}$, then for every $\tau\in(-\tau_0,\tau_0)\setminus\{0\}$($\tau=0$ is trivial), there exists $n>0$ and a sequence numbers $\left\lbrace r_j \right\rbrace_{j=0}^n$ satisfying 
	   $$
	   r_0=c_1\delta_1
	   ~>~r_1=\lambda^{\frac{1}{2}}c_1\delta_1
	   ~>~\cdots\cdots
	   ~>~r_{n-1}=\lambda^{\frac{n-1}{2}}c_1\delta_1
	   ~>~\frac{L_0|\tau|}{\rho}=L_1|\tau|
	   ~\geq~
	   r_n=\lambda^{\frac{n}{2}}c_1\delta_1.
	   $$
	   For every $j=0,\cdots,n-1$, we have
	   $$
	   \theta^*f\left(
	   B^s_{r_j}\left(\theta^*f^{-2}(W^{cu})\right)
	   \right) \subseteq 
	   B^s_{\lambda r_j}\left(\theta^*f^{-1}(W^{cu})\right)
	   $$
	   Since $L_0|\tau|\leq\rho r_{n-1}\leq \rho r_j$, Corollary \ref{cor:boundary} and Lemma \ref{lem:tubular} shows that  every $v\in \theta^*f^{-1}(W^{cu})$ satisfies
	   $$
	   B\left(W^s_v(\lambda r_j),L_0|\tau|\right)
	   \subseteq B^s_{\sqrt{\lambda} r_j}\left( W^{cu} \right).
	   $$
	   Thus 
	   \begin{align*}
	       \tilde{X}_{\tau}\left( 
	       B^s_{\lambda r_j}\left(\theta^*f^{-1}(W^{cu})\right) \right)
	       ~&\subseteq~
	       B\left( B^s_{\lambda r_j}\left(\theta^*f^{-1}(W^{cu})\right),
	       L_0|\tau| \right) \\
	       ~&\subseteq~ 
	       B\left( B^s_{\lambda r_j}\left(\theta^*f^{-1}(W^{cu})\right),
	       \rho r_j \right) 
	       ~\subseteq~
	        B^s_{r_{j+1}}\left(W^{cu}\right).
	   \end{align*}
	   This implies for every $j=0,\cdots,n-1$, we have
	   \begin{align*}
	       F_{\tau}\left( 
	       B^s_{r_j}\left(\theta^*f^{-2}(W^{cu})\right) 
	       \right)
	       ~\cap~
	       B^s_{c_1\delta_1}\left(\theta^*f^{-2}(W^{cu})\right)
	       ~\subseteq~
	       B^s_{r_{j+1}}\left(\theta^*f^{-2}(W^{cu})\right)
	   \end{align*}
	   Since $r_n\leq L_1|\tau|$, we have
	   $$
	   \sigma_{\tau}\left(\bigsqcup_{i=0}^{k-1}\RR^i\right)\subset B^s_{L_1|\tau|}\left(\theta^*f^{-2}(W^{cu})\right).
	   $$
	   The proof for $\sigma_{\tau}\left(\bigsqcup_{i=0}^{k-1}\RR^i\right)\subset B^u_{L_1|\tau|}\left(\theta^*f^2(W^{cs})\right)$
	   is the same.
	\end{proof}
	
    From Theorem \ref{thm:permanence}, the center-stable manifold $W^{cs}_{\tau}$ intersects the center-unstable manifold $W^{cu}_{\tau}$ transversely in $\sigma_{\tau}\left(\bigsqcup_{i=0}^{k-1}\RR_i\right)$. 
	This implies $\sigma_{\tau}(\RR_i)$ is contained in the $(2L_1|\tau|)$-neighborhood of $\RR_i$ following the metric $\tilde{d}(\cdot,\cdot)$ on $\theta^*(F^s\oplus F^u)(\delta_1/C_1)$. From the local product structure, there exists a unique point $q_{\tau}$ satisfying
	$$
	q_{\tau}^{su}~=~
	\exp_{\theta_i(t)}^{-1}(q_{\tau})~=~
	\exp_{\theta_i(t)}^{-1}\circ\Phi\circ\sigma_{\tau}
	\left((t-\delta_1,t+\delta_1)\right)~
	\pitchfork~ E^s_{\theta_i(t)}(\delta_1)\oplus E^u_{\theta_i(t)}(\delta_1).
	$$ 
	Moreover, it satisfies $\|q_{\tau}^{su}\|\leq 4L_1\cdot|\tau|$. 
	
	Now let $\psi:E^s_{\theta_i(t)}(\delta_1)\oplus E^u_{\theta_i(t)}(\delta_1)\rightarrow E^c_{\theta_i(t)}(\delta_1)$ be a $C^1$-map satisfying 
	$$
	\psi(0)=0, \qquad \|\partial\psi/\partial s\|<10^{-3},
	\qquad {\rm and} \qquad
	\|\partial\psi/\partial u\|<10^{-3}.
	$$
	The disk $D^{su}_{\theta_i(t)}=\exp_{\theta_i(t)}({\rm Graph}(\psi))$ is tangent to $10^{-2}$-cone field of $E^s\oplus E^u$. From the local product the  segment $\sigma_{\tau}\left((t-\delta_1,t+\delta_1)\right)$ intersects $D^{su}_{\theta_i(t)}$ with a unique point $q$. Then for $\exp_{\theta_i(t)}^{-1}(q)=q^{su}+\psi(q^{su})$, since $q^{su}_{\tau}$ is connected to $q$ by a sub-arc of $\Phi\circ\sigma_{\tau}\left((t-\delta_1,t+\delta_1)\right)$, we have the following estimation
	$$
	\|q^{su}_{\tau}-q^{su}\|<\psi(q^{su})\leq
	10^{-3}\cdot\|q^{su}\|.
	$$
	So we have
	$\|q^{su}\|\leq\|q^{su}_{\tau}\|/(1-10^{-3})$, and 
	$\|\psi(q^{su})\|\leq10^{-3}\cdot\|q^{su}\|\leq10^{-3}\cdot\|q^{su}_{\tau}\|/(1-10^{-3})$.
	Thus for $L=10L_1$, we have
	$$
	\|q^{su}\|+\|\psi(q^{su})\|\leq \frac{1+10^{-3}}{1-10^{-3}}\cdot4L_1\cdot|\tau|\leq L\cdot|\tau|,
	$$
	which proves the theorem.
\end{proof}

The following corollary of Theorem \ref{thm:Lipschitz} will be needed in Section \ref{subsec:local}.

\begin{corollary}\label{cor:Lipschitz}
	Let the map $\varphi^s_{\theta_i(t)}:E^s_{\theta_i(t)}(\delta_0)\rightarrow E^c_{\theta_i(t)}(\delta_0)\oplus E^u_{\theta_i(t)}(\delta_0)$
	define the local stable manifold of $\theta_i(t)$ as 
	${\cal F}^s_{\theta_i(t)}(\delta_0)=\exp_{\theta_i(t)}\left({\rm Graph}(\varphi^s_{\theta_i(t)})\right)$.
	With the same assumption as Theorem \ref{thm:Lipschitz}, we can  enlarge $L$, such that for every $\tau$ satisfying $|\tau|<\tau_0$,
	\begin{enumerate}
		\item 
		Let the map $\varphi^{su}_{\theta_i(t)}:E^s_{\theta_i(t)}(\delta_1)\oplus E^u_{\theta_i(t)}(\delta_1)\rightarrow E^c_{\theta_i(t)}(\delta_1)$ be defined as
		$$
		\varphi^{su}_{\theta_i(t)}(v^s+v^u)=\pi^c_{\theta_i(t)}\circ \varphi^s_{\theta_i(t)}(v^s),
		\qquad \forall v^s+v^u\in E^s_{\theta_i(t)}(\delta_1)\oplus E^u_{\theta_i(t)}(\delta_1).
		$$
		For every $|\tau|<\tau_0$, the $C^r$-submanifold $\exp_{\theta_i(t)}\left({\rm Graph}(\varphi^{su}_{\theta_i(t)})\right)$ intersects $\Phi\circ\sigma_{\tau}((t-\delta_1,t+\delta_1))$ with a unique point $q=q(t,\tau)$.
		\item 
		If we denote
		$$
		\exp_{\theta_i(t)}^{-1}(q)=q^s+\pi^c_{\theta_i(t)}\circ \varphi^s_{\theta_i(t)}(q^s)+q^u\in 
		E^s_{\theta_i(t)}(\delta_1)\oplus E^c_{\theta_i(t)}(\delta_1)\oplus E^u_{\theta_i(t)}(\delta_1),
		$$
		then it satisfies
		$$
		\|q^u-\pi^u_{\theta_i(t)}\circ \varphi^s_{\theta_i(t)}(q^s)\|\leq L\cdot|\tau|.
		$$
		
	\end{enumerate}

\end{corollary}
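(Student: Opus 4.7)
The plan is to deduce the corollary by specializing Theorem \ref{thm:Lipschitz} to the map $\psi = \varphi^{su}_{\theta_i(t)}$ and then unfolding the resulting Lipschitz estimate componentwise. First I would verify that $\varphi^{su}_{\theta_i(t)}$ satisfies the hypotheses of Theorem \ref{thm:Lipschitz}. Since $\varphi^s_{\theta_i(t)}(0)=0$ by Lemma \ref{lem:local-stable}, one has $\varphi^{su}(0) = \pi^c_{\theta_i(t)}\varphi^s_{\theta_i(t)}(0)=0$; since $\varphi^{su}$ is independent of the $u$-variable, $\partial\varphi^{su}/\partial u\equiv 0$; and since $\|\pi^c_{\theta_i(t)}\|\le 2$ by Lemma \ref{lem:metric} while $\|\partial\varphi^s/\partial s\|<10^{-3}$ by Lemma \ref{lem:local-stable}, the derivative $\partial\varphi^{su}/\partial s = \pi^c\circ(\partial\varphi^s/\partial s)$ is uniformly small (I would either note that the bound $10^{-3}$ in Theorem \ref{thm:Lipschitz} is only a schematic smallness threshold under which the proof runs unchanged, or shrink $\delta_1$ using item 3 of Lemma \ref{lem:local-stable} so that $\|\partial\varphi^s/\partial s\|<5\cdot 10^{-4}$ on the relevant disk, yielding the stated bound exactly). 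Applying Theorem \ref{thm:Lipschitz} then produces the unique intersection point $q(t,\tau)$ of $\exp_{\theta_i(t)}({\rm Graph}(\varphi^{su}_{\theta_i(t)}))$ with $\Phi\circ\sigma_\tau((t-\delta_1,t+\delta_1))$, which is part (1).

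For part (2), write $\exp_{\theta_i(t)}^{-1}(q) = q^s + q^u + \pi^c\varphi^s(q^s)$, so that in the notation of Theorem \ref{thm:Lipschitz} one has $q^{su} = q^s + q^u$ and $\varphi^{su}(q^{su}) = \pi^c\varphi^s(q^s)$. Theorem \ref{thm:Lipschitz} then gives $\|q^{su}\| + \|\pi^c\varphi^s(q^s)\| \le L|\tau|$. Using $\|\pi^s\|,\|\pi^u\|\le 2$ from Lemma \ref{lem:metric}, I would extract $\|q^s\|, \|q^u\| \le 2\|q^{su}\| \le 2L|\tau|$. Meanwhile $\varphi^s(0)=0$ together with $\|\partial\varphi^s/\partial s\|<10^{-3}$ gives $\|\varphi^s(q^s)\|\le 10^{-3}\|q^s\|$, hence $\|\pi^u\varphi^s(q^s)\|\le 2\cdot 10^{-3}\|q^s\|\le 4\cdot 10^{-3}\,L|\tau|$. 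The triangle inequality then yields
\[
\|q^u - \pi^u\varphi^s(q^s)\| \;\le\; \|q^u\| + \|\pi^u\varphi^s(q^s)\| \;\le\; (2 + 4\cdot 10^{-3})\,L|\tau|.
\]
Enlarging the constant $L$ to $(2+4\cdot 10^{-3})L$, which still depends only on $f,X,F^s,F^u$, completes the proof.

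The argument is essentially bookkeeping on top of Theorem \ref{thm:Lipschitz}; the only delicate point is checking that $\varphi^{su}$ genuinely lies in the class of graphing maps to which Theorem \ref{thm:Lipschitz} applies, since the composition with $\pi^c$ slightly inflates the derivative bound. I expect this cosmetic issue to be the one spot requiring care, and it is resolved either by observing that the threshold $10^{-3}$ in Theorem \ref{thm:Lipschitz} is not sharp or by a harmless shrinking of $\delta_1$ using the continuity of $\partial\varphi^s/\partial s$ at the zero section.
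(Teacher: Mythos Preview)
Your proposal is correct and follows essentially the same approach as the paper: verify that $\varphi^{su}_{\theta_i(t)}$ meets the hypotheses of Theorem \ref{thm:Lipschitz}, apply it to get the intersection point $q$ with $\|q^{su}\|+\|\varphi^{su}(q^{su})\|\le L|\tau|$, then use $\|\pi^s\|,\|\pi^u\|\le 2$ and $\|\partial\varphi^s/\partial s\|<10^{-3}$ to bound $\|q^u-\pi^u\varphi^s(q^s)\|$ by a constant times $L|\tau|$. Your care about the factor of $2$ from $\|\pi^c\|$ in checking the derivative bound on $\varphi^{su}$ is actually more scrupulous than the paper, which simply asserts $\|\partial\varphi^{su}/\partial s\|<10^{-3}$; as you say, either reading of the threshold or a harmless shrinking of $\delta_1$ resolves this cosmetic point.
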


\begin{proof}
	Since $\delta_1\leq\delta_0$,
	Item 2 of Lemma \ref{lem:metric} shows $\|\partial\varphi^s_{\theta_i(t)}/\partial s(v^s)\|<10^{-3}$ for every $v^s\in E^s_{\theta_i(t)}(\delta_1)$. 
	So the map $\varphi^{su}_{\theta_i(t)}$ satisfies
	$$
	\|\partial \varphi^{su}_{\theta_i(t)}/\partial s\|<10^{-3},
	\qquad {\rm and} \qquad
	\|\partial \varphi^{su}_{\theta_i(t)}/\partial u\|\equiv0.
	$$
	Theorem \ref{thm:Lipschitz} shows that $\exp_{\theta_i(t)}({\rm Graph}(\varphi^{su}_{\theta_i(t)}))$ intersects $\Phi\circ\sigma_{\tau}((t-\delta_1,t+\delta_1))$ with a unique point $q=q(t,\tau)$, and it satisfies
	$$
	\|q^s+q^u\|+\|\pi^c_{\theta_i(t)}\circ \varphi^s_{\theta_i(t)}(q^s)\|<L\cdot\tau.
	$$
	From the property of adapted metric we have chosen, we have $\|q^s\|<2L\tau$ and $\|q^u\|<2L\tau$.
	
	On other hand, we have
	$$
	\|\pi^u_{\theta_i(t)}\circ \varphi^s_{\theta_i(t)}(q^s)\|
	\leq \|\partial \varphi^s_{\theta_i(t)}/\partial s\|\cdot\|q^s\|
	\leq 10^{-3}\cdot 2L\tau.
	$$
	Thus
	$$
	\|q^u-\pi^u_{\theta_i(t)}\circ \varphi^s_{\theta_i(t)}(q^s)\|\leq
	\|q^u\|+\|\pi^u_{\theta_i(t)}\circ \varphi^s_{\theta_i(t)}(q^s)\| \leq 2(1+10^{-3})L\tau.
	$$
	So we enlarge $L$ to $2(1+10^{-3})L$ proving the corollary.
\end{proof}

\subsection{A dynamical $su$-foliation}
\label{subsec:su-foliation}

In this subsection, we introduce a dynamical $su$-foliation in the tubular neighborhood of $\bigsqcup_{i=0}^{k-1}\RR_i$ in  $\theta^*(F^s\oplus F^u)(\delta_1)$. We will define the leaf conjugacy between $\bigsqcup_{i=0}^{k-1}\RR_i$ and $\bigsqcup_{i=0}^{k-1}\sigma_{\tau}(\RR_i)$ by this $su$-foliation.

Let $F$ be a $C^\infty$ $u$-dimensional plane field which $C^0$-approximates $E^u$. We assume $0<\eta<10^{-3}$, and
$$
\measuredangle(F,E^u)=\max_{x\in M}\max_{v\in F_x,\|v\|=1}d_{T_xM}(v,E^u_x)<\eta.
$$
For every $y\in M$, we denote $D^u_y(\delta_1)=\exp_y(F_y(\delta_1))$ which is a $u$-dimensional imbedded disk in $M$, and
\begin{align}\label{Def:Dsu}
     D^{su}_z(\delta_1)=\bigcup_{y\in{\cal F}^s_z(\delta_1)}D^u_y(\delta_1) =\bigcup_{y\in{\cal F}^s_z(\delta_1)}\exp_y(F_y(\delta_1)).
\end{align}

For every $\eta>0$, we say a $C^1$-curve $\gamma:(a,b)\rightarrow M$ is tangent to the $\eta$-cone field of $E^c$, if for every $t\in(a,b)$, the vector $\gamma'(t)\in T_{\gamma(t)}M$ satisfies
$$
\|\pi^s_{\gamma(t)}(\gamma'(t))+\pi^u_{\gamma(t)}(\gamma'(t))\|
\leq\|\pi^c_{\gamma(t)}(\gamma'(t))\|.
$$

\begin{lemma}\label{lem:su-disk}
	Let $0<\eta<10^{-3}$ and $F$ be a $C^\infty$ $u$-dimensional plane field satisfying $\measuredangle(F,E^u)<\eta$. Then there exists a constant $0<\delta_2=\delta_2(M,f,F,\eta)\leq\delta_1$ satisfying the following properties:
	\begin{enumerate}
		\item For every $z_0\in M$, the set $D^{su}_{z_0}(\delta_2)=\bigcup_{y\in{\cal F}^s_{z_0}(\delta_2)}D^u_y(\delta_2)$ is a $C^r$-smooth local manifold and satisfies
		$$
		\measuredangle(T_wD^{su}_{z_0}(\delta_2),E^s_w\oplus E^u_w)<2\eta, \qquad \forall w\in D^{su}_{z_0}(\delta_2).
		$$
		Moreover, there exists a $C^r$-function $\psi^{su}_{z_0}:E^s_{z_0}(\delta_2/2)\oplus E^u_{z_0}(\delta_2/2)\rightarrow E^c_{z_0}(\delta_2/2)$ satisfies:
		\begin{itemize}
			\item $\exp^{-1}_{z_0}(D^{su}_{z_0}(\delta_2))
			\cap T_{z_0}M(\delta_2/2)
			={\rm Graph}(\psi^{su}_{z_0})
			=\left\lbrace v^{su}+\psi^{su}_{z_0}(v^{su}): 
			v^{su}\in E^s_{z_0}(\delta_2/2)\oplus E^u_{z_0}(\delta_2/2)\right\rbrace $;
			\item $\psi^{su}_{z_0}(0^{su})=0^c$, $\|\partial\psi^{su}_{z_0}/\partial s\|<2\eta$, and $\|\partial\psi^{su}_{z_0}/\partial u\|<2\eta$.
		\end{itemize}
		
		\item If $\gamma^c$ is a $C^1$-curve tangent to $E^c$ everywhere and centered at $z_0$ with radius $\delta_2$, then the set
		$$
		B^{su}_{z_0}(\gamma^c,\delta_2)=\bigcup_{z\in\gamma^c}D^{su}_z(\delta_2)
		$$
		is a neighborhood of $z_0$, and the family
		$$
		{\cal D}^{su}(\gamma^c)=\left\lbrace D^{su}_z(\delta_2):z\in\gamma^c\right\rbrace 
		$$
		is a $C^0$-foliation on $B^{su}_{z_0}(\gamma^c,\delta_2)$. For any $C^1$-curve $\gamma\subset B^{su}_{z_0}(\gamma^c,\delta_2)$ that is tangent to the $10^{-3}$-cone field of $E^c$ in $B^{su}_{z_0}(\gamma^c,\delta_2)$, the foliation ${\cal D}^{su}(\gamma^c)$ is transverse to $\gamma$ everywhere.
		
		\item For every $0<\tau<\delta_2$, there exists $0<\tau'<\delta_2$, such that for any center curve $\gamma^c$ centered at a point $z_0$ with radius $\delta_2$, if $\gamma_1$ and $\gamma_2$ are two $C^1$-curves tangent to the $10^{-3}$-cone field of $E^c$ in $B^{su}_{z_0}(\gamma^c,\delta_2)$, and $x_i,y_i$ are endpoints of $\gamma_i$ for $i=1,2$ which satisfying
		$$
		x_1,x_2\in D^{su}_{z'}(\delta_2),\qquad y_1,y_2\in D^{su}_{z''}(\delta_2),\qquad{\it and}\qquad z',z''\in\gamma^c,
		$$
		then the length $l(\gamma_1)\geq\tau$ implies $l(\gamma_2)\geq\tau'$.
	\end{enumerate}
\end{lemma}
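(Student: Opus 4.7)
The three items concern uniform estimates (independent of $z_0\in M$), so the plan is to parametrize $D^{su}_{z_0}$ as a $C^r$ graph over $E^s\oplus E^u$ using the smoothness of both ${\cal F}^s$ (Lemma \ref{lem:local-stable}) and $F$, then reduce items (2) and (3) to standard transversality and projection arguments.

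For item (1), I would parametrize $D^{su}_{z_0}(\delta_1)$ by the $C^r$-map $\Psi_{z_0}(v^s,v^u)=\exp_y(\widetilde{P}_y v^u)$, where $y=\exp_{z_0}(v^s+\varphi^s_{z_0}(v^s))$ lies on ${\cal F}^s_{z_0}(\delta_1)$ and $\widetilde{P}$ is a $C^\infty$ local trivialization of the bundle $F$. Since $\varphi^s_{z_0}$ is $C^r$ by Lemma \ref{lem:local-stable} and $F$ is $C^\infty$, $\Psi_{z_0}$ is $C^r$. At the origin its differential sends $(v^s,v^u)$ to $v^s+v^u\in E^s_{z_0}\oplus F_{z_0}$, whose image is within $\eta$ of $E^s_{z_0}\oplus E^u_{z_0}$ by hypothesis. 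By uniform continuity (compactness of $M$, the uniform $C^1$-control on $\varphi^s_{z_0}$ from Lemma \ref{lem:local-stable}(2)--(3), and $C^\infty$-smoothness of $F$), there is $\delta_2\leq\delta_1$ independent of $z_0$ for which $\Psi_{z_0}$ is a $C^r$-embedding on $E^s_{z_0}(\delta_2)\oplus F_{z_0}(\delta_2)$, and the tangent planes of its image stay within $2\eta$ of $E^s\oplus E^u$. The graph representation $\psi^{su}_{z_0}$ follows from the implicit function theorem applied to the projection $\pi^s_{z_0}+\pi^u_{z_0}:D^{su}_{z_0}(\delta_2)\to E^s_{z_0}\oplus E^u_{z_0}$, and the slope bounds $\|\partial\psi^{su}_{z_0}/\partial s\|,\|\partial\psi^{su}_{z_0}/\partial u\|<2\eta$ translate the $2\eta$ tangent estimate into graph-derivative bounds, using the near-orthogonality of the invariant splitting guaranteed by the adapted metric (Lemma \ref{lem:metric}).

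For items (2) and (3), I would argue as follows. Repeating the construction along a center curve $\gamma^c$ produces a $C^0$-family of $C^r$-disks varying continuously with the basepoint (since $\gamma^c$ is $C^1$ and the data in item (1) depend continuously on the basepoint). These disks are pairwise disjoint near $\gamma^c$ because their tangent planes are $2\eta$-close to $E^s\oplus E^u$ while $\gamma^c$ is tangent to $E^c$, whose angle to $E^s\oplus E^u$ is bounded below by $1-10^{-3}$; hence a standard tubular-flow argument shows that $B^{su}_{z_0}(\gamma^c,\delta_2)=\bigcup_{z\in\gamma^c}D^{su}_z(\delta_2)$ is a $C^0$-foliation on a neighborhood of $z_0$. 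Any $C^1$-curve in $B^{su}_{z_0}(\gamma^c,\delta_2)$ tangent to the $10^{-3}$-cone of $E^c$ is likewise transverse to the leaves for $\eta<10^{-3}$. For item (3) I would use the continuous projection $\pi^c:B^{su}_{z_0}(\gamma^c,\delta_2)\to\gamma^c$ sending each point to the $\gamma^c$-parameter of the leaf through it: both $\gamma_1$ and $\gamma_2$ project homeomorphically onto the arc of $\gamma^c$ from $z'$ to $z''$, and the cone condition forces the length of each $\gamma_i$ to be comparable (up to uniform multiplicative constants) to $d_{\gamma^c}(z',z'')$, so $\tau'$ can be chosen proportional to $\tau$.

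The main technical obstacle will be securing uniformity in $z_0$ in item (1). Although ${\cal F}^s$ is typically only H\"older as a foliation, each individual leaf ${\cal F}^s_{z_0}(\delta_0)$ is $C^r$ and its graph function $\varphi^s_{z_0}$ is controlled uniformly in $z_0$ by Lemma \ref{lem:local-stable}; combined with the $C^\infty$ regularity and uniform angle estimate for $F$, this should deliver the required uniform $\delta_2$. A secondary point is verifying that the same $\delta_2$ works in items (2) and (3) for every center curve $\gamma^c$ and for every comparison pair of curves $\gamma_1,\gamma_2$; this reduces to checking that the constants appearing in the tubular-neighborhood and projection arguments depend only on the uniform data ($M$, $f$, $F$, $\eta$) already fixed in item (1), which is the case since the cone and transversality bounds are pointwise.
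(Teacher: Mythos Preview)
Your approach to item (1) is essentially the same as the paper's: parametrize $D^{su}_{z_0}$ via the $C^r$ stable leaf and the $C^\infty$ bundle $F$, compute the differential at the origin, and use uniform continuity to get $\delta_2$ and the graph representation.

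For item (2), the paper takes a slightly different route that you may find cleaner. Rather than a tubular-flow argument based on the slope estimate from item (1), the paper first invokes Lemma~\ref{lem:cs-manifold} to build the $C^1$ center-stable submanifold ${\cal F}^{cs}_{\gamma^c}(\delta_2)=\bigcup_{z\in\gamma^c}{\cal F}^s_z(\delta_2)$, and then applies the inverse function theorem to the single map $\Psi:(F|_{{\cal F}^{cs}_{\gamma^c}(\delta_2)})(\delta_2)\to M$, $\Psi(v_y)=\exp_y(v_y)$. Since $D\Psi_{0_y}$ is the identity on $E^{cs}_y\oplus F_y\cong T_yM$, this immediately shows that \emph{all} the $u$-disks $D^u_y(\delta_2)$ for $y\in{\cal F}^{cs}_{\gamma^c}(\delta_2)$ are pairwise disjoint, and hence so are the $D^{su}_z(\delta_2)$ for $z\in\gamma^c$. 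Your argument would also work, but requires transferring the graph representation of each $D^{su}_z$ from the chart at $z$ to a common chart at $z_0$.

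For item (3), there is a gap in your quantitative claim. You assert that $l(\gamma_i)$ is comparable to $d_{\gamma^c}(z',z'')$ up to uniform \emph{multiplicative} constants, so that $\tau'$ can be taken proportional to $\tau$. But the slope bound from item (1) only gives that, in a chart at $z_0$, the $E^c$-coordinate of any point on the leaf $D^{su}_{z}$ differs from $z^c$ by at most $O(\eta)\cdot\|v^{su}\|\le O(\eta\delta_2)$. This yields only an \emph{additive} comparison $\bigl|\,l(\gamma_i)-d_{\gamma^c}(z',z'')\,\bigr|=O(\eta\delta_2)$, which gives no lower bound on $l(\gamma_2)$ once $\tau\ll\eta\delta_2$. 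A multiplicative bound would require the holonomy of ${\cal D}^{su}(\gamma^c)$ to be uniformly bi-Lipschitz on center-cone curves, which is not immediate from a $C^0$ foliation with $C^r$ leaves. The paper sidesteps this by a compactness/contradiction argument: assume sequences $\gamma^c_n$, $\mu^1_n$, $\mu^2_n$ with $l(\mu^1_n)=\tau$ and $l(\mu^2_n)<1/n$, pass to limits, and obtain a single point lying on two distinct leaves of the limiting foliation, contradicting item (2). (The remark following the proof notes that ${\cal D}^{su}(\gamma^c)$ is in fact uniformly $C^1$, which would rescue your multiplicative claim, but the paper explicitly chooses not to rely on this.)
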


\begin{proof}
	Recall that ${\cal F}^s_{z_0}(\delta_1)$ is a $C^r$-submanifold for every $z_0\in M$. The bundle $F$ is $C^0$-close to $E^u$, which implies $F_z$ is quasi-transverse to $E^s_z$ for every $z\in M$. Since the bundle $F$ is $C^\infty$-smooth, the set $D^{su}_{z_0}(\delta_1)$ is a $C^r$-submanifold. Moreover, we have
	$$
	T_zD^{su}_{z_0}(\delta_1)=E^s_z\oplus F_z, \qquad \forall z\in{\cal F}^s_{z_0}(\delta_1).
	$$
	From the uniform continuity, there exists $\delta_2\in(0,\delta_1]$, such that
	$$
	\measuredangle(T_zD^{su}_{z_0}(\delta_2),E^s_z\oplus E^u_z)<2\eta, \qquad \forall z\in D^{su}_{z_0}(\delta_2).
	$$
	The existence of function $\psi^{su}_{z_0}$ is a direct consequence of the property of $D^{su}_{z_0}(\delta_2)$. This proves the first item.
	
	If $\gamma^c$ is $C^1$-curve tangent to $E^c$ everywhere and centered at $z_0$ with radius $\delta_2$, then 
	$$
	{\cal F}^{cs}_{\gamma^c}(\delta_2)=\bigcup_{z\in\gamma^c}{\cal F}^s_z(\delta_2)
	$$ 
	is an imbedded $C^1$-submanifold tangent to $E^s\oplus E^c$ everywhere.
	
	Now we consider the map
	$$
	\Psi: 
	\left(F|_{{\cal F}^{cs}_{\gamma^c}(\delta_2)}\right)(\delta_2)
	=\left\lbrace v_y:~y\in{\cal F}^{cs}_{\gamma^c}(\delta_2), 
	     ~v_y\in F_y(\delta_2)\right\rbrace  
	\longrightarrow M
	$$
	which is defined as $\Psi(v_y)=\exp_y(v_y)\in M$. For every $0_y\in \left(F|_{{\cal F}^{cs}_{\gamma^c}(\delta_2)}\right)(\delta_2)$, we have
	$$
	D\Psi_{0_y}:T_y{\cal F}^{cs}_{\gamma^c}(\delta_2)\oplus F_y=E^{cs}_y\oplus F_y\longrightarrow T_yM=E^{cs}_y\oplus F_y
	$$
	satisfies $D\Psi_{0_y}=(id_{E^{cs}_y},id_{F_y})$.
	
	From the inverse function theorem and shrinking $\delta_2$ if necessary, $\Psi$ is a diffeomorphism from $\left(F|_{{\cal F}^{cs}_{\gamma^c}(\delta_2)}\right)(\delta_2)$ to its image in $M$. So $B^{su}_{z_0}(\gamma^c,\delta_2)$ is a neighborhood of $z_0$.
	Moreover, for every two points $y_1\neq y_2\in{\cal F}^{cs}_{\gamma^c}(\delta_2)$, we have
	$$
	D^u_{y_1}(\delta_2)\cap D^u_{y_2}(\delta_2)
	=\Psi(F^u_{y_1}(\delta_2))\cap\Psi(F^u_{y_1}(\delta_2))
	=\emptyset.
	$$
	So for every $z_1\neq z_2\in\gamma^c$, since ${\cal F}^s_{z_1}(\delta_2)\cap {\cal F}^s_{z_2}(\delta_2)=\emptyset$, we have
	$$
	D^{su}_{z_1}(\delta_2)\cap D^{su}_{z_2}(\delta_2)=\emptyset.
	$$
	That is ${\cal D}^{su}(\gamma^c)=\{D^{su}_z(\delta_2):z\in\gamma^c\}$ is a $C^0$-foliation on $B^{su}_{z_0}(\gamma^c,\delta_2)$. This proves the second item.
	
	Finally, we show that this local foliation is uniformly continuous. We prove it by contradiction. Assume there exist $0<\tau<\delta_2$, a sequence of $C^1$-curves $\gamma^c_n$ tangent to $E^c$ and centered at $z_n$ with radius $\delta_2$, and 
	two sequences of $C^1$-curves $\mu_n^1$ and $\mu_n^2$ tangent to $10^{-3}$-cone field in $B^{su}_{z_n}(\gamma^c_n,\delta_2)$, such that
	$$
	l(\mu_n^1)=\tau, \qquad {\rm and}\qquad l(\mu_n^2)<\frac{1}{n}.
	$$
	By taking the subsequence, we can assume that
	$$
	z_n\rightarrow z, \qquad \gamma^c_n\rightarrow\gamma^c,
	\qquad {\rm and} \qquad
	\mu_n^1\rightarrow\mu^1, \qquad  \mu_n^2\rightarrow y\in M,
	$$
	where $\{y\}\cup\mu^1\subset B^{su}_z(\gamma^c,\delta_2)$. The curve $\mu^1$ satisfies $l(\mu^1)=\tau$ and is tangent to the $10^{-3}$-cone field of $E^c$. So the curve $\mu^1$ is transverse to every $D^{su}_z(\delta_2)$ it intersects.
	
	Denote two endpoints of $\mu^1$ be $y^1$ and $y^2$. Since ${\cal D}^{su}(\gamma^c)$ is a $C^0$-foliation transverse to $E^c$ everywhere, it is also transverse to $\mu^1$, so we have $z_1\neq z_2\in\gamma^c$, such that
	$y^i\in D^{su}_{z_i}(\delta_2)$ for $i=1,2$.
	However, the point $y\in D^{su}_{z_1}(\delta_2)\cap D^{su}_{z_2}(\delta_2) $, which contradicts that ${\cal D}^{su}(\gamma^c)$ is a foliation. This proves the lemma.
\end{proof}

\begin{remark}
	The third item shows that the foliation ${\cal D}^{su}(\gamma^c)$ is uniformly continuous. The constant $l'$ depends only on $F$ and $l$, and is independent on $\gamma^c$.
	Actually, the foliation ${\cal D}^{su}(\gamma^c)$ is uniformly $C^1$-smooth in $B^{su}_{z_0}(\gamma^c,\delta_2)$. Becasuse the stable foliation ${\cal F}^s$ is uniformly $C^1$-smooth in ${\cal F}^{cs}_{\gamma^c}(\delta_2)$ and the bundle $F$ is $C^{\infty}$-smooth, see \cite{PSW1}. We won't need this property in the future. 
\end{remark}

\begin{corollary}\label{cor:su-foliation}
	Let $F$ be a $C^\infty$ $u$-dimensional plane field satisfying $\measuredangle(F,E^u)<\eta<10^{-3}$, and $\delta_2\in(0,\delta_1]$ be the constant in Lemma \ref{lem:su-disk}. For every $t\in\bigsqcup_{i=0}^{k-1}\RR_i$, we denote
	$$
	\tilde{D}^{su}_t(\delta_2)=
	\Phi^{-1}\left(D^{su}_{\theta(t)}(\delta_2)\right).
	$$
	Then
	$$
	{\cal D}^{su}(\delta_2/10)=
	\left\lbrace  \tilde{D}^{su}_t(\delta_2)\cap\theta^*(F^s\oplus F^u)(\delta_2/10):~t\in\bigsqcup_{i=0}^{k-1}\RR_i \right\rbrace 
	$$                        
	is a $C^0$-foliation of $\theta^*(F^s\oplus F^u)(\delta_2/10)$. This foliation satisfies
	\begin{enumerate}
		\item It is transverse to $(\Phi^{-1})^*(E^c)$ everywhere.
		\item For every $0<\tau<\delta_2$, there exists $0<\tau'<\delta_2$, such that if $\tilde{\gamma}$ is a $C^1$-curves tangent to the $10^{-3}$-cone field of $(\Phi^{-1})^*(E^c)$ in $\theta^*(F^s\oplus F^u)(\delta_2/10)$, and $x_1,x_2$ are endpoints of $\tilde{\gamma}$ for $i=1,2$ which satisfying
		$$
		l(\tilde{\gamma})\geq\tau \qquad {\it and} \qquad
		x_i\in\tilde{D}^{su}_{t_i}(\delta_2),~~i=1,2,
		$$
		then $|t_1-t_2|\geq\tau'$.
	\end{enumerate}

\end{corollary}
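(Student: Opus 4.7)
The plan is to transfer the local $su$-foliation on $M$ produced by Lemma \ref{lem:su-disk} to the bundle $\theta^*(F^s\oplus F^u)$ via the local diffeomorphism $\Phi$ from Proposition \ref{prop:exp-map}. Since $\delta_2\leq\delta_1$, for every $t_0\in\RR_i$ the restriction
$$
\Phi:~\theta^*(F^s\oplus F^u)_{(t_0-\delta_1,t_0+\delta_1)}(\delta_1)\longrightarrow M
$$
is a diffeomorphism onto its image. The curve $\theta_i|_{(t_0-\delta_2,t_0+\delta_2)}$ is a $C^1$-center curve of length at most $2\delta_2$ centered at $\theta_i(t_0)$, so Lemma \ref{lem:su-disk}(2) produces a $C^0$-foliation $\{D^{su}_{\theta_i(t)}(\delta_2):|t-t_0|<\delta_2\}$ on a neighborhood of $\theta_i(t_0)$ in $M$. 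I would first pull these local foliations back through $\Phi^{-1}$ and check that any two of them agree on overlaps, because each leaf on the base is determined by the geometric data $D^{su}_{\theta(t)}(\delta_2)$ alone, independently of the interval in which $t$ was placed.

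Next I would verify on the tube $\theta^*(F^s\oplus F^u)(\delta_2/10)$ that the lifted leaves $\tilde D^{su}_t(\delta_2)\cap\theta^*(F^s\oplus F^u)(\delta_2/10)$ are pairwise disjoint for distinct $t\in\bigsqcup_{i=0}^{k-1}\RR_i$ and cover the tube. For a point $v$ in the tube with base parameter $t$, the fiber-radius bound $\delta_2/10$ is a safety margin forcing the entire relevant neighborhood of $v$ to lie inside a single $\Phi$-injectivity domain $\theta^*(F^s\oplus F^u)_{(t-\delta_1,t+\delta_1)}(\delta_1)$. Disjointness of the leaves $D^{su}_{\theta(t)}(\delta_2)$ on $M$, as given by Lemma \ref{lem:su-disk}(2), then lifts to disjointness on the bundle side, and existence of a foliated neighborhood of $\theta(t)$ on $M$ provides the unique lifted leaf through $v$.

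Finally the two enumerated properties transfer through $\Phi$. Property $(1)$ is immediate, since $D\Phi$ sends $(\Phi^{-1})^*(E^c)$ to $E^c$ along $\bigsqcup_{i=0}^{k-1}\RR_i$ and the lifted leaves push to $D^{su}$-leaves, on which Lemma \ref{lem:su-disk}(2) guarantees transversality to $E^c$. For property $(2)$, a $C^1$-curve $\tilde\gamma$ tangent to the $10^{-3}$-cone field of $(\Phi^{-1})^*(E^c)$ pushes to $\Phi\circ\tilde\gamma$ tangent to a slightly enlarged cone field of $E^c$ on $M$, after shrinking $\delta_2$ so that $D\Phi$ deviates little from the bundle identification. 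Lemma \ref{lem:su-disk}(3) applied to $\Phi\circ\tilde\gamma$ yields a $\tau''>0$ such that its endpoints lie in $D^{su}$-leaves through base points separated by at least $\tau''$ in arc-length along $\theta_i$, and since $\theta_i$ is arc-length parameterized this translates directly into parameter separation $|t_1-t_2|\geq\tau'$. The main obstacle I expect is the disjointness/covering step above: one must calibrate the fiber safety factor $1/10$ against the tilt of $D^{su}$ (controlled by $\eta<10^{-3}$) so that for every $v$ in the tube the $\Phi$-injectivity window is wide enough in the base direction to capture every candidate parameter $t$ whose leaf could meet $v$, thereby converting Lemma \ref{lem:su-disk}(2) on $M$ into both uniqueness and existence of the lifted leaf.
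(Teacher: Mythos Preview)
Your proposal is correct and matches the paper's intended argument: the paper states Corollary~\ref{cor:su-foliation} without proof, relying on the reader to pull back the local $su$-foliation of Lemma~\ref{lem:su-disk} through the uniform local diffeomorphism $\Phi$ of Proposition~\ref{prop:exp-map}, exactly as you outline. Your identification of the key calibration issue (that the fiber radius $\delta_2/10$ and the tilt bound $2\eta<2\cdot10^{-3}$ together force each lifted leaf to project to a base interval short enough that disjointness reduces to Lemma~\ref{lem:su-disk}(2) inside a single injectivity chart) and your use of the arc-length parameterization of $\theta_i$ as the second comparison curve in Lemma~\ref{lem:su-disk}(3) to convert length into parameter separation are precisely the points one has to check.
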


\subsection{Moving forward by global perturbations}
\label{subsec:global}

In this subsection, we prove Theorem \ref{Thm:Global-Perturb}. During this subsection, we assume $f\in{\rm PH}^r(M)$ with  one-dimensional oriented center bundle $E^c$, and $Df$ preserves the orientation of $E^c$.
Let $X\in{\cal X}^r(M)$ be a $C^r$ vector field which is positively transverse to $E^s\oplus E^u$. Let $F^s$ and $F^u$ are two $C^\infty$ bundles that are $C^0$-close to $E^s$ and $E^u$ respectively, where $TM=F^s\oplus E^c\oplus F^u$ is a direct sum. Let $\theta:\bigsqcup_{i=0}^{k-1}\RR_i\rightarrow M$ be a periodic normally hyperbolic $C^1$-leaf immersion, and
$$
\theta^*f:\theta^*(F^s\oplus F^u)(\delta_1/C_1)\rightarrow \theta^*(F^s\oplus F^u)(\delta_1/2)
$$
be the lifting bundle dynamics.

 Denote by $\tilde{X}$ the lifting vector field of $X$ on $\theta^*(F^s\oplus F^u)(\delta_1)$, and
$$
F_{\tau}=\tilde{X}_{\tau}\circ\theta^*f: \theta^*(F^s\oplus F^u)(\delta_1/C_1)\rightarrow \theta^*(F^s\oplus F^u)(\delta_1)
$$
be the lifting dynamics of $X_{\tau}\circ f:M\rightarrow M$ for every $\tau\in(-\tau_0,\tau_0)$.

We want to show that if the vector field $X$ is positively transverse to $E^s\oplus E^u$, then the perturbation generated by $X_{\tau}$ actually pushes the point moving forward, i. e.
there exists $\eta>0$, such that if $\measuredangle(F,E^u)<\eta$, for every $y\in M$ and let $D^{su}_{y}$ be the local disk defined in (\ref{Def:Dsu}), then for $\tau>0$ small enough,  we have
	$$
	F_{\tau}\big(D^{su}_{y}\big)~>~D^{su}_{f(y)},
	$$ 
in a $(L\cdot\tau)$-neighborhood of $f(y)$, where $L$ is give by Theorem \ref{thm:Lipschitz}. Here the order $``>''$ comes from the orientation of $E^c$ around $f(y)$, where $F_{\tau}\big(D^{su}_{y}\big)$ and $D^{su}_{f(y)}$ are graphs of two smooth function from $E^s_{f(y)}\oplus E^u_{f(y)}$ to $E^c_{f(y)}$.

To describe the moving forward phenomenon accurately, we need to use the language of leaf conjugacy and comparing the order of conjugacy point in bundle dynamics.

\begin{proposition}\label{Prop:Moving-forward}
	Let $X$ be a $C^r$-vector field which is positively transverse to $E^s\oplus E^u$ on $M$. There exist $\eta=\eta(M,f,X)>0$, a $C^\infty$ $u$-dimensional plane field $F$ satisfying $\measuredangle(F,E^u)<\eta$, and $\tau_1=\tau_1(M,f,X,\eta,F)>0$, such that
	\begin{itemize}
		\item let $\delta_2=\delta_2(M,f,\eta,F)>0$ be the constant in Lemma \ref{lem:su-disk} and Corollary \ref{cor:su-foliation}, and
		$$
		{\cal D}^{su}(\delta_2/10)=\left\lbrace \tilde{D}^{su}_{t}(\delta_2)\cap 
		\theta^*(F^s\oplus F^u)(\delta_2/10):
		t\in\bigsqcup_{i=0}^{k-1}\RR_i \right\rbrace 
		$$ 
		be the $su$-foliation on $\theta^*(F^s\oplus F^u)(\delta_2/10)$ defined as Corollary \ref{cor:su-foliation};
		
		\item for every $\tau\in(-\tau_1,\tau_1)$, let $\sigma_{\tau}:\bigsqcup_{i=0}^{k-1}\RR_i\rightarrow \theta^*(F^s\oplus F^u)(\delta_2/10)$ be the $F_{\tau}$-invariant section defined in Theorem \ref{thm:permanence} and Theorem \ref{thm:Lipschitz};
		
		\item let the leaf conjugacy
		$h_{\tau}:\bigsqcup_{i=0}^{k-1}\RR_i\rightarrow \bigsqcup_{i=0}^{k-1}\sigma_{\tau}(\RR_i)$
	    be defined as 
		$$
		h_{\tau}(t)=\sigma_{\tau}(\RR_i)\cap\tilde{D}^{su}_{t}(\delta_2),
		\qquad \forall t\in\RR_i.
		$$
	\end{itemize}
	We have
	\begin{enumerate}
		\item for every $0<\tau<\tau_1$, there exists $\Delta_{\tau}>0$ such that
		$$
		F_{\tau}\circ h_{\tau}(t)~>~
		h_{\tau}\left(\theta^*f(t)+\Delta_{\tau}\right),
		\qquad \forall t\in\RR_i,\quad i=0,\cdots,k-1.
		$$
		
		\item for every $-\tau_1<\tau<0$, there exists $\Delta_{\tau}>0$ such that
		$$
		F_{\tau}\circ h_{\tau}(t)~<~
		h_{\tau}\left(\theta^*f(t)-\Delta_{\tau}\right),
		\qquad \forall t\in\RR_i,\quad i=0,\cdots,k-1.
		$$
	\end{enumerate}
    Here the order on $\sigma_{\tau}(\RR_{i+1})$ inherits from $\RR_{i+1}$~$(\RR_k=\RR_0)$.
\end{proposition}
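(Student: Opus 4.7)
The plan is to measure the displacement produced by $F_\tau$ along $\sigma_\tau(\RR_{i+1})$ via the holonomy projection of the $su$-foliation ${\cal D}^{su}(\delta_2/10)$, and to show that the positive center component of $X$ dominates any first-order error coming from the fact that $\theta^*f$ does not preserve ${\cal D}^{su}$ exactly. Since $X$ is positively transverse to $E^s\oplus E^u$ on the compact manifold $M$, the continuous function $X^c(z):=\pi^c_z(X(z))$ has a uniform lower bound $\beta:=\min_{z\in M}X^c(z)>0$. I will pick $\eta\in(0,10^{-3})$ small enough in terms of $\beta$ and $\|X\|$, choose a $C^\infty$ $u$-plane field $F$ with $\measuredangle(F,E^u)<\eta$, and then invoke Lemma \ref{lem:su-disk}, Corollary \ref{cor:su-foliation}, Theorem \ref{thm:permanence} and Theorem \ref{thm:Lipschitz} to obtain the constants $\delta_2,\tau_0,L$; the exact smallness condition on $\eta$ will be fixed at the end of the estimate.

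On a neighborhood of the zero section I define a projection $\pi_{su}$ by sending $w$ to the unique $s$ with $w\in\tilde D^{su}_s(\delta_2)$; by Corollary \ref{cor:su-foliation} this is well-defined and uniformly Lipschitz, and by Lemma \ref{lem:su-disk} the leaves are at angle at most $2\eta$ from $\pi^{-1}(E^s\oplus E^u)$. The statement to prove becomes $\pi_{su}(F_\tau(h_\tau(t)))-\theta^*f(t)\ge \Delta_\tau>0$, uniformly in $t$. I decompose $F_\tau(h_\tau(t))=\tilde X_\tau(\theta^*f(h_\tau(t)))$ and estimate the two pieces separately.

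For the $\theta^*f$-piece (the error): by Theorem \ref{thm:Lipschitz}, $\|h_\tau(t)\|\le L|\tau|$ in the fibers, so writing $\Phi(h_\tau(t))=\exp_y(w)$ with $y\in{\cal F}^s_{\theta(t)}(\delta_2)$ and $\|w\|=O(|\tau|)$ one has $f(\exp_y(w))=\exp_{f(y)}(Df|_y w)+O(\|w\|^2)$. The partial hyperbolicity of $f$ gives $\measuredangle(Df\cdot F,E^u)\le \lambda\eta<\eta$, so $Df|_y w$ lies within angle $2\eta$ of $F_{f(y)}$, placing $f(\Phi(h_\tau(t)))$ within distance $O(\eta|\tau|)$ of $D^u_{f(y)}\subset D^{su}_{\theta(\theta^*f(t))}$. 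Pulling back by $\Phi^{-1}$ yields $|\pi_{su}(\theta^*f(h_\tau(t)))-\theta^*f(t)|\le C_1\eta|\tau|$. For the $\tilde X_\tau$-piece (the main term): the flow displaces any point by $\tau\tilde X+O(\tau^2)$, and by continuity the $c$-component $X^c$ at any point within $L|\tau|$ of the base is at least $\beta/2$ once $\tau_1$ is small. Since the leaves of ${\cal D}^{su}$ have angle at most $2\eta$ with $E^s\oplus E^u$, the change in $\pi_{su}$ produced by a vector $v$ equals $\pi^c(v)-O(\eta)\|\pi^{su}(v)\|$, so
\[ \pi_{su}\bigl(\tilde X_\tau(\theta^*f(h_\tau(t)))\bigr)-\pi_{su}\bigl(\theta^*f(h_\tau(t))\bigr)\ \ge\ \tau\bigl[\beta/2-C_2\eta\|X\|\bigr]\ \ge\ (\beta/3)\tau \]
for $\eta$ and $\tau_1$ small.

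Combining the two estimates yields $\pi_{su}(F_\tau(h_\tau(t)))-\theta^*f(t)\ge (\beta/3)\tau - C_1\eta|\tau|$, and choosing $\eta<\beta/(6C_1)$ at the outset gives the required $\Delta_\tau:=(\beta/6)\tau>0$, uniformly in $t$ since every constant involved is a sup-norm on the compact $M$. The case $\tau<0$ is completely symmetric: $\tilde X_\tau$ then moves in the negative center direction with the same magnitude, giving $\pi_{su}(F_\tau(h_\tau(t)))<\theta^*f(t)-(\beta/6)|\tau|$. The main obstacle is the error estimate for the $\theta^*f$-piece: one must verify that the non-invariance of the smooth approximation ${\cal D}^{su}$ under $\theta^*f$ produces a deviation which is genuinely linear in $\eta|\tau|$, and the key ingredients for this are the Lipschitz shadowing bound $\|h_\tau(t)\|\le L|\tau|$ from Theorem \ref{thm:Lipschitz} together with the domination $E^s\oplus E^c\oplus E^u$, which ensures $Df\cdot F$ stays within angle $\eta$ of $E^u$ so that the error is absorbable by the positive main term $(\beta/3)\tau$.
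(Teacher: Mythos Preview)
Your proposal is correct and follows essentially the same approach as the paper's proof: both use the Lipschitz shadowing bound $\|h_\tau(t)\|\le L|\tau|$ from Theorem~\ref{thm:Lipschitz} to show that applying $\theta^*f$ displaces the leaf parameter by only $O(\eta L|\tau|)$, while the flow $\widetilde X_\tau$ contributes a positive center displacement of order $b_0\tau$, and then choose $\eta$ small relative to $b_0/(L\|Df\|+\|X\|)$ so the main term dominates. The only difference is packaging --- you work with the holonomy projection $\pi_{su}$, while the paper computes in exponential coordinates at $\theta(t)$ and $f(\theta(t))$ and compares the $c$-coordinate of $X_\tau\!\circ f(q_\tau)$ against the graph function $\psi^{su}_{f(z_0)}$ directly --- but the estimates and the final choice of $\eta$ are the same.
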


\begin{remark}
	The leaf conjugacy $h_{\tau}$ can be very flexible. For instance, if the bundles $F^s,F^u$ satisfy $\measuredangle(F^s,E^s)<\eta$ and $\measuredangle(F^u,E^u)<\eta$, then we can take 
	$$
	h_{\tau}(t)=\sigma_{\tau}(\RR_i)\cap\theta^*(F^s\oplus F^u)_t(\delta_2),
	\qquad \forall t\in\RR_i,\quad i=0,\cdots,k-1,
	$$
	This is the classical way for constructing leaf conjugacy in \cite{HPS}.
	The result of Proposition \ref{Prop:Moving-forward} still holds for this $h_{\tau}$, which is Theorem \ref{Thm:push} in the introduction.
    Here we take the special foliation ${\cal D}^{su}(\delta_2/10)$ for constructing the leaf conjugacy $h_{\tau}$ in order to prove Theorem \ref{Thm:Local-Perturb}.
\end{remark}

\begin{figure}[htbp]
	\centering
	\includegraphics[width=15cm]{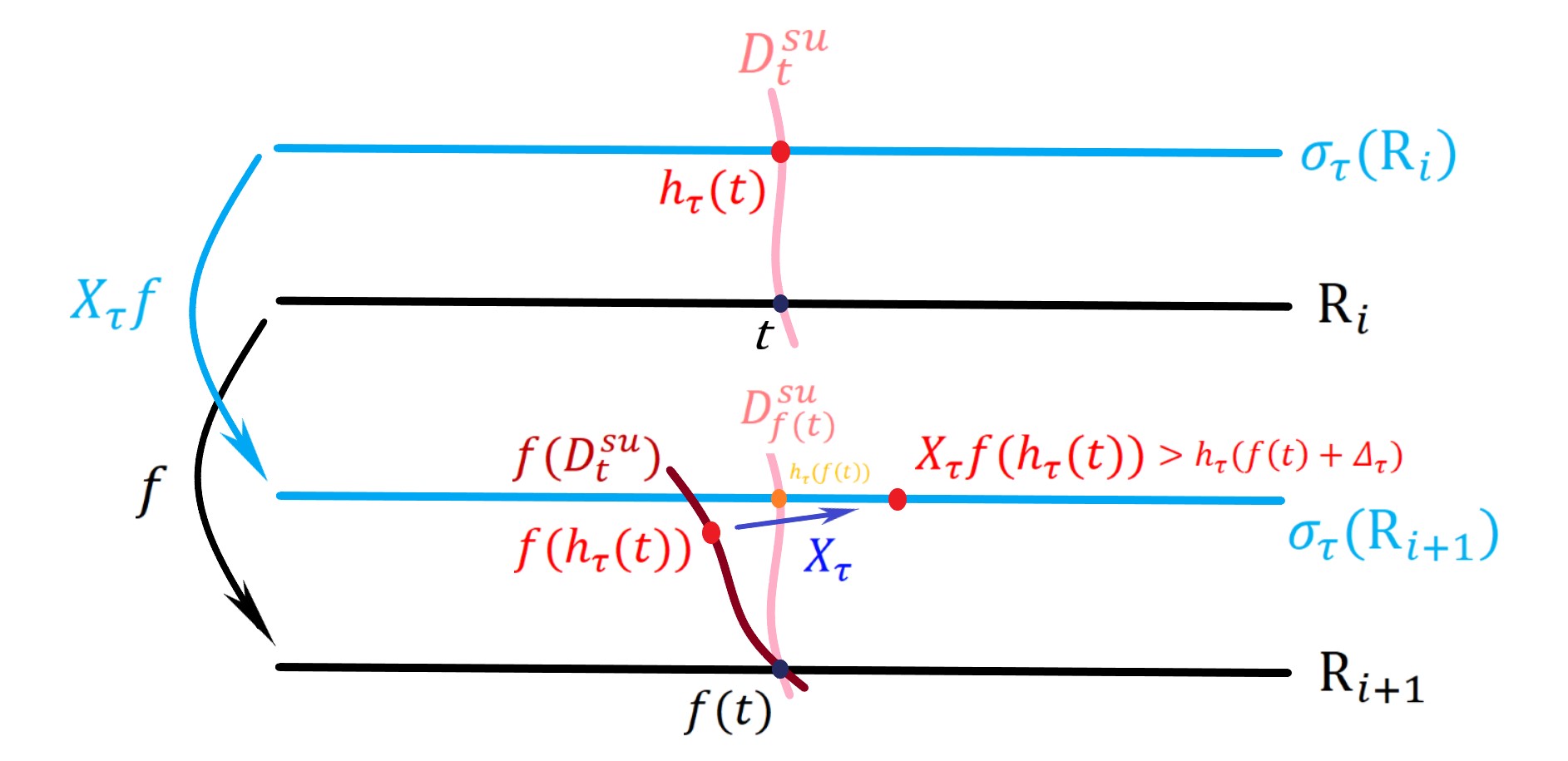}
	\caption{Moving forward by $F_{\tau}$}
\end{figure}

\begin{proof}[Proof of Proposition \ref{Prop:Moving-forward}]
	We only prove the first item with $\tau>0$. The proof of the case $\tau<0$ is the same.

	Let $z\in M$ and we consider the lifting vector field $X_z=D\exp_z^{-1}(X)$ which is locally well defined around the zero of $T_zM$. For every $v\in T_zM$ which is sufficiently close to $0$, we denote
	$$
	X_z(v)=X_z^s(v)+X_z^c(v)+X_z^u(v)\in E^s_z(v)\oplus E^c_z(v)\oplus E^u_z(v)
	$$
	and $X_z^{su}(v)=X_z^s(v)+X_z^u(v)\in E^s_z(v)\oplus E^u_z(v)$.
	We define the order in $E^c_z$ following the orientation of $E^c_z$. Thus for $0_z\in T_zM$ be the zero, we have $X_z^c(0_z)>0$ in this order.
	
	Since the vector field $X\in{\cal X}^r(M)$ is positively transverse to $E^s\oplus E^u$,  there exist three constants $\delta_3>0$, $a_0>0$ and $b_0>0$, such that for every $z\in M$,
	\begin{itemize}
		\item for every $v^{su}\in E^s_z(\delta_3)\oplus E^u_z(\delta_3)$, the curve $\exp_z^{-1}(v^{su}\times E^c(\delta_3))$ is tangent to the $10^{-3}$-cone field of $E^c$ everywhere;
		\item the lifting vector field $X_z$ satisfies, for every $v\in T_zM(\delta_3)=E^s_z(\delta_3)\oplus E^c_z(\delta_3)\oplus E^u_z(\delta_3)$,
		$$
		\|X_z^c(v)\|\geq b_0, \qquad {\rm and} \qquad
		\|X_z^s(v)\|+\|X_z^u(v)\|<a_0\|X_z^c(v)\|
		\leq 2a_0\|X\|,
		$$
		where $\|X\|=\sup_{z\in M}\|X(z)\|$.
	\end{itemize}
	Notice here these three constants only depend on $M$, $f$ and $X$.
	
	Let $\tau_0$  and $L$ be constants in Theorem \ref{thm:Lipschitz}, and we take the constant
	$$
	\eta=\min\left\lbrace 10^{-3}, 
	~\frac{b_0}{8\left(\sup_{z\in M}\|Df(z)\|\cdot L +L+2a_0\|X\|\right)} \right\rbrace
	$$
	Take a $C^\infty$ $u$-dimensional plane field $F$ satisfying $\measuredangle(F,E^u)<\eta$. Let $\delta_2=\delta_2(M,f,F,\eta)>0$ be the constant in Lemma \ref{lem:su-disk}. 
	
	Let $\delta'=10^{-2}\cdot\min\{\delta_2,\delta_3\}$, $\tau_0$ be the constant in Theorem \ref{thm:Lipschitz}, and
	$$
	\tau'=\min\left\lbrace \tau_0,~
	\frac{\delta'}{100L\cdot\|X\|\cdot \sup_{x\in M}\|Df(x)\|}  \right\rbrace .
	$$
	
	For every $t\in\RR_i$ be fixed, we denote $z_0=\theta_i(t)$.
	 If $\tau$ satisfies $|\tau|<\tau'$, then Theorem \ref{thm:Lipschitz} implies that the point 
	$
	q_{\tau}=\Phi(h_{\tau}(t))\in D^{su}_{z_0}(\delta')
	$
	satisfies
	$$
	\exp^{-1}_{z_0}(q_{\tau})
	=q_{\tau}^{su}+q_{\tau}^c
	\in E^{su}_{z_0}\oplus E^c_{z_0}, 
	\qquad {\rm and} \qquad 
	\|q_{\tau}^{su}\|
	\leq L\cdot\tau<
	\frac{\delta'}{100\cdot\|X\|\cdot\sup_{z\in M}\|Df(z)\|}.
	$$
    Moreover, the first item of Lemma \ref{lem:su-disk} implies
	$$
	\|q_{\tau}^c\|=
	\|\psi^{su}_{z_0}(q_{\tau}^{su})\|<
	2\eta\cdot\|q_{\tau}^{su}\|\leq 
	2\eta\cdot L\cdot\tau<
	\frac{2\eta\cdot\delta'}{100\cdot\|X\|\cdot\sup_{x\in M}\|Df(x)\|},
	$$
	where $\psi^{su}_{z_0}$ is the function whose graph defines $D^{su}_{z_0}(\delta')$ by the exponential map in Lemma \ref{lem:su-disk}.
	Recall that $z_0=\theta_i(t)$ and $f(z_0)=\theta_{i+1}(\theta^*f(t))$. From the definition of $\tau'$ and $|\tau|<\tau'$, we have 
	$$
	f(q_{\tau})\in B_{f(z_0)}(\delta'/10),
	\qquad {\rm and} \qquad
	X_{\tau}\circ f(q_{\tau})\in B_{f(z_0)}(\delta').
	$$
	
	If we denote
	$$
	\exp^{-1}_{f(z_0)}(f(q_{\tau}))=f(q_\tau)^{su}+f(q_\tau)^c
	\in E^{su}_{f(z_0)}\oplus E^c_{f(z_0)},
	$$
	then we have the following claim:
	
	\begin{claim}
		There exists a continuous function $\epsilon(\cdot):\RR\rightarrow\RR_+$ which only depends on $M$, $f$, and $X$, satisfying $\epsilon(\tau)\rightarrow 0$ as $\tau\rightarrow0$, such that
		\begin{itemize}
			\item $\|f(q_\tau)^{su}\|<
			\left(\|Df(z_0)\|+\epsilon(\tau)\right)\cdot
			    \|q_{\tau}^{su}\|<
			\left(\|Df(z_0)\|+\epsilon(\tau)\right)\cdot L\tau$;
			\item $\|f(q_\tau)^c\|<
			\left(\|Df|_{E^c_{z_0}}\|+\epsilon(\tau)\right)\cdot
			    \|q_{\tau}^c\|<
			2\eta\cdot\left(\|Df|_{E^c_{z_0}}\|+\epsilon(\tau)\right)
			    \cdot L\tau$.
		\end{itemize}
		In particular, this implies
		$$
		f(q_\tau)^c>
		-2\eta\cdot\left(\|Df|_{E^c_{z_0}}\|+\epsilon(\tau)\right)
		\cdot L\tau.
		$$
    \end{claim}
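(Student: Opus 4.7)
My plan is to read the claim as a Taylor estimate in exponential coordinates, exploiting the $Df$-invariance of the partially hyperbolic splitting at the base point. Define the localization
\[
F_{z_0}\;:=\;\exp_{f(z_0)}^{-1}\circ f\circ\exp_{z_0},
\]
a $C^r$ map from a small ball in $T_{z_0}M$ into $T_{f(z_0)}M$ with $F_{z_0}(0)=0$ and $DF_{z_0}(0)=Df(z_0)$. Since $r\ge 2$ and $M$ is compact, the mean-value theorem applied to $DF_{z_0}$ produces a constant $C_f=C_f(M,f)>0$ such that
\[
F_{z_0}(v)=Df(z_0)v+R_{z_0}(v),\qquad \|R_{z_0}(v)\|\le C_f\|v\|^2
\]
uniformly for $z_0\in M$ and $\|v\|<\delta_3$.

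Apply this to $v:=\exp_{z_0}^{-1}(q_\tau)=q_\tau^{su}+q_\tau^c$. Invariance of the splitting under $Df$ gives $Df(z_0)q_\tau^{su}\in E^s_{f(z_0)}\oplus E^u_{f(z_0)}$ and $Df(z_0)q_\tau^c\in E^c_{f(z_0)}$, so projecting by $\pi^{su}_{f(z_0)}$ and $\pi^c_{f(z_0)}$ (each of operator norm $\le 2$ by the adapted metric, Lemma \ref{lem:metric}) yields
\[
f(q_\tau)^{su}=Df(z_0)q_\tau^{su}+\pi^{su}_{f(z_0)}R_{z_0}(v),\qquad f(q_\tau)^{c}=Df(z_0)q_\tau^{c}+\pi^{c}_{f(z_0)}R_{z_0}(v).
\]
Now Lemma \ref{lem:su-disk}(1) gives $\|q_\tau^c\|\le 2\eta\|q_\tau^{su}\|$, hence $\|v\|\le(1+2\eta)\|q_\tau^{su}\|$, while Theorem \ref{thm:Lipschitz} gives $\|q_\tau^{su}\|\le L\tau$. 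Consequently
\[
\|R_{z_0}(v)\|\;\le\;C_f(1+2\eta)^2\|q_\tau^{su}\|^2\;\le\;C_f(1+2\eta)^2 L\tau\cdot\|q_\tau^{su}\|.
\]
Setting $\epsilon(\tau):=4C_f(1+2\eta)^2 L\tau$ (continuous, depending only on $M$, $f$, $X$ since $\eta$ is already fixed in terms of these data, and vanishing at $\tau=0$) the $su$-bound follows immediately from $\|Df(z_0)q_\tau^{su}\|\le\|Df(z_0)\|\cdot\|q_\tau^{su}\|$, and the $c$-bound follows in the same way from $\|Df(z_0)q_\tau^c\|\le\|Df|_{E^c_{z_0}}\|\cdot\|q_\tau^c\|$ together with $\|q_\tau^c\|<2\eta L\tau$. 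The final lower bound on $f(q_\tau)^c$ is then just the signed reading of the corresponding norm bound in the oriented line $E^c_{f(z_0)}$.

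I do not expect a serious obstacle here: the whole argument is a uniform Taylor expansion controlled by compactness of $M$ and $C^2$-regularity of $f$. The one point of bookkeeping worth flagging is that the quadratic remainder in the $c$-direction is naturally controlled by $\|q_\tau^{su}\|^2$ rather than by $\|q_\tau^c\|^2$; so in passing from the intermediate form $(\|Df|_{E^c_{z_0}}\|+\epsilon(\tau))\|q_\tau^c\|$ to the final bound $2\eta(\|Df|_{E^c_{z_0}}\|+\epsilon(\tau))L\tau$ one must absorb an extra factor $1/\eta$ into $\epsilon$, which is harmless because $\eta=\eta(M,f,X)$ is itself a fixed constant.
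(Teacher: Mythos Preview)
Your approach is essentially the same as the paper's: linearize $f$ in exponential coordinates at $z_0$, use $Df$-invariance of the splitting to separate the $su$- and $c$-components, and control the remainder uniformly in the base point. The paper, however, phrases the remainder as a little-$o$ term $o(\|q_\tau^{su}\|+\|q_\tau^c\|)$ coming only from $C^1$-differentiability and compactness, rather than your explicit quadratic bound from $C^2$-regularity; this is why the paper can assert (as it does elsewhere) that the argument works already for $C^1$ partially hyperbolic diffeomorphisms, whereas your version needs $r\ge 2$.

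Your bookkeeping observation at the end is well taken: the remainder in the $c$-direction is genuinely controlled by $\|q_\tau^{su}\|$, not by $\|q_\tau^c\|$, so the literal middle inequality $\|f(q_\tau)^c\|<(\|Df|_{E^c_{z_0}}\|+\epsilon(\tau))\|q_\tau^c\|$ need not hold when $\|q_\tau^c\|$ is very small relative to $\|q_\tau^{su}\|$. The paper's proof has exactly the same feature, and in both cases only the outer bound $2\eta(\|Df|_{E^c_{z_0}}\|+\epsilon(\tau))L\tau$ is actually used downstream, so this is a cosmetic imprecision in the statement rather than a gap in either argument.
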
	
    
    \begin{proof}[Proof of the Claim]
    	It is clear that
    	$$
    	f(q_\tau)^{su}+f(q_\tau)^c=\exp_{f(z_0)}^{-1}\circ f\circ\exp_{z_0}(q_{\tau}^{su}+q_{\tau}^c): T_{z_0}M\longrightarrow T_{f(z_0)}M.
    	$$
    	Let $0_{z_0}$ and $0_{f(z_0)}$ be origins of $T_{z_0}M$ and $T_{f(z_0)}M$ respectively. The derivative of the local diffeomorphism $\exp_{f(z_0)}^{-1}\circ f\circ\exp_{z_0}$ satisfies
    	$$
    	D\left(\exp_{f(z_0)}^{-1}\circ f\circ\exp_{z_0}\right)(0_{z_0})
    	=Df(z_0):~
    	T_{0_{z_0}}\left(T_{z_0}M\right)\cong T_{z_0}M
    	\longrightarrow
    	T_{0_{f(z_0)}}\left(T_{f(z_0)}M\right)\cong T_{f(z_0)}M.
    	$$
    	
    	Since $q_{\tau}$ uniformly converges to $z_0$ as $\tau\rightarrow 0$, we have
    	$$
    	f(q_\tau)^{su}=(Df|_{E^{su}_{z_0}})(q_{\tau}^{su})+ o(\|q_{\tau}^{su}\|+\|q_{\tau}^c\|),
    	\qquad{\rm and}\qquad
    	f(q_\tau)^c=(Df|_{E^c_{z_0}})(q_{\tau}^c)+ o(\|q_{\tau}^{su}\|+\|q_{\tau}^c\|).
    	$$
    	Here $o(\|q_{\tau}^{su}\|+\|q_{\tau}^c\|)/(\|q_{\tau}^{su}\|+\|q_{\tau}^c\|)$ uniformly converges to $0$ as $\tau\rightarrow0$. The facts that $\|q_{\tau}^c\|<2\eta\|q_{\tau}^{su}\|$ and $\|q_{\tau}^{su}\|<L\tau$ imply the claim.
    \end{proof}
    
	Now we consider the action of $X_{f(z_0),\tau}$ on the point $(f(q_\tau)^{su}+f(q_\tau)^c)$. Denote 
	$$
	X_{f(z_0),\tau}\left(f(q_\tau)^{su}+f(q_\tau)^c\right)
	=\exp_{f(z_0)}^{-1}\left(\Phi\circ F_{\tau}(h_{\tau}(t))\right)
	=p_{\tau}^{su}+p_{\tau}^c\in 
	E^{su}_{f(z_0)}\oplus E^c_{f(z_0)}.
	$$
	For every $\tau>0$, we have
	\begin{itemize}
		\item $\|p_{\tau}^{su}\|\leq
		  \|f(q_\tau)^{su}\|+\sup_{v\in T_{f(z_0)}M(\delta_3)}
		  \left(\|X^s_{f(z_0)}(v)\|+\|X^u_{f(z_0)}(v)\|\right)
		   \cdot\tau<
		  \|f(q_\tau)^{su}\|+2a_0\|X\|\cdot\tau$;
		
		\item $p_{\tau}^c\geq -\|f(q_\tau)^c\|+\inf_{v\in T_{f(z_0)}M(\delta_3)}\|X^c_{f(z_0)}(v)\|\cdot\tau
		\geq -\|f(q_\tau)^c\|+b_0\cdot\tau$.
	\end{itemize}
	This implies
	$$
	\|p_{\tau}^{su}\|\leq
	\left[\left(\|Df(z_0)\|+\epsilon(\tau)\right)\cdot L+2a_0\|X\|\right]\cdot\tau,
	\qquad {\rm and} \qquad
	p_{\tau}^c>b_0\cdot\tau-
	 2\eta\left[\cdot\left(\|Df|_{E^c_{z_0}}\|+\epsilon(\tau)\right)
	  \cdot L\right]\cdot\tau.
	$$
	
	There exists $0<\tau_1<\tau'$, such that for every $|\tau|\leq\tau_1$, it satisfies
	$$
	0\leq\epsilon(\tau)<1,
	\qquad {\rm and} \qquad
	\left[\left(\sup_{z\in M}\|Df(z)\|+1\right)\cdot L+2a_0\|X\|\right]\cdot|\tau|<\delta'.
	$$
	Let $D^{su}_{f(z_0)}(\delta')$ be the disk defined by $F$ as Lemma \ref{lem:su-disk}, and $\psi^{su}_{f(z_0)}$ be the function whose graph defines $D^{su}_{f(z_0)}(\delta')$ by the exponential map.  
	Since 
	$$
	\eta\leq 
	\frac{b_0}{8\left(\sup_{z\in M}\|Df(z)\|\cdot L
		+L+2a_0\|X\|\right)},
	\qquad {\rm and} \qquad |\tau|<\tau_1,
	$$
	we have
	\begin{align*}
	    p_{\tau}^c-\psi^{su}_{f(z_0)}(p_{\tau}^{su})
	    &>b_0\cdot\tau
	     -2\eta\left[\cdot\left(\|Df|_{E^c_{z_0}}\|+1\right)\cdot L\right]\cdot\tau-2\eta\cdot\|p_{\tau}^{su}\|  \\
	    &>b_0\cdot\tau
	     -2\eta\cdot\left[2\left(\sup_{z\in M}\|Df(z)\|+1\right)\cdot L+2a_0\|X\|\right]\cdot\tau \\
	    &> b_0\cdot\tau-\frac{b_0}{2}\cdot\tau= \frac{b_0}{2}\cdot\tau>0.
	\end{align*}
	This implies the point 
	$$
	\exp_{f(z_0)}\left(p_{\tau}^{su}+p_{\tau}^c\right)
	 =X_{\tau}\circ f(h_{\tau}(t))
	 =\Phi(F_{\tau}(h_{\tau}(t)))
	$$
	belongs to $D^{su}_{\theta_{i+1}(t')}$ for some $t'\in\RR_{i+1}$ with $t'>\theta^*f(t)$.
	
	Moreover, since the segment 
	$$
	\exp_{f(z_0)}\left(\left\lbrace  p_{\tau}^{su}+v^c:~\varphi^{su}_{f(z_0)}(p_{\tau}^{su})\leq v^c\leq p_{\tau}^c\right\rbrace\right)
	$$
	is tangent to the $10^{-3}$-cone field of $E^c$ with length uniformly bounded from zero, the second item of Corollary \ref{cor:su-foliation} shows that there exists $\Delta_{\tau}>0$ satisfying $t'-\theta^*f(t)>\Delta_{\tau}$. So we have
	$$
	F_{\tau}\circ h_{\tau}(t)~>~
	h_{\tau}\left(\theta^*f(t)+\Delta_{\tau}\right).
	$$
\end{proof}

\begin{remark}
	Here the constant $\Delta_{\tau}$ is independent of the choice of periodic center curves $\theta:\bigsqcup_{i=0}^{k-1}\RR_i\rightarrow M$ and its period $k$.
\end{remark}

Applying this proposition, we can prove Theorem \ref{Thm:Global-Perturb}.

\begin{proof}[{\bf Proof of Theorem \ref{Thm:Global-Perturb}}]
	For every $x\in\Omega(f)$, if $x\in{\rm Per}(f)$, then we can choose $\tau_n\equiv0$ and $p_n\equiv x$. Otherwise, for the vector field $X\in{\cal X}^r(M)$ which is positively transverse to $E^s\oplus E^u$, let $F$ be the $C^{\infty}$ plane field on $M$ and $\tau_1$ be the constant both decided by Proposition \ref{Prop:Moving-forward}. Then for every $n>1/\tau_1$, let $\Delta_{1/n}$ be the number associated to $\tau=1/n$ as Proposition \ref{Prop:Moving-forward}.
	
	For $x\in\Omega(f)\setminus{\rm Per}(f)$,
	Proposition \ref{prop:center-curve} implies there exists a complete $C^1$-curve $\theta:\RR\rightarrow M$ tangent to $E^c$ everywhere, which is $k$-periodic for some $k>0$, and satisfies
	$$
	y=\theta(0)\in B_x(1/n), \qquad {\rm and} \qquad
	f^k(y)=\theta(t_0)~~{\rm with}~~|t_0|<\Delta_{1/n}.
	$$
	If $t_0=0$, then $y$ is a periodic point. We assume $-\Delta_{1/n}<t_0<0$. The proof for $0<t_0<\Delta_{1/n}$ is the same.
	
	Now we consider the fiber bundle dynamics $\theta^*f$ and its perturbation $F_{1/n}=\tilde{X}_{1/n}\circ\theta^*f$ defined on $\theta^*(F^s\oplus F^u)(\delta_1/C_1)$. Let $h_{1/n}:\bigsqcup_{i=0}^{k-1}\RR_i\rightarrow \bigsqcup_{i=0}^{k-1}\sigma_{1/n}(\RR_i)$ be the leaf conjugacy defined as Proposition \ref{Prop:Moving-forward}. Then
	for $0\in\RR=\RR_0$,  Proposition \ref{Prop:Moving-forward} shows that on $\sigma_{1/n}(\RR_0)$,
	\begin{align*}
	        F_{1/n}^k\left(h_{1/n}(0)\right)
	        &>F_{1/n}^{k-1}
	          \left(h_{1/n}\left(\theta^*f(0)\right)\right)
	         >\cdots\cdots>
	         F_{1/n}
	          \left(h_{1/n}\left(\theta^*f^{k-1}(0)\right)\right) \\
	        &>h_{1/n}\left(\theta^*f^k(0)+\Delta_{1/n}\right)
	         =h_{1/n}(t_0+\Delta_{1/n}) \\
	        &>h_{1/n}(0).
	\end{align*}
	
	Recall that $F_{0}=\theta^*f$ and $\theta^*f^k(0)<0$. Since $\sigma_{\tau}$ vary continuously with respect to $\tau$, there exists $\tau_n\in(0,1/n)$, such that
	$F_{\tau_n}^k(0)=0$. Let
	$p_n=\Phi\left(h_{\tau_n}(0)\right)$, then we have
	$$
	\left(X_{\tau_n}\circ f\right)^k(p_n)=p_n.
	$$
	Moreover, Theorem \ref{thm:Lipschitz} shows that $d(y,p_n)<L\cdot\tau_n$, which implies $d(x,p_n)<(L+1)/n$. Let $n$ tend to infinity and we prove the theorem. 
\end{proof}

\subsection{Local dominated dynamics}\label{subsec:domination}

We prove Theorem \ref{Thm:Local-Perturb} in Section \ref{subsec:domination} and \ref{subsec:local}. The difficulty for proving Theorem \ref{Thm:Local-Perturb} is that we don't have the uniformly moving forward estimation in Proposition \ref{Prop:Moving-forward} by the local perturbation. When the orbit goes through the region where  the vector field vanishes, the leaf conjugacy point $F_{\tau}(h_{\tau}(t))=f(h_{\tau}(t))$ may move backward comparing with $h_{\tau}(f(t))$. So we need an estimation how far the point $F_{\tau}(h_{\tau}(t))$ move backward.

In this subsection, we prove Lemma \ref{Lem:Doimination}, which shows that the distance of $F_{\tau}(h_{\tau}(t))$ moving backward in the center direction is controlled by the distance between conjugacy point $h_{\tau}(t)$ and $t$ in the unstable direction. This is the key observation for proving Theorem \ref{Thm:Local-Perturb}. During this subsection, we only assume $f\in{\rm PH^r}(M)$ with ${\rm dim}E^c=1$.

Recall that for every $z\in M$, the function $\varphi_z^s:E^s_z(\delta_0)\rightarrow E^c_z(\delta_0)\oplus E^u_z(\delta_0)$ defines the local stable manifold of $z$ as ${\cal F}^s_{z}(\delta_0)=\exp_z\left({\rm Graph}(\varphi_z^s)\right)$.
Lemma \ref{lem:local-stable} shows that $\varphi_z^s(0^s_z)=0^{cu}_z$, and for every $\eta>0$, there exists $0<\delta_{\eta}\leq\delta_0$, such that $\|\partial\varphi_z^s/\partial s(v^s)\|<\eta$ for every $v^s\in E^s_z(\delta_\eta)$. 

Moreove, let the map $\varphi^{su}_z:E^s_z(\delta_0)\oplus E^u_z(\delta_0)\rightarrow E^c_z(\delta_0)$ be defined as
$$
\varphi^{su}_z(v^{su})
=\pi^c_z\circ \varphi^s_z\circ\pi^s_z(v^{su}),
\qquad \forall v^{su}\in E^s_z(\delta_0)\oplus E^u_z(\delta_0).
$$
From the definition of $\varphi^{su}_z$, it satisfies 
$$
\varphi^{su}_z(0^{su}_z)=0^c_z,
\qquad
\|\partial\varphi^{su}_z/\partial u\|\equiv0,
\qquad {\rm and} \qquad 
\|\partial\varphi^{su}_z/\partial s(v^{su})\|\equiv\|\partial\varphi_z^s/\partial s(\pi^s_z(v^{su}))\|.
$$

The following lemma is the key for proving Theorem \ref{Thm:Local-Perturb}. It is deduced from the partial hyperbolicity of $f$, and the action of $Df|_{E^u}$ dominates $Df|_{E^c}$.

\begin{lemma}\label{Lem:Doimination}
	There exist two constants $\delta_4>0$ and $\eta_1>0$, which both depend only on $M$ and $f$, satisfying the following properties:
	\begin{enumerate}
	  \item For every $y\in M$, let
		$$
		\gamma_y:E^c_y(\delta_4)\rightarrow E^s_y(\delta_4)\oplus E^u_y(\delta_4), 
		\qquad {\it and} \qquad
		\gamma_{f(y)}:E^c_{f(y)}(\delta_4)\rightarrow E^s_{f(y)}(\delta_4)\oplus E^u_{f(y)}(\delta_4)
		$$
		be two $C^1$-maps which satisfy $\|\partial\gamma_y/\partial c\|<\eta_1$ and $\|\partial\gamma_{f(y)}/\partial c\|<\eta_1$, then
		\begin{itemize}
			\item the graph of $\gamma_y$ intersect with the graph of $\varphi^{su}_y$ with a unique point $w_1$, i.e.
			there exist a unique point $w_1=w_1^s+w_1^c+w_1^u\in T_yM(\delta_4)$ and a unique vector $v_1^u\in E^u_y(\delta_4)$, such that
			$$
			w_1^s+w_1^u=\gamma_y(w_1^c), \qquad {\it and} \qquad
			w_1^c+w_1^u=\varphi_y^s(w_1^s)+v_1^u;
			$$
			\item the graph of $\gamma_{f(y)}$ intersect with the graph of $\varphi^{su}_{f(y)}$ with a unique point $w_2$, 
			i.e.
			there exist a unique point $w_2=w_2^s+w_2^c+w_2^u\in T_{f(y)}M(\delta_4)$ and a unique vector $v_2^u\in E^u_{f(y)}(\delta_4)$, such that
			$$
			w_2^s+w_2^u=\gamma_{f(y)}(w_2^c), \qquad {\it and} \qquad
			w_2^c+w_2^u=\varphi_{f(y)}^s(w_2^s)+v_2^u.
			$$
		\end{itemize}

	  \item We fix two orientations of $E^c_y$ and $E^c_{f(y)}$, such that $Df(y)$ preserves the orientations. The orders on $E^c_y$ and $E^c_{f(y)}$ are defined by these orientations.
	  For every $0<\theta\leq1$, let $z_1=z_1^s+z_1^c+z_1^u\in T_yM(\delta_4)$ which is defined as 
	    $$
	    z_1^c=w_1^c-\theta\eta_1\cdot\|v_1^u\| =w_1^c-\theta\eta_1\cdot\|w_1^u-\pi^u_y\circ\varphi^s_y(w_1^s)\|,
	    \qquad {\it and} \qquad
	    z_1^s+z_1^u=\gamma_y(z_1^c).
	    $$
	    If we assume the point $\exp_{f(y)}^{-1}\circ f\circ\exp_y(z_1)$ is contained in $T_{f(y)}M(\delta_4)$, and denote $\exp_{f(y)}^{-1}\circ f\circ\exp_y(z_1)=f(z_1)^s+f(z_1)^c+f(z_1)^u$, then it satisfies
	    \begin{align*}
	      f(z_1)^c-\pi^c_{f(y)}\circ\varphi^s_{f(y)}(f(z_1)^s)
	       &\geq -\theta\eta_1\cdot\frac{1-\eta_1}{1+\eta_1}\cdot \left\|f(z_1)^u-  
	         \pi^u_{f(y)}\circ\varphi^s_{f(y)}(f(z_1)^s)\right\| \\
	       &\geq -\theta\eta_1\cdot \|Df(y)\|\cdot\|v_1^u\|.
	    \end{align*}

	  \item Furthermore, let $X_{f(y)}$ be a vector field defined on  
	    $T_{f(y)}M(\delta_4)$ which satisfies $X^c_{f(y)}(v)\geq0$, and
	    $$
	    \|X^s_{f(y)}(v)\|+\|X^u_{f(y)}(v)\|\leq3\eta_1\cdot X^c_{f(y)}(v), 
	    \qquad\forall v\in T_{f(y)}M(\delta_4).
	    $$
	    Here
	    $X_{f(y)}(v)=X_{f(y)}^s(v)+X_{f(y)}^c(v)+X_{f(y)}^u(v)\in E^s_{f(y)}(v)\oplus E^c_{f(y)}(v)\oplus E^u_{f(y)}(v)$ is the canonical decomposition.
	    If for some $\tau>0$, the point 
	    $$
	    z_2=z_2^s+z_2^c+z_2^u=X_{f(y),\tau}\circ\exp_{f(y)}^{-1}\circ f\circ\exp_y(z_1)
	    $$ 
	    is also contained in $T_{f(y)}M(\delta_4)$, and satisfies $z_2^s+z_2^u=\gamma_{f(y)}(z_2^c)$, then we have
	    $$
	    z_2^c\geq w_2^c-\theta\eta_1\|v_2^u\| 
	      =w_2^c-\theta\eta_1
	     \left\|w_2^u-\pi^u_{f(y)}\circ\varphi^s_{f(y)}(w_2^s)\right\|.
	    $$
	\end{enumerate}	
\end{lemma}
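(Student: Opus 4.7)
The plan is to prove the three items in sequence, with Part 2 carrying the essential content and Parts 1 and 3 following by graph-intersection and monotonicity arguments.

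\emph{Part 1 (existence and uniqueness of $w_1,w_2$).} By Lemma~\ref{lem:local-stable}, the map $\varphi^{su}_y$ defined by $\varphi^{su}_y(v^{su})=\pi^c_y\varphi^s_y(\pi^s_y v^{su})$ satisfies $\varphi^{su}_y(0)=0$, $\|\partial\varphi^{su}_y/\partial u\|\equiv 0$, and $\|\partial\varphi^{su}_y/\partial s\|$ can be made smaller than any prescribed $\eta_1>0$ uniformly in $y$ by shrinking $\delta_4$. Since $\gamma_y$ has slope bounded by $\eta_1$, the map $w^c\mapsto\varphi^{su}_y(\gamma_y(w^c))$ is a self-contraction of $E^c_y(\delta_4)$ with Lipschitz constant at most $\eta_1^2<1$. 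Its unique fixed point $w_1^c$ defines $w_1$, and $v_1^u:=w_1^u-\pi^u_y\varphi^s_y(w_1^s)$ follows. The same argument at $f(y)$ produces $w_2$ and $v_2^u$.

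\emph{Part 2 (the main estimate).} Set $\bar{z}_1:=z_1^s+\varphi^s_y(z_1^s)\in\mathcal{F}^s_y$, the projection of $z_1$ onto the local stable manifold of $y$ along $E^c\oplus E^u$. Writing
\begin{align*}
\Delta c &:= z_1^c-\pi^c_y\varphi^s_y(z_1^s), \\
\Delta u &:= z_1^u-\pi^u_y\varphi^s_y(z_1^s),
\end{align*}
the hypothesis $w_1\in\mathrm{Graph}(\varphi^{su}_y)$ gives $w_1^c=\pi^c_y\varphi^s_y(w_1^s)$; combined with the slope bounds on $\gamma_y$ and $\varphi^s_y$ (both less than $\eta_1$ after shrinking $\delta_4$), a direct computation yields $\Delta c=-\theta\eta_1\|v_1^u\|+O(\theta\eta_1^2\|v_1^u\|)$ and $\|\Delta u\|=\|v_1^u\|(1+O(\theta\eta_1^2))$. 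Now apply the $C^1$ Taylor expansion $\exp^{-1}_{f(y)}\circ f\circ\exp_y(v)=Df(y)v+O(\|v\|^2)$. Since $Df(y)$ preserves the splitting $E^s\oplus E^c\oplus E^u$, since it preserves the chosen orientation of $E^c$, and since $f(\bar z_1)\in\mathcal{F}^s_{f(y)}$, the $c$-offset $f(z_1)^c-\pi^c_{f(y)}\varphi^s_{f(y)}(f(z_1)^s)$ equals $Df|_{E^c_y}(\Delta c\cdot e^c)$ plus errors of order $\delta_4\|\Delta c\|+\eta_1\|\Delta u\|\|Df|_{E^u_y}\|$, while the $u$-offset has norm at least $m(Df|_{E^u_y})\|\Delta u\|(1-O(\eta_1))$. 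The sign of the $c$-offset matches that of $\Delta c$ by the orientation hypothesis, so the ratio of the signed $c$-offset to the $u$-offset norm at $f(z_1)$ is bounded below by $-\theta\eta_1\cdot\|Df|_{E^c_y}\|/m(Df|_{E^u_y})$ up to controllable errors. By partial hyperbolicity and compactness of $M$ there is a uniform gap $\|Df|_{E^c_y}\|/m(Df|_{E^u_y})\leq\lambda'<1$; choosing $\eta_1$ so small (independent of $y$) that $\lambda'$ together with the accumulated errors is at most $(1-\eta_1)/(1+\eta_1)$ yields the first desired bound. The second bound $\geq-\theta\eta_1\|Df(y)\|\|v_1^u\|$ is immediate from the crude estimate $|\text{$c$-offset at }f(z_1)|\leq\|Df|_{E^c_y}\|\,|\Delta c|+\text{error}$.

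\emph{Part 3 (incorporating the flow).} Let $\tilde{z}_1:=\exp^{-1}_{f(y)}\circ f\circ\exp_y(z_1)$, so $z_2=X_{f(y),\tau}(\tilde{z}_1)$. The hypothesis $X^c_{f(y)}\geq 0$ with $\|X^s_{f(y)}\|+\|X^u_{f(y)}\|\leq 3\eta_1 X^c_{f(y)}$ forces the flow to be monotonically nondecreasing in the $c$-coordinate and nearly tangent to $E^c$, so $z_2^c\geq\tilde{z}_1^c$ and $\|z_2^{su}-\tilde{z}_1^{su}\|\leq 3\eta_1(z_2^c-\tilde{z}_1^c)$. Combining with the slope bound on $\varphi^s_{f(y)}$, the signed $c$-offset from the stable manifold at $z_2$ is at least the signed $c$-offset at $\tilde{z}_1$ minus an $O(\eta_1)$ fraction of $(z_2^c-\tilde{z}_1^c)$, hence by Part 2 it is at least $-\theta\eta_1\cdot\frac{1-\eta_1}{1+\eta_1}\cdot\|z_2^u-\pi^u_{f(y)}\varphi^s_{f(y)}(z_2^s)\|$ after absorbing errors into the constants. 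Finally, since both $z_2$ and $w_2$ lie on $\mathrm{Graph}(\gamma_{f(y)})$ and the map $v^c\mapsto v^c-\pi^c_{f(y)}\varphi^s_{f(y)}(\pi^s_{f(y)}\gamma_{f(y)}(v^c))$ has derivative close to $1$ and vanishes at $v^c=w_2^c$, while the $u$-offset at $z_2$ relates to $\|v_2^u\|$ by a factor $1+O(\eta_1)$ via the same graph-slope arithmetic as Part~1, we convert the signed-offset bound into the desired conclusion $z_2^c\geq w_2^c-\theta\eta_1\|v_2^u\|$.

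\textbf{Main obstacle.} The hard part is Part~2: bookkeeping all the small errors (from the slopes of $\gamma_y$ and $\varphi^s_y$, and from the Taylor remainder of $f$ in exponential coordinates), and choosing $\eta_1$ and $\delta_4$ \emph{uniformly in $y$} so that the partial-hyperbolicity gap $\|Df|_{E^c_y}\|/m(Df|_{E^u_y})<1$ (uniform by compactness of $M$) absorbs these errors and produces precisely the factor $(1-\eta_1)/(1+\eta_1)$. The orientation-preservation hypothesis on $Df(y)$ is essential here: without it we could only bound magnitudes, not the signed $c$-offset needed for a one-sided inequality.
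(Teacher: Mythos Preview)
Your proposal is correct and follows essentially the same approach as the paper. The paper also chooses $\eta_1$ from the domination gap (explicitly $\eta_1=\min\{\inf_z\|Df|_{E^s_z}\|/1000,\,\sqrt[4]{\inf_z m(Df|_{E^u_z})/\|Df|_{E^c_z}\|}-1\}$), projects onto the local stable manifold, linearizes $f$ in exponential coordinates, and tracks the $c$- and $u$-offsets through one iterate of $f$ and then through the flow $X_{f(y),\tau}$; the only cosmetic difference is that the paper anchors its auxiliary stable-manifold point at $w_1$ (via $r_1=w_1-v_1^u$) rather than at $z_1$ as you do, and in Part~3 it spells out the final conversion from the offset inequality at $z_2$ to the inequality $z_2^c\geq w_2^c-\theta\eta_1\|v_2^u\|$ via an explicit two-case analysis (according to the sign of $z_2^c-\pi^c_{f(y)}\varphi^s_{f(y)}(z_2^s)$) rather than the derivative-near-$1$ heuristic you sketch.
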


\begin{remark}
	Notice here that the  vector field $X_{f(y)}$ may vanish at some region of $T_{f(y)}M(\delta_2)$ or vanish at the whole $T_{f(y)}M(\delta_2)$.
\end{remark}

\begin{proof}
	Let 
	  $$
	  \eta_1=\min\left\lbrace 
	   \inf_{z\in M}\frac{\|Df|_{E^s_z}\|}{1000}, ~
	   \sqrt[4]{\inf_{z\in M}\frac{m(Df|_{E^u_z})}{\|Df|_{E^c_z}\|}}-1
	   \right\rbrace 
	  $$
	and choose $\delta_4$ small enough such that $\|\partial\varphi^s_z/\partial s(v^s)\|<\eta_1$ for every $z\in M$ and $v^s\in E^s_z(\delta_4)$. This imples
	$$
	\|\partial\varphi^{su}_z/\partial s(v^{su})\|<\eta_1,
	\qquad \forall z\in M, \quad \forall v^{su}\in E^s_z(\delta_4)\oplus E^u_z(\delta_4).
	$$ 
	
	The graph of $\gamma_y$ is contained in $T_yM(\delta_4)$ and tangent to the $\eta_1$-cone field of $E^c_y$ everywhere. So the transversality between $E^c_y$ and $E^s_y\oplus E^u_y$ implies there exists a unique point $w_1=w_1^s+w_1^c+w_1^u\in{\rm Graph}(\psi^{su}_y)\cap{\rm Graph}(\gamma_y)$. Thus we have $w_1^s+w_1^u=\gamma_y(w_1^c)$ and $w_1^c=\pi^c_y\circ\varphi^s_y(w_1^s)$. The vector $v_1^u=w_1^u-\pi^u_y\circ\varphi^s_y(w_1^s)$. The proof for $w_2$ is the same. This proves the first item of the lemma. 
	
	\vskip2mm

    We begin to prove the second item, and we will shrink $\delta_4$ during the proof.
	Let $0<\theta\leq1$ be fixed, and $z_1=z_1^s+z_1^c+z_1^u\in T_yM(\delta_4)$ be the point satisfying
	$$
	z_1^c=w_1^c-\theta\eta_1\cdot\|v_1^u\| =w_1^c-\theta\eta_1\cdot\|w_1^u-\pi^u_y\circ\varphi^s_y(w_1^s)\|,
	\qquad {\it and} \qquad
	z_1^s+z_1^u=\gamma_y(z_1^c).
	$$

	Considering the point $\exp_{f(y)}^{-1}\circ f\circ\exp_y(z_1)$, Lemma \ref{lem:local-stable} shows that
	$$
	\exp_{f(y)}^{-1}\circ f\circ\exp_y\left( \left\lbrace v^s+\varphi^s_y(v^s):v^s\in E^s_y(\delta_4)\right\rbrace \right) 
	\subseteq \left\lbrace v^s+\varphi^s_{f(y)}(v^s):v^s\in E^s_{f(y)}(\lambda\delta_4)\right\rbrace .
	$$ 
	If we denote $r_1=r_1^s+r_1^c+r_1^u=w_1-v_1^u$, then it satisfies $r_1^s=w_1^s$, $r_1^c=w_1^c$, and  $r_1^c+r_1^u=\varphi^s_y(r_1^s)$. Moreover, the point
	$$
	r_2=r_2^s+r_2^c+r_2^u=\exp_{f(y)}^{-1}\circ f\circ\exp_y(r_1)
	$$
	satisfies $\|r_2^s\|<\lambda\delta_4$ and $r_2^c+r_2^u=\varphi_{f(y)}^s(r_2^s)$. 
	
	The lemma assumed that $\exp_{f(y)}^{-1}\circ f\circ\exp_y(z_1)=f(z_1)^s+f(z_1)^c+f(z_1)^u\in T_{f(y)}M(\delta_4)$. We have the following claim.
	
	\begin{claim}\label{claim:Df}
		There exists a function $\varepsilon(\cdot):\RR_+\rightarrow\RR_+$ which only depends on $M$ and $f$, satisfying $\varepsilon(\delta_4)\rightarrow 0$ as $\delta_4\rightarrow 0$, such that 
		\begin{itemize}
			\item $f(z_1)^c-r_2^c\geq -\|Df|_{E^c_y}\|\cdot(\theta\eta_1\|v_1^u\|)-\varepsilon(\delta_4)\cdot \|v_1^u\|$;
			\item $\|f(z_1)^s-r_2^s\|\leq 
			\|Df|_{E^s_y}\|\cdot(\theta\eta_1^2\|v_1^u\|)
			+ \varepsilon(\delta_4)\cdot\|v_1^u\|$;
			\item $(1-\theta\eta_1^2)m(Df|_{E^u_y})\cdot\|v_1^u\|-
			\varepsilon(\delta_4)\cdot\|v_1^u\|\leq \|f(z_1)^u-r_2^u\|
			\leq(1+\theta\eta_1^2)\|Df|_{E^u_y}\|\cdot\|v_1^u\|+
			\varepsilon(\delta_4)\cdot\|v_1^u\|$.
		\end{itemize} 
	\end{claim}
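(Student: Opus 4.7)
The plan is to reduce the claim to a first-order Taylor expansion governed by $Df(y)$, absorbing the nonlinear remainder into $\varepsilon(\delta_4)$. First, I expand $z_1-r_1$ along the splitting $E^s_y\oplus E^c_y\oplus E^u_y$. By construction $r_1^s=w_1^s$, $r_1^c=w_1^c$, and $r_1^u=w_1^u-v_1^u$, whereas $z_1^c=w_1^c-\theta\eta_1\|v_1^u\|$ and $z_1^s+z_1^u=\gamma_y(z_1^c)$. The Lipschitz bound $\|\partial\gamma_y/\partial c\|<\eta_1$, together with the almost-orthogonality of the adapted splitting (Lemma~\ref{lem:metric}), yields $\|z_1^s-r_1^s\|\leq\theta\eta_1^2\|v_1^u\|$ and $\|z_1^u-r_1^u\|=\|v_1^u+(z_1^u-w_1^u)\|\in[(1-\theta\eta_1^2)\|v_1^u\|,\,(1+\theta\eta_1^2)\|v_1^u\|]$, while $z_1^c-r_1^c=-\theta\eta_1\|v_1^u\|$ exactly.

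Next, let $G=\exp_{f(y)}^{-1}\circ f\circ\exp_y$, so that $G(0)=0$ and $DG(0)=Df(y)$. Taylor's theorem produces a constant $C$, depending only on the $C^2$-norm of $f$ and of the exponential charts (hence only on $M$ and $f$), with
\[
G(z_1)-G(r_1)=Df(y)(z_1-r_1)+R(z_1,r_1),\qquad
\|R(z_1,r_1)\|\leq C\cdot\max(\|z_1\|,\|r_1\|)\cdot\|z_1-r_1\|.
\]
Since $z_1,r_1\in T_yM(\delta_4)$ and $\|z_1-r_1\|\leq(1+2\theta\eta_1)\|v_1^u\|$, setting $\varepsilon(\delta_4):=2C\delta_4$ uniformly over $y\in M$ bounds $\|R(z_1,r_1)\|$ by $\varepsilon(\delta_4)\|v_1^u\|$. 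Because $Df(y)$ preserves the partially hyperbolic splitting and the chosen orientation of $E^c$, the three components of $Df(y)(z_1-r_1)$ can be read off directly from partial hyperbolicity: the $E^c$-component equals $-\theta\eta_1\|Df|_{E^c_y}\|\cdot\|v_1^u\|$ as a signed scalar, the $E^s$-component has norm at most $\|Df|_{E^s_y}\|\cdot\theta\eta_1^2\|v_1^u\|$, and the $E^u$-component has norm trapped in the interval $[m(Df|_{E^u_y})(1-\theta\eta_1^2)\|v_1^u\|,\,\|Df|_{E^u_y}\|(1+\theta\eta_1^2)\|v_1^u\|]$.

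Subtracting $r_2=G(r_1)$ from $f(z_1)=G(z_1)$ and comparing component by component then yields the three inequalities of the claim, with the contribution of $R$ bounded by $\varepsilon(\delta_4)\|v_1^u\|$ in each direction; the projection bounds $\|\pi^{s/c/u}\|\leq 2$ from Lemma~\ref{lem:metric} only introduce a harmless multiplicative constant, which is absorbed into $\varepsilon(\delta_4)$ after enlarging $C$. There is no serious obstacle here: the argument is a linear approximation plus a quadratic error controlled by the chart radius $\delta_4$, and the only subtlety is tracking the sign in the $E^c$-direction, which is precisely handled by the hypothesis that $Df(y)$ preserves the fixed orientations of $E^c_y$ and $E^c_{f(y)}$.
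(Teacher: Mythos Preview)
Your argument is correct and follows essentially the same line as the paper: compute the components of $z_1-r_1$ from the definition of $z_1$ and the Lipschitz bound on $\gamma_y$, then use that $G=\exp_{f(y)}^{-1}\circ f\circ\exp_y$ satisfies $DG(0)=Df(y)$ to approximate $G(z_1)-G(r_1)$ by $Df(y)(z_1-r_1)$, with the nonlinear remainder absorbed into $\varepsilon(\delta_4)$. The only minor difference is that you invoke a $C^2$ Taylor remainder to get $\varepsilon(\delta_4)=O(\delta_4)$, whereas the paper uses only uniform continuity of $DG$ (so that the argument still goes through for $C^1$ diffeomorphisms, as noted in Section~\ref{subsec:proof}); in the stated setting $r\geq 2$ this makes no difference.
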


    \begin{proof}[Proof of the Claim]
    	From the definition of points $r_1,w_1,z_1$, and $\|\partial\gamma_y/\partial c\|<\eta_1$, we have
    	$$
    	z_1^c-r_1^c=-\theta\eta_1\|v_1^u\|, \qquad 
    	\|z_1^s-r_1^s\|\leq\theta\eta_1^2\|v_1^u\|, \qquad
    	\|v_1^u\|-\theta\eta_1^2\|v_1^u\|\leq \|z_1^u-r_1^u\|\leq
    	\|v_1^u\|+\theta\eta_1^2\|v_1^u\|.
    	$$
    	In particular, we have $\|z_1-r_1\|<10\|v_1^u\|$.
    	
    	Since
    	$
    	D(\exp_{f(y)}^{-1}\circ f\circ\exp_y)(0_y)=Df_y:
    	T_{0_y}(T_yM)\cong T_yM\longrightarrow
    	T_{0_{f(y)}}(T_{f(y)}M)\cong T_{f(y)}M,
    	$
    	there exists a function $\varepsilon(\cdot):\RR_+\rightarrow\RR_+$ which satisfies $\varepsilon(\delta_4)\rightarrow 0$ as $\delta_4\rightarrow 0$, such that for every $y_1,y_2\in T_yM(\delta_4)$, and every $i=s,c,u$, we have
    	$$
    	\left\|\pi^i_{f(y)} 
    	\left( \exp_{f(y)}^{-1}\circ f\circ\exp_y(y_1)- \exp_{f(y)}^{-1}\circ f\circ\exp_y(y_2)\right) - (Df|_{E^i_y})(\pi^i_y(y_1-y_2))\right\|
    	\leq\frac{\varepsilon(\delta_4)}{10}\|y_1-y_2\|.
    	$$
    	The function $\varepsilon(\cdot)$ only depends on $M$ and $f$. Thus we apply this estimation to $z_1$ and $r_1$, which deduces the claim.    	 
    \end{proof}
	
	Now we shrink $\delta_4$ sufficiently small, such that for every $0<\delta\leq\delta_4$, it satisfies
	$$
	0<\varepsilon(\delta)<10^{-2}\cdot\eta_1^4.
	$$
	Then from this claim, we have the following estimation about the position of $\exp_{f(y)}^{-1}\circ f\circ\exp_y(z_1)$, which proves the second item of this lemma.

	\begin{figure}[htbp]
		\centering
		\includegraphics[width=15cm]{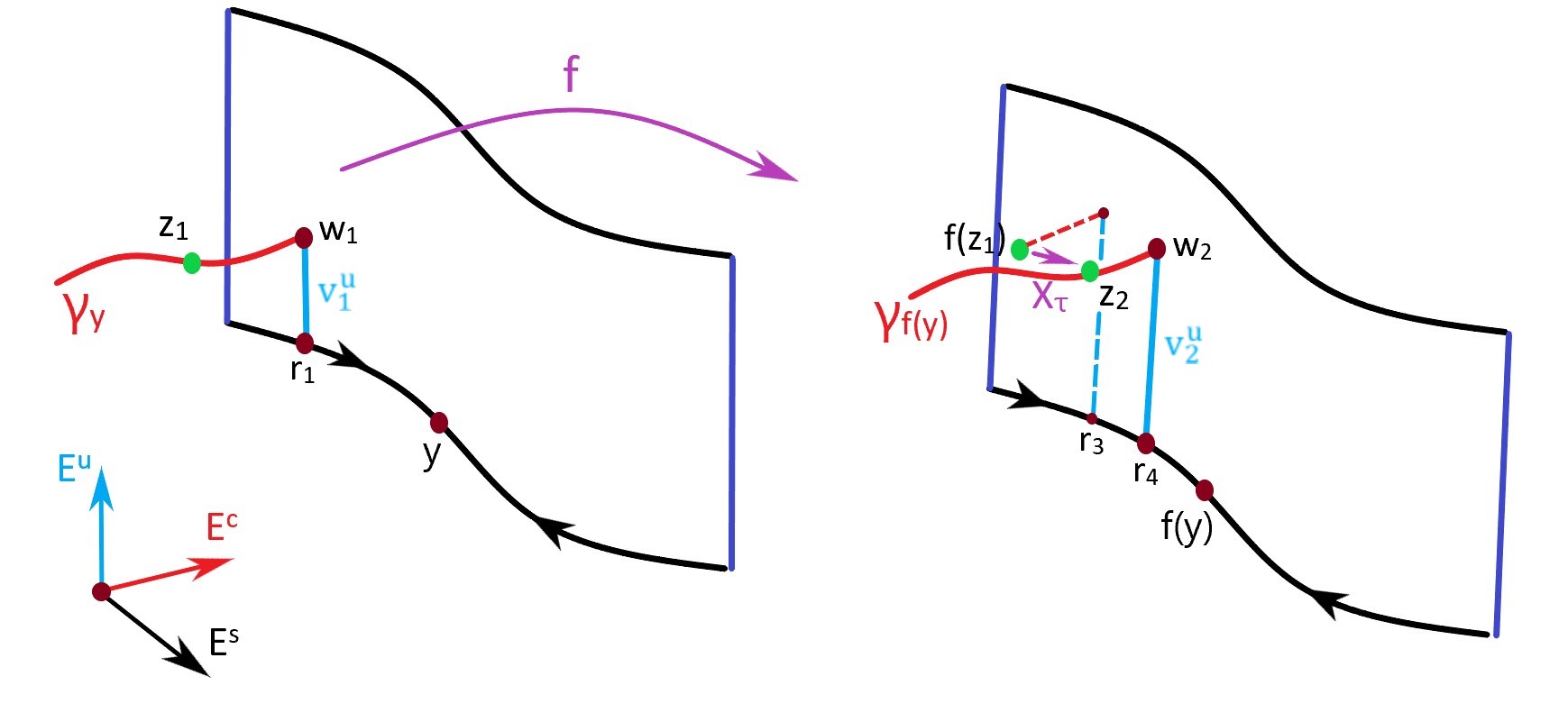}
		\caption{Domination of local action}
	\end{figure}

    \begin{claim}\label{claim:r3}
    	Let the point $r_3=r_3^s+r_3^c+r_3^u\in T_{f(y)}M(\delta_4)$ be defined as
    	$r_3^s=f(z_1)^s$ and $r_3^c+r_3^u=\varphi^s_{f(y)}(r_3^s)$. That is
    	$$
    	r_3^c=\pi^c_{f(y)}\circ\varphi^s_{f(y)}(f(z_1)^s)
    	\qquad {\it and } \qquad
    	r_3^u=\pi^u_{f(y)}\circ\varphi^s_{f(y)}(f(z_1)^s).
    	$$
    	Then we have the following estimation
    	$$
    	f(z_1)^s-r_3^c\geq
    	-\theta\eta_1\cdot\frac{1-\eta_1}{1+\eta_1}\cdot
    	\|f(z_1)^u-r_3^u\|\geq -\theta\eta_1\cdot \|Df(y)\|\cdot\|v_1^u\|.
    	$$ 
    \end{claim}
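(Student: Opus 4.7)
The plan is to compare the triple $(f(z_1)^s, f(z_1)^c, f(z_1)^u)$ produced by Claim \ref{claim:Df} with the projection $r_3$ onto the graph of $\varphi^s_{f(y)}$, by using the already-known point $r_2$ on that graph as a reference, and then to absorb all error terms into the domination $m(Df|_{E^u}) > (1+\eta_1)^4 \|Df|_{E^c}\|$ that was built into the choice of $\eta_1$.

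First I would transfer from $r_2$ to $r_3$ along the local stable manifold of $f(y)$. Since both $r_2^c + r_2^u = \varphi^s_{f(y)}(r_2^s)$ and $r_3^c + r_3^u = \varphi^s_{f(y)}(f(z_1)^s)$, and since $\|\partial\varphi^s_{f(y)}/\partial s\|<\eta_1$ on $E^s_{f(y)}(\delta_4)$ (together with the fact that $\pi^c_{f(y)}$ and $\pi^u_{f(y)}$ have operator norm at most $2$ in the adapted metric), I obtain
\[
|r_3^c-r_2^c|\le 2\eta_1\,\|f(z_1)^s-r_2^s\|,\qquad \|r_3^u-r_2^u\|\le 2\eta_1\,\|f(z_1)^s-r_2^s\|.
\]
Combining with the three bounds of Claim \ref{claim:Df} applied to the $s$-estimate $\|f(z_1)^s-r_2^s\|\le \|Df|_{E^s_y}\|\,\theta\eta_1^2\|v_1^u\|+\varepsilon(\delta_4)\|v_1^u\|$, I get a lower bound of the shape
\[
f(z_1)^c-r_3^c\;\ge\;-\theta\eta_1\,\|Df|_{E^c_y}\|\cdot\|v_1^u\|\;-\;C\bigl(\eta_1^3+\varepsilon(\delta_4)\bigr)\|v_1^u\|,
\]
while the $u$-component satisfies
\[
(1-\theta\eta_1^2)\,m(Df|_{E^u_y})\,\|v_1^u\|-C\bigl(\eta_1+\varepsilon(\delta_4)\bigr)\|v_1^u\|\;\le\;\|f(z_1)^u-r_3^u\|\;\le\;(1+\theta\eta_1^2)\,\|Df|_{E^u_y}\|\,\|v_1^u\|+C\bigl(\eta_1+\varepsilon(\delta_4)\bigr)\|v_1^u\|.
\]

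The key step is then purely algebraic: by the choice $(1+\eta_1)^4\le m(Df|_{E^u_z})/\|Df|_{E^c_z}\|$ and by shrinking $\delta_4$ so that $\varepsilon(\delta_4)\ll \eta_1^4\,m(Df|_{E^u_y})$, the ratio
\[
\frac{\theta\eta_1\,\|Df|_{E^c_y}\|+C(\eta_1^3+\varepsilon(\delta_4))}{(1-\theta\eta_1^2)m(Df|_{E^u_y})-C(\eta_1+\varepsilon(\delta_4))}
\]
is bounded above by $\theta\eta_1\cdot\frac{1-\eta_1}{1+\eta_1}$. Multiplying through by $\|f(z_1)^u-r_3^u\|$ yields the first claimed inequality $f(z_1)^c-r_3^c\ge -\theta\eta_1\tfrac{1-\eta_1}{1+\eta_1}\|f(z_1)^u-r_3^u\|$. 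The second inequality is then immediate from the upper bound on $\|f(z_1)^u-r_3^u\|$, since $(1+\theta\eta_1^2)\tfrac{1-\eta_1}{1+\eta_1}\|Df|_{E^u_y}\|\le \|Df(y)\|$ once $\delta_4$ is small enough and $\eta_1<10^{-3}$.

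The main obstacle is bookkeeping: one must keep strict separation between the three different small quantities $\theta\eta_1\|Df|_{E^c}\|\|v_1^u\|$ (the unavoidable backward drift of the center coordinate), the $O(\eta_1^3)$ errors coming from twisting the stable leaves by $\varphi^s_{f(y)}$, and the $\varepsilon(\delta_4)$ linearization errors, and check that the $(1+\eta_1)^4$-domination absorbs the last two. Once this is verified, the desired inequality drops out without further effort.
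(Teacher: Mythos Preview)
Your proposal is correct and follows essentially the same route as the paper: pass from $f(z_1)$ to $r_3$ via the intermediate point $r_2$ on the graph of $\varphi^s_{f(y)}$, invoke the three bounds of Claim~\ref{claim:Df}, and then absorb all lower-order terms using the domination $(1+\eta_1)^4\le m(Df|_{E^u})/\|Df|_{E^c}\|$. The only differences are cosmetic: the paper writes the $r_2\to r_3$ error with a factor $\eta_1$ rather than your $2\eta_1$ (relying on the near-orthogonality of the adapted metric), and tracks the $u$-error as $O(\eta_1^3)$ rather than your looser $C(\eta_1+\varepsilon(\delta_4))$, which makes the final ratio check slightly cleaner but does not change the argument.
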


    \begin{proof}[Proof of the Claim]
    	From the definition  $r_3^s=f(z_1)^s$ and Claim \ref{claim:Df}, we have
    	$$
        \|r_3^s-r_2^s\|\leq\|Df|_{E^s_y}\|\cdot(\theta\eta_1^2\|v_1^u\|)
    	+ \varepsilon(\delta_4)\cdot\|v_1^u\|.
    	$$
    	Since both $r_2$ and $r_3$ are on the graph of $\varphi^s_{f(y)}$, and $\|\partial\varphi^s_{f(y)}/\partial s\|<\eta_1$, we have
    	$$
    	\|r_2^c-r_3^c\|<
    	\left(\theta\eta_1^3\|Df|_{E^s_y}\|
    	  +\eta_1\varepsilon(\delta_4)\right)\cdot\|v_1^u\|, 
    	\qquad{\rm and} \qquad
    	\|r_2^u-r_3^u\|<
    	\left(\theta\eta_1^3\|Df|_{E^s_y}\|
    	  +\eta_1\varepsilon(\delta_4)\right)\cdot\|v_1^u\|.
    	$$
    	
    	This implies
    	\begin{align*}
    	   \|f(z_1)^u-r_3^u\|&\leq\|f(z_1)^u-r_2^u\|+\|r_2^u-r_3^u\|\\
    	    &\leq \left(1+\theta\eta_1^2\right)\|Df|_{E^u_y}\|\cdot\|v_1^u\|
    	      +\varepsilon(\delta_4)\cdot\|v_1^u\|+
    	      \left(\theta\eta_1^3\|Df|_{E^s_y}\|+
    	        \eta_1\varepsilon(\delta_4)\right)\cdot\|v_1^u\| \\
    	    &\leq\left(1+\theta\eta_1^2+\theta\eta_1^3
    	        +\eta_1^4+\eta_1^5\right) 
    	        \cdot\|Df(y)\|\cdot\|v_1^u\|   \\
    	    &\leq\frac{1+\eta_1}{1-\eta_1}\cdot\|Df(y)\|\cdot\|v_1^u\|.
    	\end{align*}
    	
    	We apply the estimation between $r_2$ and $\exp_{f(y)}^{-1}\circ f\circ\exp_y(z_1)$ in Claim \ref{claim:Df}. Since $\eta_1\leq10^{-3}\|Df|_{E^s_y}\|$ and $\varepsilon(\delta_4)<10^{-2}\eta_1^4$, we have
    	\begin{align*}
           	f(z_1)^c-r_3^c 
           &\geq f(z_1)^c-r_2^c-\|r_2^c-r_3^c\| \\	
           &\geq -\|Df|_{E^c_y}\|\cdot(\theta\eta_1\|v_1^u\|) -\varepsilon(\delta_4)\cdot \|v_1^u\|-
            \left(\theta\eta_1^3\|Df|_{E^s_y}\|+
             \eta_1\varepsilon(\delta_4)\right)\cdot\|v_1^u\| \\
           &\geq-\left(\theta\eta_1\cdot\|Df|_{E^c_y}\|+
            (\theta+1)\eta_1^3\|Df|_{E^s_y}\|\right)
            \cdot\|v_1^u\| \\
           &\geq-\theta\eta_1(1+\eta_1)\|Df|_{E^c_y}\|\cdot\|v_1^u\|
    	\end{align*}
        and
        \begin{align*}
           \|f(z_1)^u-r_3^u\|  
           &\geq \|f(z_1)^u-r_2^u\|-\|r_2^u-r_3^u\| \\
           &\geq m(Df|_{E^u_y})\cdot
             \left(\|v_1^u\|-\theta\eta_1^2\|v_1^u\|\right)-
             \varepsilon(\delta_4)\cdot\|v_1^u\|-
             \left(\theta\eta_1^3\|Df|_{E^s_y}\|+
                \eta_1\varepsilon(\delta_4)\right)\cdot\|v_1^u\| \\
           &\geq \left[m(Df|_{E^u_y})\left(1-(1+\theta)\eta_1^2\right)
               -2\varepsilon(\delta_4)\right]\cdot\|v_1^u\| \\
           &\geq (1-\eta_1)m(Df|_{E^u_y})\cdot\|v_1^u\|
        \end{align*}
        
    	Recall that we have $0<\eta_1\leq\sqrt[4]{\inf_{z\in M}[m(Df|_{E^u_z})/\|Df|_{E^c_z}\|]}-1$, which implies
    	$$
    	\frac{\|Df|_{E^c_y}\|\cdot(1+\eta_1)}{m(Df|_{E^u_y})(1-\eta_1)}\leq \frac{1-\eta_1}{1+\eta_1}.
    	$$
    	Thus we have
    	$$
    	f(z_1)^c-r_3^c\geq
    	-\theta\eta_1\cdot\frac{1-\eta_1}{1+\eta_1}\cdot
    	\|f(z_1)^u-r_3^u\|\geq -\theta\eta_1\cdot \|Df(y)\|\cdot\|v_1^u\|.
    	$$
    \end{proof} 
	
	\vskip 2mm

	Now we prove the third item of the lemma. Let $X$ be the vector field and consider the point $z_2=X_{f(y),\tau}(\exp_{f(y)}^{-1}\circ f\circ\exp_y(z_1))$. Since we have assumed that
	$$
	\|X^s_{f(y)}(v)\|+\|X^u_{f(y)}(v)\|\leq3\eta_1\cdot X^c_{f(y)}(v), 
	\qquad\forall v\in T_{f(y)}M(\delta_4),
	$$
	it implies
	$$
	   z_2^c-f(z_1)^c\geq 
	   \frac{1}{3\eta_1}\cdot\left( \|z_2^s-f(z_1)^s\|+
	   \|z_2^u-f(z_1)^u\|\right).
	$$
	
	\begin{claim}\label{claim:r4}
		Let the point $r_4=r_4^s+r_4^c+r_4^u\in T_{f(y)}M(\delta_4)$ be defined as
		$r_4^s=z_2^s$ and
		$r_4^c+r_4^u=\varphi^s_{f(y)}(r_4^s)$. Then it satisfies
		$$
		z_2^c-r_4^c\geq-\theta\eta_1\cdot\frac{1-\eta_1}{1+\eta_1}\cdot
		\|z_2^u-r_4^u\|.
		$$
	\end{claim}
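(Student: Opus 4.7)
The plan is to reduce the claim to Claim \ref{claim:r3} by controlling the displacement $z_2 - f(z_1)$ generated by the flow of $X_{f(y)}$, and then comparing the two "projection points'' $r_3$ (attached to $f(z_1)^s$) and $r_4$ (attached to $z_2^s$) via the Lipschitz property of $\varphi^s_{f(y)}$.

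First I would write $\Delta = z_2^c - f(z_1)^c$ and integrate the flow along $\gamma(t) = X_{f(y),t}(\exp_{f(y)}^{-1}\circ f\circ\exp_y(z_1))$ for $t\in[0,\tau]$. Since the splitting of $T_{f(y)}M$ is fixed and the cone assumption $\|X^s\|+\|X^u\|\leq 3\eta_1 X^c$ holds pointwise with $X^c\geq 0$, this gives $\Delta\geq 0$ together with the one-step cone estimate
$$
\|z_2^s - f(z_1)^s\| + \|z_2^u - f(z_1)^u\| \leq 3\eta_1\,\Delta.
$$
Next I would use $\|\partial\varphi^s_{f(y)}/\partial s\|<\eta_1$ and $\|\pi^{c/u}\|\leq 2$ (from the adapted metric of Lemma \ref{lem:metric}) to conclude that, since $r_3^s=f(z_1)^s$ and $r_4^s=z_2^s$ both lie in the domain of $\varphi^s_{f(y)}$, one has
$|r_4^c-r_3^c| + |r_4^u-r_3^u|\leq 4\eta_1\|z_2^s-f(z_1)^s\|\leq 12\eta_1^2\,\Delta.$

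Now I would expand
$$
z_2^c - r_4^c = (f(z_1)^c - r_3^c) + \Delta + (r_3^c - r_4^c),
$$
and bound below using Claim \ref{claim:r3}:
$$
z_2^c - r_4^c \geq -\theta\eta_1\cdot\tfrac{1-\eta_1}{1+\eta_1}\,\|f(z_1)^u - r_3^u\| + (1 - 12\eta_1^2)\,\Delta.
$$
Analogously, writing $z_2^u - r_4^u = (f(z_1)^u-r_3^u) + (z_2^u-f(z_1)^u) + (r_3^u-r_4^u)$ and using the two estimates above, the reverse triangle inequality gives
$$
\bigl|\,\|f(z_1)^u-r_3^u\| - \|z_2^u-r_4^u\|\,\bigr| \leq (3\eta_1 + 12\eta_1^2)\Delta.
$$

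Setting $c = \theta\eta_1\cdot\tfrac{1-\eta_1}{1+\eta_1}$, the desired inequality $z_2^c-r_4^c\geq -c\|z_2^u-r_4^u\|$ then reduces to the algebraic inequality
$$
c\,(3\eta_1 + 12\eta_1^2)\,\Delta \;\leq\; (1-12\eta_1^2)\,\Delta,
$$
which is automatic from $c\leq\eta_1$ and the definition of $\eta_1$ at the start of Lemma \ref{Lem:Doimination} (in particular $\eta_1$ is much smaller than the $C^0$-contraction rate of $Df|_{E^s}$, so $\eta_1<10^{-3}$). The main obstacle I anticipate is the bookkeeping: one has to make sure the $O(\eta_1)$ loss coming from the flow step (which costs $3\eta_1\Delta$ in the $u$-component) is strictly dominated by the gain of $\Delta$ in the $c$-component, and the $O(\eta_1^2)$ errors from moving along the graph of $\varphi^s$ are negligible. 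This is where the specific choice of $\eta_1$ at the beginning of the lemma, combined with Lemma \ref{lem:local-stable}(2)(3) controlling $\partial\varphi^s/\partial s$, does all the work; no further smallness of $\delta_4$ is required beyond what Claim \ref{claim:r3} already uses.
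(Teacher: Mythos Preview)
Your proposal is correct and follows essentially the same route as the paper: you decompose $z_2^c-r_4^c$ and $z_2^u-r_4^u$ through the intermediate points $f(z_1)$ and $r_3$, invoke Claim~\ref{claim:r3}, use the cone condition on $X_{f(y)}$ to bound the flow displacement by $3\eta_1\Delta$, control $r_3-r_4$ via $\|\partial\varphi^s_{f(y)}/\partial s\|<\eta_1$, and finish with the same algebraic comparison. The only cosmetic difference is that you keep track of the projection norms $\|\pi^{c/u}\|\leq 2$ (yielding $12\eta_1^2$ instead of the paper's $3\eta_1^2$), which is harmless since $\eta_1<10^{-3}$.
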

	
	\begin{proof}[Proof of the Claim]

		In the center direction, it satisfies
		\begin{align*}
		z_2^c-r_4^c =& (z_2^c-f(z_1)^c)+(f(z_1)^c-r_3^c)+(r_3^c-r_4^c) \\
		\geq& (z_2^c-f(z_1)^c)+(f(z_1)^c-r_3^c)-\|\partial\varphi^s_{f(y)}/\partial s\| \cdot\|r_3^s-r_4^s\|\\
		\geq& (z_2^c-f(z_1)^c)+(f(z_1)^c-r_3^c)-\eta_1\cdot \|f(z_1)^s-z_2^s\| \\
		\geq& (1-3\eta_1^2)\cdot(z_2^c-f(z_1)^c)+(f(z_1)^c-r_3^c).
		\end{align*}
		Meanwhile, in the unstable direction, it satisfies
		\begin{align*}
		\|z_2^u-r_4^u\|\geq & -\|z_2^u-f(z_1)^u\|+\|f(z_1)^u-r_3^u\|-\|r_3^u-r_4^u\| \\
		\geq &-(3\eta_1)\cdot(z_2^c-f(z_1)^c)+\|f(z_1)^u-r_3^u\| -\eta_1\cdot\|f(z_1)^s-z_2^s\| \\
		\geq &-(3\eta_1+3\eta_1^2)\cdot(z_2^c-f(z_1)^c)+\|f(z_1)^u-r_3^u\|.
		\end{align*}
		Thus we have
		\begin{align*}
		-\theta\eta_1\cdot\frac{1-\eta_1}{1+\eta_1}\cdot
		\|z_2^u-r_4^u\| 
		\leq & 
		-\theta\eta_1\cdot\frac{1-\eta_1}{1+\eta_1}\cdot\left[ -(3\eta_1+3\eta_1^2)\cdot\left(z_2^c-f(z_1)^c\right)+\|f(z_1)^u-r_3^u\| \right]  \\
		\leq & \eta_1\cdot\left(z_2^c-f(z_1)^c\right)
		-\theta\eta_1\cdot\frac{1-\eta_1}{1+\eta_1}\|f(z_1)^u-r_3^u\|
		\end{align*}
		Notice that $1-3\eta_1^2>\eta_1$ and $z_2^c-\pi^c_{f(y)}\circ\exp_{f(y)}^{-1}\circ f\circ\exp_y(z_1)\geq0$. Thus the estimation of $z_2^c-r_4^c$ and Claim \ref{claim:r3} implies
		$$
		-\theta\eta_1\cdot\frac{1-\eta_1}{1+\eta_1}\cdot
		\|z_2^u-r_4^u\|\leq z_2^c-r_4^c,
		$$
		which proves the claim.
	\end{proof}
	
	There are two possibilities for the position of $z_2$:
	
	\vskip 2mm
	
	\noindent{\bf Case 1.} 
	$z_2^c\geq\pi^c_{f(y)}\circ\varphi^s_{f(y)}(z_2^s)$.
	
	In this case, the curve defined by the graph of $\gamma_{f(y)}:E^c_{f(y)}(\delta_4)\rightarrow E^s_{f(y)}(\delta_4)\oplus E^u_{f(y)}(\delta_4)$ that passing through $z_2$ will intersects with the set
	$$
	\left\lbrace v^s+v^u+\pi^c_{f(y)}\circ\varphi^s_{f(y)}(v^s): ~v^s+v^u\in E^s_{f(y)}(\delta_4)\oplus E^u_{f(y)}(\delta_4)\right\rbrace 
	$$
	with a unique point $w_2$. The fact that $w_2^c=\pi^c_{f(y)}\circ\varphi^s_{f(y)}(w_2^s)$ implies $z_2^c\geq w_2^c$. Thus we must have
	$$
	z_2^c\geq w_2^c-\theta\eta_1\cdot\left\|w_2^u
	  -\pi^u_{f(y)}\circ\varphi^s_{f(y)}(w_2^s)\right\|,
	$$
	which proves the lemma in this case.
	
	\vskip 2mm
	
	\noindent {\bf Case 2.} 
	$z_2^c<\pi^c_{f(y)}\circ\varphi^s_{f(y)}(z_2^s)$.
	
	In this case, we need a more delicate estimation. 
	The curve which is defined by the graph of $\gamma_{f(y)}:E^c_{f(y)}(\delta_4)\rightarrow E^s_{f(y)}(\delta_4)\oplus E^u_{f(y)}(\delta_4)$, and passing through $z_2$, intersects the set
	$$
	\left\lbrace v^s+v^u+\pi^c_{f(y)}\circ\varphi^s_{f(y)}(v^s): ~v^s+v^u\in E^s_{f(y)}(\delta_4)\oplus E^u_{f(y)}(\delta_4)\right\rbrace 
	$$
	with a unique point $w_2=w_2^s+w_2^c+w_2^u$ satisfying $w_2^c=\pi^c_{f(y)}\circ\varphi^s_{f(y)}(w_2^s)$. 
	
	In particular, since  $z_2^c<\pi^c_{f(y)}\circ\varphi^s_{f(y)}(z_2^s)$,
	we have $w_2^c>z_2^c$. To prove the lemma, we only need to show that 
	$$
	z_2^c-w_2^c\geq-\theta\eta_1\left\|w_2^u
	  -\pi^u_{f(y)}\circ\varphi^s_{f(y)}(w_2^s)\right\|.
	$$
	
	We have the following estimations:
	\begin{align*}
	  \|w_2^u-\pi^u_{f(y)}\circ\varphi^s_{f(y)}(w_2^s)\|
	   \geq &-\|w_2^u-z_2^u\|+\|z_2^u-r_4^u\|
	      -\|r_4^u-\pi^u_{f(y)}\circ\varphi^s_{f(y)}(w_2^s)\| \\
	  \geq &-\eta_1(w_2^c-z_2^c)+\|z_2^u-r_4^u\|
	      -\eta_1\cdot\|r_4^s-w_2^s\| 
	\end{align*}
	Since $\|r_4^s-w_2^s\|=\|z_2^s-w_2^s\|\leq\eta_1(w_2^c-z_2^c)$, we have
	$$
	\|w_2^u-\pi^u_{f(y)}\circ\varphi^s_{f(y)}(w_2^s)\|~\geq~ \|z_2^u-r_4^u\|-(\eta_1+\eta_1^2)\cdot(w_2^c-z_2^c).
	$$
	So we only need to prove that
    $$
      z_2^c-w_2^c~\geq~ \frac{-\theta\eta_1}{1+\theta\eta_1^2+\theta\eta_1^3}\|z_2^u-r_4^u\|,
    $$
    which implies
    $z_2^c-w_2^c\geq  -\theta\eta_1\left( \|z_2^u-r_4^u\|-(\theta\eta_1+\theta\eta_1^2)\cdot(w_2^c-z_2^c) \right)$ and the lemma.

	On the other hand, we have
	\begin{align*}
	   z_2^c-w_2^c~\geq~ &z_2^c-r_4^c-\|r_4^c-w_2^c\|~\geq~ z_2^c-r_4^c-\eta_1\cdot\|r_4^s-w_2^s\| \\
	   ~=~ & z_2^c-r_4^c-\eta_1\cdot\|z_2^s-w_2^s\|~\geq~ z_2^c-r_4^c-\eta_1^2\cdot(w_2^c-z_2^c)
	\end{align*}
    Notice that $1-\eta_1<1-\eta_1^2$ and $1+\theta\eta_1^2+\theta\eta_1^3<1+\eta_1$. Then Claim \ref{claim:r4} implies
    \begin{align*}
       z_2^c-w_2^c~\geq~ &\frac{1}{1-\eta_1^2}\cdot(z_2^c-r_4^c)
       ~\geq~ \frac{1}{1-\eta_1^2}\cdot(-\theta\eta_1)\cdot \frac{1-\eta_1}{1+\eta_1}\cdot\|z_2^u-r_4^u\| 
       ~\geq~  \frac{-\theta\eta_1}{1+\theta\eta_1^2+\theta\eta_1^3}\|z_2^u-r_4^u\| \\
       ~\geq~ & -\theta\eta_1\cdot\left\|w_2^u-
         \pi^u_{f(y)}\circ\varphi^s_{f(y)}(w_2^s)\right\|.
    \end{align*}
    This finishes the proof of the lemma.
\end{proof}

\subsection{Moving forward by local perturbations}
\label{subsec:local}

In this subsection, we prove Theorem \ref{Thm:Local-Perturb}. We need to construct the local perturbation vector field $X$ and re-choose the $u$-dimensional plane field $F$, such that angles between $X$ and $E^c$, $F$ and $E^u$ are both sufficiently small. In this subsection, we only assume $f\in{\rm PH}^r(M)$ with ${\rm dim}E^c=1$.

\begin{proof}[{\bf Proof of Theorem \ref{Thm:Local-Perturb}}]
	From the assumption of Theorem \ref{Thm:Local-Perturb}, for $x\in\Omega(f)\setminus{\rm Per}(f)$, there exists an open neighborhood $U$ of $x$ with $E^c|_U$ is orientable, such that for every pair of points $y,f^k(y)\in U$, $Df^k(y)$ preserves the orientation of $E^c|_U$.  
	Since $U$ is open, there exists $\varepsilon_0>0$, such that $B_x(5\varepsilon_0)\subset U$.

	There are two possibilities: either $E^c$ is orientable, or $E^c$ is non-orientable.
	
	\vskip 2mm
	
	\noindent{\bf Case 1.} $E^c$ is orientable.
	
	\vskip 1mm
	
	We fix an orientation of $E^c$, and the orientation of $E^c|_U$ coincides with this orientation. 
	In this case, there are also two possibilities, either $Df$ preserves the orientation of $E^c$, or $Df$ reverses the orientation of $E^c$. If $Df$ reverses the orientation of $E^c$, then the assumption that $Df^k(y)$ preserves the orientation of $E^c|_U$ for every $y,f^k(y)\in U$ implies $k$ must be even. Thus $x\in\Omega(f^2)\setminus{\rm Per}(f^2)$. We can always apply Proposition \ref{prop:center-curve} to the point $x$.
	
	\vskip1mm
	
	Let the constants $\delta_4$ and $\eta_1$ decided in Lemma \ref{Lem:Doimination}, we shrink $\delta_4$ such that $0<\delta_4\leq\varepsilon_0$. 
	Let $X$ be a $C^{\infty}$ vector field defined on $M$, which satisfies:
		\begin{enumerate}
			\item The support of $X$ is contained in $B_x(3\varepsilon_0)$: ${\it supp}(X)\subset B_x(3\varepsilon_0)\subset U$;
			\item For every $z\in{\it supp}(X)$, $X(z)$ is positively transverse to $E^s_z\oplus E^u_z$, and the decomposition $X(z)=X^s(z)+X^c(z)+X^u(z)\in E^s_z\oplus E^c_z\oplus E^u_z$ satisfies
			$$
			\|X^s(z)\|+\|X^u(z)\|<\eta_1\|X^c(z)\|
			\leq 2\eta_1\|X\|.
			$$
			\item There exists $b_1>0$, such that $2b_1\leq\|X^c(z)\|$ for every $z\in B_x(2\varepsilon_0)$.
		\end{enumerate}

	For the vector field $X\in{\cal X}^{\infty}(M)$, let $L$ be the constant in Theorem \ref{thm:Lipschitz} associated to $X$.
	We want to show that for every $n$ large enough, there exist $\tau_n\in(-1/n,1/n)$ and $p_n\in{\rm Per}(X_{\tau_n}\circ f)$, such that $d(x,p_n)<(1+L)/n$.

	Let
	\begin{align}\label{def:eta}
	   \eta=\frac{1}{100}\cdot\min\left\lbrace  \eta_1,~\frac{b_1}{L\cdot\sup_{z\in M}\|Df(z)\|+\|X\|} \right\rbrace  \in\left(0,10^{-3}\right),
	\end{align}
	and $F$ be a $C^{\infty}$ $u$-dimensional plane field on $M$ which satisfying 
	\begin{align}\label{def:F}
	\measuredangle(F,E^u)=\max_{x\in M}\max_{v\in F_x,\|v\|=1}d_{T_xM}(v,E^u_x)<\eta.
	\end{align}
	Let $\delta_2=\delta_2(M,f,F,\eta)$ be the constant in Lemma \ref{lem:su-disk}. We still denote 
	$D^{su}_z(\delta_2)=\bigcup_{y\in{\cal F}^s_z(\delta_2)}D^u_y(\delta_2) =\bigcup_{y\in{\cal F}^s_z(\delta_2)}\exp_y(F_y(\delta_2))$.

	For every family of complete periodic center curves $\theta:\bigsqcup_{i=0}^{k-1}\RR_i\rightarrow M$ and the fiber dynamics $\theta^*f$, we consider the invariant section $\sigma_{\tau}$ of $F_{\tau}=\tilde{X}_{\tau}\circ\theta^*f$. 
	Theorem \ref{thm:permanence} shows that invariant section $\sigma_{\tau}$ converge to the zero-section in $C^1$-topology as $\tau$ tends to zero. So we have the following claim.

	\begin{claim}\label{claim:def-tau2}
		There exists $\tau_2=\tau_2(M,f,X,\eta_1)>0$, 
		such that for every family of complete periodic center curves $\theta$ and every $\tau\in(-\tau_2,\tau_2)$, the shadowing curves $\Phi\circ\sigma_{\tau}\left(\RR_i\right), i=0,\cdots,k-1$ are tangent to the $\eta_1/2$-cone field of $E^c$ everywhere.
	\end{claim}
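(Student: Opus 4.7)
The plan is to invoke the $C^1$-convergence $\sigma_\tau \to 0$ from Theorem \ref{thm:permanence} and Remark \ref{rk:permanence}, together with the fact that, at the zero section, $\Phi$ sends the base direction to $E^c$ and the fiber direction to $F^s\oplus F^u$, which is uniformly away from $E^c$ in angle. Crucially, the constants $\tau_\e$ in Remark \ref{rk:permanence} are independent of the periodic center curve $\theta:\bigsqcup_{i=0}^{k-1}\RR_i \to M$, so the resulting $\tau_2$ will be uniform as required.

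First I would unwind the differential of $\Phi$ at a point $\sigma_\tau(t)$ lying in $\theta^*(F^s\oplus F^u)_t(\e_0)$ for some small $\e_0$. At the zero $0_t$, the computation in the proof of Proposition \ref{prop:exp-map} shows that $D\Phi(0_t)$ realizes the identification $T_{0_t}\theta^*(F^s\oplus F^u) \cong T_t\RR_i \oplus \theta^*(F^s\oplus F^u)_t$ $\cong E^c_{\theta_i(t)}\oplus F^s_{\theta_i(t)}\oplus F^u_{\theta_i(t)}$. By Lemma \ref{lem:metric} and the assumption $\measuredangle(F^s,E^s),\measuredangle(F^u,E^u)<10^{-3}$, the subspace $F^s\oplus F^u$ makes an angle at least, say, $1/2$ with $E^c$, uniformly over $M$. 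By uniform continuity of $D\Phi$ on the compact tube $\overline{\theta^*(F^s\oplus F^u)(\delta_1/2)}$ and of the splitting $E^s\oplus E^c\oplus E^u$, there exists $\e_0=\e_0(M,f,F^s,F^u,\eta_1)>0$ such that whenever $v\in\theta^*(F^s\oplus F^u)(\e_0)$ and $\xi\in T_v \theta^*(F^s\oplus F^u)$ with base component $1$ and fiber component of norm at most $\e_0$, the image $D\Phi(v)\cdot\xi$ lies in the $(\eta_1/2)$-cone of $E^c_{\Phi(v)}$.

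Next, at any $t\in\RR_i$ the tangent of the shadowing curve is
\[
D(\Phi\circ\sigma_\tau)(t)\cdot 1 \;=\; D\Phi(\sigma_\tau(t))\cdot\bigl(1,\,D\sigma_\tau(t)\cdot 1\bigr),
\]
where $D\sigma_\tau(t)\cdot 1$ is the fiber component of the tangent to $\sigma_\tau$. Applying Theorem \ref{thm:permanence} and Remark \ref{rk:permanence} with the $\e_0$ produced in the previous step yields $\tau_2=\tau_2(M,f,X,F^s,F^u,\eta_1)>0$ such that $\|\sigma_\tau-0\|_{C^1}<\e_0$ for all $|\tau|<\tau_2$ and for \emph{every} choice of $\theta$. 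Combining with the cone bound from the previous step gives the claim.

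The main (and really only) subtle point is that the $\e_0$ from the differential estimate must be chosen \emph{before} we invoke the convergence of $\sigma_\tau$, and the convergence must be uniform in the center curve $\theta$ and its period $k$. This uniformity is precisely the content of Remark \ref{rk:permanence}, so there is no hidden difficulty once that uniformity is used. All other ingredients (the form of $D\Phi$ at the zero section, the angle bounds from the adapted metric, and the closeness of $F^{s/u}$ to $E^{s/u}$) are already in place.
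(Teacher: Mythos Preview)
Your proposal is correct and matches the paper's approach: the paper simply states the claim as an immediate consequence of the $C^1$-convergence $\sigma_\tau\to 0$ in Theorem~\ref{thm:permanence} (with the uniformity in $\theta$ from Remark~\ref{rk:permanence}), and you have spelled out precisely why that convergence yields the cone condition via the form of $D\Phi$ at the zero section. One small phrasing issue: the tube $\theta^*(F^s\oplus F^u)(\delta_1/2)$ is not compact (its base $\bigsqcup_i\RR_i$ is noncompact), so the uniformity of $D\Phi$ does not come from compactness of the tube itself but from the fact that $\Phi(v)=\exp_{\theta(t)}(\theta_*(v))$ and the map $(x,w)\mapsto\exp_x(w)$ together with the bundles $F^s,F^u,E^c$ live on the compact manifold $M$; this is what makes $\e_0$ independent of $\theta$ and $k$, as you need.
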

	
	There exists $\delta_5=\delta_5(M,f,X,\eta_1)>0$, such that for every $|\tau|<\min\{\tau_2,\delta_5/100L\}$ and $t\in\RR_i$, there exists a $C^1$-map $\gamma^c:E^c_{\theta_i(t)}(\delta_5)\rightarrow E^s_{\theta_i(t)}(\delta_5)\oplus E^u_{\theta_i(t)}(\delta_5)$ satisfying $\|\partial\gamma^c/\partial c\|\leq\eta_1$, such that
	$$
	\exp_{\theta_i(t)}^{-1}\circ\Phi\circ\sigma_{\tau}\left( (t-3\delta_5,t+3\delta_5)\right)
	\cap T_{\theta_i(t)}M(\delta_5)={\rm Graph}(\gamma^c)=
	\left\lbrace v^c+\gamma^c(v^c):~v^c\in E^c_{\theta_i(t)}(\delta_5) \right\rbrace.
	$$
	This means the shadowing curve is tangent to $\eta_1$-cone field of $E^c$ in the local chart of exponential maps.

	\vskip1mm
	
	Let $0<\delta< 10^{-2}\cdot\min\{\delta_2,\delta_4,\delta_5\}$
	    be a constant, which satisfies
	    \begin{enumerate}
		    \item for every $z\in B_x(\varepsilon_0)$ and  $v\in T_zM(\delta)$, the vector field $X_z(v)=D\exp_z^{-1}(X)(v)=X^s_z(v)+X^c_z(v)+X^u_z(v)$ satisfies
		    $$
		    \|X^s_z(v)\|+\|X^u_z(v)\|\leq 3\eta_1\|X^c_z(v)\|\leq 9\eta_1\|X\|,
		    \qquad {\rm and} \qquad
		    b_1\leq\|X^c_z(v)\|\leq 3\|X\|;
		    $$
		    \item for every $z\in M$ and $v^s\in E^s_z(\delta)$, we have $\|\partial\varphi^s_z(v)/\partial s\|<\eta$, where the graph of $\varphi^s_z$ defines the local stable manifold of $z$.
	    \end{enumerate} 
	For every $n\in\NN$ satisfying $1/n<\delta/(100L\cdot\sup_{z\in M}\|Df(z)\|)$, we consider the perturbation diffeomorphism $X_{1/n}\circ f$.
	
	Let $\Delta_n>0$ be the constant in Corollary \ref{cor:su-foliation} associated to $b_1/2n$, such that if $\gamma$ is a segment tangent to $10^{-3}$-cone field of $E^c$ with length $b_1/2n$, and $x_1,x_2$ be two endpoints of $\gamma$ satisfying $\Phi^{-1}(x_i)\in\tilde{D}^{su}_{t_i}(\delta_2)$ for $i=1,2$, then we have $|t_1-t_2|>\Delta_n$ in $\RR_i$.
    
    \vskip2mm

	Now we apply Proposition \ref{prop:center-curve} to the point $x\in\Omega(f)\setminus{\rm Per}(f)$ or $\Omega(f^2)\setminus{\rm Per}(f^2)$. In either way, we have the following claim.
	
	\begin{claim}
		For every $n>0$,
		there exist a complete periodic center curves $\theta:\bigsqcup_{i=0}^k\RR_i\rightarrow M$ and corresponding induced diffeomorphism $\theta^*f:\bigsqcup_{i=0}^k\RR_i\rightarrow\bigsqcup_{i=0}^k\RR_i$ defined as Lemma \ref{lem:inducing}, such that $f\circ\theta=\theta\circ\theta^*f$. Moreover, the zero $0_0\in\RR_0$ and $y=\theta(0_0)$ satisfy 
		$$
		d(x,y)<1/n,\qquad f^k(y)=\theta(s_n), \qquad {\rm and} \qquad |s_n|<\Delta_n.
		$$
	\end{claim}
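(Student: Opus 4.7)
The plan is to invoke Proposition \ref{prop:center-curve} directly, after first reducing to a setting where the hypothesis ``$Df$ preserves the orientation of $E^c$'' is available. First, assume we are in Case 1, so $E^c$ is orientable. If $Df$ already preserves this orientation, Proposition \ref{prop:center-curve} applies to $f$ at the point $x\in\Omega(f)\setminus\mathrm{Per}(f)$. Otherwise $Df$ reverses the orientation of $E^c$; the hypothesis that $Df^k(y)$ preserves the orientation of $E^c|_U$ whenever $y,f^k(y)\in U$ then forces every return time $k$ of the orbits witnessing $x\in\Omega(f)$ (those with $y_j\to x$ and $f^{k_j}(y_j)\to x$, eventually inside $U$) to be even. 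Thus the same sequences witness $x\in\Omega(f^2)$; since $x\notin\mathrm{Per}(f)$ implies $x\notin\mathrm{Per}(f^2)$, Proposition \ref{prop:center-curve} applies to $f^2$, and $Df^2$ preserves the orientation of $E^c$.

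Second, in either case, Proposition \ref{prop:center-curve} supplies a sequence of complete $C^1$ curves $\theta_m:\RR\to M$, period integers $K_m\to+\infty$ (with $K_m$ a power of the relevant iterate, i.e.\ $K_m=k_m$ or $K_m=2k_m$), base points $y_m=\theta_m(0)\to x$, and parameter shifts $t_m\to 0$ such that
\[
\theta_m'(t)\in E^c_{\theta_m(t)}\text{ is the positive unit vector,}\qquad f^{K_m}(y_m)=\theta_m(t_m),
\]
together with the commuting diagram $f^{K_m}\circ\theta_m=\theta_m\circ(\theta_m^*f^{K_m})$ from Proposition \ref{prop:center-curve}(3) and Lemma \ref{lem:inducing}.

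Third, we fix $n$ and select the index. Recall that the constant $\Delta_n>0$ was produced from Corollary \ref{cor:su-foliation} applied to the fixed scale $b_1/(2n)$; in particular $\Delta_n$ depends only on $n$ (through $b_1$, $F$, and the ambient data). Since $y_m\to x$ and $t_m\to 0$, we may choose $m$ so large that simultaneously
\[
d(x,y_m)<\tfrac{1}{n}\qquad\text{and}\qquad |t_m|<\Delta_n.
\]
Setting $\theta:=\theta_m$, $k:=K_m$, $s_n:=t_m$, and applying Lemma \ref{lem:inducing} to the $k$-periodic curve $\theta$ produces the disjoint union $\bigsqcup_{i=0}^{k-1}\RR_i$ together with the induced diffeomorphism $\theta^*f$ satisfying $f\circ\theta=\theta\circ\theta^*f$.

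There is no real obstacle beyond the parity check used in the orientation-reversing case: one must verify that the non-wandering witnesses of $x$ under $f$ carry over to $f^2$. This is immediate from the hypothesis of Theorem \ref{Thm:Local-Perturb} together with the fact that the returns of $y_j$ to $U$ must all have even iterates. The rest of the argument is a direct extraction from Proposition \ref{prop:center-curve} and Lemma \ref{lem:inducing}, together with the diagonal selection to match the prescribed thresholds $1/n$ and $\Delta_n$.
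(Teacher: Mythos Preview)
Your proposal is correct and follows essentially the same route as the paper: reduce to $f$ or $f^2$ via the parity argument so that Proposition~\ref{prop:center-curve} applies, extract the sequence of complete periodic center curves with $y_m\to x$ and $t_m\to 0$, and then choose $m$ large enough to meet the thresholds $1/n$ and $\Delta_n$; finally invoke Lemma~\ref{lem:inducing} to build $\theta^*f$ on $\bigsqcup_{i=0}^{k-1}\RR_i$. The paper states the claim as an immediate consequence of Proposition~\ref{prop:center-curve} without writing out the selection step, so your argument is simply a more explicit rendering of the same idea.
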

	
	If $s_n=0$, then $y$ is a periodic point and we are done. We assume $-\Delta_n<s_n<0$. The proof for $0<s_n<\Delta_n$ is the same.
	
	\begin{definition}\label{def:conjugacy}
		For every $t\in\RR_i$, let 
		$\tilde{D}^{su}_{t}(\delta_2)=
		\Phi^{-1}\left(D^{su}_{\theta(t)}(\delta_2)\right)$  and
		$$
		{\cal D}^{su}(\delta)=\left\lbrace \tilde{D}^{su}_{t}(\delta_2)\cap 
		\theta^*(F^s\oplus F^u)(\delta):
		t\in\bigsqcup_{i=0}^{k-1}\RR_i \right\rbrace 
		$$ 
		be the $su$-foliation on $\theta^*(F^s\oplus F^u)(\delta)$ defined as Corollary \ref{cor:su-foliation}. For every $\tau\in(-\tau_2,\tau_2)$, we define the leaf conjugacy
		$h_{\tau}:\bigsqcup_{i=0}^{k-1}\RR_i\rightarrow \bigsqcup_{i=0}^{k-1}\sigma_{\tau}(\RR_i)$
	    as 
		$$
		h_{\tau}(t)~=~
		\sigma_{\tau}(\RR_i)\cap\tilde{D}^{su}_{t}(\delta_2),
		\qquad \forall t\in\RR_i;
		$$
	\end{definition}

	For every $t\in\RR_i$, we denote 
	$\psi^{su}_{\theta(t)}:
	E^s_{\theta(t)}(\delta)\oplus E^u_{\theta(t)}(\delta)
	\rightarrow E^c_{\theta(t)}(\delta)$
	the $C^1$-map whose graph satisfies
	$$
	\left\lbrace v^{su}+\psi^{su}_{\theta(t)}(v^{su}):
	v^{su}\in E^s_{\theta(t)}(\delta)\oplus E^u_{\theta(t)}(\delta) \right\rbrace 
	~=~
	\exp_{\theta(t)}^{-1}\left(D^{su}_{\theta(t)}(\delta_2)\right) 
	\cap T_{\theta(t)}M(\delta).
	$$
	The definition and property of $D^{su}_{\theta(t)}(\delta_2)$ implies that
	\begin{itemize}
		\item $\Phi\circ h_{\tau}(t)=
		       D^{su}_{\theta(t)}(\delta_2)\cap
		     \Phi\circ\sigma_{\tau}((t-3\delta,t+3\delta))$;
		\item $\|\partial\psi^{su}_{\theta(t)}/\partial s\|<2\eta$, and $\|\partial\psi^{su}_{\theta(t)}/\partial u\|<2\eta$.
	\end{itemize}

    Let $0_i\in\RR_i$ be the zero point of $\RR_i$ for $i=0,\cdots,k-1$.
    We denote $\gamma_i:E^c_{\theta(0_i)}(\delta)\rightarrow E^s_{\theta(0_i)}(\delta)\oplus E^u_{\theta(0_i)}(\delta)$ be the $C^1$-map satisfying
    $$
    \left( \exp_{\theta(0_i)}^{-1}\circ\Phi\circ\sigma_{1/n}((0_i-3\delta,0_i+3\delta))
    \right) 
    \bigcap T_{\theta(0_i)}M(\delta)={\rm Graph}(\gamma_i)
    =\left\lbrace v^c+\gamma_i(v^c):~v^c\in E^c_{\theta(0_i)}(\delta) \right\rbrace. 
    $$
    It satisfies $\|\partial\gamma_i/\partial c\|\leq\eta_1$. The graph of $\gamma_i$ intersects the set
    $$
    \left\lbrace v^s+v^u+\pi^c_{\theta(0_i)}\circ\varphi^s_{\theta(0_i)}(v^s): ~v^s+v^u\in E^s_{\theta(0_i)}(\delta)\oplus E^u_{\theta(0_i)}(\delta)  \right\rbrace \subset T_{\theta(0_i)}M(\delta),
    $$
    with a unique point $w_i=w_i^s+w_i^c+w_i^u$.
    Here the graph of $\varphi^s_{\theta(0_i)}$ defines the local stable manifold of $\theta(0_i)$ in $T_{\theta(0_i)}M(\delta)$. Corollary \ref{cor:Lipschitz} shows that
    $$
    \|w_i^u-\pi^u_{\theta(0_i)}\circ\varphi^s_{\theta(0_i)}(w_i^s)\|\leq L/n.
    $$
    
    There exists a unique point $z_i\in\Phi\circ\sigma_{1/n}((0_i-3\delta,0_i+3\delta))$, where $\exp_{\theta(0_i)}^{-1}(z_i)=z_i^s+z_i^c+z_i^u\in T_{\theta(0_i)}M(\delta)$ satisfies
    $$
    z_i^c=w_i^c- 3\eta\cdot\|w_i^u-\pi^u_{\theta(0_i)}\circ\varphi^s_{\theta(0_i)}(w_i^s)\|,
    \qquad {\rm and} \qquad
    z_i^s+z_i^u=\gamma_i(z_i^c).
    $$
    For every $i=0,\cdots,k-1$,
    we denote $t_i\in(0_i-3\delta,0_i+3\delta)\subset\RR_i$ the point satisfying $\Phi\circ h_{1/n}(t_i)=z_i$, i.e.
    $$
    z_i=D^{su}_{\theta(t_i)}(\delta_2)\pitchfork \Phi\circ\sigma_{1/n}((0_i-3\delta,0_i+3\delta)).
    $$
	
	For the zero $0_0\in\RR_0$ and the point $h_{1/n}(0_0)$, we have the following claim.
	
	\begin{claim}\label{claim:disk}
		Let the orientation of $\sigma_{1/n}(\RR_0)$ be induced by $\RR_0$, 
		then $z_0\leq h_{1/n}(0_0)$ in $\sigma_{1/n}(\RR_0)$. This implies $t_0\leq 0_0$ in $\RR_0$.
	\end{claim}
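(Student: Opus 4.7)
The plan is to translate everything into the exponential chart at $\theta(0_0)$. Writing $p = \exp_{\theta(0_0)}^{-1}(h_{1/n}(0_0)) = p^s + p^c + p^u$, by definition of $h_{1/n}$ the point $p$ satisfies $p^s + p^u = \gamma_0(p^c)$ and $p^c = \psi^{su}_{\theta(0_0)}(p^s + p^u)$, while $w_0$ satisfies $w_0^s + w_0^u = \gamma_0(w_0^c)$ and $w_0^c = \varphi^{su}_{\theta(0_0)}(w_0^s + w_0^u) = \pi^c_{\theta(0_0)} \circ \varphi^s_{\theta(0_0)}(w_0^s)$. Because $\sigma_{1/n}$ is tangent to the $(\eta_1/2)$-cone field of $E^c$ (Claim \ref{claim:def-tau2}) and the orientation on $\RR_0$ is the one for which $\theta_0'$ is the positive unit vector in $E^c$, the induced ordering on $\sigma_{1/n}(\RR_0)$ corresponds in the chart to ordering by $v^c$. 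Hence $z_0 \leq h_{1/n}(0_0)$ on $\sigma_{1/n}(\RR_0)$ reduces to the scalar inequality $z_0^c \leq p^c$, and the implication $t_0 \leq 0_0$ will then follow since $t \mapsto h_{1/n}(t)$ is an order-preserving homeomorphism of a neighborhood of $0_0$ onto its image (both $\sigma_{1/n}$ and the disks $\tilde{D}^{su}_t$ are, in the chart, uniformly close to the canonical structure along $\RR_0$).

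The key geometric observation is the containment ${\cal F}^s_{\theta(0_0)}(\delta_2) \subset D^{su}_{\theta(0_0)}(\delta_2)$, which is immediate from the definition $D^{su}_z = \bigcup_{y\in{\cal F}^s_z} D^u_y$ combined with $y \in D^u_y$ (since $0 \in F_y$). In the exponential chart this containment amounts to the identity
\[ \psi^{su}_{\theta(0_0)}\bigl(v^s,\, \pi^u_{\theta(0_0)} \circ \varphi^s_{\theta(0_0)}(v^s)\bigr) = \pi^c_{\theta(0_0)}\circ \varphi^s_{\theta(0_0)}(v^s) = \varphi^{su}_{\theta(0_0)}(v^s + v^u), \]
valid for all $v^s, v^u$. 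Combined with $\|\partial\psi^{su}_{\theta(0_0)}/\partial u\| < 2\eta$ (Lemma \ref{lem:su-disk}), this shows that ${\rm Graph}(\psi^{su})$ and ${\rm Graph}(\varphi^{su})$ meet along the image of the local stable manifold and separate in the $c$-direction by at most $2\eta\|v^u - \pi^u\varphi^s(v^s)\|$.

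The finishing step will be a standard implicit-function argument. Set $G(v^c) = v^c - \psi^{su}_{\theta(0_0)}(\gamma_0(v^c))$ and $H(v^c) = v^c - \varphi^{su}_{\theta(0_0)}(\gamma_0(v^c))$, so that $G(p^c) = 0$ and $H(w_0^c) = 0$. Using $\|\partial\psi^{su}/\partial s\|, \|\partial\psi^{su}/\partial u\| < 2\eta$, $\|\gamma_0'\| \leq \eta_1$, and the projection bound $\|\pi^i\| \leq 2$ from Lemma \ref{lem:metric}, both $|G' - 1|$ and $|H' - 1|$ stay below $8\eta\eta_1$, so $G' \geq 1/2$ throughout. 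The identity above evaluated at $v^c = w_0^c$ gives $|G(w_0^c)| = |G(w_0^c) - H(w_0^c)| \leq 2\eta\,\|w_0^u - \pi^u\varphi^s(w_0^s)\|$, and the mean value theorem applied to $G$ between $w_0^c$ and $p^c$ yields
\[ |p^c - w_0^c| \leq \frac{|G(w_0^c)|}{\inf G'} \leq \frac{2\eta}{1 - 8\eta\eta_1}\,\|w_0^u - \pi^u\varphi^s(w_0^s)\| < 3\eta\,\|w_0^u - \pi^u\varphi^s(w_0^s)\| \]
for the choice $\eta, \eta_1 < 10^{-3}$. By the definition of $z_0^c$ this forces $p^c \geq z_0^c$. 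The main obstacle is spotting the identity along the stable manifold: without the containment ${\cal F}^s \subset D^{su}$ the naive estimate would only bound $|p^c - w_0^c|$ by a multiple of $\eta\|w_0^u\|$, whereas what the following subsection requires is a bound proportional to $\|w_0^u - \pi^u\varphi^s(w_0^s)\|$, which Corollary \ref{cor:Lipschitz} controls by $L/n$ and hence shrinks with the perturbation size.
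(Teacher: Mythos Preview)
Your proof is correct and uses the same key idea as the paper: the containment ${\cal F}^s_{\theta(0_0)}\subset D^{su}_{\theta(0_0)}$, which in the exponential chart gives the identity $\psi^{su}_{\theta(0_0)}(v^s,\pi^u\varphi^s(v^s))=\pi^c\varphi^s(v^s)$. The only difference is in the finishing step: the paper shows directly that $z_0^c\le\psi^{su}_{\theta(0_0)}(z_0^s+z_0^u)$ by estimating $\psi^{su}$ at $(z_0^s,z_0^u)$ from its value at the nearby stable-manifold point $(w_0^s,\pi^u\varphi^s(w_0^s))$ and using $\|\partial\psi^{su}/\partial u\|<2\eta$, whereas you first locate $p^c$ via the mean value theorem applied to $G$ and then compare to $z_0^c$. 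Both arguments yield the same bound $|p^c-w_0^c|<3\eta\|w_0^u-\pi^u\varphi^s(w_0^s)\|$, so the approaches are essentially equivalent.
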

	
	\begin{proof}[Proof of the Claim]
		We only need to show that $z_0^c\leq\psi^{su}_{\theta(0_0)}(z_0^s+z_0^u)$. Since $\|\partial \gamma_0/\partial c\|<\eta_1$, we have
		$$
		\|z_0^s-w_0^s\|\leq\eta_1\cdot3\eta\|w_0^u-\pi^u_{\theta(0_0)}\circ\varphi^s_{\theta(0_0)}(w_0^s)\|, \qquad
		\|z_0^u-w_0^u\|\leq\eta_1\cdot3\eta\|w_0^u-\pi^u_{\theta(0_0)}\circ\varphi^s_{\theta(0_0)}(w_0^u)\|. 
		$$

		From the fact that $\|\partial\psi^{su}_{\theta(t)}/\partial u\|<2\eta$, and $\psi^{su}_{\theta(0_0)}(w_0^s+\pi^u_{\theta(0_0)}\circ\varphi^s_{\theta(0_0)}(w_0^s))=\pi^c_{\theta(0_0)}\circ\varphi^s_{\theta(0_0)}(w_0^s)$, we have the estimation
		\begin{align*}
		    \psi^{su}_{\theta(0_0)}(z_0^s+z_0^u) &\geq 
		    \psi^{su}_{\theta(0_0)}\left(w_0^s+
		      \pi^u_{\theta(0_0)}\circ\varphi^s_{\theta(0_0)}(w_0^s)\right)
		     -2\eta\cdot\|z_0^s-w_0^s\|-2\eta\cdot\|z_0^u
		     -\pi^u_{\theta(0_0)}\circ\varphi^s_{\theta(0_0)}(w_0^s)\| \\
		    &\geq \pi^c_{\theta(0_0)}\circ\varphi^s_{\theta(0_0)}(w_0^s)
		    -2\eta\cdot\|z_0^s-w_0^s\|-2\eta\cdot\|z_0^u-w_0^u\|-2\eta\cdot\|w_0^u-\pi^u_{\theta(0_0)}\circ\varphi^s_{\theta(0_0)}(w_0^s)\| \\
		    &\geq w_0^c-2\eta\cdot(1+3\eta_1)\cdot \|w_0^u-\pi^u_{\theta(0_0)}\circ\varphi^s_{\theta(0_0)}(w_0^s)\|
		\end{align*}
		Since 
		$$
		\eta_1\ll1/6, \qquad {\rm and } \qquad
		z_0^c=w_0^c-3\eta\cdot \|w_0^u-\pi^u_{\theta(0_0)}\circ\varphi^s_{\theta(0_0)}(w_0^s)\|,
		$$
		This shows $\psi^{su}_{\theta(0_0)}(z_0^s+z_0^u)\geq z_0^c$.	   
	\end{proof}
	
	Now we let the orientation of $E^c_{\theta(0_i)}$ in $T_{\theta(0_i)}M(\delta)$ is induced $Df^i_{\theta(0_0)}(E^c_{\theta(0_0)})$ for every $i=1,\cdots,k-1$. With this orientation, we can see that the vector field $X_{\theta(0_i)}=D\exp_{\theta(0_i)}^{-1}(X)$ is non-negative in the center direction and satisfies the assumption in Lemma \ref{Lem:Doimination}. (Notice here this orientation may be not fit the orientation of $E^c$). Lemma \ref{Lem:Doimination} shows that 
	$$
	\pi^c_{\theta(0_{i+1})}\circ\exp_{\theta(0_{i+1})}^{-1}\circ X_{1/n}\circ f(z_i)>z_{i+1}^c, \qquad i=0,1,\cdots,k-2.
	$$
    This implies
    \begin{align}\label{local dominated}
        F_{1/n}\circ h_{1/n}(t_i)\geq h_{1/n}(t_{i+1}), 
        \qquad \forall i=0,1,\cdots,k-2.
    \end{align}
    Here the order on $\sigma(\RR_{i+1})$ is induced by the order of $\RR_{i+1}$.  Again, if $Df$ reverses the orientation of $E^c$, then the order of $\RR_i$ is reversing to the order of $E^c$ when $i$ is odd, and the period $k$ is even.
    
    Moreover, if we denote 
    $$
    \exp^{-1}_{\theta(s_n)}\left(f(z_{k-1})\right)
    =\exp^{-1}_{f^k\circ\theta(0_0)}\left(f(z_{k-1})\right)
    =f(z_{k-1})^s+f(z_{k-1})^c+f(z_{k-1})^u
    \in T_{\theta(s_n)}M(\delta),
    $$
    then Item 2 of Lemma \ref{Lem:Doimination} implies
    \begin{align*}
     f(z_{k-1})^c-\pi^c_{\theta_0(s_n)}\circ\varphi^s_{\theta_0(s_n)}(f(z_{k-1})^s)
     & \geq -3\eta\cdot\|Df(\theta(0_{k-1}))\|\cdot \|w_{k-1}^u-\pi^u_{\theta(0_{k-1})}\circ\varphi^s_{\theta(0_{k-1})}(w_{k-1}^s)\|\\
     & \geq -3\eta\cdot\frac{L}{n}\cdot\|Df(\theta(0_{k-1}))\|.
    \end{align*}
    
    Now we consider the action of the vector field $X_{\theta(s_n)}=D\exp_{\theta(s_n)}^{-1}(X)$ with time $1/n$.
    Denote 
    $$
    z_k=X_{1/n}\circ f(z_{k-1}), 
    \qquad {\rm and} \qquad
    \exp_{\theta(s_n)}^{-1}(z_k)=z_k^s+z_k^c+z_k^u\in T_{\theta(s_n)}M(\delta).
    $$ 
    We have the following claim:
    
    \begin{claim}\label{claim:last-step}
    	The point $z_k$ satisfies 
    	$$
    	z_k^c> \psi^{su}_{\theta(s_n)}(z_k^s+z_k^u)+\frac{b_1}{2n}.
    	$$
    	This implies there exists $t_k>0_0$ in $\RR_0$, such that $z_k=\Phi\circ h_{1/n}(t_k)$.
    \end{claim}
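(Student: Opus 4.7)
The plan is to establish the displayed inequality $z_k^c > \psi^{su}_{\theta(s_n)}(z_k^s + z_k^u) + b_1/(2n)$ by decomposing the left-hand side into a dominant positive contribution from the push by $X_{1/n}$ and several small error terms, each controlled by the choice of $\eta$ in (\ref{def:eta}). Once the inequality is secured, the existence of $t_k > 0_0$ will follow by combining it with the uniform continuity of the $su$-foliation (Corollary \ref{cor:su-foliation}) and the bound $|s_n| < \Delta_n$.

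Since $d(x, \theta(0_0)) < 1/n$ and $|s_n| < \Delta_n$ is tiny, the basepoint $\theta(s_n)$ lies in $B_x(2\varepsilon_0)$, and the properties imposed on $X$ give $X^c_{\theta(s_n)} \geq b_1$ and $\|X^s_{\theta(s_n)}\| + \|X^u_{\theta(s_n)}\| \leq 3\eta_1 \, X^c_{\theta(s_n)}$ throughout the chart $T_{\theta(s_n)}M(\delta)$. Integrating the lift of $X$ over time $1/n$ yields $z_k^c - f(z_{k-1})^c \geq b_1/n$ and $\|z_k^{su} - f(z_{k-1})^{su}\| \leq 3\eta_1(z_k^c - f(z_{k-1})^c) \leq 9\eta_1\|X\|/n$. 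I now telescope
\begin{align*}
z_k^c - \psi^{su}_{\theta(s_n)}(z_k^{su})
&= \bigl(z_k^c - f(z_{k-1})^c\bigr) + \bigl(f(z_{k-1})^c - \pi^c_{\theta(s_n)}\varphi^s_{\theta(s_n)}(f(z_{k-1})^s)\bigr) \\
&\quad + \bigl(\pi^c_{\theta(s_n)}\varphi^s_{\theta(s_n)}(f(z_{k-1})^s) - \psi^{su}_{\theta(s_n)}(f(z_{k-1})^{su})\bigr)
 + \bigl(\psi^{su}_{\theta(s_n)}(f(z_{k-1})^{su}) - \psi^{su}_{\theta(s_n)}(z_k^{su})\bigr).
\end{align*}
The second term is bounded below by Lemma \ref{Lem:Doimination}(2), applied at $y = \theta(0_{k-1})$ with $\theta = 3\eta/\eta_1$, giving $\geq -3\eta L\|Df(\theta(0_{k-1}))\|/n$. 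The third term, via the identity $\psi^{su}(v^s + \pi^u\varphi^s(v^s)) = \pi^c\varphi^s(v^s)$ and $\|\partial\psi^{su}/\partial u\| < 2\eta$, is bounded in absolute value by $2\eta \cdot \|f(z_{k-1})^u - \pi^u_{\theta(s_n)}\varphi^s_{\theta(s_n)}(f(z_{k-1})^s)\|$, which by Claim \ref{claim:r3} is at most $4\eta\|Df\|L/n$. The fourth term is at most $2\eta \cdot 9\eta_1\|X\|/n$ by the Lipschitz bound on $\psi^{su}_{\theta(s_n)}$. Summing,
\[
z_k^c - \psi^{su}_{\theta(s_n)}(z_k^{su}) \;\geq\; \frac{b_1}{n} - \frac{C\eta\bigl(L\|Df\| + \|X\|\bigr)}{n}
\]
for an absolute constant $C$; the definition (\ref{def:eta}) of $\eta$ makes the subtracted quantity at most $b_1/(2n)$, yielding the claimed inequality.

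For the second assertion, $z_k \in \Phi\circ\sigma_{1/n}(\RR_0)$ so $z_k = \Phi\circ h_{1/n}(t_k)$ for a unique $t_k$ near $s_n$. The inequality just proved means $z_k$ lies above $D^{su}_{\theta(s_n)}$ in the center direction by at least $b_1/(2n)$, while $h_{1/n}(s_n)$ sits on $D^{su}_{\theta(s_n)}$; using $\|\sigma_{1/n}\|_{C^0} \leq L/n$ from Theorem \ref{thm:Lipschitz} and the Lipschitz bound on $\psi^{su}_{\theta(s_n)}$, the $c$-displacement between $h_{1/n}(s_n)$ and $z_k$ along $\sigma_{1/n}$ exceeds $b_1/(2n)$ up to a negligible $O(\eta L/n)$ correction, hence the arc length of $\sigma_{1/n}$ between them is at least $b_1/(2n)$. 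Since $\sigma_{1/n}$ is tangent to the $\eta_1/2$-cone of $(\Phi^{-1})^*(E^c)$ by Claim \ref{claim:def-tau2}, the $t$-parameter increases monotonically with the $c$-coordinate along $\sigma_{1/n}$, giving $t_k > s_n$; Corollary \ref{cor:su-foliation} applied with $\tau = b_1/(2n)$ and $\tau' = \Delta_n$ then forces $t_k - s_n \geq \Delta_n > |s_n|$, i.e. $t_k > 0_0$. The main obstacle is keeping the three error sources---from Lemma \ref{Lem:Doimination}(2), from the graph-function exchange $\varphi^s \leftrightarrow \psi^{su}$ via Claim \ref{claim:r3}, and from the $X_{1/n}$-displacement in the $su$-direction---all simultaneously bounded by the single constant $\eta$ against the main $b_1/n$ push, so that the scale of the final inequality matches exactly the length $b_1/(2n)$ at which $\Delta_n$ is defined.
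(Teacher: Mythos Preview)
Your proof is correct and follows essentially the same strategy as the paper: bound $z_k^c$ from below using the push $b_1/n$ minus the domination error from Lemma~\ref{Lem:Doimination}(2), bound $\psi^{su}_{\theta(s_n)}(z_k^{su})$ via the small slopes $2\eta$, and let the choice of $\eta$ in~(\ref{def:eta}) absorb all error terms into $b_1/(2n)$. Your four-term telescoping is just a reorganization of the paper's separate lower bound on $z_k^c$ and upper bound on $|\psi^{su}_{\theta(s_n)}(z_k^{su})|$; the paper bounds $\|z_k^{su}\|$ crudely by $(10L\sup\|Df\|+\|X\|)/n$ and multiplies by $2\eta$, whereas you split off the piece containing the stable manifold via the identity $\psi^{su}(v^s+\pi^u\varphi^s(v^s))=\pi^c\varphi^s(v^s)$ (used in the paper in Claim~\ref{claim:disk}), which is a little sharper but not materially different.

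For the implication $t_k>0_0$, the paper's argument is slightly cleaner than yours: instead of running along the arc of $\sigma_{1/n}$ between $h_{1/n}(s_n)$ and $z_k$ and carrying the extra $O(\eta L/n)$ correction, the paper uses the \emph{vertical} segment $\exp_{\theta(s_n)}\{z_k^{su}+v^c:\psi^{su}_{\theta(s_n)}(z_k^{su})\le v^c\le z_k^c\}$, whose lower endpoint lies exactly on $D^{su}_{\theta(s_n)}$ and whose upper endpoint is $z_k\in D^{su}_{\theta(t_k)}$, with length $\ge b_1/(2n)$ on the nose; Corollary~\ref{cor:su-foliation} then gives $t_k-s_n\ge\Delta_n>|s_n|$ with no correction term to manage.
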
 
    
    \begin{proof}[Poof of the Claim]
    	We have the following estimations:
    	\begin{align*}
    	z_k^c&=\pi^c_{\theta(s_n)}\circ X_{\theta(s_n),1/n} \circ\exp^{-1}_{\theta(s_n)}(f(z_{k-1})) \geq 
    	f(z_{k-1})^c+b_1\cdot\frac{1}{n} \\
    	& \geq \pi^c_{\theta(s_n)}\circ\varphi^s_{\theta(s_n)}(f(z_{k-1})^s)
    	-3\eta\cdot\frac{L}{n}\cdot\|Df(\theta(0_{k-1}))\|
    	+b_1\cdot\frac{1}{n} 
    	\end{align*}
    	Since $\|f(z_{k-1})^s\|\leq\|z_{k-1}^s\|\leq L/n$, we have
    	$z_k^c\geq 
    	 \left(b_1-4\eta L\cdot\|Df(\theta(0_{k-1}))\|\right)/n$.
    	
    	On the other hand, we have
    	\begin{align*}
    	   \|z_k^s+z_k^u\| &\leq \|f(z_{k-1})^s+f(z_{k-1})^u\|+\frac{9\eta_1}{n}\cdot\|X\| \\
    	   & \leq 10\sup_{z\in M}\|Df(z)\|\cdot\|z_{k-1}^s+z_{k-1}^u\|
    	   +\frac{9\eta_1}{n}\cdot\|X\| \\
    	   & \leq \frac{1}{n}\cdot \left( 10 L\sup_{z\in M}\|Df(z)\| +\|X\| \right) 
    	\end{align*}
    	This implies 
    	$$
    	\psi^{su}_{\theta(s_n)}\left(z_k^s+z_k^u\right)
    	\leq \frac{2\eta}{n}\cdot 
    	\left( 10 L\sup_{z\in M}\|Df(z)\| +\|X\| \right) 
    	$$
    	
    	Thus we have
    	\begin{align*}
    	   z_k^c &\geq \frac{1}{n}\cdot\left(b_1-4\eta L\cdot\|Df(\theta(0_{k-1}))\|\right)+
    	   \psi^{su}_{\theta(s_n)}(z_k^s+z_k^u)-\frac{2\eta}{n}\cdot \left( 10 L\sup_{z\in M}\|Df(z)\| +\|X\| \right) \\
    	   &\geq \psi^{su}_{\theta(s_n)}(z_k^s+z_k^u)+\frac{b_1}{n}-
    	   \frac{\eta}{n}\cdot\left(24L\sup_{z\in M}\|Df(z)\|+2\|X\| \right) 
    	\end{align*}
    	Recall that $\eta\leq 
    	  b_1/100\left(L\sup_{z\in M}\|Df(z)\|+\|X\|\right)$, which implies 
    	$$
    	z_k^c\geq\psi^{su}_{\theta(s_n)}(z_k^s+z_k^u)-\frac{b_1}{2n}.
    	$$
    	
    	The segment 
    	$$
    	\exp_{\theta(s_n)}\left(\left\lbrace z_k^s+z_k^u+v^c:  \psi^{su}_{\theta(s_n)}(z_k^s+z_k^u)\leq v^c\leq z_k^c \right\rbrace\right)
    	$$
    	is a center segment tangent to $10^{-3}$-cone field of $E^c$ everywhere with length at least $b_1/2n$. Thus if we denote
    	$z_k=\Phi\circ h_{1/n}(t_k)$, we must have
    	$t_k\geq s_n+\Delta_n>0_0$, which proves the claim.
    \end{proof}
     
    Finally, Claim \ref{claim:disk} and the equation \ref{local dominated} imply that, in $\sigma_{1/n}(\RR_0)$, we have
    $$
    F_{1/n}^k\circ h_{1/n}(0_0)\geq 
    F_{1/n}^k\circ h_{1/n}(t_0)\geq h_{1/n}(t_k)> h_{1/n}(0_0).
    $$
	Since $F_0^k(0_0)=s_n<0_0$ and the invariant section $\sigma_{\tau}$ vary continuously with respect to $\tau$, there exists $0<\tau_n<1/n$ such that
	$$
	F_{\tau_n}^k\circ h_{\tau_n}(0_0)=h_{\tau_n}(0_0)\in\RR_i.
	$$
	Thus the point $p_n=\Phi\circ h_{\tau_n}(0_0)$ is a periodic point of $X_{\tau_n}\circ f$ satisfying $d(x,p_n)<(L+1)/n$. This proves the theorem when $E^c$ is orientable.
	
	\vskip 3mm
	
	\noindent{\bf Case 2.} $E^c$ is non-orientable.
	
	\vskip 1mm
	
	In this case, we consider the double covering $\tilde{M}$ of $M$ such that the lifting bundle $\tilde{E}^c$ of $E^c$ is orientable. Then we make the local perturbation of the lifting diffeomorphism as Case 1. The only difference is the perturbation on $\tilde{M}$ should be symmetric that can be projected on $M$.
	
	Let $\pi:\tilde{M}\rightarrow M$ be the covering map of the double covering $\tilde{M}$. For the point $x\in\Omega(f)\setminus{\rm Per}(f)$ and its neighborhood $U$, we denote $\pi^{-1}(x)=\{x^+,x^-\}$ and $\pi^{-1}(U)=U^+\cup U^-$, where $x^+\in U^+$ and $x^-\in U^-$.
	
	We fix an orientation of $E^c|_{U}$ and an orientation of $\tilde{E}^c$, such that $D\pi:\tilde{E}^c|_{U^+}\rightarrow E^c|_U$ preserves the corresponding orientations. This implies $D\pi:\tilde{E}^c|_{U^-}\rightarrow E^c|_U$ reverses the corresponding orientations.
	Moreover, let $\tilde{f}$ be a lifting diffeomorphism of $f$ which preserves the orientation of $\tilde{E}^c$. The lifting diffeomorphism $\tilde{f}:\tilde{M}\rightarrow\tilde{M}$ is also partially hyperbolic:
	$$
	T\tilde{M}=\tilde{E}^s\oplus \tilde{E}^c\oplus \tilde{E}^c, 
	\qquad {\rm and} \qquad
	D\pi(\tilde{E}^i)=E^i, \qquad\forall i=s,c,u.
	$$
	We have the following claim.
	
	\begin{claim}\label{claim:orientation}
		The point $x^+$ is a non-wandering point of $\tilde{f}$. Moreover, for every $y^+\in U^+$, we have
		$$
		B_{\tilde{f}^i(y^+)}(\varepsilon_0)\cap B_{x^-}(3\varepsilon_0)=\emptyset,
		\qquad \forall i>0.
		$$
	\end{claim}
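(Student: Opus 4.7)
The plan is to exploit the orientation bookkeeping between $\tilde{E}^c$ and $E^c|_U$ through the two sheets $U^+, U^-$ of the covering. Recall that $D\pi$ preserves orientation from $\tilde{E}^c|_{U^+}$ onto $E^c|_U$ and reverses orientation from $\tilde{E}^c|_{U^-}$ onto $E^c|_U$, while $D\tilde{f}$ preserves the orientation of $\tilde{E}^c$ globally. Thus, whenever $y^+\in U^+$ and $\tilde{f}^i(y^+)\in U^-$ for some $i>0$, the commutation $\pi\circ\tilde{f}^i=f^i\circ\pi$ yields $D\pi\circ D\tilde f^i(y^+)=Df^i(\pi(y^+))\circ D\pi(y^+)$; the left-hand side reverses the orientation of $E^c|_U$ (one preserve, one preserve, one reverse), while $D\pi(y^+)$ preserves it, forcing $Df^i(\pi(y^+))$ to reverse the orientation of $E^c|_U$. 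This is precisely forbidden by the standing hypothesis on $U$ that every orbit return keeps the orientation of $E^c|_U$. This observation is the engine for both parts.

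For the second statement, I first fix $\varepsilon_0$ small enough that $B_{x^+}(5\varepsilon_0)\subset U^+$ and $B_{x^-}(5\varepsilon_0)\subset U^-$ (this is possible since $\pi$ is a local isometry and $B_x(5\varepsilon_0)\subset U$; the two lifted balls are disjoint for $\varepsilon_0$ below the injectivity radius of the double cover). Then for any $y^+\in U^+$ and any $i>0$, suppose $B_{\tilde f^i(y^+)}(\varepsilon_0)\cap B_{x^-}(3\varepsilon_0)\neq\emptyset$; by the triangle inequality $\tilde f^i(y^+)\in B_{x^-}(4\varepsilon_0)\subset U^-$. Applying the orientation argument above with $y=\pi(y^+)\in U$ and $f^i(y)\in U$ gives a contradiction with the hypothesis of Theorem \ref{Thm:Local-Perturb}. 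This rules out every such $i$ and proves the second item.

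For the first statement, I start from some sequences $y_n\to x$ and $k_n>0$ with $f^{k_n}(y_n)\to x$, coming from $x\in\Omega(f)$. For $n$ large enough both $y_n$ and $f^{k_n}(y_n)$ lie in arbitrarily small neighborhoods of $x$ contained in $U$, so I may lift $y_n$ to $y_n^+\in U^+$ via the local inverse of $\pi|_{U^+}$. The iterate $\tilde f^{k_n}(y_n^+)$ lies in $\pi^{-1}(f^{k_n}(y_n))$, hence tends either to $x^+$ or to $x^-$ along subsequences. If along some subsequence $\tilde f^{k_n}(y_n^+)\to x^-$, then for large $n$ both $y_n^+\in U^+$ and $\tilde f^{k_n}(y_n^+)\in U^-$, and the orientation computation again contradicts the hypothesis. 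Therefore $\tilde f^{k_n}(y_n^+)\to x^+$, which shows that $x^+\in\Omega(\tilde f)$.

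The main (and really the only) obstacle is the bookkeeping of orientations through the three maps $D\pi(y^+)$, $D\tilde f^i$ and $D\pi(\tilde f^i(y^+))$; once this is made precise with the convention fixed at the beginning of Case~2, both items reduce to the same dichotomy. No new perturbation or analytic work is needed here—the claim is purely a topological–orientation statement converting the hypothesis on $f$ in $U$ into a geometric separation property for $\tilde f$ on the cover.
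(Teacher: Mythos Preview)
Your proof is correct and follows essentially the same orientation-bookkeeping argument as the paper. In fact you are more thorough: the paper's proof only addresses the second assertion (the disjointness of balls), reducing it as you do to showing $\tilde f^i(y^+)\notin U^-$ and deriving the same orientation contradiction, but does not spell out why $x^+\in\Omega(\tilde f)$; your lifting-and-dichotomy argument for the first assertion fills that gap cleanly using the same mechanism.
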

	
	\begin{proof}
		Since $B_x(5\varepsilon_0)\subset U$, we only need to show that $\tilde{f}^i(y^+)\notin U^-$ for every $y^+\in U^+$ and $i>0$. If there exists $y^+\in U^+$ and $i>0$, such that $\tilde{f}^i(y^+)\in U^-$, then $y=\pi(y^+)\in U$ and $f^i(y)\in U$ which satisfy $Df^i$ reverses the orientation of $E^c|_U$. This contradicts the assumption.
	\end{proof}
	
	 Let $\eta_1$ and $\delta_4$ be the constants decided by Lemma \ref{Lem:Doimination} with respect to $\tilde{M}$ and $\tilde{f}$. Shrinking $\delta_4$ if necessary, we assume that $\delta_4\leq\varepsilon_0$. 
	Let $X$ be a $C^{\infty}$ vector field defined on $M$, which satisfies:
	\begin{enumerate}
		\item The support of $X$ is contained in $B_x(3\varepsilon_0)$: ${\it supp}(X)\subset B_x(3\varepsilon_0)\subset U$;
		\item For every $z\in{\it supp}(X)$, $X(z)$ is positively transverse to $E^s_z\oplus E^u_z$ with respect to the local orientation of $E^c|_{U}$, and the decoposition $X(z)=X^s(z)+X^c(z)+X^u(z)\in E^s_z\oplus E^c_z\oplus E^u_z$ satisfies
		$$
		\|X^s(z)\|+\|X^u(z)\|\leq\eta_1\|X^c(z)\|.
		$$
		\item There exists $b_1>0$, such that $2b_1\leq\|X^c(z)\|$ for every $z\in B_x(2\varepsilon_0)$.
	\end{enumerate}
	If we consider the lifting vector field $\tilde{X}\in{\cal X}^{\infty}(\tilde{M})$, then it satisfies
	\begin{enumerate}
		\item The support of $\tilde{X}$ is contained in $B_{x^+}(3\varepsilon_0)\cup B_{x^-}(3\varepsilon_0)$. 
		\item For every $z^+\in U^+$ where $\tilde{X}(z^+)\neq0$, $\tilde{X}(z^+)$ is positively transverse to $\tilde{E}^s_{z^+}\oplus\tilde{E}^u_{z^+}$, and satisfies
		$$
		\|\tilde{X}^s(z^+)\|+\|\tilde{X}^u(z^+)\|\leq\eta_1\cdot\|\tilde{X}^c(z^+)\|. 
		$$
		Moreover, $\|X^c(z^+)\|\geq2b_1$ if $z^+\in B_{x^+}(2\varepsilon_0)$.
		\item For every $y^+\in U^+$ and every $i\geq 0$, the vector field 
		$$
		\tilde{X}_{\tilde{f}^i(y^+)}=
		D\exp_{\tilde{f}^i(y^+)}^{-1}(\tilde{X})=
		\tilde{X}^s_{\tilde{f}^i(y^+)}+\tilde{X}^c_{\tilde{f}^i(y^+)}
		   +\tilde{X}^u_{\tilde{f}^i(y^+)}
		\in\tilde{E}^s_{\tilde{f}^i(y^+)}\oplus
		   \tilde{E}^c_{\tilde{f}^i(y^+)}\oplus
		   \tilde{E}^u_{\tilde{f}^i(y^+)}
		$$ 
		satisfies
		$\tilde{X}^c_{\tilde{f}^i(y^+)}(v)\geq 0$ for every 
		$v\in T_{\tilde{f}^i(y^+)}\tilde{M}(\delta_4)$.
	\end{enumerate}
	 
	Now we repeat the proof of the case that $E^c$ is orientable for $\tilde{f}$ and $x^+$. The key fact is the third item of the property $\tilde{X}$. Every $\tilde{f}^i(y^+)=\theta(0_i)$ never enter the region $U^-$. This allows us to repeat the argument of Case 1.
	
	For every $n$ large enough, there exists $|\tau_n|<1/n$, such that $p_n^+$ is a periodic point of $\tilde{X}_{\tau_n}\circ\tilde{f}$ with $d(x^+,p_n^+)<(1+L)/n$. Here $L$ is the constant of Theorem \ref{thm:Lipschitz} with respect to $\tilde{f}$ and $\tilde{X}$. 
	
	This implies there exists $p_n=\pi(p_n^+)$ tends to $x$ and $X_{\tau_n}\circ f$ converges to $f$ in $C^r$-topology, such that $p_n\in{\rm Per}(X_{\tau_n}\circ f)$. This finishes the proof of Theorem \ref{Thm:Local-Perturb}.	
\end{proof}

\section{Topological degrees for returning maps}\label{sec:non-orientable}

In this section, we prove Theorem \ref{Thm:Topo-degree}. It shows if there exist orbit returns that reverse the local orientation of center bundle, then there exist periodic points automatically. The proof relies on the calculation of topological degrees for returning diffeomorphisms. 

Recall that $s={\rm dim}E^s$. For every $\delta>0$, we denote $\DD^s(\delta)= \left\lbrace v=(v_1,\cdots,v_s)\in\RR^s:\|v\|=\sum_{i=1}^{s}v_i^2\leq\delta \right\rbrace$, and $\DD^s=\DD^s(1)$ the unit ball in $\RR^s$.

\begin{lemma}\label{lem:fixed-point}
	Let $a<b$ be two real numbers, and $H:[a,b]\times\DD^s\longrightarrow\RR\times\RR^s$ be a continuous map with
    $$
	H(t,v)=\left(h_1(t,v),h_2(t,v)\right)\in\RR\times\RR^s, \qquad \forall(t,v)\in[a,b]\times\DD^s,   
	$$
	which satisfies
	\begin{itemize}
		\item $\|h_2(t,v)\|<1$, for every $(t,v)\in[a,b]\times\DD^s$;
		\item $h_1(a,v)>a$, and $h_1(b,v)<b$, for every $v\in\DD^s$.
	\end{itemize}
    Then $H$ has a fixed point in $[a,b]\times\DD^s$.
\end{lemma}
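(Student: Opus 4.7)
The natural home for this statement is the Brouwer fixed point theorem: the domain $[a,b]\times\DD^s$ is homeomorphic to the closed ball $\DD^{s+1}$, and we are asked to produce a fixed point of a continuous self-map-like object. The only obstruction to applying Brouwer directly is that $H$ need not send $[a,b]\times\DD^s$ into itself, since $h_1(t,v)$ may leave $[a,b]$. However, the second coordinate behaves well: the hypothesis $\|h_2(t,v)\|<1$ tells us $h_2$ already lands in $\mathrm{int}(\DD^s)\subset\DD^s$.

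To repair the first coordinate, I would introduce the clamping map $\pi_{[a,b]}:\RR\to[a,b]$, $\pi_{[a,b]}(r)=\min(\max(r,a),b)$, and define the modified continuous self-map
\[
\widetilde H:[a,b]\times\DD^s\longrightarrow[a,b]\times\DD^s,\qquad
\widetilde H(t,v)=\bigl(\pi_{[a,b]}(h_1(t,v)),\,h_2(t,v)\bigr).
\]
Since $[a,b]\times\DD^s$ is homeomorphic to a closed $(s{+}1)$-ball and $\widetilde H$ is continuous, Brouwer's theorem furnishes a fixed point $(t^*,v^*)$.

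The final step is to show that the clamp is inactive at this fixed point, so that $\widetilde H(t^*,v^*)=(t^*,v^*)$ upgrades to $H(t^*,v^*)=(t^*,v^*)$. From the fixed point equation I get $v^*=h_2(t^*,v^*)$ and $t^*=\pi_{[a,b]}(h_1(t^*,v^*))$. If the clamp were active on the left, $t^*=a$ and $h_1(t^*,v^*)\le a$, which contradicts the hypothesis $h_1(a,v^*)>a$. Symmetrically, $t^*=b$ with $h_1(t^*,v^*)\ge b$ contradicts $h_1(b,v^*)<b$. Hence $h_1(t^*,v^*)\in[a,b]$ and $t^*=h_1(t^*,v^*)$, giving the desired fixed point.

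There is no real obstacle here: the hypotheses on $h_1$ at $t=a,b$ are precisely the boundary conditions needed to exclude spurious fixed points created by the clamp, and the strict inequality $\|h_2\|<1$ does the analogous job on the $\DD^s$-boundary. If one preferred a degree-theoretic argument, the same ideas yield that the continuous map $F(t,v)=(t-h_1(t,v),\,v-h_2(t,v))$ is nowhere zero on $\partial([a,b]\times\DD^s)$ and is homotopic, through nowhere-zero maps on the boundary, to $(t,v)\mapsto(t-\tfrac{a+b}{2},v)$, which has degree $1$; but the clamping/Brouwer route above is the most direct.
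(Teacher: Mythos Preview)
Your proof is correct and takes a genuinely different route from the paper's argument. The paper argues by contradiction via topological degree: after normalizing to $[a,b]=[-1,1]$ it assumes $H$ has no fixed point, forms the Gauss-type map $P(t,v)=\dfrac{(t,v)-H(t,v)}{\|(t,v)-H(t,v)\|}$ into $\SS^s$, observes that $P|_{\partial([-1,1]\times\DD^s)}$ must have degree $0$ since $P$ extends over the solid cylinder, and then uses the linear homotopy $L_\mu(t,v)=(t,v)-\mu H(t,v)$ together with the boundary hypotheses to show this restriction is homotopic to the identity-type map $Q(t,v)=(t,v)/\|(t,v)\|$ of degree $1$, a contradiction. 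Your clamping argument trades the degree computation for a direct application of Brouwer: you retract the first coordinate onto $[a,b]$, obtain a self-map of a ball, and then use the strict boundary inequalities to rule out the clamp being active at the Brouwer fixed point. This is shorter and more elementary, avoiding any homotopy bookkeeping; the paper's approach, on the other hand, makes the underlying degree obstruction explicit, which is the viewpoint you sketch in your final sentence. Either is perfectly adequate here.
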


\begin{proof}
	By taking an affine conjugation, we only need to prove this lemma for $a=-1$ and $b=1$. The boundary $\partial([-1,1]\times\DD^s)$ is homeomorphic to the unit sphere $\SS^s\subset\RR^{s+1}$ by the  homeomorphism $Q:\partial([-1,1]\times\DD^s)\rightarrow\SS^s$ defined as
	$$
	Q(t,v)=\frac{(t,v)}{\sqrt{|t|^2+\|v\|^2}}, \qquad\forall (t,v)\in\partial([-1,1]\times\DD^s).
	$$
	
	Assume $H$ has no fixed points in $[-1,1]\times\DD^s$. We consider the continuous map
	$P:[-1,1]\times\DD^s\rightarrow\SS^s$
	defined as
	$$
	P(t,v)=\frac{(t,v)-H(t,v)}{\|(t,v)-H(t,v)\|}, \qquad\forall (t,v)\in[-1,1]\times\DD^s.
	$$
	Since $(t,v)\neq H(t,v)$, $P$ is well defined on $[-1,1]\times\DD^s$. This implies the topological degree of the map
	$$
	P|_{\partial([-1,1]\times\DD^s)}:\partial([-1,1]\times\DD^s)\rightarrow\SS^s
	$$
	is zero.
	
	For every $\mu\in[0,1]$ and $(t,v)\in\partial([-1,1]\times\DD^s)$, consider the map 
	\begin{align*}
	     &\qquad L :~[0,1]\times\partial([-1,1]\times\DD^s) \longrightarrow \partial([-1,1]\times\DD^s), \\
	     &L_{\mu}(t,v)=(t,v)-\mu\cdot H(t,v)=(t-\mu\cdot h_1(t,v), v-\mu\cdot h_2(t,v)).
	\end{align*}
	For every $\mu\in(0,1]$, it satisfies
	\begin{itemize}
		\item either $(t,v)\in[-1,1]\times\partial\DD^s$ with $\|v\|=1$, then
		    $$
		    \|L_{\mu}(t,v)\|\geq\|v-\mu\cdot h_2(t,v)\|\geq\|v\|-\mu\cdot\|h_2(t,v)\|>0;
		    $$
		\item or $(t,v)\in\{-1\}\times\DD^s$, then $\|L_{\mu}(t,v)\|\geq|-1-\mu\cdot h_1(-1,v)|>|\mu-1|\geq 0$;
		\item or $(t,v)\in\{1\}\times\DD^s$, then $\|L_{\mu}(t,v)\|\geq|1-\mu\cdot h_1(-1,v)|>|1-\mu|\geq 0$.	    
	\end{itemize}
    This implies $L_{\mu}(t,v)\neq0$ for every $\mu\in(0,1]$ and $(t,v)\in\partial([-1,1]\times\DD^s)$. When $\mu=0$, it satisfies $L_{\mu}(t,v)=(t,v)\neq0$ for every $(t,v)\in\partial([-1,1]\times\DD^s)$.
    
    So we can define a homotopy $P:[0,1]\times\partial([-1,1]\times\DD^s)\rightarrow\SS^s$ as
    $$
    P_{\mu}(t,v)=\frac{L_{\mu}(t,v)}{\|L_{\mu}(t,v)\|}, \qquad\forall(\mu,t,v)\in[0,1]\times\partial([-1,1]\times\DD^s).
    $$
    For $\mu=0$ and $\mu=1$, we have
    $$
    P_0\equiv Q, \qquad {\rm and} \qquad P_1\equiv P|_{\partial([-1,1]\times\DD^s)}.
    $$
    
    Thus $P|_{\partial([-1,1]\times\DD^s)}$ is homotopic to $Q$. This is a contradiction since the topological degree of $Q$ is one. Thus $H$ must has a fixed point in $[-1,1]\times\DD^s$.
\end{proof}

Now we can prove Theorem \ref{Thm:Topo-degree}.

\begin{proof}[\bf{Proof of Theorem \ref{Thm:Topo-degree}}]
	As we assumed, there exist $y_n\rightarrow x$ and $f^{k_n}(y_n)\rightarrow x$ with $k_n>0$, such that $Df^{k_n}(y_n)$ reverses the local orientation of $E^c$ around $x$.
	If $k_n$ is bounded, then $x$ is a periodic point and we are done. Otherwise, we must have $k_n\rightarrow+\infty$ as $n\rightarrow\infty$.

	The idea for proving this theorem is the following. We first take a center curve center at $y_n$. The union of local stable manifolds of this curve is a center-stable submanifold. The iteration of this cs-submanifold contracts the stable direction, reverses the center direction, and is close to the original cs-submanifold. Projecting by the holonomy map of ${\cal F}^u$, Lemma \ref{lem:fixed-point} shows we can find a periodic unstable leaf. There exists a periodic point in this unstable leaf.

	\begin{figure}[htbp]
		\centering
		\includegraphics[width=15cm]{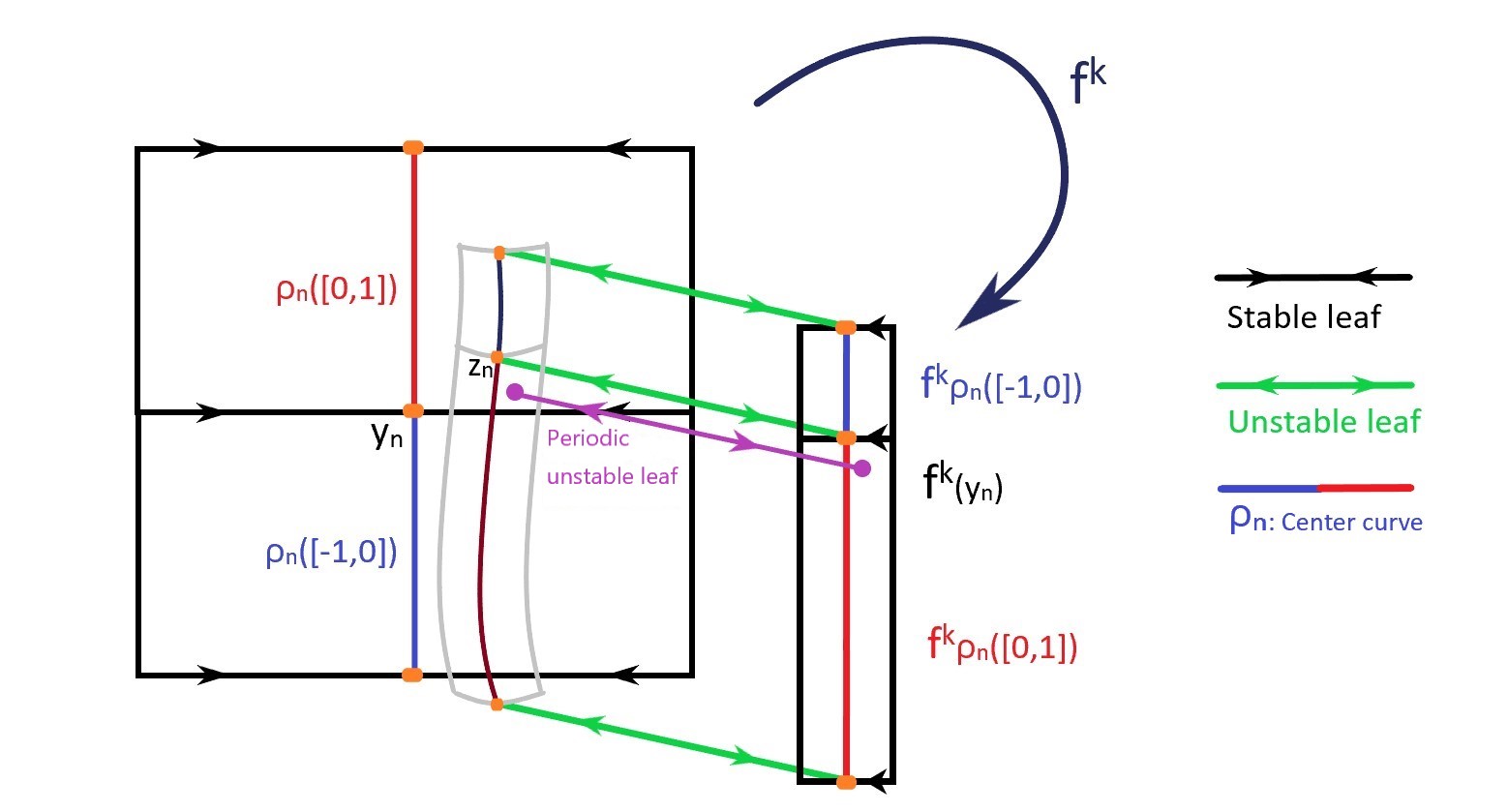}
		\caption{Return map}
	\end{figure}

	We fix a local orientation of $E^c$ around $x$.
	Let $\delta_0$ and $C_0$ be constants in Notation \ref{notation}. Then Lemma \ref{lem:local-stable}, \ref{lem:cs-manifold} and \ref{lem:local-product} all hold for constants $\delta_0$ and $C_0$.

	For every $n$, we take a $C^1$ curve $\rho_n:[-\delta_0,\delta_0]\rightarrow M$ satisfying
	\begin{itemize}
		\item $\rho_n(0)=y_n$;
		\item $\rho_n'(t)$ is the unit positive vector in $E^c(\rho_n(t))$ for every $t\in[-\delta_0,\delta_0]$.
	\end{itemize}
	Lemma \ref{lem:cs-manifold} shows that
	$$
	{\cal F}^{cs}_{\rho_n}(\delta_0)=\bigcup_{y\in\rho_n}{\cal F}^s_{y}(\delta_0)
	$$
	is an imbedded $C^1$-submanifold tangent to $E^s\oplus E^c$ everywhere.
	
	Let
	$$
	\Phi_n:{\cal F}^{cs}_{\rho_n}(\delta_0)\longrightarrow[-1,1]\times\DD^s,
	$$
	be a homeomorphism satisfying the following properties: 
	\begin{enumerate}
		\item For every $y\in{\cal F}^{cs}_{\rho_n}(\delta_0)$, there exists a unique $t\in[-1,1]$ such that $y\in{\cal F}^s_{\rho_n(\delta_0\cdot t)}(\delta_0)$, thus $\Phi_n(y)=(t,v)$ for some $v\in\DD^s$. This implies if $y_1,y_2\in\rho_n$, then $\phi_n(y_i)=(t_i,0)$ for $i=1,2$, and 
		\begin{align}\label{def:Phi_nc}
		     |t_1-t_2|\cdot\delta_0=d_{\rho_n}(y_1,y_2).
		\end{align}

		\item Item 1 of Lemma \ref{lem:local-stable} shows ${\cal F}^s_{\rho_n(\delta_0\cdot t)}(\delta_0)$ is diffeomorphic to $E^s_{\rho_n(\delta_0\cdot t)}(\delta_0)$. So we take a family of diffeomorphisms mapping ${\cal F}^s_{\rho_n(\delta_0\cdot t)}(\delta_0)$ to $\DD^s$ as
		$$
		\Phi_n|_{{\cal F}^s_{\rho_n(\delta_0\cdot t)}(\delta_0)}:
		~{\cal F}^s_{\rho_n(\delta_0\cdot t)}(\delta_0)~\longrightarrow~\{t\}\times\DD^s,
		$$
		such that
		$\Phi_n|_{{\cal F}^s_{\rho_n(\delta_0\cdot t)}(\delta_0)}$ varies $C^1$-continuously with respect to $\rho_n(\delta_0\cdot t)$,
		 and satisfies if $z_1,z_2\in{\cal F}^s_{\rho_n(\delta_0\cdot t)}(\delta_0)$ and $\Phi_n(z_i)=(t,v_i)$ for $i=1,2$, then
		 \begin{align}\label{def:Phi_ns}
		      \frac{\delta_0}{2}\cdot\|v_1-v_2\|~<~ 
		      d_{{\cal F}^s}(z_1,z_2)~<~
		      2\delta_0\cdot\|v_1-v_2\|.
		 \end{align}
	\end{enumerate}
	This homeomorphism $\Phi_n$
	gives a chart on ${\cal F}^{cs}_{\rho_n}(\delta_0)$. 
	Denote
	\begin{itemize}
		\item $\pi^c:[-1,1]\times\DD^s\rightarrow[-1,1]$ the projection to the first coordinate: $\pi^c(t,v)=t$;
		\item $\pi^s:[-1,1]\times\DD^s\rightarrow\DD^s$ the projection to the second coordinate: $\pi^s(t,v)=v$.
	\end{itemize}
	
	For $n$ large enough, ${\cal F}^u_{f^{k_n}(y_n)}(\delta_0)$ intersects ${\cal F}^{cs}_{\rho_n}(\delta_0)$ with a unique point $z_n$. If we denote
	$$
	\Phi_n(z_n)=(t_n,v_n)\in[-1,1]\times\DD^s,
	\qquad {\rm and} \qquad 
	\e_n=d\left(y_n,f^{k_n}(y_n)\right),
	$$
	then $\lim_{n\rightarrow\infty}\e_n=0$. From Lemma \ref{lem:local-product} and properties of $\Phi_n$ in (\ref{def:Phi_nc},\ref{def:Phi_ns}), we have
	$$
	d_{{\cal F}^u}\left(z_n,f^{k_n}(y_n)\right)<C_0\cdot\e_n, 
	\qquad 
	|t_n|<\frac{C_0\cdot\e_n}{\delta_0}, 
	\qquad {\rm and} \qquad
	\|v_n\|<\frac{2C_0\cdot\e_n}{\delta_0}.
	$$
	Here $C_0$ is the constant in Lemma \ref{lem:local-product} and Notation \ref{notation}.
	This implies 
	$\lim_{n\rightarrow\infty}|t_n|=\lim_{n\rightarrow\infty}\|v_n\|=0$.
	By taking the subsequence if necessary, we can assume 
	$$
	|t_n|<1/n,
	\qquad {\rm and} \qquad
	\|v_n\|<1/n.
	$$
	
	We want to define a continuous map $H_n:[a_n,b_n]\times\DD^s\rightarrow[-1,1]\times\DD^s$ for some real numbers $-1\leq a_n<b_n\leq 1$ and for every $n$ large enough which satisfies the assumption of Lemma \ref{lem:fixed-point}. 
	
	From the continuity, for every $t\in[-1,1]$ which is sufficiently close to $0$, we can define 
	 $$
	 H_n(t,0)\in[-1,1]\times\DD^s
	 =\Phi_n\left({\cal F}^{cs}_{\rho_n}(\delta_0)\right)
	 $$
	be the unique point satisfying
	 $$
	 H_n(t,0)=\Phi_n\left({\cal F}^u_{f^{k_n}(\rho_n(t))}(\delta_0)
	 \pitchfork{\cal F}^{cs}_{\rho_n}(\delta_0)\right).
	 $$
	It is clear that $H_n(0,0)=\Phi_n(z_n)=(t_n,v_n)$. 
	 
	Since $Df^{k_n}(y_n)$ reverse the local orientation of $E^c$ around $x$, we can define
	\begin{align}\label{def:a_n}
	  a_n&=\min\left\{ t\in[-1,0):~\pi^c\circ H_n(s,0)\leq\frac{2}{n}
	       ~{\rm for}~\forall t\leq s\leq 0 \right\}; \notag \\
	  b_n&=\max\left\{ t\in(0,1]:~\pi^c\circ H_n(s,0)\geq-\frac{2}{n}
	       ~{\rm for}~\forall 0\leq s\leq t \right\}. 
	\end{align}
    The curve $\rho_n$ is tangent to $E^c$, so does
	$f^{k_n}\circ\rho_n:[-\delta_0,\delta_0]\rightarrow M$. This implies the curve
	$$
	\Phi_n^{-1}\circ H_n(t,0):~[a_n,b_n]~\longrightarrow~
	       {\cal F}^{cs}_{\rho_n}(\delta_0)\subset M
	$$
	is the image of the holonomy map by local unstable foliation from the center curve $f^{k_n}\circ\rho_n([a_n,b_n])$ to ${\cal F}^{cs}_{\rho_n}(\delta_0)$:
	$$
	\Phi_n^{-1}\circ H_n(t,0)=\H^u\circ f^{k_n}\circ\rho_n(t), 
	\qquad \forall t\in[a_n,b_n].
	$$
	Here 
	$\H^u:{\cal F}^{cs}_{f^{k_n}\circ\rho_n}(\delta_0)\rightarrow
	{\cal F}^{cs}_{\rho_n}(\delta_0)$ is the local holonomy map from the center-stable submanifold ${\cal F}^{cs}_{f^{k_n}\circ\rho_n}(\delta_0)$ to 
	${\cal F}^{cs}_{\rho_n}(\delta_0)$ around $y_n$ to $z_n$. 
	Thus $\Phi_n^{-1}\circ H_n(t,0)$ is tangent to $E^c$ as $t$ varies. 
	
	From the local product structure, $\pi^c\circ H_n(t,0)$ is monotonous decreasing as $t$ increasing from $a_n$ to $b_n$. Moreover, the product structure stated in Lemma \ref{lem:local-product} shows that
	\begin{align}\label{equ:t0}
	    \|\pi^s\circ H_n(t,0)\|~\leq~ C_0\cdot\frac{2}{n}+\|v_n\|
	    ~\leq~\frac{2C_0+1}{n},
	    \qquad \forall t\in[a_n,b_n].
	\end{align}

	By taking the subsequence of $y_n$ and corresponding $k_n$ if necessary, we want $k_n$ large enough such that the diameter of 
	$f^{k_n}\big( {\cal F}^s_{\rho_n(\delta_0t)}(\delta_0) \big)$ is sufficiently small, such that for every $t\in[a_n,b_n]$ with $y=\rho_n(\delta_0\cdot t)$, and 
	$w\in{\cal F}^s_{y}(\delta_0)$ with $w=\Phi_n^{-1}(t,v)$ for some $v\in\DD^s$, the holonomy map $\H^u$ of unstable foliation is well defined form $f^{k_n}\circ{\cal F}^s_{y}(\delta_0)$ to  
	${\cal F}^{cs}_{\rho_n}(\delta_0)$ and satisfies
	\begin{align}\label{equ:tv}
	    |\pi^c\circ H_n(t,0)-\pi^c\circ H_n(t,v)| &=
	    |\pi^c\circ\Phi_n\circ\H^u(y)-\pi^c\circ\Phi_n\circ\H^u(w)|
	    <\frac{1}{n}, \notag \\
	    \|\pi^s\circ H_n(t,0)-\pi^s\circ H_n(t,v)\| &=
	    \|\pi^s\circ\Phi_n\circ\H^u(y)-\pi^s\circ\Phi_n\circ\H^u(w)\|
	    <\frac{1}{n}.
	\end{align}
    In particular, this implies for $n$ large enough, the unstable holonomy map $\H^u$ is well defined on
    $$
    \bigcup_{t\in[a_n,b_n]}f^{k_n}
    \left({\cal F}^s_{\rho_n(\delta_0\cdot t)}(\delta_0)\right).
    $$
    Thus we can define $H_n:[a_n,b_n]\times\DD^s\rightarrow[-1,1]\times\DD^s$ as
    $$
    H_n(t,v)~=~\Phi_n\circ\H^u\circ\Phi_n^{-1}(t,v)
    ~=~\Phi_n\left({\cal F}^u_{f^{k_n}\circ\Phi_n^{-1}(t,v)}(\delta_0)
    \pitchfork{\cal F}^{cs}_{\rho_n}(\delta_0)\right),
    \qquad \forall (t,v)\in[a_n,b_n]\times\DD^s.
    $$
    Moreover, for $n$ large enough, estimations in equations (\ref{equ:t0}) and (\ref{equ:tv}) guarantee that
    \begin{align}\label{equ:bound}
         |\pi^c\circ H_n(t,v)|~&\leq~ |\pi^c\circ H_n(t,0)|+\frac{1}{n}
         ~<~\frac{3}{n}~<~1, \\
         \|\pi^s\circ H_n(t,v)\|~&\leq~ \|\pi^s\circ H_n(t,0)\|+\frac{2C_0+1}{n}
         ~<~\frac{2C_0+2}{n}~<~1. \notag
    \end{align}
    This verifies the first condition of Lemma \ref{lem:fixed-point}.
	
	From the definition of $a_n$ in (\ref{def:a_n}), there are two possibilities:
	\begin{itemize}
		\item either $\pi^c\circ H_n(a_n,0)=\frac{2}{n}$;
		\item or $a_n=-1$ and $\pi^c\circ H_n(-1,0)<\frac{3}{n}$.
	\end{itemize}
	If the  first case applies, then for every $v\in\DD^s$, the estimation in (\ref{equ:tv}) shows that
	$$
	\pi^c\circ H_n(a_n,v)~>~ \pi^c\circ H_n(a_n,0)-\frac1n~=~\frac1n~>~0~>~a_n.
	$$
	Otherwise, $a_n=-1$ estimation in (\ref{equ:bound}) shows 
	$\pi^c\circ H_n(a_n,v)>a_n$ for every $v\in\DD^s$. In both cases, we have
	$$
	\pi^c\circ H_n(a_n,v)~>~a_n, \qquad \forall v\in\DD^s.
	$$
	The same argument shows that 
	$\pi^c\circ H_n(n_n,v)~<~b_n$ for every $v\in\DD^s$. 
	
	\vskip2mm
	
	Applying Lemma \ref{lem:fixed-point} to $H_n:[a_n,b_n]\times\DD^s\rightarrow[-1,1]\times\DD^s$, it has a fixed point $(c_n,v_n)\in[a_n,b_n]\times\DD^s$. Moreover, (\ref{equ:bound}) shows that
	$$
	|c_n|<\frac{3}{n}\rightarrow0, \qquad {\rm and} \qquad
	 \|v_n\|<\frac{2C_0+2}{n}\rightarrow0,
	\qquad {\rm as} \quad n\rightarrow+\infty.
	$$
	Let $r_n=\Phi_n^{-1}(c_n,u_n)$, thus we have $d(y_n,r_n)\rightarrow0$ as $n\rightarrow+\infty$. 
	Since $f^{k_n}\circ\Phi_n^{-1}\big([a_n,b_n\times\DD^n]\big)$ is a small center-stable disk satisfies 
	$$
	\Phi_n\circ\H^u\left( f^{k_n}\circ\Phi_n^{-1}\big([a_n,b_n\times\DD^n]\big) \right)
	~\subseteq~[-3/n,3/n]\times\DD^s((2C_0+2)/n),
	$$
	This implies the diameter of $f^{k_n}\circ\Phi_n^{-1}\big([a_n,b_n\times\DD^n]\big)$ tends to zero as $n\rightarrow0$. Thus we have
	$$
	d(y_n,f^{k_n}(r_n))~\leq~d(y_n,f^{k_n}(y_n))+d(f^{k_n}(y_n),f^{k_n}(r_n))
    ~\rightarrow~0, \qquad {\rm as} \quad n\rightarrow+\infty.
	$$
	
	The fact that $f^{k_n}(r_n)\in{\cal F}^u_{r_n}(\delta_0)$ implies there exists a periodic point $p_n\in{\cal F}^u_{r_n}(\delta)$ with period $k_n$. Moreover, the fact that $\lim_{n\rightarrow+\infty} d(r_n,f^{k_n}(r_n))=0$ implies $\lim_{n\rightarrow+\infty} d(r_n,p_n)=0$. So we have
	$$
	\lim_{n\rightarrow+\infty} d(x,p_n)\leq \lim_{n\rightarrow+\infty} d(x,y_n)+\lim_{n\rightarrow+\infty} d(y_n,r_n)+
	\lim_{n\rightarrow+\infty} d(r_n,p_n) =0.
	$$	
	This finishes the proof of Theorem \ref{Thm:Topo-degree}.
\end{proof}

\section{Divergence-free vector fields}\label{sec:vector-field}

In this section, we prove Theorem \ref{Thm:Div-free}, i.e. if  $f\in{\rm PH}^r(M)$ with one-dimensional oriented center bundle $E^c$ and is topologically mixing, then there exists a divergence-free vector field $X\in{\cal X}^{\infty}(M)$ which is positively transverse to $E^s\oplus E^u$.

Since $f^2$ is also topologically mixing, and the partially hyperbolic splitting of $f^2$ coincides with the splitting of $f$, we only need to prove Theorem \ref{Thm:Div-free} for $f^2$. So we assume $Df$ preserves the orientation of $E^c$.

\begin{definition}\label{def:positive-transv}
	Let $\gamma:[a,b]({\rm or}~\SS^1)\rightarrow M$ be a $C^1$-curve. We say $\gamma$ is positively transverse to $E^s\oplus E^u$ if $\gamma'(t)\in T_{\gamma(t)}M$ is positively transversely to $E^s\oplus E^u$ at $\gamma(t)$ for every $t\in[a,b]({\rm or}~\SS^1)$.
\end{definition}

\begin{remark}
	If $\gamma:[a,b]({\rm or}~\SS^1)\rightarrow M$ is a $C^1$-curve positively transverse to $E^s\oplus E^u$, and $\theta:[a,b]({\rm or}~\SS^1)\rightarrow M$ is $C^1$-close to $\gamma$. Then $\theta$ is also positively transverse to $E^s\oplus E^u$.
\end{remark}

\begin{lemma}\label{lem:transverse-segment}
	Let $f\in{\rm PH}^r(M)$ with one-dimensional oriented center bundle $E^c$, and $Df$ preserves the orientation of $E^c$. If $\gamma:[a,b]({\rm or}~\SS^1)\rightarrow M$ is a $C^1$-curve positively transverse to $E^s\oplus E^u$, then for any fixed $k\in\ZZ$,
	$$
	f^k\circ\gamma:[a,b]({\rm or}~\SS^1)\longrightarrow M
	$$
	is a $C^1$-curve positively transverse to $E^s\oplus E^u$.
\end{lemma}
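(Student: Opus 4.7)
The plan is to reduce the claim to a direct pointwise computation using the $Df$-invariance of the partially hyperbolic splitting together with the orientation-preservation hypothesis on $Df|_{E^c}$.

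First, recall that by definition, saying $\gamma$ is positively transverse to $E^s\oplus E^u$ means exactly that for every $t$, if we decompose
\[
\gamma'(t)=v^s(t)+v^c(t)+v^u(t)\in E^s_{\gamma(t)}\oplus E^c_{\gamma(t)}\oplus E^u_{\gamma(t)},
\]
then $v^c(t)=\pi^c_{\gamma(t)}(\gamma'(t))$ is a strictly positive vector of $E^c_{\gamma(t)}$ with respect to the fixed orientation of $E^c$.

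Next, I would apply $Df^k$ to this decomposition. By the chain rule, $(f^k\circ\gamma)'(t)=Df^k(\gamma(t))\cdot\gamma'(t)$, so
\[
(f^k\circ\gamma)'(t)=Df^k(v^s(t))+Df^k(v^c(t))+Df^k(v^u(t)).
\]
Since $E^s$, $E^c$, $E^u$ are $Df$-invariant subbundles, each summand lies in the corresponding bundle at $f^k(\gamma(t))$; in particular this is precisely the $E^s\oplus E^c\oplus E^u$ decomposition at the point $f^k(\gamma(t))$, so
\[
\pi^c_{f^k(\gamma(t))}\bigl((f^k\circ\gamma)'(t)\bigr)=Df^k|_{E^c_{\gamma(t)}}\bigl(v^c(t)\bigr).
\]

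Finally, I would invoke the hypothesis that $Df$ preserves the orientation of $E^c$. This immediately implies $Df^k$ preserves the orientation of $E^c$ for every $k\in\ZZ$ (using that $Df^{-1}$ is the inverse of an orientation-preserving linear map on the one-dimensional bundle). Consequently, since $v^c(t)$ is a positive vector in $E^c_{\gamma(t)}$, its image $Df^k|_{E^c_{\gamma(t)}}(v^c(t))$ is a positive vector in $E^c_{f^k(\gamma(t))}$, and in particular it is nonzero. This is exactly the definition of $(f^k\circ\gamma)'(t)$ being positively transverse to $E^s\oplus E^u$ at $f^k(\gamma(t))$, proving the lemma. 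There is no real obstacle here beyond organizing the bookkeeping of the invariant splitting and the orientation.
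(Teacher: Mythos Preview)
Your proof is correct and follows exactly the same approach as the paper, which simply states that the lemma is a direct consequence of the $Df$-invariance of the splitting $E^s\oplus E^c\oplus E^u$ and the fact that $Df$ preserves the orientation of $E^c$. You have merely unpacked explicitly the one-line justification the authors leave to the reader.
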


\begin{proof}
	This lemma is the direct consequence of $E^s\oplus E^c\oplus E^u$ is a $Df$-invariant splitting, and $Df$ preserves the orientation of $E^c$.
\end{proof}

\begin{proposition}\label{prop:transverse-cycle}
	Let $f\in{\rm PH}^r(M)$ with one-dimensional oriented center bundle $E^c$, and $Df$ preserves the orientation of $E^c$. If $f$ is topologically mixing, then for every $x\in M$, there exists a $C^{\infty}$ imbedded circle $\gamma_x:\SS^1\rightarrow M$, such that $\gamma_x$ is positively transverse to $E^s\oplus E^u$ and $x\in\gamma_x(\SS^1)$.
\end{proposition}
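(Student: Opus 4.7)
My plan is to establish that the set
\[ R := \{ p\in M : \text{there exists a } C^{\infty} \text{ positively transverse embedded loop through } p \} \]
equals $M$, whence the proposition follows by taking $p=x$. Since $M$ is connected, it suffices to prove that $R$ is open, closed, and non-empty. Openness and closedness will both follow from a single local modification lemma: given a $C^{\infty}$ positively transverse curve $\gamma$ passing near a point $p$, and any sufficiently nearby point $q$, one can locally replace a segment of $\gamma$ near $p$ by another $C^{\infty}$ positively transverse segment passing through $q$. In local exponential coordinates, a positively transverse curve is strictly monotone in the $E^c$-coordinate but has full $(s+u)$-dimensional freedom in the transverse $E^s\oplus E^u$ directions, so one can steer the curve through $q$ at the parameter where its $E^c$-height matches that of $q$. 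This immediately yields openness, and closedness follows by the same argument applied to a loop through $p_n\to p$ for $n$ large.

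For non-emptiness I plan to use topological mixing of $f$ together with Lemma~\ref{lem:transverse-segment}. Since $E^c$ is an orientable $1$-dimensional subbundle of $TM$, it admits a continuous positively oriented nowhere-zero section; by $C^0$-approximation and $C^0$-openness of positive transversality, I obtain a smooth vector field $Z$ on $M$ positively transverse to $E^s\oplus E^u$. Fix any $p\in M$ and take a short $Z$-orbit segment $\alpha\colon[-\epsilon,\epsilon]\to M$ with $\alpha(0)=p$, and write $p_{\pm}=\alpha(\pm\epsilon)$. Applying topological mixing of $f$ to small open positively-transverse forward and backward cones at $p_+$ and $p_-$, respectively, there exist an integer $n>0$ and a point $y$ in the forward cone at $p_+$ such that $f^n(y)$ lies in the backward cone at $p_-$. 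By Lemma~\ref{lem:transverse-segment}, $f^n$-iterates of positively transverse arcs remain positively transverse. I then concatenate: $\alpha$ from $p_-$ through $p$ to $p_+$; a short positively transverse splice from $p_+$ to $y$; a positively transverse arc from $y$ to $f^n(y)$ built using $f$-iterates; and a short splice from $f^n(y)$ back to $p_-$. Local $C^{\infty}$-smoothing at the junctions (using the $(s+u)$-dimensional transverse freedom) and a generic perturbation to eliminate self-intersections yield a $C^{\infty}$ positively transverse embedded loop through $p$, showing $R\neq\emptyset$.

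The main obstacle is the construction of the positively transverse arc from $y$ to $f^n(y)$, because $f$ is a discrete diffeomorphism rather than a flow. My plan is an inductive construction: at each intermediate iterate $f^i(y)$, emit a short positively transverse $Z$-arc, then apply $f$ to transport it to a positively transverse arc based at $f^{i+1}(y)$ by Lemma~\ref{lem:transverse-segment}, and splice consecutive arcs by short positively transverse connectors exploiting the $(s+u)$-dimensional freedom. The crucial compatibility at each splice — that the endpoint of one $f$-image arc lies in the positively transverse forward cone of the start of the next $Z$-arc — is to be ensured by taking the intermediate arc lengths sufficiently small and the local orientations consistent, using that $Df$ preserves the orientation of $E^c$ so that each $f$-image of a positively transverse arc remains aligned with the globally chosen positive direction. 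The required carefulness in controlling these cones and scales is where most of the technical work will live.
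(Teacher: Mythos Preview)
Your open/closed framework for the set $R$ is elegant and the local modification lemma is sound: in exponential coordinates the positively transverse condition only constrains the $E^c$-component to be positive, so steering in the $E^s\oplus E^u$ directions at a fixed center-height is unobstructed. Openness and closedness of $R$ follow exactly as you say.

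The gap is in your non-emptiness argument, specifically the inductive construction of a positively transverse arc from $y$ to $f^n(y)$. The issue is that your building blocks do not concatenate. A short $Z$-arc $\alpha_i$ at $f^i(y)$ and its image $f(\alpha_i)$ both sit near $f^{i+1}(y)$, but neither is connected to the arc you have already built starting from $y$. More concretely, the concatenated path you want must pass from a neighborhood of $f^i(y)$ to a neighborhood of $f^{i+1}(y)$, and these two points are generically far apart in $M$; there is no short positively transverse connector between them, and $f$-iterating a short arc at $f^i(y)$ produces an arc \emph{based at} $f^{i+1}(y)$, not one \emph{arriving at} $f^{i+1}(y)$ from the previously constructed endpoint. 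The $(s+u)$-dimensional freedom you invoke only allows splicing between points that are already close and have compatible center-heights; it does not manufacture long positively transverse bridges between distant orbit points. This is exactly why the paper's proof supplies a long positively transverse curve from elsewhere: it uses Proposition~\ref{prop:center-curve} to get an $f^k$-periodic center curve $\theta:\RR\to M$, travels along a long segment $\theta([1,s_{n_0}])$ (which is tangent to $E^c$, hence positively transverse), and uses topological mixing only once, to close the loop with a single controlled splice.

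A possible repair of your approach, staying closer to your setup, is to abandon $f$ entirely for non-emptiness and instead use recurrence of the flow of $Z$: by Krylov--Bogolyubov there is a $Z$-recurrent point $q$, and among return times $T_k\to\infty$ with $\phi_{T_k}(q)\to q$ one can always find $k<l$ with the center-height of $\phi_{T_l}(q)$ strictly below that of $\phi_{T_k}(q)$ in a local chart (since the heights converge to $0$ and cannot be strictly increasing); then the long $Z$-arc $\phi_{[T_k,T_l]}(q)$ closes up by a short positively transverse connector. This would give $R\neq\emptyset$ without invoking Proposition~\ref{prop:center-curve} or $f$ at all.
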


\begin{proof}
	Since $f$ is partially hyperbolic and topologically mixing, the set $\Omega(f)\setminus{\rm Per}(f)$ is dense in $M$. 
	Proposition \ref{prop:center-curve} shows that, for every $x\in M$, there exists a complete curve $\theta:\RR\rightarrow M$, such that  $f^k(\theta(\RR))=\theta(\RR)$ with some $k>0$, and $\theta(0)$ can be arbitrarily close to $x$. Moreover, $\theta'(t)$ is the positive unit vector in $E^c_{\theta(t)}$ for every $t\in\RR$. 
	
	Fix a small constant $\delta>0$, we require that $\theta(0)$ is close to $x$ enough, such that for any point $y\in B(\theta(-1),\delta)$, there exists a $C^1$ curve
	$\sigma:[-1,1]\rightarrow M$ satisfying
	\begin{itemize}
		\item $\sigma(-1)=y$, $\sigma(0)=x$, and $\sigma(1)=\theta(1)$;
		\item $\sigma'(-1)$ is the unit positive vector in $E^c_{\sigma(-1)}=E^c_y$;
		\item $\sigma'(1)=\theta'(1)$ is the unit positive vector in $E^c_{\sigma(1)}=E^c_{\theta(1)}$;
		\item $\sigma$ is positively transverse to $E^s\oplus E^u$.
	\end{itemize}
	
	Take $z_0\in M$ be a limit point of $\theta$, i.e. there exists $t_n\rightarrow+\infty$, such that $\theta(t_n)\rightarrow z_0$. Now we fix another small constant $\eta>0$, such that for every $n$ large enough, and every point $z\in B(z_0,\eta)$, there exists a $C^1$ curve $\rho_n:[0,1]\rightarrow M$ satisfying
	\begin{itemize}
		\item $\rho_n(0)=\theta(t_n-1)$, and $\rho_n(1)=z$;
		\item $\rho_n'(0)$ is the unit positive vector in $E^c_{\theta(t_n-1)}$, and $\rho_n'(1)$ is the unit positive vector in $E^c(z)$;
		\item $\rho_n$ is positively transverse to $E^s\oplus E^u$.
	\end{itemize}
	
	Since $f$ is topologically mixing, there exists $N_0\in\NN$, such that for every $m>N_0$, we have 
	$$
	f^m(B(z_0,\eta))\cap B(\theta(-1),\delta)\neq\emptyset.
	$$
	Take some $l\in\NN$, such that $k\cdot l>N_0$. Denote $y\in B(\theta(-1),\delta)$ and $z\in B(z_0,\eta)$ such that
	$$
	f^{k\cdot l}(z)=y.
	$$
	Then we can take the corresponding curves $\sigma$ and $\rho_n$ for $n$ large enough as above.
	
	The fact that $f^k(\theta(\RR))=\theta(\RR)$ implies there exists a sequence of real numbers $\{s_n\}_n$, such that
	$$
	\theta(s_n)=f^{k\cdot l}\circ\theta(t_n-1), \qquad\forall n\in\NN.
	$$
	Since $t_n\rightarrow+\infty$ as $n\rightarrow\infty$, we also have $s_n\rightarrow+\infty$ as $n\rightarrow\infty$.
	
	So for some $n_0$ large enough, we have $s_{n_0}>1$. The $C^1$-curve $f^{k\cdot l}\circ \rho_{n_0}:[0,1]\rightarrow M$ satisfies
	\begin{itemize}
		\item $f^{k\cdot l}\circ \rho_{n_0}(0)=\theta(s_{n_0})$, and $f^{k\cdot l}\circ \rho_{n_0}(1)=y$;
		\item $[f^{k\cdot l}\circ \rho_{n_0}]'(0)$ is a positive vector in $E^c_{\theta(s_{n_0})}$, and $[f^{k\cdot l}\circ \rho_{n_0}]'(1)$ is a positive vector in $E^c_y$;
		\item $f^{k\cdot l}\circ \rho_{n_0}$ is positively transverse to $E^s\oplus E^u$ by Lemma \ref{lem:transverse-segment}.
	\end{itemize}
	Reparameterizing $f^{k\cdot l}\circ \rho_{n_0}$, we can have a $C^1$-curve $\varrho:[0,1]\rightarrow M$ satisfies
	\begin{itemize}
		\item $\varrho(0)=\theta(s_{n_0})$, and $\varrho(1)=y$;
		\item $\varrho'(0)$ is the unit positive vector in $E^c_{\theta(s_{n_0})}$, and $\varrho'(1)$ is the unit positive vector in $E^c_y$;
		\item $\varrho$ is positively transverse to $E^s\oplus E^u$.
	\end{itemize}
	
	\begin{figure}[htbp]
		\centering
		\includegraphics[width=15cm]{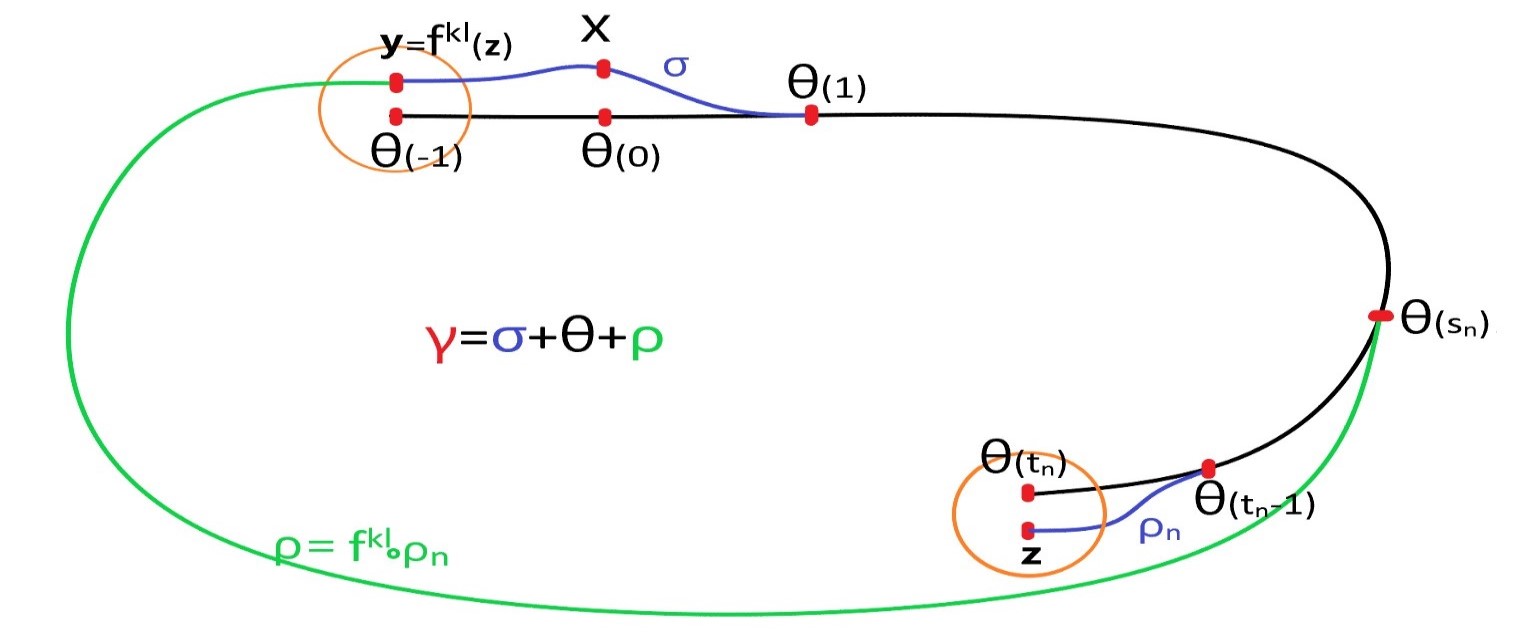}
		\caption{Positively Transverse Cycle}
	\end{figure}
	
	Now we define the curve $\gamma:[-1,s_{n_0}+1]\rightarrow M$ as following:
	$$
	\gamma(t)=\left\{
	\begin{array}{lll}
	\sigma(t), \qquad & t\in[-1,1]; \\
	\theta(t), \qquad & t\in[1,s_{n_0}]; \\
	\varrho(t-s_{n_0}), \qquad & t\in[s_{n_0},s_{n_0}+1].
	\end{array}
	\right.
	$$
	We can see that $\gamma(0)=\sigma(0)=x$. Since 
	\begin{itemize}
		\item $\sigma(1)=\theta(1)$, and $\sigma'(1)=\theta'(1)$ is the unit positive vector in $E^c_{\theta(1)}$;
		\item $\theta(s_{n_0})=\varrho(0)$, and $\theta'(s_{n_0})=\varrho'(0)$ is the unit positive vector in $E^c_{\theta(s_{n_0})}$,
	\end{itemize}
	$\gamma$ is a $C^1$ curve on $M$. Moreover, since
	\begin{itemize}
		\item $\sigma(-1)=\varrho(1)=y$, and $\sigma'(-1)=\varrho'(1)$ is the unit positive vector in $E^c_y$,
	\end{itemize}
	$\gamma$ is the image of a $C^1$-map from $\SS^1$ to $M$ by identifying $-1$ and $s_{n_0}+1$. In particular, $\gamma$ is positively transverse to $E^s\oplus E^u$.
	
	Notice that $\gamma$ may has self-intersections. However, since $d={\rm dim}M\geq3$, by small $C^1$-perturbations while keeping $x$ in the curve, we can get a $C^\infty$ imbedded circle $\gamma_x:\SS^1\rightarrow M$, such that $\gamma_x$ is positively transverse to $E^s\oplus E^u$ and $x\in\gamma_x(\SS^1)$.
\end{proof}

\begin{lemma}\label{lem:vector-field}
	Let $f\in{\rm PH}^r(M)$ with one-dimensional oriented center bundle $E^c$, and $Df$ preserves the orientation of $E^c$. If $f$ is topologically mixing, then for every $x\in M$, there exsits a divergence-free vector field $Y_x\in{\cal X}^{\infty}(M)$, such that $Y_x(x)\neq 0$, and $Y_x(y)$ is positively transverse to $E^s\oplus E^u$ for every $y\in M$ where $Y_x(y)\neq 0$.
\end{lemma}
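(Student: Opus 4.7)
The plan is to build $Y_x$ by smearing out a closed curve through $x$ using a tubular-neighborhood construction that is automatically divergence-free. First I would apply Proposition \ref{prop:transverse-cycle} to obtain a $C^{\infty}$ imbedded circle $\gamma_x:\SS^1\rightarrow M$ with $x\in\gamma_x(\SS^1)$ that is positively transverse to $E^s\oplus E^u$. Since $\gamma_x$ is an embedding of a compact $1$-manifold, it admits a tubular neighborhood: for some $\e>0$, there is a $C^{\infty}$ diffeomorphism $\Psi:\SS^1\times\DD^{d-1}(\e)\rightarrow T\subseteq M$ with $\Psi(s,0)=\gamma_x(s)$ and $d\Psi_{(s,0)}(\partial/\partial s)=\gamma_x'(s)$ for every $s\in\SS^1$. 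In the coordinates $(s,v)\in\SS^1\times\DD^{d-1}(\e)$, the Riemannian volume form pulls back to $\Psi^*\mathrm{vol}_M = J(s,v)\,ds\wedge dv_1\wedge\cdots\wedge dv_{d-1}$ for a smooth positive function $J$.

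Next I would choose a smooth non-negative bump function $\phi:\DD^{d-1}(\e)\rightarrow[0,+\infty)$ with $\phi(0)>0$, compactly supported in $\DD^{d-1}(\e)$, and define
$$
\widetilde{Y}(s,v)=\frac{\phi(v)}{J(s,v)}\cdot\frac{\partial}{\partial s}
$$
on $\SS^1\times\DD^{d-1}(\e)$. Pushing $\widetilde{Y}$ forward by $\Psi$ and extending by zero on $M\setminus T$ produces a $C^\infty$ vector field $Y_x\in{\cal X}^{\infty}(M)$. The key computation is that, because $\phi$ is independent of $s$,
$$
L_{\widetilde{Y}}\bigl(J\,ds\wedge dv_1\wedge\cdots\wedge dv_{d-1}\bigr)=\partial_s\bigl(\phi(v)\bigr)\,ds\wedge dv_1\wedge\cdots\wedge dv_{d-1}=0,
$$
so $Y_x$ is divergence-free with respect to the Riemannian volume on $M$. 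Non-vanishing at $x=\Psi(s_0,0)$ is immediate since $Y_x(x)=(\phi(0)/J(s_0,0))\cdot\gamma_x'(s_0)\neq 0$.

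The remaining point is positive transversality on the support of $Y_x$. Along $\gamma_x$, the vector $Y_x$ is a positive multiple of $\gamma_x'$, which is positively transverse to $E^s\oplus E^u$ by construction. Since the condition ``$d\Psi(\partial/\partial s)$ is positively transverse to $E^s\oplus E^u$'' is open in the $C^0$-topology, and the map $(s,v)\mapsto d\Psi_{(s,v)}(\partial/\partial s)$ is continuous on $\SS^1\times\DD^{d-1}(\e)$, there exists $\e'\in(0,\e)$ such that $d\Psi(\partial/\partial s)$ remains positively transverse to $E^s\oplus E^u$ throughout $\Psi(\SS^1\times\DD^{d-1}(\e'))$. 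Choosing $\phi$ with support inside $\DD^{d-1}(\e')$ then guarantees that $Y_x$ is positively transverse to $E^s\oplus E^u$ wherever it is non-zero.

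There is essentially no deep obstacle here: Proposition \ref{prop:transverse-cycle} supplies the only topologically nontrivial ingredient (a smoothly imbedded closed cycle through any prescribed point, positively transverse to $E^s\oplus E^u$), and the tubular-neighborhood construction above converts such a cycle into a divergence-free vector field without altering the direction of the tangent along the curve. The one place requiring care is the choice of the radius of the tubular neighborhood and the support of $\phi$, which must be small enough that $d\Psi(\partial/\partial s)$ stays inside the open cone of vectors positively transverse to $E^s\oplus E^u$; this is automatic by compactness of $\gamma_x(\SS^1)$ and continuity of the partially hyperbolic splitting.
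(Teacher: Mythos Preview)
Your approach is essentially the paper's: invoke Proposition \ref{prop:transverse-cycle} for an embedded circle through $x$ positively transverse to $E^s\oplus E^u$, then build a compactly supported divergence-free field in a tubular neighborhood pointing along the circle direction. Your Lie-derivative computation with $\widetilde Y=\frac{\phi(v)}{J(s,v)}\,\partial/\partial s$ is correct and clean.

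There is one genuine gap. You assert that the tubular neighborhood is a \emph{product}, i.e.\ that there is a diffeomorphism $\Psi:\SS^1\times\DD^{d-1}(\e)\to T$. But an embedded circle in $M$ may have non-orientable normal bundle, in which case the tubular neighborhood is the twisted $\DD^{d-1}$-bundle over $\SS^1$ and no such global product chart exists. The paper treats this as a separate case. The repair is minor: take $\phi$ to depend only on $\|v\|^2$ so that it descends to the twisted bundle, replace the form $J\,ds\wedge dv_1\wedge\cdots\wedge dv_{d-1}$ by the density $J\,|ds\,dv_1\cdots dv_{d-1}|$, and observe that the same computation $L_{\widetilde Y}(J\,|ds\,dv|)=\partial_s\phi(v)\cdot|ds\,dv|=0$ goes through. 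Since the lemma does not assume $M$ is orientable, you should say a word about this case.
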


\begin{proof}
	From Proposition \ref{prop:transverse-cycle}, there exists a $C^{\infty}$ imbedded circle $\gamma_x:\SS^1\rightarrow M$, such that $\gamma_x$ is positively transverse to $E^s\oplus E^u$ and $x\in\gamma_x(\SS^1)$. Since the isotopy class of diffeomorphisms on $\DD^{d-1}$ has two elements, one is the identity one, the other is the orientation reversing one.
	Considering a small tubular neighborhood of $\gamma_x$, there are two possibilities:
	
	\vskip1mm
	
	\noindent{\bf Case \uppercase\expandafter{\romannumeral1}}: 
	The tubular neighborhood of $\gamma_x$ is diffeomorphic to $\DD^{d-1}\times\SS^1$, where $\DD^{d-1}$ is the unit ball in $\RR^{d-1}$.
	
	In this case, let
	$$
	\Gamma_x:\DD^{d-1}\times\SS^1= \left\lbrace (a_1,\cdots,a_{d-1},b)\in\RR^{d-1}\times\SS^1:\sum_{i=1}^{d-1}a_i^2\leq 1 \right\rbrace \longrightarrow M,
	$$
	be a $C^\infty$ imbedding of $\DD^{d-1}\times\SS^1$ into $M$, such that $\Gamma_x|_{(0,\cdots,0)\times\SS^1}=\gamma_x$, and $\Gamma_x(0,\cdots,0,0)=x$.
	
	Since $\gamma_x$ is positively transverse to $E^s\oplus E^u$, we can assume that for every $(a_1,\cdots,a_{d-1})\in\RR^{d-1}$, $\Gamma_x\left((a_1,\cdots,a_{d-1},b)\times\SS^1\right)$ is positively transverse to $E^s\oplus E^u$ by shrinking the tubular neighborhood if necessary. This is equivalent to that 
	${\rm D}\Gamma_x\left(\frac{\partial}{\partial b}|_{(a_1,\cdots,a_{d-1},b)}\right)$
	is positively transverse to 
	$E^s_{\Gamma_x(a_1,\cdots,a_{d-1},b)}\oplus E^u_{\Gamma_x(a_1,\cdots,a_{d-1},b)}
	$
	for every $(a_1,\cdots,a_{d-1},b)\in\DD^{d-1}\times\SS^1$. 
	
	Since $\Gamma_x$ is a $C^\infty$ imbedding, there exists a $C^\infty$ positive function
	$$
	V(a_1,\cdots,a_{d-1},b):
	~\DD^{d-1}\times\SS^1\longrightarrow \RR_+,
	$$
	such that the Lebesgue measure $m$ of $M$ on $\Gamma_x(\DD^{d-1}\times\SS^1)$ satisfies
	$$
	\Gamma_x^*(m)(a_1,\cdots,a_{d-1},b)=V(a_1,\cdots,a_{d-1},b)\cdot {\rm d}a_1{\rm d}a_2\cdots{\rm d}a_{d-1}{\rm d}b.
	$$ 
	
	We define a $C^\infty$-bump function $\varphi:[0,1]\rightarrow \RR$, such that
	$$
	\varphi(t)\left\{
	\begin{array}{lll}
	=1, \qquad & t\in[0,1/3], \\
	\in(0,1), \qquad & t\in(1/3,2/3), \\
	=0, \qquad & t\in[2/3,1].
	\end{array}
	\right.
	$$

	Let $Z$ be a $C^\infty$ vector field on $\DD^{d-1}\times\SS^1$ defined as
	$$
	Z(a_1,\cdots,a_{d-1},b)=V(a_1,\cdots,a_{d-1},b)
	\cdot\varphi\left(\sum_{i=1}^{d-1}a_i^2\right)
	\cdot\frac{\partial}{\partial b}.
	$$
	From the definition, we have $Z(a_1,\cdots,a_{d-1},b)\equiv0$ if $2/3\leq \sum_{i=1}^{d-1}a_i^2\leq1$.
	
	Let $\Phi_t$ be the flow generated by $Z$ on $\DD^{d-1}\times\SS^1$. For any $(a_1,\cdots,a_{d-1},b)\in\DD^{d-1}\times\SS^1$, denote
	$$
	(a_1(t),\cdots,a_{d-1}(t),b(t))=\Phi_t(a_1,\cdots,a_{d-1},b).
	$$
	Since the vector field $Z$ is a smooth function times $\frac{\partial}{\partial b}$, we have $a_i(t)\equiv a_i$ for $i=1,\cdots,d-1$.
	
	In the coordinate $ \left\lbrace \frac{\partial}{\partial a_1},\cdots,\frac{\partial}{\partial a_{d-1}},\frac{\partial}{\partial b} \right\rbrace $, we have
	$$
	{\rm D}\Phi_t=\left(\begin{array}{cc}
	{\rm Id}_{d-1} & 0 \\
	* & \frac{V(a_1(t),\cdots,a_{d-1}(t),b(t))}{V(a_1,\cdots,a_{d-1},b)}
	\end{array}\right).
	$$
	This implies 
	$$
	{\rm det}({\rm D}\Phi_t(a_1,\cdots,a_{d-1},b))\cdot V(a_1,\cdots,a_{d-1},b)=V(a_1(t),\cdots,a_{d-1}(t),b(t)).
	$$
	That is $Z$ preserves the volume 
	$$
	\Gamma_x^*(m)(a_1,\cdots,a_{d-1},b)=V(a_1,\cdots,a_{d-1},b)\cdot {\rm d}a_1{\rm d}a_2\cdots{\rm d}a_{d-1}{\rm d}b.
	$$
	
	We define the vector field on $M$ as
	$$
	Y_x(y)=\left\{
	\begin{array}{ll}
	{\rm D}\Gamma_x^{-1}(Z), \qquad &\mbox{if $y\in\Gamma_x\left(\DD^{d-1}\times\SS^1\right)$;} \\
	0, \qquad & \mbox{if $y\notin\Gamma_x\left(\DD^{d-1}\times\SS^1\right)$.}
	\end{array}
	\right.
	$$ 
	So $Y_x$ is a $C^\infty$ divergence-free vector field on $M$. Moreover, for every point $y$ where $Y_x(y)\neq0$, $Y_x(y)$ is equal to ${\rm D}\Gamma_x^{-1}(Z)$, which is a positive number times 
	${\rm D}\Gamma_x^{-1}\left(\frac{\partial}{\partial b}\right)$. This implies $Y_x(y)$ is positively transverse to $E^s(y)\oplus E^u(y)$.
	This proves the first case.
	
	\vskip1mm
	
	\noindent{\bf Case \uppercase\expandafter{\romannumeral2}}: The tubular neighborhood of $\gamma_x$ is diffeomorphic to the twisted $\DD^{d-1}$-bundle over $\SS^1$, i.e.
	\begin{align*}
	\DD^{d-1}&\ltimes\SS^1 ~=~ \DD^{d-1}\times[0,1]/\sim  \nonumber \\ 
	&= \left\lbrace (a_1,\cdots,a_{d-1},b)\in\RR^{d-1}\times[0,1]:~ \sum_{i=1}^{d-1}a_i^2\leq1,~(a_1,a_2,\cdots,a_{d-1},b)
	\sim(-a_1,a_2,\cdots,a_{d-1},b) \right\rbrace . 
	\end{align*}

	In this case, we can define the vector field exactly the same as  $\DD^{d-1}\times\SS^1$. The only difference is the Lebesgue measure $m$ on this tubular neighborhood is a $d$-form without signature.
	
	Let $\Gamma_x:\DD^{d-1}\ltimes\SS^1\rightarrow M$ be a $C^\infty$ imbedding, such that 
	$$
	\Gamma_x|_{\{(0,\cdots,0,b):b\in\SS^1\}}=\gamma_x, 
	\qquad {\rm and} \qquad
	\Gamma_x(0,\cdots,0,0)=x.
	$$ 
	Moreover, we can assume ${\rm D}\Gamma_x\left(\frac{\partial}{\partial b}|_{(a_1,\cdots,a_{d-1},b)}\right)$ is positively transverse to 
	$
	E^s_{\Gamma_x(a_1,\cdots,a_{d-1},b)}\oplus E^u_{\Gamma_x(a_1,\cdots,a_{d-1},b)}
	$
	for every $(a_1,\cdots,a_{d-1},b)\in\DD^{d-1}\ltimes\SS^1$.
	
	There exists a $C^\infty$ positive function
	$$
	V(a_1,\cdots,a_{d-1},b):
	~\DD^{d-1}\ltimes\SS^1\longrightarrow \RR_+,
	$$
	such that the Lebesgue measure $m$ of $M$ on $\Gamma_x(\DD^{d-1}\ltimes\SS^1)$ satisfies
	$$
	\Gamma_x^*(m)(a_1,\cdots,a_{d-1},b)=V(a_1,\cdots,a_{d-1},b)\cdot |{\rm d}a_1{\rm d}a_2\cdots{\rm d}a_{d-1}{\rm d}b|.
	$$ 
	
	Let $Z$ be a $C^\infty$ vector field defined as
	$$
	Z(a_1\cdots,a_{d-1},b)= V(a_1\cdots,a_{d-1},b)
	\cdot\varphi\left(\sum_{i=1}^{d-1}a_i^2\right)
	\cdot\frac{\partial}{\partial b},
	$$
	where $\varphi$ is the bump function defined above. In particular, we have 
	$$
	\varphi(a_1,a_2\cdots,a_{d-1},b)=\varphi(-a_1,a_2\cdots,a_{d-1},b).
	$$
	So $Z$ is a well-defined $C^{\infty}$ vector field on $\DD^{d-1}\ltimes\SS^1$.
	Since the $Z$-orbit of every point is either a circle or the point itself,
	the same argument shows that the flow generated by $Z$ preserves the volume $V(a_1,\cdots,a_{d-1},b)
	|{\rm d}a_1{\rm d}a_2\cdots{\rm d}a_{d-1}{\rm d}b|$ on $\DD^{d-1}\ltimes\SS^1$.
	
	Define the vector field on $M$ as
	$$
	Y_x(y)=\left\{
	\begin{array}{ll}
	{\rm D}\Gamma_x^{-1}(Z), \qquad &\mbox{if $y\in\Gamma_x\left(\DD^{d-1}\ltimes\SS^1\right)$;} \\
	0, \qquad & \mbox{if $y\notin\Gamma_x\left(\DD^{d-1}\ltimes\SS^1\right)$.}
	\end{array}
	\right.
	$$ 
	We can see that $Y_x$ is a $C^\infty$ vector field which preserves the Lebesgue measure of $M$, and is positively transverse to $E^s\oplus E^u$ at every point where it is non-vanishing. This finishes our proof.
\end{proof}

Now we can prove Theorem \ref{Thm:Div-free}.

\begin{proof}[\bf{Proof of Theorem \ref{Thm:Div-free}}]
	Since the partially hyperbolic splittings of $f$ and $f^2$ are the same, we can assume $Df$ preserves the orientation of $E^c$.
	From Lemma \ref{lem:vector-field}, for every $x\in M$, there exists a divergence-free vector field $Y_x\in{\cal X}^{\infty}(M)$, such that $Y_x$ is positively transverse to $E^s\oplus E^u$ where it is non-vanishing. Let 
	$$
	U_x=\{y\in M: Y_x(y)\neq0\}.
	$$
	Then $U_x$ is an open set, and $x\in U_x$. 
	
	Take a finite cover $M\subseteq U_{x_1}\cup\cdots\cup U_{x_k}$, and consider the vector field
	$$
	X=Y_{x_1}+\cdots+Y_{x_k}\in{\cal X}^{\infty}(M).
	$$
	Recall that for the Lebesgue measure $m$ which is locally a d-form on $M$. The vector field $X$ preserves it if and only if $\emph{L}_Xm=0$ everywhere. And we have
	\begin{align*}
	    \emph{L}_Xm&=i_X\circ{\rm d}m+{\rm d}\circ i_Xm={\rm d}\circ i_Xm={\rm d}\circ i_{Y_{x_1}+\cdots+Y_{x_k}}m \\
	    & =\sum_{j=1}^{k}{\rm d}\circ i_{Y_{x_j}}m=\sum_{j=1}^{k}\emph{L}_{Y_{x_j}}m=0.
	\end{align*}
	So $X$ is a nowhere vanishing divergence-free vector field on $M$. 
	
	Moreover, for every $x\in M$, there exists some $j\in\{1,\cdots,k\}$, such that $Y_{x_j}(x)$ is positively transverse to $E^s\oplus E^u$ at $x$. And for any $j\neq l\in\{1,\cdots,k\}$, $Y_{x_j}(x)$ is either zero, or positively transverse to $E^s_x\oplus E^u_x$. Since $E^c$ is one dimensional, we have $\sum_{j=1}^{k}Y_{x_j}(x)$ is positively transverse to $E^s\oplus E^u$ at $x$. Thus $X$ is positively transverse to $E^s\oplus E^u$ on $M$.
\end{proof}

\noindent Shaobo Gan, School of Mathematical Sciences, Peking University, Beijing 100871, China\\
E-mail address: gansb@pku.edu.cn
\vspace{0.5cm}

\noindent Yi Shi, School of Mathematical Sciences, Peking University, Beijing 100871, China\\
E-mail address: shiyi@math.pku.edu.cn

\end{document}